\documentclass[onecolumn,12pt]{article}
\usepackage{amsmath,amssymb,amsfonts}
\usepackage{algorithmic}
\usepackage{graphicx}
\usepackage{textcomp}
\usepackage{xcolor}
\usepackage{here}
\usepackage{mathrsfs}
\usepackage{enumerate}
\usepackage{mathtools}
\usepackage{fullpage}
\usepackage{cite}
\usepackage{dsfont}

\usepackage{authblk}

\usepackage[pdfencoding=pdfdoc, unicode=false, pdfusetitle=false, bookmarks=true]{hyperref}



\hypersetup{
  pdftitle={Uniform Exponential Stability Analysis of Impulsive Linear Time-Invariant Infinite-Dimensional Systems in Banach and Hilbert spaces: Non-coercive and coercive conditions},
  pdfauthor={Corentin Briat, Francesco Ferrante, Christophe Prieur},
}

\newtheorem{theorem}{Theorem}[section]
\newtheorem{proposition}[theorem]{Proposition}
\newtheorem{lemma}[theorem]{Lemma}
\newtheorem{definition}[theorem]{Definition}
\newtheorem{assumption}[theorem]{Assumption}

\newtheorem{remark}[theorem]{Remark}

\DeclareMathOperator*{\diag}{diag}

\def\d{\mathrm{d}}
\def\ds{\mathrm{d}s}
\def\dt{\mathrm{d}t}
\def\dr{\mathrm{d}r}
\def\du{\mathrm{d}u}

\newtheorem{corollary}[theorem]{Corollary}

\everymath{\displaystyle}

\def\Tmin{T_{\min}}

\newenvironment{proof}{{\it Proof :~}}{\hfill$\square$\\}

\date{}

\begin{document}


\title{Uniform Exponential Stability Analysis of Impulsive Linear Time-Invariant Systems on Banach and Hilbert Spaces: Non-Coercive and Coercive Stability Conditions\thanks{This document has not yet been peer-reviewed, and a portion of it is currently under review. A revised version, reflecting the outcome of that review and correcting any remaining typos or unclear statements, will be posted in due course. Comments and corrections from readers are warmly welcome in the meantime.}}

\author[1]{Corentin Briat\footnote{Corresponding author.}}
\author[2]{Francesco Ferrante}
\author[3]{Christophe Prieur}

\affil[1]{\footnotesize School of Life Sciences, University of Applied Sciences, Muttenz, Switzerland, \texttt{corentin@briat.info}, \url{http://www.briat.info}}
\affil[2]{Department of Engineering, University of
  Perugia, Perugia, Italy, \texttt{francesco.ferrante@unipg.it}, \url{http://www.fferrante.net/}}
\affil[3]{Universit\'e Grenoble Alpes, CNRS, Grenoble INP,
  GIPSA-lab, Grenoble, France, \texttt{christophe.prieur@gipsa-lab.fr}, \url{https://www.gipsa-lab.grenoble-inp.fr/~christophe.prieur/}}


\maketitle

\begin{abstract}
We consider the uniform exponential stability analysis of infinite-dimensional impulsive systems defined on a Banach or Hilbert space, whose flow is governed by a fixed $C_0$-semigroup generator and whose jumps occur at a prescribed time sequence. While the flow and jump maps are themselves time-invariant, the time-triggered impulses render the propagator a genuinely time-varying evolution family, which is the source of the analysis difficulty addressed here. We combine ideas from hybrid systems theory and infinite-dimensional systems to produce operator-based stability conditions, which can be analytically or numerically checked via convex programming. Necessary and sufficient conditions for the uniform exponential stability of impulsive systems on Banach spaces are obtained in the context of a fixed impulse-times sequence but also of arbitrary, constant, minimum, and range dwell-times using both non-coercive and coercive Lyapunov functionals. Some of those results are then adapted to systems on a Hilbert space and quadratic Lyapunov functionals. As an application, linear switched systems are shown to be an exact special case: reformulated as impulsive systems with unit-norm selector jumps, they inherit non-coercive and clock-dependent dwell-time stability conditions on both Banach and Hilbert spaces. Theoretical and numerical examples are given for illustration, notably on the sampled-data control of time-delay systems.\\

\noindent\textbf{Keywords.} Hybrid systems, Switched systems, Infinite-Dimensional Systems, Dwell-Times, Operator Lyapunov Equations, Linear Partial Integral Inequalities (LPIs), Sum of squares.
\end{abstract}

\section{Introduction}

Linear continuous-time infinite-dimensional systems provide the natural state-space framework for dynamics governed by partial differential equations and by functional (e.g., time-delay) differential equations, and their stability properties are now well-understood \cite{Pazy:83,Curtain:95,Bensoussan:06}. When the dynamics are time-invariant, the solution map is a strongly continuous ($C_0$) semigroup, and exponential stability is characterized through the spectrum of the generator, through Lyapunov operator equations, and through the integral criterion of Datko \cite{Datko:70}. A difficulty specific to the infinite-dimensional setting is that coercive \emph{quadratic} Lyapunov functionals need not exist for exponentially stable $C_0$-semigroups on general Banach or Hilbert spaces \cite{Mironchenko:19}: there are exponentially stable $C_0$-semigroups on Hilbert spaces for which no equivalent scalar product makes the semigroup a strict contraction. This phenomenon, which has no counterpart in the finite-dimensional setting, has motivated the development of non-coercive converse Lyapunov theorems for $C_0$-semigroups \cite{Hante:11,Mironchenko:19,Haidar:22} and, more broadly, of input-to-state stability theory for infinite-dimensional systems \cite{Mironchenko:18}. Coercive Lyapunov functionals do remain available through the so-called sup-type construction \cite{Mironchenko:18,Haidar:22}, but they need not be quadratic and so are not amenable to the operator-Lyapunov-equation analysis pursued here on Hilbert spaces.

In parallel, hybrid dynamical systems have emerged as a unifying language for processes that combine continuous flow with discrete jumps, encompassing switched and impulsive systems within a single formalism \cite{Goebel:12,Haddad:06,liu2015lyapunov}. Impulsive systems, in which the state is reset instantaneously at a prescribed sequence of time instants, are of particular interest because sampled-data and networked control systems admit an exact impulsive reformulation that, unlike a purely discrete-time model, preserves the intersample behavior \cite{Sivashankar:94,Naghshtabrizi:08,Hetel:13}. A closely related and widely used alternative models the sampling as a time-varying delay, the input-delay approach to sampled-data control \cite{Fridman:04,Fridman:10,Liu:12}, which connects the present setting to the analysis and control of time-delay systems \cite{Fridman:14}. This feature is essential whenever the intersample signal carries information, as in optimal control or in the presence of continuous-time exogenous inputs \cite{Bamieh:91,Khammash:93}. For finite-dimensional impulsive systems, a well-established stability theory is available, rooted in the classical theory of impulsive differential equations \cite{Lakshmikantham:89,Bainov:89,Samoilenko:95,Yang:01b} and comprising dwell-time and Lyapunov-based conditions \cite{Hespanha:08,Teel:14,Mancilla:20}, timer-dependent Lyapunov functions \cite{Goebel:12,Briat:13d,Briat:15i,Xiang:15a} or looped-functionals \cite{Seuret:12b,Briat:11l,Briat:12h, Briat:13b} that capture dwell-time constraints with low conservatism and may result in convex linear matrix inequality (LMI) formulations, suitable for both analysis and design. 

The stability analysis of infinite-dimensional impulsive systems, which sit at the intersection of these two theories, is considerably less developed. Input-to-state stability and Lyapunov methods for impulsive systems on Banach spaces have been investigated in \cite{Dashkovskiy:12,Dashkovskiy:13,Dashkhovskiy:23,Bivziuk:23}, mostly through dwell-time conditions and coercive functionals; the closest to the present work are \cite{Dashkhovskiy:23,Bivziuk:23}: the former derives dwell-time stability conditions for infinite-dimensional impulsive systems in which both the flow and the jumps may be individually unstable, and the latter establishes a comparison theorem for the linear case that reduces stability under an averaged dwell-time to a constant dwell-time problem. Analogously, infinite-dimensional switched systems were also considered in \cite{Haidar:22,Chitour:25}; we show in Section~\ref{sec:switched} that such systems are in fact an application of the present impulsive framework. Three features distinguish the problem both from its finite-dimensional and from its non-impulsive infinite-dimensional counterparts. First, although the flow generator and the jump operator are themselves time-invariant, a fixed sequence of impulse times pins the jumps to absolute instants, so the resulting propagator is a genuine two-parameter evolution family that is \emph{not} time-translation invariant. Second, the jump operator may not be invertible, resulting in a propagator that is not globally invertible. Those two properties together imply that the group and semigroup structure of propagators underpinning Datko's theorem is unavailable, and stability bounds valid from the initial time do not automatically extend to all initial times. Third, the absence of coercive functionals on general Banach spaces, already present without jumps, persists here, so a characterization of exponential stability that does not presuppose a coercive functional is required.

The objective of this paper is to develop such a theory for linear impulsive systems on Banach and Hilbert spaces whose jumps are time-triggered. The starting point is an impulsive analogue of Datko's theorem (Lemma~\ref{lem:datko}) that converts an integral-plus-sum boundedness condition into a \emph{hybrid} exponential bound, with explicit and separate decay contributions from the continuous flow and from the discrete jumps. Building on this lemma, we establish a complete, necessary and sufficient, \emph{non-coercive} Lyapunov characterization of global exponential stability for a fixed impulse sequence (Theorem~\ref{thm:main}), together with persistent-flowing and persistent-jumping counterparts (Theorems~\ref{thm:pf} and~\ref{thm:pj}) that cover the degenerate cases where only one of the two mechanisms is dissipative. The parallel coercive theory is developed in Section~\ref{sec:coercive}, where we provide an explicit construction of a coercive functional whenever exponential stability holds, alongside a discrete-time formulation obtained by sampling at the jump instants and embedded in the dwell-time subsections of Sections~\ref{sec:lyapunov}--\ref{sec:coercive}; on Hilbert spaces, these conditions specialize to quadratic ones expressed as operator Lyapunov equations and inequalities (Section~\ref{sec:hilbert}). Timer-dependent conditions are then derived for the constant, minimum and range dwell-time families (Sections~\ref{subsec:nc:cst}--\ref{subsec:nc:rng} and~\ref{subsec:coerc:cst}--\ref{subsec:coerc:rng}), together with convex design conditions obtained through a change of variables and a clarification of the role of the semigroup growth bound, which is shown to be necessary rather than restrictive for families admitting unbounded dwell-times (Proposition~\ref{prop:necessity:semigroup}). As an application, Section~\ref{sec:switched} shows that linear switched systems fall within this framework through an exact reformulation as impulsive systems on a product space: the switches become unit-norm selector jumps, placing the system in the persistent-flowing regime and yielding non-coercive Lyapunov conditions on Banach spaces and clock-dependent quadratic conditions on Hilbert spaces for fixed, minimum-dwell-time, and arbitrary switching, the last recovering the common non-coercive Lyapunov characterization of arbitrary switching \cite{Haidar:22,Liberzon:03}. The framework is finally illustrated through worked examples on transport equations and on the sampled-data control of time-delay systems, the latter treated within the Partial Integral Equation framework of \cite{Peet:21pie,Shivakumar:21}, where the hybrid conditions reduce to linear operator inequalities solvable with off-the-shelf software like PIETOOLS \cite{PIETOOLS:24} combined with a piecewise discretization approach for parametrizing the timer-dependent operators \cite{Allerhand:11,Allerhand:13,Xiang:15a,Briat:16c}.


The paper is organized as follows. Section~\ref{sec:prelim} defines the system class, the solution concept and the stability notions. Section~\ref{sec:useful} gathers the standing growth-bound assumption and the impulsive Datko lemma. Section~\ref{sec:lyapunov} establishes non-coercive Lyapunov characterizations, with a subsection for each class of impulse sequences (fixed, arbitrary, constant / minimum / range dwell-time); the fixed-sequence subsection presents an equivalence between exponential stability, a Datko condition and a continuous-time hybrid Lyapunov criterion; the arbitrary-sequence subsection presents an equivalence between exponential stability and a time-independent Lyapunov criterion; each dwell-time subsection adds a discrete-time criterion on the monodromy operator of the system. Section~\ref{sec:coercive} establishes the parallel coercive (Lipschitz, equivalent-norm) theory on Banach spaces, with the same subsection structure and full equivalences in the Lipschitz coercive class. Section~\ref{sec:hilbert} specializes to Hilbert spaces and the quadratic class, where each subsection states a non-coercive quadratic theorem and a coercive corollary; these are necessary and sufficient in the fixed-sequence and constant dwell-time cases, but only sufficient in the arbitrary, minimum, and range dwell-time cases, since the quadratic class is strictly smaller than the non-coercive Lipschitz class on Hilbert spaces whenever the family of admissible monodromies is not a singleton. Section~\ref{sec:switched} applies the framework to linear switched systems, recast as impulsive systems on a product space with unit-norm selector jumps. Section~\ref{sec:examples} illustrates the results on transport equations, time-delay systems, and the sampled-data boundary control of a reaction-diffusion equation.

\section{Definitions and preliminary results}\label{sec:prelim}

We introduce in this section fundamental concepts and results about infinite-dimensional and impulsive systems, the solutions of such systems via propagators, as well as various dwell-time concepts and appropriate exponential stability concepts for such systems. 

\begin{definition}[\!\!\cite{Curtain:95}]
  A family of bounded linear operators $\{S(t)\}_{t\geq 0}$, on a Banach space $X$ is called a strongly continuous semigroup if
  \begin{enumerate}[(i)]
    \item $S(0)=I$
    \item $S(t+r)=S(t)S(r)$ for all $t, r\geq 0$
    \item $t\mapsto S(t)$ is strongly continuous. Namely for all $x\in X$
    $$
    \lim_{t \downarrow 0}\Vert S(t)x-x\Vert=0
    $$
    \end{enumerate}
\end{definition}

\begin{definition}[\!\!\cite{Pazy:83}]
  The infinitesimal generator $A:D(A)\subset X\mapsto X$ of a strongly continuous semigroup $S(t)$ defined on a Banach space $X$ is defined by
  \begin{equation}
    Ax=\lim_{t\downarrow 0}\dfrac{S(t)x-x}{t}\ \textnormal{for all }x\in D(A),
  \end{equation}
  where
    \begin{equation}
    D(A)=\left\{x\in X:\lim_{t\downarrow 0}\dfrac{S(t)x-x}{t}\ \textnormal{exists}\right\}.
  \end{equation}
  Moreover, we have that
  \begin{equation}
    \dfrac{d}{dt}S(t)x=AS(t)x
  \end{equation}
  for any $x\in D(A)$.
\end{definition}

We consider here the following class of infinite-dimensional impulsive systems
\begin{equation}\label{eq:syst}
\begin{array}{rcl}
  \dot{x}(t)&=&Ax(t),\ t\notin\mathbb{T}_\sigma\\
  x(t)&=&J x(t^-),\ t\in\mathbb{T}_\sigma\\
  x(t_0)&=&x_0
  \end{array}
\end{equation}
where $t_0\in\mathbb{R}_{\ge 0}$ is the initial time.  The notation $x(t^-)$ stands for the limit from the left at $t$. For system \eqref{eq:syst}, we assume that $A:D(A)\subset X\mapsto X$ is the infinitesimal generator of a strongly continuous semigroup $S(t)$ on the Banach space $X$. We further assume $J\in L(X)$ is a nonzero bounded operator ($J\ne0$) such that $JD(A)\subset D(A)$. The sequence of impulse instants $\sigma:=\{t_k\}_{k\ge1}$ is assumed to belong to the set
\begin{equation}
  \mathcal{S}_{0,\infty}:=\left\{\{t_k\}_{k\ge1}:\begin{array}{l}
    t_{i+1}-t_i>0, t_0=0,i\ge0,\\
    t_j\to\infty\ \textnormal{as } j\to\infty
  \end{array}\right\}.
\end{equation}
and we define the set of impulse instants of a given sequence $\sigma$ as $\mathbb{T}_\sigma$.

For results that exploit additional structure on the timing of impulses, we will work with the following subfamilies of $\mathcal{S}_{0,\infty}$, parametrized by dwell-time bounds. Given $0<T_{\min}\le T_{\max}<\infty$:
\begin{align}
  \mathcal{S}_{\mathrm{cst}}(T)&:=\big\{\sigma\in\mathcal{S}_{0,\infty}:\ t_{k+1}-t_k=T\text{ for all }k\ge0\big\},\label{eq:family:cst}\\
  \mathcal{S}_{\min}(T_{\min})&:=\big\{\sigma\in\mathcal{S}_{0,\infty}:\ t_{k+1}-t_k\ge T_{\min}\text{ for all }k\ge0\big\},\label{eq:family:min}\\
  \mathcal{S}_{\mathrm{rng}}(T_{\min},T_{\max})&:=\big\{\sigma\in\mathcal{S}_{0,\infty}:\ T_{\min}\le t_{k+1}-t_k\le T_{\max}\text{ for all }k\ge0\big\}.\label{eq:family:rng}
\end{align}
We refer to $\mathcal{S}_{\mathrm{cst}}(T)$ as the \emph{constant dwell-time family} (a single periodic sequence), to $\mathcal{S}_{\min}(T_{\min})$ as the \emph{minimum dwell-time family} (inter-jump intervals bounded below), and to $\mathcal{S}_{\mathrm{rng}}(T_{\min},T_{\max})$ as the \emph{range dwell-time family} (inter-jump intervals in a bounded interval). 

Inspired by the notions of solutions to hybrid systems for finite-dimensional systems \cite{PrieurAstolfi03}, we propose the following definitions
\begin{definition}\label{def:existence}
A function $x: [t_0, \infty)\mapsto X$ is a solution to the system \eqref{eq:syst} if 
\begin{itemize}
  \item it is continuous on $\cup_{k\geq 0}[t_k, t_{k+1})$, 
  \item it has a left limit at $t_{k}$ for all $k\geq 1$ denoted $x(t_k^-)$,  and
  \item the following expressions hold true
\begin{equation*}
  \begin{array}{rcl}
   \int_{t_k}^t x(s) \ds&\in& D(A),\quad\forall t\in (t_k, t_{k+1}),\quad \forall k\geq 0,\\
    x(t)&=&x_0+A\int_{t_0}^t x(s) \ds,\quad\forall  t\in (t_0, t_1),\\
    x(t)&=&x(t_k)+A\int_{t_k}^t x(s) \ds,\quad\forall  t\in (t_k, t_{k+1}),\\
   x(t_k)&=&Jx(t_k^-), k\ge1.
  \end{array}
\end{equation*}
\end{itemize}
\end{definition}

\begin{proposition}
Assume that  $\sigma\in\mathcal{S}_{0,\infty}$. Then, there exists a unique and forward complete solution to the system \eqref{eq:syst} for $t\ge t_0=0$. In particular, this solution can be written as
\begin{equation}
  x(t)=U_\sigma(t,s)x(s),\ t\ge s\ge t_0=0
\end{equation}
where
\begin{equation}\label{eq:generator}
  \begin{array}{rcl}
    U_\sigma(t,\theta)U_\sigma(\theta,s)&=&U_\sigma(t,s), \theta\in[s,t]\\
   \dfrac{\partial}{\partial t} U_\sigma(t,s)&=&AU_\sigma(t,s),\quad t\notin\mathbb{T}_\sigma,\ s\le t\\
   \dfrac{\partial}{\partial s} U_\sigma(t,s)&=&-U_\sigma(t,s)A,\quad s\notin\mathbb{T}_\sigma,\ s\le t\\
    U_\sigma(t_k,s)&=&JU_\sigma(t_k^-,s),\quad k\ge1,\ s<t_k\\
    U_\sigma(t,t_k^-)&=&U_\sigma(t,t_k)J,\quad k\ge1,\ t\ge t_k\\
     U_\sigma(t,t)&=&I,\quad t\ge0\\
  \end{array}
\end{equation}
\end{proposition}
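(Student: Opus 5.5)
The plan is to build the solution interval by interval using the mild-solution theory of $C_0$-semigroups, and then to read off the propagator $U_\sigma$ as an explicit product of semigroup operators and copies of $J$.

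\textbf{Existence and uniqueness on one interval.} On $[t_k,t_{k+1})$ the solution concept of Definition~\ref{def:existence} is exactly the mild (integrated) solution of $\dot x=Ax$ with initial datum $x(t_k)$. The classical result \cite{Pazy:83,Curtain:95} gives existence and uniqueness: for any $z\in X$, the function $x(t)=S(t-t_k)z$ satisfies $\int_{t_k}^t x(s)\,\ds\in D(A)$ and $x(t)=z+A\int_{t_k}^t x(s)\,\ds$. Uniqueness follows from the standard argument: the difference $w$ of two solutions with zero datum satisfies $w(t)=A\int_{t_k}^t w$, so $v(t)=\int_{t_k}^t w$ solves $\dot v=Av$, $v(t_k)=0$, hence $v\equiv0$ and $w\equiv 0$.

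Since $S(\cdot)z$ is continuous on $[t_k,\infty)$, the left limit $x(t_{k+1}^-)=S(t_{k+1}-t_k)x(t_k)$ exists. We then set $x(t_{k+1})=Jx(t_{k+1}^-)$, which is well defined because $J\in L(X)$. Induction on $k$ gives a unique solution on $[0,t_k)$ for every $k$. Because $t_k\to\infty$ (the sequence is in $\mathcal{S}_{0,\infty}$), every $t\ge0$ lies in some $[t_k,t_{k+1})$. This yields forward completeness with no Zeno accumulation.

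\textbf{The propagator.} For $t\in[t_j,t_{j+1})$ and $s\in[t_i,t_{i+1})$ with $i<j$, define
\begin{equation*}
U_\sigma(t,s)=S(t-t_j)\,J\,S(t_j-t_{j-1})\,J\cdots J\,S(t_{i+1}-s),
\end{equation*}
and for $i=j$ set $U_\sigma(t,s)=S(t-s)$. The solution is then $x(t)=U_\sigma(t,s)x(s)$ by the construction above. The listed identities are verified as follows.
\begin{itemize}
\item The cocycle property follows by concatenating products, using $S(a)S(b)=S(a+b)$ when $\theta$ falls inside an interval.
\item $U_\sigma(t,t)=I$ holds since $S(0)=I$.
\item The jump relations $U_\sigma(t_k,s)=JU_\sigma(t_k^-,s)$ and $U_\sigma(t,t_k^-)=U_\sigma(t,t_k)J$ follow from strong continuity of $S$ at the interval endpoints, interpreting the left limits strongly.
\item $\partial_t U_\sigma(t,s)=AU_\sigma(t,s)$ for $t\notin\mathbb{T}_\sigma$ holds since only the leftmost factor depends on $t$.
\end{itemize}

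\textbf{The main obstacle} is the backward derivative $\partial_s U_\sigma(t,s)=-U_\sigma(t,s)A$. This must be understood strongly on $D(A)$: for $x\in D(A)$, $\tfrac{d}{ds}S(t_{i+1}-s)x=-S(t_{i+1}-s)Ax$. The remaining left factors are bounded, so the product rule applies. Similarly, $\partial_t U_\sigma$ holds on vectors $U_\sigma(t_j,s)x$ lying in $D(A)$. This is exactly where the hypothesis $JD(A)\subset D(A)$ enters: $S$ leaves $D(A)$ invariant, so the whole product maps $D(A)$ into $D(A)$ and $A$ may be applied. I would state explicitly that the derivative identities are meant strongly on $D(A)$, and deal with the one-sided derivatives at $s=t_i$ separately.
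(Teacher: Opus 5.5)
Your argument is correct. The paper does not prove this proposition; it is stated and then used as standard. Your interval-by-interval mild-solution construction, and the explicit product $U_\sigma(t,s)=S(t-t_j)J\cdots JS(t_{i+1}-s)$, are exactly what the paper relies on implicitly later: for example $U_\sigma(T,s)=S(T-t^*)JU_\sigma(t^{*-},s)$ in Step~1 of Lemma~\ref{lem:datko}, $U_\sigma(t+h,t)=S(h)$ on jump-free intervals, and the arc-and-jump factorization behind Remark~\ref{rem:growth:G}.

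Your proof also makes explicit something the statement leaves implicit: the sense in which the displayed identities hold. The left limits $U_\sigma(t_k^-,s)$ and $U_\sigma(t,t_k^-)$ exist only strongly. The two derivative identities hold strongly on $D(A)$, not as operator identities on all of $X$, since for a general $C_0$-semigroup $\partial_t U_\sigma(t,s)x$ need not exist when $x\notin D(A)$. This is the reading under which the proposition is actually true.

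One correction to your own bookkeeping concerns where the hypothesis is used.
\begin{itemize}
\item The backward identity $\partial_s U_\sigma(t,s)x=-U_\sigma(t,s)Ax$ is the easy one, not the main obstacle. Only the rightmost factor $S(t_{i+1}-s)$ depends on $s$, and the left factors are fixed bounded operators. So it holds for every $x\in D(A)$ without any invariance assumption, and no product rule is needed.
\item The hypothesis $JD(A)\subset D(A)$, together with $S(t)D(A)\subset D(A)$, is needed only for the forward identity. It guarantees $U_\sigma(t_j,s)x\in D(A)$ for $x\in D(A)$, so that $AU_\sigma(t,s)x$ is defined and $t\mapsto S(t-t_j)U_\sigma(t_j,s)x$ is differentiable.
\end{itemize}
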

Observe that the operator $U_\sigma(t,s)$ is not necessarily invertible for two reasons. The first one is that the semigroup $S(t)$ is not necessarily invertible. The second reason is that the jump operator $J$ is not necessarily invertible.
\begin{definition}
  A functional $V:\mathbb{R}_{\ge0}\times X\to\mathbb{R}$ is uniformly Lipschitz on bounded subsets of $X$ if for any $r>0$, there exists an $L_r>0$ such that
  \begin{equation}
    |V(t,x)-V(t,y)|\le L_r\|x-y\|
  \end{equation}
  holds for all $\|x\|<r,\|y\|<r$ and $t\ge0$.
\end{definition}

We define also the following jump counter:
\begin{definition}[Jump counter]
The counting variable
\begin{equation}
  \kappa_\sigma(t,s):=\#\{\mathbb{T}_\sigma\cap(s,t]\}
\end{equation}
counts the number of impulse instants on the half-open interval $(s,t]$. The half-open convention is consistent with the propagator semantics: $U_\sigma(t,s)$ starts from the post-jump state at $s$, so a jump occurring exactly at $s$ is not counted, whereas $U_\sigma(t,s^-)$ (start from the pre-jump state) does count it. With a slight abuse of language, we also define $\kappa_\sigma(t):=\kappa_\sigma(t,0)$.
\end{definition}



\begin{definition}
For a functional $V:\mathbb{R}_{\ge0}\times X\ni (t,x)\mapsto\mathbb{R}$, piecewise (right) continuous in $t$ and continuous in $x$, its lower right-hand Dini derivative is given by 
\begin{equation}
  \underline{D}^+V(t,x)=\liminf_{h\downarrow0}\dfrac{V(t+h,S(h)x)-V(t,x)}{h}.
\end{equation}
for all $t$ at which $V(t^-,x)=V(t,x)$. 
\end{definition}



\begin{definition} \label{def:stab}
  Let a sequence of impulse times $\sigma$ be given. Then, the origin of the hybrid system \eqref{eq:syst} is \textbf{globally exponentially stable} (GES) if there exist some constants $ M \ge1$, $\alpha\ge0$ and $\rho\in(0,1]$ verifying $\alpha-\log(\rho)>0$ such that
  \begin{equation}\label{eq:defstab}
         ||x(t)||_X\le M \rho^{\kappa_\sigma(t,s)}e^{-\alpha (t-s)}||x(s)||_X
  \end{equation}
  for all $t\ge s\ge0$ and all $x(s)\in X$, where $\kappa_\sigma(t,s)$ denotes the number of impulse instants in $(s,t]$; equivalently, $||U_\sigma(t,s)||\le  M \rho^{\kappa_\sigma(t,s)}e^{-\alpha (t-s)}$. It is \textbf{strongly GES} when $\alpha>0$ and $\rho\in(0,1)$.
\end{definition}

The constraint $\alpha-\log(\rho)>0$ requires at least one mechanism to be strictly decaying and admits three regimes: the \emph{hybrid} case $\alpha>0$, $\rho\in(0,1)$; the \emph{persistent-jumping} case $\alpha=0$, $\rho\in(0,1)$ (decay carried by the jumps alone); and the \emph{persistent-flowing} case $\alpha>0$, $\rho=1$ (carried by the flow alone). Strong GES is thus the hybrid regime.

In the above result, the bound is required to hold \emph{uniformly with respect to the initial time} $s\ge0$, and not merely at $s=0$. This uniformity is indispensable for the converse results of Section~\ref{sec:lyapunov}: since the impulsive propagator $U_\sigma(t,s)$ is not time-translation invariant (the jumps of a fixed sequence are pinned to absolute times, even though $A$ and $J$ are time-invariant), the bound at $s=0$ does \emph{not} imply the bound at $s>0$, and the latter is precisely what is produced by Lemma~\ref{lem:datko}. This definition naturally generalizes to families of impulse-times sequences:

\begin{definition} \label{def:stabu}
 Given a family $\mathcal{S}\subset\mathcal{S}_{0,\infty}$, the origin of \eqref{eq:syst} is \textbf{uniformly globally exponentially stable} (UGES) over $\mathcal{S}$ if there exist $M\ge1$, $\alpha\ge0$, $\rho\in(0,1]$ with $\alpha-\log(\rho)>0$ such that
  \begin{equation}\label{eq:defstab:uniform}
         ||x(t)||_X\le M \rho^{\kappa_\sigma(t,s)}e^{-\alpha (t-s)}||x(s)||_X
  \end{equation}
  for all $t\ge s\ge0$, $x(s)\in X$ and $\sigma\in \mathcal{S}$; it is \textbf{strongly UGES} when moreover $\alpha>0$ and $\rho\in(0,1)$.
\end{definition}
\section{Useful results}
\label{sec:useful}

We provide here the technical tools underpinning many of the stability results of the paper. The first is a standing growth-bound assumption on the propagator; the second is the main technical lemma, an impulsive analogue of Datko's theorem \cite{Datko:70}, which converts a hybrid integral-plus-sum boundedness condition into exponential decay.

\begin{assumption}[Growth bound]\label{ass:growth}
There exist a piecewise continuous function $g:\mathbb{R}_{\ge0}\to\mathbb{R}_{>0}$, bounded below by some $c_0>0$, and a scalar $\mu>1$ such that
\begin{equation}\label{eq:growth}
  \|U_\sigma(t,s)x\|\le g(t-s)  \mu^{\kappa_\sigma(t,s)}\|x\|
\end{equation}
holds for all $t\ge s\ge0$ and all $x\in X$. Let
\begin{equation}\label{eq:Tbar}
  \bar T:=\sup_{k\ge0}(t_{k+1}-t_k)\in(0,\infty]
\end{equation}
denote the \emph{supremal dwell-time} of the sequence $\sigma$ under consideration (with the convention $t_0=0$), to be replaced by $\sup_{\sigma\in\mathcal S}\sup_{k\ge0}(t_{k+1}-t_k)$ when a family $\mathcal S\subseteq\mathcal{S}_{0,\infty}$ is considered. We require
\begin{equation}\label{eq:G}
  G:=\sup_{0\le\tau<\bar T}g(\tau)<\infty .
\end{equation}
\end{assumption}

\begin{remark}\label{rem:growth:G}
The bound \eqref{eq:growth} always holds for some admissible $g$ and $\mu$: since $A$ generates a $C_0$-semigroup, there exist $M_0\ge1$ and $\omega_0\in\mathbb{R}$ with $\|S(t)\|\le M_0e^{\omega_0 t}$, and the factorization of $U_\sigma$ into semigroup arcs separated by jumps gives \eqref{eq:growth} with $g(t)=M_0\max(1,e^{\omega_0 t})$ (bounded below by $M_0>0$, as the assumption requires, while still dominating $\|S(t)\|$) and any $\mu\ge\|J\|M_0$; enlarging $\mu$ preserves \eqref{eq:growth} since $\kappa_\sigma\ge0$, so the normalization $\mu>1$ may always be assumed. Condition \eqref{eq:G} is governed entirely by the supremal dwell-time $\bar T$ and splits the theory into two regimes.
\begin{itemize}
  \item \emph{Bounded dwell-times} ($\bar T<\infty$), as for the constant and range dwell-time families of Sections~\ref{subsec:nc:cst} and \ref{subsec:nc:rng}. The choice $g(t)=M_0e^{\omega_0 t}$ with $\omega_0>0$ gives $G=M_0e^{\omega_0\bar T}$ finite, so \eqref{eq:G} places no restriction on the semigroup: the generator $A$ may be exponentially unstable. The same conclusion applies to an individual sequence whose dwell-times are bounded, even when it belongs to $\mathcal{S}_{0,\infty}$.
  \item \emph{Unbounded dwell-times} ($\bar T=\infty$), as for $\mathcal{S}_{0,\infty}$ and the minimum dwell-time family $\mathcal{S}_{\min}(T_{\min})$. Here \eqref{eq:G} holds only if $\omega_0\le0$, in which case $G=M_0$. By Proposition~\ref{prop:necessity:semigroup}, exponential stability under arbitrarily long jump-free intervals already requires $S$ to be exponentially stable. The situation parallels the finite-dimensional case, in which an impulsive or switched system admitting unbounded dwell-times is stable only if the underlying continuous dynamics is stable. The condition $\omega_0\le0$ is thus necessary rather than restrictive.
\end{itemize}
\end{remark}

\begin{proposition}[Necessity of semigroup stability under unbounded dwell-times]\label{prop:necessity:semigroup}
Suppose $\bar T=\infty$. If the system \eqref{eq:syst} is GES (Definition~\ref{def:stab}) for the sequence $\sigma$, or UGES with respect to a family $\mathcal S$ with $\bar T(\mathcal S)=\infty$, with constants $M\ge1$, $\alpha>0$, $\rho\in(0,1)$, then the semigroup is exponentially stable:
\begin{equation}\label{eq:necessity:S}
  \|S(\tau)\|\le M e^{-\alpha\tau}\qquad\text{for all }\tau\ge0 .
\end{equation}
In particular the growth bound of $S$ satisfies $\omega_0\le-\alpha<0$, and Assumption~\ref{ass:growth} then holds with $G=M_0$.
\end{proposition}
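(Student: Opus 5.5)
The plan is to exploit the fact that, on a jump-free interval, the propagator coincides with the semigroup. Inside such an interval the stability estimate then reads directly as a bound on $S$. The only difficulty is that the intervals have finite length, so we need one for every prescribed $\tau$, which is exactly what $\bar T=\infty$ supplies.

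Fix $\tau\ge0$. Since $\bar T=\sup_{k\ge0}(t_{k+1}-t_k)=\infty$, there is an index $k$ with $t_{k+1}-t_k>\tau$. In the family case, $\bar T(\mathcal S)=\infty$ gives some $\sigma\in\mathcal S$ and some $k$ with this property. Take $s=t_k$ and $t=t_k+\tau<t_{k+1}$. Then $(s,t]\cap\mathbb{T}_\sigma=\emptyset$, so $\kappa_\sigma(t,s)=0$.

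Next we identify $U_\sigma(t,s)$ with $S(\tau)$. By Definition~\ref{def:existence} and uniqueness of the solution, on $[t_k,t_{k+1})$ the solution starting from $x(t_k)$ is the mild solution of $\dot x=Ax$. Equivalently, the characterization \eqref{eq:generator} with no jump in $(s,t]$ gives $U_\sigma(t_k+\tau,t_k)=S(\tau)$. Concretely, $x(t)=x(t_k)+A\int_{t_k}^t x$ is the integral equation uniquely solved by $S(t-t_k)x(t_k)$.

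Applying \eqref{eq:defstab} (or \eqref{eq:defstab:uniform}) with this choice of $s,t$ gives
\begin{equation*}
\|S(\tau)x\|=\|U_\sigma(t_k+\tau,t_k)x\|\le M\rho^{0}e^{-\alpha\tau}\|x\|=Me^{-\alpha\tau}\|x\|
\end{equation*}
for every $x\in X$. Since $\tau$ was arbitrary, this is exactly \eqref{eq:necessity:S}.

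The consequences then follow from standard facts. The growth bound is $\omega_0=\lim_{\tau\to\infty}\tau^{-1}\log\|S(\tau)\|\le-\alpha<0$. We may therefore take $g(\tau)=M_0\max(1,e^{\omega_0\tau})=M_0$, constant, as in Remark~\ref{rem:growth:G}, for instance with $M_0=M$. This gives $G=M_0<\infty$ even though $\bar T=\infty$.

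The only step requiring care is the identification $U_\sigma(t_k+\tau,t_k)=S(\tau)$. It must rest on uniqueness of mild solutions on a single flow interval, which is standard (Pazy) and implicit in the existence proposition. Note also that the argument uses only $\alpha>0$; the hypothesis $\rho\in(0,1)$ is not needed, because $\kappa_\sigma=0$ on the chosen window.
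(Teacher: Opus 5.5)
Your proposal is correct and follows essentially the same route as the paper. Both pick, for each $\tau\ge0$, a jump-free window $[t_k,t_k+\tau]$ (guaranteed by $\bar T=\infty$), identify $U_\sigma(t_k+\tau,t_k)=S(\tau)$ with $\kappa_\sigma=0$, and apply the uniform-in-initial-time bound at $s=t_k$. Your added remarks on uniqueness of the mild solution and on the choice $g\equiv M_0$ (e.g.\ $M_0=M$, $\omega_0=-\alpha$) simply make explicit what the paper leaves implicit.
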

\begin{proof}
Fix $\tau\ge0$. Since $\bar T=\infty$, there exist a jump index $k$ (and, in the family case, a sequence $\sigma\in\mathcal S$) with $t_{k+1}-t_k>\tau$. The interval $[t_k,t_k+\tau]$ then contains no jump, so $U_\sigma(t_k+\tau,t_k)=S(\tau)$ and $\kappa_\sigma(t_k+\tau,t_k)=0$. The uniform-in-initial-time bound of Definition~\ref{def:stab}, applied at starting time $s=t_k$, yields $\|S(\tau)\|=\|U_\sigma(t_k+\tau,t_k)\|\le M\rho^{0}e^{-\alpha\tau}=Me^{-\alpha\tau}$. As $\tau\ge0$ was arbitrary, \eqref{eq:necessity:S} follows.
\end{proof}

%
%

The following is the central technical lemma of this section. It extends Datko's classical result \cite{Datko:70} to the impulsive setting, where the propagator is an evolutionary process rather than a semigroup, and neither time-translation invariance nor group structure is available. The key advance over the plain exponential bound is that the proof extracts a \emph{hybrid} decay rate, with explicit contributions from both the continuous flow and the discrete jumps, matching the parametrization of Definition~\ref{def:stab} exactly.

\begin{lemma}[Impulsive Datko Lemma]\label{lem:datko}
Let $\sigma\in\mathcal{S}_{0,\infty}$ be given. Under Assumption~\ref{ass:growth}, suppose that for some $p\ge1$ and $b>0$ the condition
\begin{equation}\label{eq:datko:all}
  \int_s^\infty\|U_\sigma(t,s)x\|^p  \dt
  \;+\sum_{\substack{k\ge1\\t_k>s}}\|U_\sigma(t_k^-,s)x\|^p
  \;\le\; b^p\|x\|^p
\end{equation}
holds for all $s\ge0$ and all $x\in X$. Then there exist constants $M\ge1$, $\alpha>0$, and $\rho\in(0,1)$ such that
\begin{equation}\label{eq:ges:hybrid}
  \|U_\sigma(t,s)x\|\le M  e^{-\alpha(t-s)}\rho^{\kappa_\sigma(t,s)}\|x\|
\end{equation}
for all $t\ge s\ge0$ and all $x\in X$.
\end{lemma}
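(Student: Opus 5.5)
The plan is to adapt the classical Datko argument to the two-parameter propagator $U_\sigma$. We never use time-translation invariance; only the cocycle property $U_\sigma(t,\theta)U_\sigma(\theta,s)=U_\sigma(t,s)$, the jump relations in \eqref{eq:generator}, and the growth bound of Assumption~\ref{ass:growth}. The decay will be measured in the \emph{hybrid length} $(t-s)+\kappa_\sigma(t,s)$ of an interval. The two terms in \eqref{eq:datko:all} will control, respectively, the flow part and the jump part of that length.

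\emph{Step 1 (uniform boundedness).} We show $\|U_\sigma(t,s)\|\le K$ for all $t\ge s\ge0$. Fix $t>s$ and distinguish three cases.
\begin{itemize}
\item No jump in $(s,t]$. Then $U_\sigma(t,s)=S(t-s)$ with $t-s<\bar T$, so the norm is at most $G$.
\item Otherwise let $t_k$ be the last jump in $(s,t]$. If $t-t_k<1$, write $U_\sigma(t,s)x=U_\sigma(t,t_k)JU_\sigma(t_k^-,s)x$. The arc $[t_k,t]$ is jump-free, and $\|U_\sigma(t_k^-,s)x\|\le b\|x\|$ by the sum term of \eqref{eq:datko:all}. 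Hence $\|U_\sigma(t,s)x\|\le G\|J\|b\|x\|$.
\item If $t-t_k\ge1$, the window $[t-1,t]$ is jump-free. For every $r$ in it, $\|U_\sigma(t,s)x\|\le G\|U_\sigma(r,s)x\|$. Raising to the power $p$ and integrating over the window gives $\|U_\sigma(t,s)x\|^p\le G^p b^p\|x\|^p$ by the integral term.
\end{itemize}
Taking $K:=\max(G,\,G\|J\|b,\,Gb)$ handles all cases.

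\emph{Step 2 (decay in hybrid length).} For the flow part, use $U_\sigma(t,s)=U_\sigma(t,r)U_\sigma(r,s)$ and Step~1. This gives
\[
(t-s)\|U_\sigma(t,s)x\|^p\le\int_s^t\|U_\sigma(t,r)U_\sigma(r,s)x\|^p\,\dr\le K^pb^p\|x\|^p .
\]
For the jump part, use $U_\sigma(t,s)=U_\sigma(t,t_k)J\,U_\sigma(t_k^-,s)$ for every jump $t_k\in(s,t]$. This gives
\[
\kappa_\sigma(t,s)\|U_\sigma(t,s)x\|^p\le (K\|J\|)^p\sum_{s<t_k\le t}\|U_\sigma(t_k^-,s)x\|^p\le (K\|J\|b)^p\|x\|^p .
\]
Adding the two bounds yields $\big((t-s)+\kappa_\sigma(t,s)\big)\|U_\sigma(t,s)\|^p\le C$, with $C$ independent of $s$. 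Hence there is an integer $N$ such that $\|U_\sigma(t,s)\|\le\tfrac12$ whenever $(t-s)+\kappa_\sigma(t,s)\ge N$.

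\emph{Step 3 (iteration).} Given $t\ge s$, partition $[s,t]$ greedily by points $s=\theta_0<\theta_1<\dots<\theta_m\le t$. Each $\theta_{i+1}$ is the first time at which the hybrid length of $(\theta_i,\theta_{i+1}]$ reaches $N$. Because a single jump increases the hybrid length by at most $1$, each chunk has hybrid length in $[N,N+1]$. Hence
\[
m\ge\frac{(t-s)+\kappa_\sigma(t,s)}{N+1}-1 .
\]
Applying the cocycle property, Step~2 on each chunk, and Step~1 on the remainder $[\theta_m,t]$ gives
\[
\|U_\sigma(t,s)\|\le K2^{-m}\le 2K\exp\!\Big(-\tfrac{\ln 2}{N+1}\big((t-s)+\kappa_\sigma(t,s)\big)\Big).
\]
This is \eqref{eq:ges:hybrid} with $M=\max(1,2K)$, $\alpha=\ln2/(N+1)$, and $\rho=e^{-\alpha}\in(0,1)$.

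I expect the main obstacle to be Step~1 together with the bookkeeping at jump instants in Step~3. The naive Datko window $[t-1,t]$ may contain jumps, each costing a factor $\mu$ from the growth bound, with no a priori bound on how many. The case split on the last jump avoids this: either the window is jump-free, or the discrete sum term controls the pre-jump state directly. In Step~3, the greedy partition points should be allowed to be jump instants, and post-jump states should be used consistently. This matches the half-open convention of $\kappa_\sigma$, so that the hybrid lengths of the chunks add exactly.
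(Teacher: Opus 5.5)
Your proof is correct, but after Step~1 it takes a different route from the paper.

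\textbf{How the paper argues.} The paper keeps the two mechanisms separate throughout. It derives one polynomial bound from the integral term, $T'\|U_\sigma(s+T',s)\|^p\le C_0^pb^p$, and another from the sum term, $\kappa_\sigma(t,s)\|U_\sigma(t,s)\|^p\le(C_0\|J\|b)^p$. It iterates each one independently, using fixed time steps $T_0$ for the first and blocks of $K_0$ jumps for the second. This gives $\|U_\sigma(t,s)\|\le 2C_0e^{-\alpha_0(t-s)}$ and $\|U_\sigma(t,s)\|\le 2C_0\rho_0^{\kappa_\sigma(t,s)}$. Only at the end does it combine them, through the weighted geometric mean $\|U_\sigma\|=\|U_\sigma\|^{\theta}\|U_\sigma\|^{1-\theta}$.

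\textbf{How you argue.} You add the two polynomial bounds into a single estimate in the hybrid length $(t-s)+\kappa_\sigma(t,s)$, then run one greedy iteration. The bookkeeping you need is exactly right on two points. First, a jump raises the hybrid length by one, so each chunk overshoots $N$ by at most $1$. Second, the half-open convention for $\kappa_\sigma$ makes chunk lengths additive, even when chunk endpoints are jump instants. This removes the interpolation step and gives the hybrid bound directly, with the rates tied as $\rho=e^{-\alpha}$.

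\textbf{What each route buys.} Yours is shorter and has a single threshold. The paper's modular route gives two things yours does not immediately give:
\begin{itemize}
\item The free parameter $\theta$ exhibits the whole family of admissible pairs $(\alpha,\rho)$, trading flow decay against jump decay.
\item Each half of the argument uses only one term of \eqref{eq:datko:all}. The sum-only and integral-only Corollaries~\ref{cor:datko:sum} and~\ref{cor:datko:int} therefore follow by deleting steps.
\end{itemize}

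\textbf{A remark on Step~1.} Your third case is unnecessary. The argument of your second case works for any value of $t-t_k$, not only $t-t_k<1$. The jump-free arc $[t_k,t]$ satisfies $t-t_k<t_{k+1}-t_k\le\bar T$, so its propagator is bounded by $G$ regardless of its length. This is exactly the paper's case~(b). Keeping Step~1 free of the integral term is what the sum-only corollary relies on.
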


\begin{proof}
The proof proceeds in four steps. Steps~1--2 establish uniform boundedness and two independent polynomial decay bounds, one from each part of condition \eqref{eq:datko:all}. Steps~3A and~3B convert each polynomial bound into a geometric decay bound via iteration. Step~4 interpolates the two geometric bounds to produce the hybrid decay rate \eqref{eq:ges:hybrid}.

\medskip\noindent\textbf{Step 1 (Uniform boundedness).}
We show that $\|U_\sigma(T,s)x\|\le C_0\|x\|$ for all $T\ge s\ge0$ and all $x\in X$, where $C_0:=G\max(1,\|J\|  b)$.

Let $T\ge s\ge0$ and $x\in X$ be arbitrary. Let $t^*$ denote the last jump of $\sigma$ in the interval $(s,T]$ if such a jump exists, and write $t^*=\varnothing$ otherwise.

\emph{Case (a): no jump in $(s,T]$, i.e., $t^*=\varnothing$.} Then $U_\sigma(T,s)=S(T-s)$, so the growth bound gives $\|U_\sigma(T,s)x\|\le g(T-s)\|x\|\le G\|x\|\le C_0\|x\|$.

\emph{Case (b): at least one jump in $(s,T]$.} Since every term in the sum in \eqref{eq:datko:all} is nonnegative and their total does not exceed $b^p\|x\|^p$, each individual term satisfies
\begin{equation}
  \|U_\sigma(t_k^-,s)x\|^p
  \;\le\;\sum_{\substack{j\ge1\\t_j>s}}\|U_\sigma(t_j^-,s)x\|^p
  \;\le\; b^p\|x\|^p,
\end{equation}
so in particular $\|U_\sigma(t^{*-},s)x\|\le b\|x\|$. Using the propagator identity $U_\sigma(T,s)=S(T-t^*)JU_\sigma(t^{*-},s)$ and the growth bound on $S(T-t^*)$:
\begin{equation}
  \|U_\sigma(T,s)x\|\le g(T-t^*)\|J\|  \|U_\sigma(t^{*-},s)x\|\le G\|J\|  b  \|x\|\le C_0\|x\|.
\end{equation}
In both cases $\|U_\sigma(T,s)x\|\le C_0\|x\|$, establishing uniform boundedness with constant $C_0$.

\medskip\noindent\textbf{Step 2A (Polynomial decay from the integral part of \eqref{eq:datko:all}).}
For any $s\ge0$, $T'>0$, and $x\in X$, insert the identity $U_\sigma(s+T',s)=U_\sigma(s+T',\tau)U_\sigma(\tau,s)$ and integrate over $\tau\in[s,s+T']$:
\begin{equation}
  T'  \|U_\sigma(s+T',s)x\|^p
  =\int_s^{s+T'}\|U_\sigma(s+T',\tau)  U_\sigma(\tau,s)x\|^p  \d\tau.
\end{equation}
Apply the uniform bound of Step~1 to $U_\sigma(s+T',\tau)$, and then the integral part of \eqref{eq:datko:all} at starting time $s$:
\begin{equation}
  T'  \|U_\sigma(s+T',s)x\|^p
  \;\le\; C_0^p\int_s^{s+T'}\|U_\sigma(\tau,s)x\|^p  \d\tau
  \;\le\; C_0^p b^p\|x\|^p.
\end{equation}
Dividing by $T'$:
\begin{equation}\label{eq:poly:int}
  \|U_\sigma(s+T',s)x\|\;\le\;\frac{C_0  b}{(T')^{1/p}}  \|x\|.
\end{equation}
This bound holds for every $s\ge0$ and $T'>0$, uniformly.

\medskip\noindent\textbf{Step 2B (Polynomial decay from the sum part of \eqref{eq:datko:all}).}
Let $t>s$ and $K:=\kappa_\sigma(t,s)\ge1$ be the number of jumps of $\sigma$ in $(s,t]$. Enumerate these jumps as $t_{k_1}<t_{k_2}<\cdots<t_{k_K}$. For each index $k_i$ with $s<t_{k_i}\le t$, use the propagator identity $U_\sigma(t,s)=U_\sigma(t,t_{k_i}^-)U_\sigma(t_{k_i}^-,s)$ together with $U_\sigma(t,t_{k_i}^-)=U_\sigma(t,t_{k_i})J$; since Step~1 bounds the propagator started at the \emph{regular} instant $t_{k_i}$, we get $\|U_\sigma(t,t_{k_i}^-)y\|=\|U_\sigma(t,t_{k_i})Jy\|\le C_0\|J\|\,\|y\|$ (the extra factor $\|J\|$ because $t_{k_i}^-$ is a pre-jump instant, where the uniform bound of Step~1 is not directly available), and hence
\begin{equation}
  \|U_\sigma(t,s)x\|^p\;\le\; (C_0\|J\|)^p\|U_\sigma(t_{k_i}^-,s)x\|^p.
\end{equation}
Sum this inequality over all $K$ jumps $t_{k_i}\in(s,t]$:
\begin{equation}
  K  \|U_\sigma(t,s)x\|^p
  \;\le\; (C_0\|J\|)^p\sum_{i=1}^{K}\|U_\sigma(t_{k_i}^-,s)x\|^p
  \;\le\; (C_0\|J\|)^p\sum_{\substack{j\ge1\\t_j>s}}\|U_\sigma(t_j^-,s)x\|^p
  \;\le\; (C_0\|J\|)^p b^p\|x\|^p,
\end{equation}
where the last inequality uses the sum part of \eqref{eq:datko:all} at starting time $s$. Dividing by $K$:
\begin{equation}\label{eq:poly:sum}
  \|U_\sigma(t,s)x\|\;\le\;\frac{C_0\|J\|  b}{K^{1/p}}  \|x\|,
  \qquad K=\kappa_\sigma(t,s)\ge1.
\end{equation}
This bound holds for every $s\ge0$ and every $t>s$ with at least one jump in $(s,t]$, uniformly.

\medskip\noindent\textbf{Step 3A (Exponential time decay by iteration).}
Define $T_0:=(2C_0 b)^p$. Setting $T'=T_0$ in \eqref{eq:poly:int}:
\begin{equation}
  \|U_\sigma(s+T_0,s)x\|\;\le\;\frac{C_0 b}{T_0^{1/p}}  \|x\|=\frac{1}{2}\|x\|,
\end{equation}
uniformly in $s\ge0$ and $x\in X$. Fix $s\ge0$, $t\ge s$, and $x\in X$. Write $t-s=nT_0+r$ with integer $n=\lfloor(t-s)/T_0\rfloor\ge0$ and $r\in[0,T_0)$. By the propagator chain rule and the contraction at each step of length $T_0$, using the Datko condition \eqref{eq:datko:all} at each intermediate starting time:
\begin{equation}
  \|U_\sigma(s+nT_0,s)x\|
  =\|U_\sigma(s+nT_0,  s+(n-1)T_0)\cdots U_\sigma(s+T_0,s)x\|
  \;\le\;\Bigl(\tfrac{1}{2}\Bigr)^n\|x\|.
\end{equation}
For the residual interval of length $r<T_0$, apply the uniform bound of Step~1:
\begin{equation}
  \|U_\sigma(t,s)x\|\le C_0\Bigl(\tfrac{1}{2}\Bigr)^n\|x\|.
\end{equation}
Since $n\ge(t-s)/T_0-1$, one has $(1/2)^n\le2\cdot(1/2)^{(t-s)/T_0}=2e^{-\alpha_0(t-s)}$ where $\alpha_0:=\log2/T_0>0$. Therefore:
\begin{equation}\label{eq:exp:time}
  \|U_\sigma(t,s)x\|\;\le\;2C_0  e^{-\alpha_0(t-s)}\|x\|,
\end{equation}
for all $t\ge s\ge0$ and all $x\in X$.

\medskip\noindent\textbf{Step 3B (Geometric jump-count decay by iteration).}
Define $K_0:=\lceil(2C_0\|J\| b)^p\rceil$ (the smallest integer $\ge(2C_0\|J\|b)^p$). Setting $K=K_0$ in \eqref{eq:poly:sum}:
\begin{equation}
  \|U_\sigma(t,s)x\|\;\le\;\frac{C_0\|J\| b}{K_0^{1/p}}  \|x\|\le\frac{1}{2}\|x\|
\end{equation}
whenever $\kappa_\sigma(t,s)\ge K_0$ (using $K_0\ge(2C_0\|J\|b)^p$). Fix $s\ge0$, $t\ge s$, and $x\in X$. Write $K:=\kappa_\sigma(t,s)=nK_0+r$ with integer $n=\lfloor K/K_0\rfloor\ge0$ and $r\in\{0,\ldots,K_0-1\}$. Let $\tau$ denote the time of the $(nK_0)$-th jump of $\sigma$ after $s$ (with the convention $\tau=s$ if $n=0$), so that $\kappa_\sigma(\tau,s)=nK_0$ and $\kappa_\sigma(t,\tau)=r<K_0$.

We show $\|U_\sigma(\tau,s)x\|\le(1/2)^n\|x\|$ by induction on $n$. For $n=0$: $\tau=s$, so the bound is $\|x\|\le\|x\|$, which is trivially true. For the inductive step from $n$ to $n+1$: let $\tau'$ be the time of the $(nK_0)$-th jump after $s$ and $\tau''$ the time of the $((n+1)K_0)$-th jump after $s$, so $\kappa_\sigma(\tau'',\tau')=K_0$. By the inductive hypothesis $\|U_\sigma(\tau',s)x\|\le(1/2)^n\|x\|$. Since \eqref{eq:datko:all} holds at starting time $\tau'$ (it holds at all starting times by hypothesis), and $\kappa_\sigma(\tau'',\tau')=K_0\ge K_0$, bound \eqref{eq:poly:sum} applied from $\tau'$ to $\tau''$ gives:
\begin{equation}
  \|U_\sigma(\tau'',\tau')  y\|\;\le\;\frac{1}{2}\|y\|, \qquad y=U_\sigma(\tau',s)x.
\end{equation}
Hence $\|U_\sigma(\tau'',s)x\|=\|U_\sigma(\tau'',\tau')U_\sigma(\tau',s)x\|\le(1/2)^{n+1}\|x\|$, completing the induction.

For the residual $r<K_0$ jumps from $\tau$ to $t$, apply the uniform bound:
\begin{equation}
  \|U_\sigma(t,s)x\|\le C_0\Bigl(\tfrac{1}{2}\Bigr)^n\|x\|.
\end{equation}
Since $n\ge K/K_0-1$, one has $(1/2)^n\le2\cdot(1/2)^{K/K_0}=2\rho_0^K$ where $\rho_0:=(1/2)^{1/K_0}\in(0,1)$. Therefore:
\begin{equation}\label{eq:exp:jump}
  \|U_\sigma(t,s)x\|\;\le\;2C_0  \rho_0^{\kappa_\sigma(t,s)}\|x\|,
\end{equation}
for all $t\ge s\ge0$ and all $x\in X$.

\medskip\noindent\textbf{Step 4 (Hybrid bound by interpolation).}
Steps~3A and~3B have established two independent upper bounds that hold simultaneously:
\begin{equation}
  \|U_\sigma(t,s)x\|\le2C_0  e^{-\alpha_0(t-s)}\|x\|, \qquad  \|U_\sigma(t,s)x\|\le2C_0  \rho_0^{\kappa_\sigma(t,s)}\|x\|
\end{equation}
For any $\theta\in(0,1)$, multiply the first bound raised to the power $\theta$ by the second raised to the power $1-\theta$:
\begin{equation}
  \|U_\sigma(t,s)x\|
  =\|U_\sigma(t,s)x\|^{\theta}\cdot\|U_\sigma(t,s)x\|^{1-\theta}
  \;\le\; 2C_0  e^{-\alpha_0\theta(t-s)}\rho_0^{(1-\theta)\kappa_\sigma(t,s)}\|x\|.
\end{equation}
Setting $M:=2C_0\ge1$, $\alpha:=\alpha_0\theta>0$, and $\rho:=\rho_0^{1-\theta}\in(0,1)$ yields \eqref{eq:ges:hybrid} and completes the proof.
\end{proof}

\begin{remark}\label{rem:ges:rho}
The parameter $\theta\in(0,1)$ in Step~4 distributes the decay between the flow contribution $e^{-\alpha(t-s)}$ and the jump contribution $\rho^{\kappa_\sigma(t,s)}$. As $\theta\to1$ one recovers a purely exponential time bound with $\rho\to1$; as $\theta\to0$ one recovers a purely geometric jump-count bound with $\alpha\to0$. The thresholds $T_0=(2C_0 b)^p$ (flow part) and $K_0=\lceil(2C_0\|J\| b)^p\rceil$ (jump part) arise from the analogous polynomial-decay thresholds $C_0 b/T^{1/p}=1/2$ and $C_0\|J\| b/K^{1/p}=1/2$ of the two parts of \eqref{eq:datko:all}, the jump part carrying the extra factor $\|J\|$ from the pre-jump propagator (with the jump count rounded up to an integer). Any choice of $\theta$ yields a valid GES parametrization; the natural balanced choice is $\theta=1/2$, giving $\alpha=\alpha_0/2$ and $\rho=\rho_0^{1/2}$.
\end{remark}

The two extremal parametrizations described in Remark~\ref{rem:ges:rho} correspond to the degenerate cases in which only one of the two terms of \eqref{eq:datko:all} is assumed finite. They arise in the persistent-jumping and persistent-flowing results of Section~\ref{sec:lyapunov} and are isolated in the following two corollaries. Each retains only the steps of the proof of Lemma~\ref{lem:datko} that use the corresponding part of \eqref{eq:datko:all}; the interpolation of Step~4 is not invoked.

\begin{corollary}[Sum-only Datko condition]\label{cor:datko:sum}
Let $\sigma\in\mathcal{S}_{0,\infty}$ be given and let Assumption~\ref{ass:growth} hold. Suppose that for some $p\ge1$ and $b>0$ the condition
\begin{equation}\label{eq:datko:sum}
  \sum_{\substack{k\ge1\\t_k>s}}\|U_\sigma(t_k^-,s)x\|^p\;\le\;b^p\|x\|^p
\end{equation}
holds for all $s\ge0$ and all $x\in X$. Then there exist constants $M\ge1$ and $\rho_0\in(0,1)$ such that
\begin{equation}
  \|U_\sigma(t,s)x\|\le M  \rho_0^{\kappa_\sigma(t,s)}\|x\|
\end{equation}
for all $t\ge s\ge0$ and all $x\in X$; that is, GES with $\alpha=0$ and $\rho=\rho_0$.
\end{corollary}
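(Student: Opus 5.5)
The argument reuses the steps of the proof of Lemma~\ref{lem:datko} that depend only on the sum part of \eqref{eq:datko:all}, namely Steps~1, 2B and 3B. Step~4 is not needed. Each step has to be checked to see that it never used the integral term.

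\emph{Step 1 (uniform boundedness).} We need $\|U_\sigma(T,s)x\|\le C_0\|x\|$ with $C_0:=G\max(1,\|J\|b)$. Let $t^*$ be the last jump in $(s,T]$.
\begin{itemize}
\item If there is no jump in $(s,T]$, then $U_\sigma(T,s)=S(T-s)$. Assumption~\ref{ass:growth} with $\kappa_\sigma=0$ gives the bound $g(T-s)\le G$. This uses $T-s<\bar T$, which holds because the whole interval lies inside one inter-jump interval $[t_k,t_{k+1})$.
\item If there is a jump, then $U_\sigma(T,s)=S(T-t^*)JU_\sigma(t^{*-},s)$. Each single summand of \eqref{eq:datko:sum} is at most the total sum, so $\|U_\sigma(t^{*-},s)x\|\le b\|x\|$. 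Also $T-t^*<\bar T$, and together these give the bound $G\|J\|b$.
\end{itemize}
Only \eqref{eq:datko:sum} is used here.

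\emph{Step 2 (polynomial decay in the jump count).} For $K=\kappa_\sigma(t,s)\ge1$, factor $U_\sigma(t,s)=U_\sigma(t,t_{k_i})JU_\sigma(t_{k_i}^-,s)$ at each jump in $(s,t]$. Step~1 bounds the first factor, so
\[
\|U_\sigma(t,s)x\|^p\le (C_0\|J\|)^p\|U_\sigma(t_{k_i}^-,s)x\|^p .
\]
Summing over the $K$ jumps and applying \eqref{eq:datko:sum} gives $\|U_\sigma(t,s)x\|\le C_0\|J\|bK^{-1/p}\|x\|$.

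\emph{Step 3 (geometric decay by iteration).}
\begin{itemize}
\item Set $K_0:=\lceil(2C_0\|J\|b)^p\rceil$. Any segment containing at least $K_0$ jumps then contracts by a factor $1/2$, uniformly in its starting time. This uniformity is what \eqref{eq:datko:sum} holding for all $s\ge0$ provides.
\item Write $\kappa_\sigma(t,s)=nK_0+r$ with $0\le r<K_0$. Split $[s,t]$ at the successive $(jK_0)$-th jump instants, which are post-jump regular points. Chaining the contractions gives the factor $(1/2)^n$.
\item The residual segment is bounded by $C_0$ via Step~1.
\item Since $n\ge K/K_0-1$, we get $(1/2)^n\le 2\rho_0^{K}$ with $\rho_0=(1/2)^{1/K_0}$. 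This yields the claim with $M=\max(1,2C_0)$.
\end{itemize}

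\emph{Main obstacle.} The delicate point is Step~1 in the jump-free case. Without the integral part of \eqref{eq:datko:all} nothing controls long flows, so the bound has to come from $G<\infty$ in Assumption~\ref{ass:growth}. I will state explicitly that any jump-free interval $(s,T]$ has length strictly less than $\bar T$, which includes the initial interval $[0,t_1)$ because of the convention $t_0=0$. This is where the supremal dwell-time convention is really used.

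Two degenerate cases need a short comment.
\begin{itemize}
\item If $\|J\|b<1$, the constant $C_0$ is simply $G$.
\item If $C_0\|J\|b$ is small, then $K_0$ may be $1$. This is harmless.
\end{itemize}
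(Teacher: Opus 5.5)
Your proposal is correct and follows essentially the same route as the paper: it reuses Steps~1, 2B and~3B of the proof of Lemma~\ref{lem:datko}. Case~(a) of Step~1 needs only $G<\infty$ and Case~(b) needs only a single summand of \eqref{eq:datko:sum}, which gives $M=2C_0$ and $\rho_0=(1/2)^{1/K_0}$; your explicit check that every jump-free arc is strictly shorter than $\bar T$ is a detail the paper leaves implicit, and your $\max(1,2C_0)$ is harmless but unnecessary, since $g(0)\ge1$ already forces $C_0\ge1$.
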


\begin{proof}
Condition \eqref{eq:datko:sum} is the second term of \eqref{eq:datko:all} alone. In the proof of Lemma~\ref{lem:datko}, Step~1 uses only this sum, its Case~(b) bounding the pre-jump state through $\|U_\sigma(t^{*-},s)x\|\le b\|x\|$ and its Case~(a) using neither part of \eqref{eq:datko:all}, so uniform boundedness holds with $C_0=G\max(1,\|J\|  b)$. Steps~2B and~3B likewise use only the sum, and therefore apply verbatim, yielding the geometric bound \eqref{eq:exp:jump}, which is the claim with $M=2C_0$ and $\rho_0=(1/2)^{1/K_0}$, $K_0=\lceil(2C_0\|J\|b)^p\rceil$. Steps~2A, 3A and~4, which require the integral, are not used.
\end{proof}

\begin{corollary}[Integral-only Datko condition]\label{cor:datko:int}
Let $\sigma\in\mathcal{S}_{0,\infty}$ have a positive infimal dwell-time, $T_\sigma:=\inf_{k\ge0}(t_{k+1}-t_k)>0$, and let Assumption~\ref{ass:growth} hold. Suppose that for some $p\ge1$ and $b>0$ the condition
\begin{equation}\label{eq:datko:int}
  \int_s^\infty\|U_\sigma(t,s)x\|^p  \dt\;\le\;b^p\|x\|^p
\end{equation}
holds for all $s\ge0$ and all $x\in X$. Then there exist constants $M\ge1$ and $\alpha_0>0$ such that
\begin{equation}
  \|U_\sigma(t,s)x\|\le M  e^{-\alpha_0(t-s)}\|x\|
\end{equation}
for all $t\ge s\ge0$ and all $x\in X$; that is, GES with $\alpha=\alpha_0$ and $\rho=1$.
\end{corollary}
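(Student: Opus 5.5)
The proof follows the proof of Lemma~\ref{lem:datko}, keeping Steps~2A and~3A, which use only the integral, and replacing Step~1. Step~1 cannot be reused: its Case~(b) bounds the pre-jump state $U_\sigma(t^{*-},s)x$ through the sum part of \eqref{eq:datko:all}, which is no longer available. The infimal dwell-time $T_\sigma>0$ is what makes a new uniform bound possible, because between consecutive jumps there is always at least $T_\sigma$ units of flow over which the integral condition \eqref{eq:datko:int} can be exploited.

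\textbf{Step 1$'$ (uniform boundedness).} Fix $s\le T$ and $x$. If there is no jump in $(s,T]$, then $U_\sigma(T,s)=S(T-s)$ and the bound $G\|x\|$ follows as in Case~(a). Otherwise let $t^*$ be the last jump in $(s,T]$ and $t_{*}$ the jump preceding it; if $t^*$ is the first jump after $s$, use $s$ in place of $t_*$.

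The aim is to control the pre-jump state $U_\sigma(t^{*-},s)x$. For $\tau\in[\max(s,t^*-T_\sigma),t^*)$ the interval $(\tau,t^*)$ contains no jump, so $U_\sigma(t^{*-},\tau)=S(t^*-\tau)$ with norm at most $G$. This gives
\[
\|U_\sigma(t^{*-},s)x\|\le G\,\|U_\sigma(\tau,s)x\| .
\]
Raise this to the power $p$ and average over the window. When $t^*-s\ge T_\sigma$ the window has length $T_\sigma$, and \eqref{eq:datko:int} yields
\[
\|U_\sigma(t^{*-},s)x\|^p\le G^p b^p\|x\|^p/T_\sigma .
\]
When $t^*-s<T_\sigma$ there is no jump in $(s,t^*)$, so directly $\|U_\sigma(t^{*-},s)x\|\le G\|x\|$. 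Applying the jump and then the final jump-free arc $S(T-t^*)$ gives
\[
\|U_\sigma(T,s)x\|\le C_0'\|x\|,\qquad C_0':=G\max\bigl(1,\|J\|G\max(1,bT_\sigma^{-1/p})\bigr).
\]

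\textbf{Steps 2A and 3A.} With $C_0'$ in place of $C_0$, Step~2A of Lemma~\ref{lem:datko} applies verbatim. It writes $U_\sigma(s+T',s)=U_\sigma(s+T',\tau)U_\sigma(\tau,s)$, bounds the first factor by $C_0'$, and uses \eqref{eq:datko:int} at starting time $s$. One detail needs checking: $\tau$ ranges over all of $[s,s+T']$, including jump instants, but the propagator from a jump instant starts at the post-jump state, which is exactly what Step~1$'$ covers. The result is $\|U_\sigma(s+T',s)\|\le C_0'b/(T')^{1/p}$.

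Step~3A then gives contraction by $1/2$ over each window of length $T_0=(2C_0'b)^p$. Iterating using the uniformity in $s$ yields
\[
\|U_\sigma(t,s)x\|\le 2C_0'e^{-\alpha_0(t-s)}\|x\|,\qquad \alpha_0=\log 2/T_0 ,
\]
so the claim holds with $M=2C_0'\ge1$ and $\rho=1$; Steps~2B, 3B and~4 are not used.

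The main obstacle is Step~1$'$, specifically the averaging window before $t^*$. The argument needs that window to avoid earlier jumps, which is exactly where $T_\sigma>0$ enters. It also needs Assumption~\ref{ass:growth} to bound the jump-free arcs by $G$, since each such arc has length below $\bar T$.
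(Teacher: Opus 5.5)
Your proposal is correct and follows the paper's proof essentially step for step: you replace Step~1 by bounding the pre-jump state through averaging the integral condition over a jump-free stretch of length at least $T_\sigma$ before the last jump, which gives the same constant $C_0=G\max(1,\|J\|\beta)$ with $\beta=G\max(1,bT_\sigma^{-1/p})$, and then you run Steps~2A and~3A unchanged. The only cosmetic differences are that you average over the fixed window $[t^*-T_\sigma,t^*)$ and split cases on $t^*-s$, whereas the paper averages over the whole preceding inter-jump interval $(t_{\rm prev},t^*)$ and splits cases on $t^*-t_{\rm prev}$.
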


\begin{proof}
Condition \eqref{eq:datko:int} is the first term of \eqref{eq:datko:all} alone. Steps~2A and~3A of the proof of Lemma~\ref{lem:datko} use only this integral and yield the exponential bound \eqref{eq:exp:time}, provided the uniform boundedness of Step~1 is available. Step~1 as written bounds the pre-jump state through the sum, which is now unavailable, and must be replaced by the following argument; this is the only place the hypothesis $T_\sigma>0$ enters.

Let $T\ge s\ge0$ and $x\in X$, and let $t^*$ be the last jump of $\sigma$ in $(s,T]$, if any. If there is none, $U_\sigma(T,s)=S(T-s)$ and $\|U_\sigma(T,s)x\|\le G\|x\|$. Otherwise, let $t_{\rm prev}$ be the jump preceding $t^*$, or $t_{\rm prev}:=s$ if $t^*$ is the first jump after $s$, so that $(t_{\rm prev},t^*)$ is jump-free. If $t^*-t_{\rm prev}<T_\sigma$ then, by definition of $T_\sigma$, no jump precedes $t^*$ in $(s,t^*)$, so $t_{\rm prev}=s$, $(s,t^*)$ is jump-free, and $\|U_\sigma(t^{*-},s)x\|=\|S(t^*-s)x\|\le G\|x\|$. Otherwise $t^*-t_{\rm prev}\ge T_\sigma$; for every $\tau\in(t_{\rm prev},t^*)$ the semigroup relation $U_\sigma(t^{*-},s)x=S(t^*-\tau)U_\sigma(\tau,s)x$ and the growth bound give $\|U_\sigma(t^{*-},s)x\|\le G\|U_\sigma(\tau,s)x\|$, the elapsed time $t^*-\tau$ not exceeding $\bar T$. Raising to the power $p$, integrating over $\tau\in(t_{\rm prev},t^*)$ and using \eqref{eq:datko:int} at starting time $s$,
\begin{equation}
  (t^*-t_{\rm prev})\|U_\sigma(t^{*-},s)x\|^p\le G^p\!\int_{t_{\rm prev}}^{t^*}\!\|U_\sigma(\tau,s)x\|^p  \d\tau\le G^p b^p\|x\|^p,
\end{equation}
whence $\|U_\sigma(t^{*-},s)x\|\le G  b  T_\sigma^{-1/p}\|x\|$. In all cases $\|U_\sigma(t^{*-},s)x\|\le\beta\|x\|$ with $\beta:=G\max(1,b  T_\sigma^{-1/p})$, and propagating through the last jump, $U_\sigma(T,s)=S(T-t^*)JU_\sigma(t^{*-},s)$ with $g(T-t^*)\le G$, gives $\|U_\sigma(T,s)x\|\le C_0\|x\|$ with $C_0:=G\max(1,\|J\|\beta)$.

With uniform boundedness established, Steps~2A and~3A apply verbatim and yield \eqref{eq:exp:time}, the claim with $M=2C_0$ and $\alpha_0=\log2/T_0$, $T_0=(2C_0b)^p$. Steps~2B, 3B and~4 are not used.
\end{proof}

\begin{remark}\label{rem:datko:int:dwell}
The dwell-time hypothesis $T_\sigma>0$ is required for Corollary~\ref{cor:datko:int} but not for Corollary~\ref{cor:datko:sum}. When only the jump sum is controlled it bounds each pre-jump state directly, whereas when only the integral is controlled the pre-jump state must be recovered from the flow preceding it, through $\|U_\sigma(t^{*-},s)x\|\le G  b  (t^*-t_{\rm prev})^{-1/p}\|x\|$, which degenerates as the preceding dwell-time tends to zero. This is intrinsic: the integral in \eqref{eq:datko:int} is blind to the instantaneous pre-jump values and no backward propagator is available to recover them, so an accumulation of arbitrarily short inter-jump intervals cannot be excluded by the integral alone. The hypothesis is not restrictive in the use made of this corollary, which concerns sequences of positive infimal dwell-time.
\end{remark}

\section{Non-coercive Lyapunov stability conditions}
\label{sec:lyapunov}

We now present a complete Lyapunov characterization of GES for the impulsive system \eqref{eq:syst} via non-coercive functionals. In contrast to the classical coercive Lyapunov approach, no lower bound on the functional is required; stability is established entirely through an upper bound combined with decrease conditions on flow and at jumps. Each subsection treats a different class of impulse sequences; within each subsection, a single theorem establishes the equivalence between (i) the relevant exponential stability notion, (ii) a hybrid Datko condition, (iii) a non-coercive Lyapunov criterion, and, where applicable, (iv) a discrete-time Lyapunov criterion on the monodromy operator.

\subsection{Fixed impulse sequence}\label{subsec:nc:fixed}

We address in this section the case of a fixed impulse sequence, that is the case where $\mathcal{S}$ is the singleton $\{\sigma\}$. The following result is the main result in this case:
\begin{theorem}[Fixed sequence, non-coercive N\&S]\label{thm:main}
Let $\sigma\in\mathcal{S}_{0,\infty}$ be given and let Assumption~\ref{ass:growth} hold. Then the following statements are equivalent:
\begin{enumerate}[(i)]
  \item The system \eqref{eq:syst} is strongly GES (Definition~\ref{def:stab}): there exist $M\ge1$, $\alpha>0$, and $\rho\in(0,1)$ such that
    \begin{equation}
      \|U_\sigma(t,s)\|\le M\rho^{\kappa_\sigma(t,s)}e^{-\alpha (t-s)} \qquad\forall  t\ge s\ge0.
    \end{equation}

  \item There exist $p\ge1$ and $b>0$ such that
    \begin{equation}\label{eq:datko:thm}
      \int_s^\infty\|U_\sigma(t,s)x\|^p  \dt
      \;+\sum_{\substack{k\ge1\\t_k>s}}\|U_\sigma(t_k^-,s)x\|^p
      \;\le\;b^p\|x\|^p
    \end{equation}
    holds for all $s\ge0$ and all $x\in X$.

  \item There exist a functional $V:\mathbb{R}_{\ge0}\times X\to\mathbb{R}_{\ge0}$, Lipschitz on bounded subsets of $X$ uniformly in $t$, and constants $p\ge1$, $c>0$ such that
    \begin{align}
      V(t,x)&\le c\|x\|^p, &&\forall t\ge0,\label{eq:nc:upper}\\
      \underline{D}^+V(t,x)&\le -\|x\|^p, &&\forall t\ge0,\label{eq:nc:flow}\\
      V(t,Jx)-V(t^-,x)&\le -\|x\|^p, &&\forall t\in\mathbb{T}_\sigma,\label{eq:nc:jump}
    \end{align}
    hold for all $x\in X$.
\end{enumerate}
Moreover, when (i) holds, an explicit Lyapunov functional witnessing (iii) is given by
\begin{equation}\label{eq:V:def}
  V(t,x)\;:=\;\int_t^\infty\|U_\sigma(s,t)x\|^p  \ds+\sum_{k\ge1:  t_k>t}\|U_\sigma(t_k^-,t)x\|^p,\qquad t\ge0,\;x\in X.
\end{equation}
\end{theorem}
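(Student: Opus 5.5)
I would prove the cycle (i)$\Rightarrow$(ii)$\Rightarrow$(iii)$\Rightarrow$(i), with (ii)$\Rightarrow$(i) also available directly from Lemma~\ref{lem:datko}.

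\emph{(i)$\Rightarrow$(ii).} Fix $p=1$ (any $p\ge1$ works) and use the bound of (i).
The integral part is at most $\int_s^\infty Me^{-\alpha(t-s)}\dt\,\|x\|=(M/\alpha)\|x\|$, using $\rho^{\kappa}\le1$.
For the sum, write $U_\sigma(t_k^-,s)=S(t_k-t_{k-1}')\,U_\sigma(t_{k-1}',s)$ with $t_{k-1}'=\max(t_{k-1},s)$. Since a pre-jump value is the left limit of post-jump values, strong continuity gives $\|U_\sigma(t_k^-,s)\|\le M\rho^{\kappa_\sigma(t_k,s)-1}$.
The $j$-th jump after $s$ then contributes at most $M\rho^{j-1}$, so the sum is bounded by $M/(1-\rho)$. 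Hence (ii) holds with $b=M/\alpha+M/(1-\rho)$.

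\emph{(ii)$\Rightarrow$(iii).} Define $V$ by \eqref{eq:V:def}; the upper bound \eqref{eq:nc:upper} with $c=b^p$ is exactly (ii).
For the flow condition, take $t\notin\mathbb{T}_\sigma$ (or $t$ a jump time, using the post-jump convention) and $h$ small enough that $(t,t+h]$ is jump-free. Then $U_\sigma(s,t+h)S(h)x=U_\sigma(s,t)x$, so
\begin{equation*}
V(t+h,S(h)x)-V(t,x)=-\int_t^{t+h}\|U_\sigma(s,t)x\|^p\ds .
\end{equation*}
The jump terms coincide because the sets $\{t_k>t+h\}$ and $\{t_k>t\}$ agree. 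Dividing by $h$ and using right continuity gives derivative $-\|x\|^p$.
For the jump condition at $t=t_j$, $V(t_j^-,x)$ is the left limit. This is the step I expect to be the main obstacle: one must interpret $V(t^-,x)$ carefully and check that $V(t_j^-,x)$ contains the term $\|U_\sigma(t_j^-,t_j^-)x\|^p=\|x\|^p$ plus the tail $\int_{t_j}^\infty\|U_\sigma(s,t_j)Jx\|^p\ds+\sum_{k>j}\|U_\sigma(t_k^-,t_j)Jx\|^p=V(t_j,Jx)$. The identity follows from $U_\sigma(\cdot,t_j^-)=U_\sigma(\cdot,t_j)J$ together with continuity of the integral in its lower limit. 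This gives $V(t_j,Jx)-V(t_j^-,x)=-\|x\|^p$, with equality.
For the Lipschitz property, use $|a^p-b^p|\le p\max(a,b)^{p-1}|a-b|$ together with Minkowski's inequality in $L^p\oplus\ell^p$. This gives $|V(t,x)^{1/p}-V(t,y)^{1/p}|\le b\|x-y\|$, hence uniform Lipschitz continuity on balls with constant $pb^p r^{p-1}$. Nonnegativity is clear.

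\emph{(iii)$\Rightarrow$(ii).} Fix $s$ and $x$, and set $w(t)=V(t,U_\sigma(t,s)x)$ along the trajectory. The difficulty is that $\underline{D}^+V$ is defined along $S(h)$ only at the base point. On jump-free intervals, however, $U_\sigma(t+h,s)x=S(h)U_\sigma(t,s)x$, so the lower Dini derivative of $w$ is at most $-\|U_\sigma(t,s)x\|^p$. Since $w$ is continuous on $[t_k,t_{k+1})$ (by the Lipschitz property and continuity of trajectories), a comparison lemma for Dini derivatives gives $w(b')-w(a')\le-\int_{a'}^{b'}\|U_\sigma(\tau,s)x\|^p\d\tau$ on each such interval. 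At jumps, the jump condition yields a decrease of at least $\|U_\sigma(t_k^-,s)x\|^p$; here I need that $w(t_k^-)$ equals $V(t_k^-,U_\sigma(t_k^-,s)x)$, which again follows from the Lipschitz bound.
Telescoping and using $0\le w\le c\|x\|^p$ gives \eqref{eq:datko:thm} with $b^p=c$ over every finite horizon, and hence in the limit.
Finally, Lemma~\ref{lem:datko} under Assumption~\ref{ass:growth} converts (ii) into (i), closing the cycle.
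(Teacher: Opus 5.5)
Your cycle, your witness and your telescoping are the same as the paper's, and (i)$\Rightarrow$(ii), (ii)$\Rightarrow$(iii) and (ii)$\Rightarrow$(i) are sound. The gap is in (iii)$\Rightarrow$(ii), where you assert that $w(t)=V(t,U_\sigma(t,s)x)$ is continuous on $[t_k,t_{k+1})$ ``by the Lipschitz property and continuity of trajectories''. That does not follow.

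The Lipschitz hypothesis controls $x$ only, uniformly in $t$. Continuity of $w$ also needs continuity of $t\mapsto V(t,y)$ between impulses. This is at best implicit in the convention that $\underline{D}^+V$ is evaluated where $V(t^-,x)=V(t,x)$, and otherwise must be added to (iii). Without it, both the Dini comparison step and the implication itself fail. A counterexample:
\begin{itemize}
\item Take $X=\mathbb{R}$, $A=0$, $J=2$, $t_k=k$, $p=1$.
\item Set $V(t,x)=4\varphi(t-\lfloor t\rfloor)|x|$, with $\varphi(\tau)=\tfrac14-\tfrac{\tau}{4}$ on $[0,\tfrac12)$ and $\varphi(\tau)=1-\tfrac14(\tau-\tfrac12)$ on $[\tfrac12,1)$.
\item Then $0\le V\le 4|x|$, $V$ is $4$-Lipschitz in $x$ and right-continuous in $t$.
\item The right difference quotient along $S(h)=I$ equals $-|x|$ at every $t$.
\item At impulses, $V(k,2x)-V(k^-,x)=2|x|-\tfrac72|x|\le-|x|$.
\item Assumption~\ref{ass:growth} holds with $g\equiv1$, $\mu=2$.
\item Yet $\|U_\sigma(t,0)\|=2^{\lfloor t\rfloor}$, so the system is not GES.
\end{itemize}
No condition in (iii) sees the upward jumps of $V$ at the non-impulse instants $k+\tfrac12$.

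Once time-continuity is available, the bound $|w(t')-w(t)|\le L_r\|x(t')-x(t)\|+|V(t',x(t))-V(t,x(t))|\to0$ gives what you need. Your witness from (ii)$\Rightarrow$(iii) does have this continuity, by the identity $V(t+h,S(h)x)=V(t,x)-\int_t^{t+h}\|S(u-t)x\|^p\,du$ together with the Lipschitz bound. So the equivalence survives once that regularity is written into (iii).

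The paper's proof relies on the same unstated regularity when it calls $V(\cdot,x(\cdot))$ a composition of Lipschitz maps. It also asserts that $x(\cdot)$ is Lipschitz, which fails in general for $x_0\notin D(A)$. Your reduction of the Dini derivative of $w$ to $\underline{D}^+V(t,x(t))$, via $U_\sigma(t+h,s)x=S(h)U_\sigma(t,s)x$, correctly avoids that error.

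Two smaller points. In the jump identity, the relevant continuity is not that of the integral in its lower limit, because the integrand also moves with $t$. For $t\uparrow t_j$, write $V(t,x)=\int_t^{t_j}\|S(u-t)x\|^p\,du+\|S(t_j-t)x\|^p+V(t_j,JS(t_j-t)x)$. Then use strong continuity together with the Lipschitz bound on $V(t_j,\cdot)$, which you prove anyway; the paper simply evaluates the formula at $t_k^-$.

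Your Minkowski argument, $|V(t,x)^{1/p}-V(t,y)^{1/p}|\le V(t,x-y)^{1/p}\le b\|x-y\|$, is correct. It is more economical than the paper's Lipschitz step, which first goes through Lemma~\ref{lem:datko} to get the hybrid exponential bound and then integrates and sums it. Yours uses only (ii).
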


\begin{proof}
We prove the four implications $(i)\Rightarrow(ii)$, $(ii)\Rightarrow(iii)$, $(iii)\Rightarrow(ii)$, and $(ii)\Rightarrow(i)$.

\medskip\noindent\textbf{Proof that (i) implies (ii).}
Assume strong GES with constants $M$, $\alpha>0$, $\rho\in(0,1)$. By Definition~\ref{def:stab}, which requires the bound uniformly in the initial time, for any $s\ge0$ and $t\ge s$,
\begin{equation}
  \|U_\sigma(t,s)x\|\le M\rho^{\kappa_\sigma(t,s)}e^{-\alpha(t-s)}\|x\|.
\end{equation}

For the integral, since $\rho^{p\kappa_\sigma(t,s)}\le1$:
\begin{equation}
  \int_s^\infty\|U_\sigma(t,s)x\|^p  \dt
  \;\le\; M^p\|x\|^p\int_s^\infty e^{-\alpha p(t-s)}  \dt
  \;=\;\frac{M^p}{\alpha p}\|x\|^p.
\end{equation}

For the sum, let $K_s:=\kappa_\sigma(s,0)$ and index the jumps after $s$ as $t_{K_s+1}<t_{K_s+2}<\cdots$. Since $\kappa_\sigma(t_{K_s+j}^-,s)=j-1$ (there are $j-1$ complete jumps in $(s,t_{K_s+j})$) and $e^{-\alpha p(t_{K_s+j}-s)}\le1$:
\begin{equation}
  \sum_{\substack{k\ge1\\t_k>s}}\|U_\sigma(t_k^-,s)x\|^p
  \;\le\; M^p\|x\|^p\sum_{j=1}^\infty\rho^{p(j-1)}e^{-\alpha p(t_{K_s+j}-s)}
  \;\le\; M^p\|x\|^p\sum_{j=1}^\infty\rho^{p(j-1)}
  \;=\;\frac{M^p}{1-\rho^p}\|x\|^p.
\end{equation}
Adding both bounds and setting $b^p:=M^p\!\left(\frac{1}{\alpha p}+\frac{1}{1-\rho^p}\right)$ establishes \eqref{eq:datko:thm} for all $s\ge0$ and $x\in X$.

\medskip\noindent\textbf{Proof that (ii) implies (iii).}
Assume (ii) holds. Define $V$ by \eqref{eq:V:def}. This is the left-hand side of \eqref{eq:datko:thm} evaluated at starting time $t$ with initial state $x$. We verify each condition in (iii) in turn.

\emph{Upper bound \eqref{eq:nc:upper}.} Applying \eqref{eq:datko:thm} at $s=t$:
\begin{equation}
  V(t,x)\;\le\; b^p\|x\|^p \qquad\forall  t\ge0,  x\in X.
\end{equation}
This is \eqref{eq:nc:upper} with $c=b^p$.

\emph{Flow decrease \eqref{eq:nc:flow}.} Let $t\ge0$ with $t\notin\mathbb{T}_\sigma$, and let $h>0$ be small enough so that $(t,t+h)\cap\mathbb{T}_\sigma=\varnothing$ (possible since the jump sequence has no accumulation points). Since there are no jumps in $(t,t+h)$, the propagator satisfies $U_\sigma(t+h,t)=S(h)$, and for any $s>t+h$ the chain rule gives $U_\sigma(s,t+h)S(h)=U_\sigma(s,t+h)U_\sigma(t+h,t)=U_\sigma(s,t)$. Moreover, $U_\sigma(s,t)=S(s-t)$ for $s\in(t,t+h)$ since no jump occurs in this interval. Therefore:
\begin{align}
  V(t+h,S(h)x)
  &=\int_{t+h}^\infty\|U_\sigma(s,t+h)S(h)x\|^p  \ds
   +\sum_{\substack{k\ge1\\t_k>t+h}}\|U_\sigma(t_k^-,t+h)S(h)x\|^p\notag\\
  &=\int_{t+h}^\infty\|U_\sigma(s,t)x\|^p  \ds
   +\sum_{\substack{k\ge1\\t_k>t}}\|U_\sigma(t_k^-,t)x\|^p,
\end{align}
where the jump sum is unchanged because $(t,t+h)$ contains no jump. Subtracting $V(t,x)$:
\begin{equation}
  V(t+h,S(h)x)-V(t,x)
  =-\int_t^{t+h}\|U_\sigma(s,t)x\|^p  \ds
  =-\int_t^{t+h}\|S(s-t)x\|^p  \ds.
\end{equation}
Dividing by $h>0$ and taking $h\downarrow0$, by strong continuity of $S(\cdot)$ at $0$:
\begin{equation}
  D^+V(t,x)
  =\lim_{h\downarrow0}\frac{V(t+h,S(h)x)-V(t,x)}{h}
  =-\lim_{h\downarrow0}\frac{1}{h}\int_t^{t+h}\|S(s-t)x\|^p  \ds
  =-\|x\|^p.
\end{equation}
In particular $\underline{D}^+V(t,x)=D^+V(t,x)=-\|x\|^p$, so \eqref{eq:nc:flow} holds with equality.

\emph{Jump decrease \eqref{eq:nc:jump}.} Let $t_k\in\mathbb{T}_\sigma$ be any jump time. From the propagator identity $U_\sigma(t,t_k^-)=U_\sigma(t,t_k)J$ (see \eqref{eq:generator}), it follows that $U_\sigma(\cdot,t_k)Jx=U_\sigma(\cdot,t_k^-)x$ for all $x\in X$ and all times after $t_k$. Therefore:
\begin{equation}
  V(t_k,Jx)
  =\int_{t_k}^\infty\|U_\sigma(s,t_k)Jx\|^p  \ds
  +\sum_{j>k}\|U_\sigma(t_j^-,t_k)Jx\|^p
  =\int_{t_k}^\infty\|U_\sigma(s,t_k^-)x\|^p  \ds
  +\sum_{j>k}\|U_\sigma(t_j^-,t_k^-)x\|^p.
\end{equation}
For $V(t_k^-,x)$, the sum includes the $j=k$ term, for which $\|U_\sigma(t_k^-,t_k^-)x\|=\|x\|$ (propagator at equal times is the identity):
\begin{equation}
  V(t_k^-,x)
  =\int_{t_k}^\infty\|U_\sigma(s,t_k^-)x\|^p  \ds
  +\|x\|^p
  +\sum_{j>k}\|U_\sigma(t_j^-,t_k^-)x\|^p.
\end{equation}
Subtracting:
\begin{equation}
  V(t_k,Jx)-V(t_k^-,x)=-\|x\|^p,
\end{equation}
so \eqref{eq:nc:jump} holds with equality.

\emph{Lipschitz continuity on bounded subsets.} By condition (ii) and Lemma~\ref{lem:datko}, the system satisfies the hybrid bound \eqref{eq:ges:hybrid} with some $M\ge1$, $\alpha>0$, $\rho\in(0,1)$. Fix $r>0$ and let $\|x\|,\|y\|\le r$. For any $s\ge t$, the linearity of $U_\sigma(s,t)$ gives $U_\sigma(s,t)(x-y)=U_\sigma(s,t)x-U_\sigma(s,t)y$, and the estimate
\begin{equation}
  \bigl|\|U_\sigma(s,t)x\|^p-\|U_\sigma(s,t)y\|^p\bigr|
  \;\le\; p\max\bigl(\|U_\sigma(s,t)x\|,\|U_\sigma(s,t)y\|\bigr)^{p-1}\|U_\sigma(s,t)(x-y)\|.
\end{equation}
Using \eqref{eq:ges:hybrid} to bound $\max(\|U_\sigma(s,t)x\|,\|U_\sigma(s,t)y\|)\le M e^{-\alpha(s-t)}\rho^{\kappa_\sigma(s,t)}r$ and $\|U_\sigma(s,t)(x-y)\|\le Me^{-\alpha(s-t)}\rho^{\kappa_\sigma(s,t)}\|x-y\|$:
\begin{equation}
  \bigl|\|U_\sigma(s,t)x\|^p-\|U_\sigma(s,t)y\|^p\bigr|
  \;\le\; pM^p r^{p-1}e^{-\alpha p(s-t)}\rho^{p\kappa_\sigma(s,t)}\|x-y\|.
\end{equation}
For the integral part of $|V(t,x)-V(t,y)|$:
\begin{equation}
  \int_t^\infty pM^p r^{p-1}e^{-\alpha p(s-t)}\rho^{p\kappa_\sigma(s,t)}\|x-y\|  \ds
  \;\le\; pM^pr^{p-1}\|x-y\|\int_t^\infty e^{-\alpha p(s-t)}  \ds
  \;=\;\frac{M^p r^{p-1}}{\alpha}\|x-y\|.
\end{equation}
For the sum part, let $K:=\kappa_\sigma(t,0)$ and index jumps after $t$ as $t_{K+1}<t_{K+2}<\cdots$. Since $\kappa_\sigma(t_{K+j}^-,t)=j-1$ and $e^{-\alpha p(t_{K+j}-t)}\le1$:
\begin{equation}
  \sum_{j=1}^\infty pM^pr^{p-1}e^{-\alpha p(t_{K+j}-t)}\rho^{p(j-1)}\|x-y\|
  \;\le\; pM^pr^{p-1}\|x-y\|\sum_{j=1}^\infty\rho^{p(j-1)}
  \;=\;\frac{pM^pr^{p-1}}{1-\rho^p}\|x-y\|.
\end{equation}
The series $\sum_{j=1}^\infty\rho^{p(j-1)}=1/(1-\rho^p)$ converges since $\rho<1$, regardless of the density of the jump sequence. Combining:
\begin{equation}
  |V(t,x)-V(t,y)|\;\le\;M^pr^{p-1}\!\left(\frac{1}{\alpha}+\frac{p}{1-\rho^p}\right)\|x-y\|\;=:\;L_r\|x-y\|,
\end{equation}
establishing Lipschitz continuity on bounded subsets.

\medskip\noindent\textbf{Proof that (iii) implies (ii).}
Assume (iii) holds. Fix any $t_0\ge0$ and any $y\in X$. Let $x(t):=U_\sigma(t,t_0)y$ denote the solution of \eqref{eq:syst} starting from $y$ at time $t_0$. We show that the Datko condition \eqref{eq:datko:thm} holds at starting time $t_0$ with $b^p=c$.

On each open interval $(t_k,t_{k+1})$ with $t_k\ge t_0$, the solution $x(\cdot)$ is Lipschitz (it satisfies the integral equation of Definition~\ref{def:existence}) and $V(\cdot,x(\cdot))$ is a composition of Lipschitz maps, hence absolutely continuous on each such interval. The lower Dini derivative condition \eqref{eq:nc:flow} therefore integrates via the fundamental theorem of calculus for Dini derivatives:
\begin{equation}\label{eq:flow:int}
  V(t_{k+1}^-,x(t_{k+1}^-))-V(t_k,x(t_k))
  \;\le\;-\int_{t_k}^{t_{k+1}}\|x(s)\|^p  \ds
  =-\int_{t_k}^{t_{k+1}}\|U_\sigma(s,t_0)y\|^p  \ds.
\end{equation}
At each jump time $t_{k+1}$ with $x(t_{k+1})=Jx(t_{k+1}^-)$, condition \eqref{eq:nc:jump} gives:
\begin{equation}\label{eq:jump:dec}
  V(t_{k+1},x(t_{k+1}))-V(t_{k+1}^-,x(t_{k+1}^-))
  \;\le\;-\|x(t_{k+1}^-)\|^p
  =-\|U_\sigma(t_{k+1}^-,t_0)y\|^p.
\end{equation}
Adding \eqref{eq:flow:int} and \eqref{eq:jump:dec} for $k=0,1,\ldots,n$ and telescoping (each $V(t_{k+1},\cdot)$ cancels with the $V(t_{k+1}^-,\cdot)$ from the next interval), for any $t\in(t_n,t_{n+1})$:
\begin{equation}
  V(t,x(t))
  \;\le\; V(t_0,y)
  -\int_{t_0}^t\|U_\sigma(s,t_0)y\|^p  \ds
  -\sum_{\substack{k\ge1\\t_0<t_k\le t}}\|U_\sigma(t_k^-,t_0)y\|^p.
\end{equation}
Since $V\ge0$ everywhere and $V(t_0,y)\le c\|y\|^p$ by \eqref{eq:nc:upper}:
\begin{equation}
  \int_{t_0}^t\|U_\sigma(s,t_0)y\|^p  \ds
  +\sum_{\substack{k\ge1\\t_k>t_0}}\|U_\sigma(t_k^-,t_0)y\|^p
  \;\le\;c\|y\|^p
\end{equation}
for all $t\ge t_0$. Letting $t\to\infty$ yields \eqref{eq:datko:thm} at starting time $t_0$ with $b^p=c$. Since $t_0\ge0$ and $y\in X$ were arbitrary, (ii) holds.

\medskip\noindent\textbf{Proof that (ii) implies (i).}
Condition (ii) and Assumption~\ref{ass:growth} satisfy the hypotheses of Lemma~\ref{lem:datko}. Lemma~\ref{lem:datko} therefore directly yields constants $M\ge1$, $\alpha>0$, and $\rho\in(0,1)$ such that $\|U_\sigma(t,s)\|\le Me^{-\alpha(t-s)}\rho^{\kappa_\sigma(t,s)}$ for all $t\ge s\ge0$. This is precisely strong GES (Definition~\ref{def:stab}; the bound being uniform in the initial time $s$), i.e.\ (i).
\end{proof}

The three conditions in Theorem~\ref{thm:main} admit the following interpretation. Condition~(ii) is an impulsive analogue of Datko's integral criterion \cite{Datko:70,Datko:72}: the $L^p$-in-time norm of the trajectory on $[s,\infty)$, augmented by the sum of pre-jump state norms, is bounded by the norm of the initial state. The integral term controls the continuous-time decay, while the sum over pre-jump states controls the jump-induced decay. Condition~(iii) is a non-coercive Lyapunov criterion: no lower bound on $V$ is required, and the explicit construction \eqref{eq:V:def} shows the natural candidate.


We close this subsection with two analogues of Theorem~\ref{thm:main} covering the degenerate cases where only one of the two mechanisms, flow or jumps, provides decay, and the other is neutral. Both results are stated for the same fixed sequence $\sigma\in\mathcal{S}_{0,\infty}$ as Theorem~\ref{thm:main}.

\begin{theorem}[Fixed sequence, persistent-jumping]\label{thm:pj}
Let $\sigma\in\mathcal{S}_{0,\infty}$ be given and let Assumption~\ref{ass:growth} hold. Then the following statements are equivalent:
\begin{enumerate}[(i)]
  \item The system \eqref{eq:syst} is GES with $\alpha=0$ and $\rho\in(0,1)$: there exist $M\ge1$ and $\rho\in(0,1)$ such that
    \begin{equation}
      \|U_\sigma(t,s)\|\le M\rho^{\kappa_\sigma(t,s)} \qquad\forall  t\ge s\ge0.
    \end{equation}

  \item There exist $p\ge1$ and $b>0$ such that
    \begin{equation}\label{eq:datko:pj}
      \sum_{\substack{k\ge1\\t_k>s}}\|U_\sigma(t_k^-,s)x\|^p\;\le\;b^p\|x\|^p
    \end{equation}
    holds for all $s\ge0$ and all $x\in X$.

  \item There exist a functional $V:\mathbb{R}_{\ge0}\times X\to\mathbb{R}_{\ge0}$, Lipschitz on bounded subsets of $X$ uniformly in $t$, and constants $p\ge1$, $c>0$ such that
    \begin{align}
      V(t,x)&\le c\|x\|^p, &&\forall t\ge0,\label{eq:pj:upper}\\
      \underline{D}^+V(t,x)&\le 0, &&\forall t\ge0,\label{eq:pj:flow}\\
      V(t,Jx)-V(t^-,x)&\le -\|x\|^p, &&\forall t\in\mathbb{T}_\sigma,\label{eq:pj:jump}
    \end{align}
    hold for all $x\in X$.
\end{enumerate}
Moreover, when (i) holds, an explicit Lyapunov functional witnessing (iii) is given by
\begin{equation}\label{eq:V:pj:explicit}
  V(t,x)\;:=\;\sum_{k\ge1:  t_k>t}\|U_\sigma(t_k^-,t)x\|^p.
\end{equation}
\end{theorem}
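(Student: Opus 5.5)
We follow the same four-implication cycle as in the proof of Theorem~\ref{thm:main}, $(i)\Rightarrow(ii)\Rightarrow(iii)\Rightarrow(ii)\Rightarrow(i)$, dropping the integral term everywhere and replacing Lemma~\ref{lem:datko} by Corollary~\ref{cor:datko:sum}.

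For $(i)\Rightarrow(ii)$, fix $s\ge0$ and index the jumps after $s$ as $t_{K_s+j}$, $j\ge1$, with $K_s=\kappa_\sigma(s,0)$. Since $\kappa_\sigma(t_{K_s+j}^-,s)=j-1$, the bound in (i) gives
\[
\|U_\sigma(t_{K_s+j}^-,s)x\|\le M\rho^{j-1}\|x\|.
\]
Summing the $p$-th powers yields (ii) with $b^p=M^p/(1-\rho^p)$. A pre-jump limit of the form $U_\sigma(t_k^-,s)$ is the strong limit of $U_\sigma(\tau,s)$ as $\tau\uparrow t_k$, so the norm bound passes to it.

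For $(ii)\Rightarrow(iii)$, we take $V$ as in \eqref{eq:V:pj:explicit}.
\begin{itemize}
\item The upper bound \eqref{eq:pj:upper} is (ii) at $s=t$, with $c=b^p$.
\item For the flow condition, take $t\notin\mathbb{T}_\sigma$ and $h$ small enough that $(t,t+h]$ is jump-free. The semigroup identity $U_\sigma(t_k^-,t+h)S(h)=U_\sigma(t_k^-,t)$ shows that $V(t+h,S(h)x)=V(t,x)$, so $\underline{D}^+V=0$. This is exactly where the integral term of \eqref{eq:V:def} disappears: the derivative is $0$ instead of $-\|x\|^p$.
\item For the jump condition, $U_\sigma(\cdot,t_k)J=U_\sigma(\cdot,t_k^-)$. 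Hence $V(t_k^-,x)$ contains the extra term $\|U_\sigma(t_k^-,t_k^-)x\|^p=\|x\|^p$, and $V(t_k,Jx)-V(t_k^-,x)=-\|x\|^p$.
\item For the Lipschitz property, Corollary~\ref{cor:datko:sum} gives $\|U_\sigma(t,s)\|\le M\rho_0^{\kappa_\sigma(t,s)}$. With the elementary bound $|a^p-b^p|\le p\max(a,b)^{p-1}|a-b|$, the geometric series gives $L_r=pM^pr^{p-1}/(1-\rho_0^p)$. No bound on the time elapsed between jumps is needed, since only the jump count enters.
\end{itemize}

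For $(iii)\Rightarrow(ii)$, repeat the telescoping argument of Theorem~\ref{thm:main}. On each jump-free interval, $V(\cdot,x(\cdot))$ is absolutely continuous with nonpositive lower Dini derivative, hence nonincreasing. Each jump contributes $-\|U_\sigma(t_k^-,t_0)y\|^p$. Using $V\ge0$ and $V(t_0,y)\le c\|y\|^p$, we get
\[
\sum_{t_0<t_k\le t}\|U_\sigma(t_k^-,t_0)y\|^p\le c\|y\|^p,
\]
and letting $t\to\infty$ gives (ii).

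For $(ii)\Rightarrow(i)$, apply Corollary~\ref{cor:datko:sum} directly.

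The main obstacle is the monotonicity step in $(iii)\Rightarrow(ii)$. We must pass from $\underline{D}^+V\le0$, defined along $S(h)x$, to monotonicity of $t\mapsto V(t,x(t))$, which requires relating the Dini derivative along the semigroup to the derivative of the composite. Since $x(t+h)=S(h)x(t)$ on jump-free intervals, the two coincide exactly. We then invoke the standard fact that a continuous function with nonpositive lower right Dini derivative everywhere on an interval is nonincreasing. This holds under continuity alone, which follows from $V$ being Lipschitz in $x$ together with piecewise right-continuity in $t$. The argument should be written carefully at the endpoints $t_k$ and $t_{k+1}^-$, using the left limits.
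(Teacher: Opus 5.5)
Your proposal is correct and follows the paper's proof essentially step for step: the same cycle $(i)\Rightarrow(ii)\Rightarrow(iii)\Rightarrow(ii)\Rightarrow(i)$, the same witness \eqref{eq:V:pj:explicit}, the same constants $b^p=M^p/(1-\rho^p)$ and $L_r=pM^pr^{p-1}/(1-\rho_0^p)$, and Corollary~\ref{cor:datko:sum} in place of Lemma~\ref{lem:datko}. Your monotonicity step is more careful than the paper's one-line assertion: you identify $\underline{D}^+V(t,x(t))$ with the lower right Dini derivative of $t\mapsto V(t,x(t))$ and invoke the monotonicity lemma for continuous functions. Note, however, that the continuity you need comes from $V(\cdot,y)$ being continuous at non-impulse times, which the paper's definition of $\underline{D}^+$ implicitly imposes (it is restricted to points where $V(t^-,x)=V(t,x)$); it does not follow from piecewise right-continuity alone, which would allow upward jumps of $V$ in $t$ between impulses and break the monotonicity.
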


\begin{proof}
The structure parallels Theorem~\ref{thm:main}, with the flow providing no decay and stability ensured entirely by the jumps. We prove $(i)\Rightarrow(ii)\Rightarrow(iii)\Rightarrow(ii)\Rightarrow(i)$.

\medskip\noindent\textbf{Proof that (i) implies (ii).}
For $s\ge0$ and $x\in X$, the GES bound gives $\|U_\sigma(t_k^-,s)x\|\le M\rho^{\kappa_\sigma(t_k^-,s)}\|x\|$. Since the $j$-th jump after $s$ satisfies $\kappa_\sigma(t_{K_s+j}^-,s)=j-1$ (where $K_s=\kappa_\sigma(s,0)$):
\begin{equation}
  \sum_{\substack{k\ge1\\t_k>s}}\|U_\sigma(t_k^-,s)x\|^p
  \;\le\; M^p\|x\|^p\sum_{j=1}^\infty\rho^{p(j-1)}
  \;=\;\frac{M^p}{1-\rho^p}\|x\|^p.
\end{equation}
Setting $b^p:=M^p/(1-\rho^p)$ gives \eqref{eq:datko:pj} for all $s\ge0$.

\medskip\noindent\textbf{Proof that (ii) implies (iii).}
Define, for each $t\ge0$ and $x\in X$:
\begin{equation}\label{eq:V:pj}
  V(t,x)\;:=\;\sum_{\substack{k\ge1\\t_k>t}}\|U_\sigma(t_k^-,t)x\|^p.
\end{equation}
The sum is bounded: $V(t,x)\le b^p\|x\|^p$ from \eqref{eq:datko:pj} at starting time $t$.

\emph{Flow decrease \eqref{eq:pj:flow}.} For $t\notin\mathbb{T}_\sigma$ and small $h>0$ with $(t,t+h)\cap\mathbb{T}_\sigma=\varnothing$: since $U_\sigma(t_{k}^-,t+h)S(h)=U_\sigma(t_k^-,t)$ for all $t_k>t+h$ (no jump in $(t,t+h)$), and the index set $\{k:t_k>t+h\}=\{k:t_k>t\}$ for small enough $h$:
\begin{equation}
  V(t+h,S(h)x)=\sum_{\substack{k\ge1\\t_k>t+h}}\|U_\sigma(t_k^-,t+h)S(h)x\|^p
  =\sum_{\substack{k\ge1\\t_k>t}}\|U_\sigma(t_k^-,t)x\|^p=V(t,x).
\end{equation}
Hence $D^+V(t,x)=0\le0$, so \eqref{eq:pj:flow} holds with equality (the flow is exactly neutral).

\emph{Jump decrease \eqref{eq:pj:jump}.} By the same propagator identity as in Theorem~\ref{thm:main}, $U_\sigma(\cdot,t_k)J=U_\sigma(\cdot,t_k^-)$, so:
\begin{equation}
  V(t_k,Jx)=\sum_{j>k}\|U_\sigma(t_j^-,t_k^-)x\|^p, \qquad
  V(t_k^-,x)=\|x\|^p+\sum_{j>k}\|U_\sigma(t_j^-,t_k^-)x\|^p.
\end{equation}
Subtracting: $V(t_k,Jx)-V(t_k^-,x)=-\|x\|^p$, so \eqref{eq:pj:jump} holds with equality.

\emph{Lipschitz continuity.} By condition (ii) and Corollary~\ref{cor:datko:sum}, the system satisfies $\|U_\sigma(t,s)\|\le M\rho_0^{\kappa_\sigma(t,s)}$ for $t\ge s\ge0$, with $M=2C_0$. A computation parallel to the Lipschitz argument of Theorem~\ref{thm:main} gives, for $\|x\|,\|y\|\le r$:
\begin{equation}
  |V(t,x)-V(t,y)|
  \;\le\;\sum_{j=1}^\infty pM^pr^{p-1}\rho_0^{p(j-1)}\|x-y\|
  =\frac{pM^pr^{p-1}}{1-\rho_0^p}\|x-y\|,
\end{equation}
establishing Lipschitz continuity (the sum converges since $\rho_0<1$, regardless of jump density).

\medskip\noindent\textbf{Proof that (iii) implies (ii).}
Fix $t_0\ge0$ and $y\in X$. Let $x(t)=U_\sigma(t,t_0)y$. Since $\underline{D}^+V\le0$ and $V$ is Lipschitz on flow intervals, the map $t\mapsto V(t,x(t))$ is non-increasing on each interval $(t_k,t_{k+1})$:
\begin{equation}
  V(t_{k+1}^-,x(t_{k+1}^-))\;\le\;V(t_k,x(t_k)).
\end{equation}
At each jump $t_{k+1}$, condition \eqref{eq:pj:jump} gives $V(t_{k+1},x(t_{k+1}))\le V(t_{k+1}^-,x(t_{k+1}^-))-\|x(t_{k+1}^-)\|^p$. Combining and telescoping:
\begin{equation}
  V(t_n,x(t_n))\;\le\; V(t_0,y)-\sum_{\substack{k\ge1\\t_0<t_k\le t_n}}\|U_\sigma(t_k^-,t_0)y\|^p.
\end{equation}
Since $V\ge0$ and $V(t_0,y)\le c\|y\|^p$, letting $n\to\infty$ yields \eqref{eq:datko:pj} at starting time $t_0$ with $b^p=c$.

\medskip\noindent\textbf{Proof that (ii) implies (i).}
Condition (ii) is the sum-only Datko condition \eqref{eq:datko:sum} at all starting times, so Corollary~\ref{cor:datko:sum} applies directly and yields $\|U_\sigma(t,s)\|\le M\rho_0^{\kappa_\sigma(t,s)}$ for $t\ge s\ge0$. This is GES with $\alpha=0$ and $\rho=\rho_0\in(0,1)$.
\end{proof}

\begin{theorem}[Fixed sequence, persistent-flowing]\label{thm:pf}
Let $\sigma\in\mathcal{S}_{0,\infty}$ have a positive infimal dwell-time $T_\sigma:=\inf_{k\ge0}(t_{k+1}-t_k)>0$, and let Assumption~\ref{ass:growth} hold. Then the following statements are equivalent:
\begin{enumerate}[(i)]
  \item The system \eqref{eq:syst} is GES with $\rho=1$ and $\alpha>0$: there exist $M\ge1$ and $\alpha>0$ such that
    \begin{equation}
      \|U_\sigma(t,s)\|\le Me^{-\alpha(t-s)} \qquad\forall  t\ge s\ge0.
    \end{equation}

  \item There exist $p\ge1$ and $b>0$ such that
    \begin{equation}\label{eq:datko:pf}
      \int_s^\infty\|U_\sigma(t,s)x\|^p  \dt\;\le\;b^p\|x\|^p
    \end{equation}
    holds for all $s\ge0$ and all $x\in X$.

  \item There exist a functional $V:\mathbb{R}_{\ge0}\times X\to\mathbb{R}_{\ge0}$, Lipschitz on bounded subsets of $X$ uniformly in $t$, and constants $p\ge1$, $c>0$ such that
    \begin{align}
      V(t,x)&\le c\|x\|^p, &&\forall t\ge0,\label{eq:pf:upper}\\
      \underline{D}^+V(t,x)&\le -\|x\|^p, &&\forall t\ge0,\label{eq:pf:flow}\\
      V(t,Jx)-V(t^-,x)&\le 0, &&\forall t\in\mathbb{T}_\sigma,\label{eq:pf:jump}
    \end{align}
    hold for all $x\in X$.
\end{enumerate}
Moreover, when (i) holds, an explicit Lyapunov functional witnessing (iii) is given by
\begin{equation}\label{eq:V:pf:explicit}
  V(t,x)\;:=\;\int_t^\infty\|U_\sigma(s,t)x\|^p  \ds.
\end{equation}
\end{theorem}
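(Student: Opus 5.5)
The proof will mirror those of Theorems~\ref{thm:main} and~\ref{thm:pj}, with the cycle $(i)\Rightarrow(ii)\Rightarrow(iii)\Rightarrow(ii)\Rightarrow(i)$, and with Corollary~\ref{cor:datko:int} taking the place of Lemma~\ref{lem:datko}.

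\textbf{(i)$\Rightarrow$(ii).} This step is immediate. Integrating $\|U_\sigma(t,s)x\|^p\le M^pe^{-\alpha p(t-s)}\|x\|^p$ over $[s,\infty)$ gives (ii) with $b^p=M^p/(\alpha p)$.

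\textbf{(ii)$\Rightarrow$(i).} Since $T_\sigma>0$ is assumed, Corollary~\ref{cor:datko:int} applies verbatim. It gives $\|U_\sigma(t,s)\|\le Me^{-\alpha_0(t-s)}$ uniformly in $s$.

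\textbf{(ii)$\Rightarrow$(iii).} I will take $V$ as in \eqref{eq:V:pf:explicit}. The upper bound $V\le b^p\|x\|^p$ is just (ii) evaluated at starting time $t$.

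\emph{Flow condition.} For $t\notin\mathbb{T}_\sigma$ and $h$ small, the identity $U_\sigma(s,t+h)S(h)=U_\sigma(s,t)$ gives
\[
V(t+h,S(h)x)-V(t,x)=-\int_t^{t+h}\|S(s-t)x\|^p\,\ds .
\]
Strong continuity of $S$ then yields $D^+V=-\|x\|^p$, exactly as in Theorem~\ref{thm:main}. At a jump instant $t$, the same computation works, since $V(t,\cdot)$ is evaluated at the post-jump time and $(t,t+h)$ is jump-free.

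\emph{Jump condition.} Using $U_\sigma(\cdot,t_k)J=U_\sigma(\cdot,t_k^-)$, we get
\[
V(t_k,Jx)=\int_{t_k}^\infty\|U_\sigma(s,t_k^-)x\|^p\,\ds .
\]
This coincides with $V(t_k^-,x)$, because the integral is insensitive to the single point $t_k$; the only difference in Theorem~\ref{thm:main} was the extra sum term, which is absent here. So \eqref{eq:pf:jump} holds with equality $0$.

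\emph{Lipschitz property.} Using the exponential bound from Corollary~\ref{cor:datko:int}, the inequality $|a^p-b^p|\le p\max(a,b)^{p-1}|a-b|$ gives $L_r=M^pr^{p-1}/\alpha_0$.

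I should double-check what $V(t_k^-,x)$ means. I will interpret it as $\lim_{\tau\uparrow t_k}V(\tau,x)$, and the main technical point is to verify that this limit equals $\int_{t_k}^\infty\|U_\sigma(s,t_k^-)x\|^p\,\ds$. For $\tau\in(t_{k-1},t_k)$ we have
\[
V(\tau,x)=\int_\tau^{t_k}\|S(s-\tau)x\|^p\,\ds+\int_{t_k}^\infty\|U_\sigma(s,t_k)JS(t_k-\tau)x\|^p\,\ds .
\]
The first term vanishes as $\tau\uparrow t_k$. For the second term, $S(t_k-\tau)x\to x$, and the map $y\mapsto\int_{t_k}^\infty\|U_\sigma(s,t_k)Jy\|^p\,\ds$ is continuous, by the Lipschitz bound just established, so the second term converges to $\int_{t_k}^\infty\|U_\sigma(s,t_k^-)x\|^p\,\ds$. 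This establishes the claim. I expect this limit to be the only delicate step, and the same interpretation is used implicitly in Theorem~\ref{thm:main}.

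\textbf{(iii)$\Rightarrow$(ii).} Fix $t_0$, $y$ and set $x(t)=U_\sigma(t,t_0)y$. On each flow interval, $V(\cdot,x(\cdot))$ is absolutely continuous, so \eqref{eq:pf:flow} integrates to
\[
V(t_{k+1}^-,x(t_{k+1}^-))-V(t_k,x(t_k))\le-\int_{t_k}^{t_{k+1}}\|x\|^p .
\]
At each jump, \eqref{eq:pf:jump} ensures that $V$ does not increase. Telescoping these two facts, and using $V\ge0$ and $V(t_0,y)\le c\|y\|^p$, gives $\int_{t_0}^t\|U_\sigma(s,t_0)y\|^p\,\ds\le c\|y\|^p$. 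Letting $t\to\infty$ yields (ii) with $b^p=c$.
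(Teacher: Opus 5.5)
Your proposal is correct and follows the paper's proof essentially step for step. It uses the same cycle of implications and the same witness \eqref{eq:V:pf:explicit}, relies on Corollary~\ref{cor:datko:int} for both (ii)$\Rightarrow$(i) and the Lipschitz estimate (with the same constant $M^pr^{p-1}/\alpha_0$), and uses the same telescoping for (iii)$\Rightarrow$(ii), while your explicit check that $\lim_{\tau\uparrow t_k}V(\tau,x)=V(t_k,Jx)$ tightens the paper's one-line remark that the integrand is unchanged when passing from $t_k^-$ to $t_k$; you obtain it via $U_\sigma(s,\tau)=U_\sigma(s,t_k)JS(t_k-\tau)$ for $s\ge t_k$ and continuity of $y\mapsto V(t_k,Jy)$.
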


\begin{proof}
The structure parallels Theorem~\ref{thm:main}, with jumps providing no decay and stability ensured entirely by the flow. We prove $(i)\Rightarrow(ii)\Rightarrow(iii)\Rightarrow(ii)\Rightarrow(i)$.

\medskip\noindent\textbf{Proof that (i) implies (ii).}
For $s\ge0$ and $x\in X$, the GES bound (with $\rho=1$) gives $\|U_\sigma(t,s)x\|\le Me^{-\alpha(t-s)}\|x\|$. Therefore:
\begin{equation}
  \int_s^\infty\|U_\sigma(t,s)x\|^p  \dt
  \;\le\; M^p\|x\|^p\int_s^\infty e^{-\alpha p(t-s)}  \dt
  \;=\;\frac{M^p}{\alpha p}\|x\|^p.
\end{equation}
Setting $b^p:=M^p/(\alpha p)$ gives \eqref{eq:datko:pf} for all $s\ge0$.

\medskip\noindent\textbf{Proof that (ii) implies (iii).}
Define, for each $t\ge0$ and $x\in X$:
\begin{equation}\label{eq:V:pf}
  V(t,x)\;:=\;\int_t^\infty\|U_\sigma(s,t)x\|^p  \ds.
\end{equation}
The integral is bounded: $V(t,x)\le b^p\|x\|^p$ from \eqref{eq:datko:pf} at starting time $t$, giving \eqref{eq:pf:upper}.

\emph{Flow decrease \eqref{eq:pf:flow}.} For $t\notin\mathbb{T}_\sigma$ and small $h>0$ with $(t,t+h)\cap\mathbb{T}_\sigma=\varnothing$, the same computation as in Theorem~\ref{thm:main} gives $D^+V(t,x)=-\|x\|^p$, so \eqref{eq:pf:flow} holds with equality.

\emph{Jump neutrality \eqref{eq:pf:jump}.} At any jump time $t_k$, using $U_\sigma(\cdot,t_k)J=U_\sigma(\cdot,t_k^-)$:
\begin{equation}
  V(t_k,Jx)=\int_{t_k}^\infty\|U_\sigma(s,t_k)Jx\|^p  \ds=\int_{t_k}^\infty\|U_\sigma(s,t_k^-)x\|^p  \ds=V(t_k^-,x),
\end{equation}
where the last equality uses $V(t_k^-,x)=\int_{t_k}^\infty\|U_\sigma(s,t_k^-)x\|^p\ds$ (the lower limit of the integral does not change when passing from $t_k^-$ to $t_k$ since the integrand is unchanged). Hence $V(t_k,Jx)-V(t_k^-,x)=0\le0$, so \eqref{eq:pf:jump} holds with equality.

\emph{Lipschitz continuity.} By condition (ii) and Corollary~\ref{cor:datko:int} (whose dwell-time hypothesis is the standing assumption $T_\sigma>0$), the system satisfies $\|U_\sigma(t,s)\|\le Me^{-\alpha_0(t-s)}$ for $t\ge s\ge0$, with $M=2C_0$. For $\|x\|,\|y\|\le r$:
\begin{equation}
  |V(t,x)-V(t,y)|
  \;\le\;\int_t^\infty pM^pr^{p-1}e^{-\alpha_0 p(s-t)}\|x-y\|  \ds
  =\frac{M^pr^{p-1}}{\alpha_0}\|x-y\|,
\end{equation}
establishing Lipschitz continuity.

\medskip\noindent\textbf{Proof that (iii) implies (ii).}
Fix $t_0\ge0$ and $y\in X$. Let $x(t)=U_\sigma(t,t_0)y$. Integrating \eqref{eq:pf:flow} on each interval $(t_k,t_{k+1})$:
\begin{equation}
  V(t_{k+1}^-,x(t_{k+1}^-))-V(t_k,x(t_k))\;\le\;-\int_{t_k}^{t_{k+1}}\|U_\sigma(s,t_0)y\|^p  \ds.
\end{equation}
At each jump $t_{k+1}$, condition \eqref{eq:pf:jump} gives $V(t_{k+1},x(t_{k+1}))\le V(t_{k+1}^-,x(t_{k+1}^-))$, so jumps do not increase $V$. Telescoping over all inter-jump intervals up to time $t$:
\begin{equation}
  V(t,x(t))\;\le\; V(t_0,y)-\int_{t_0}^t\|U_\sigma(s,t_0)y\|^p  \ds.
\end{equation}
Since $V\ge0$ and $V(t_0,y)\le c\|y\|^p$, letting $t\to\infty$ yields \eqref{eq:datko:pf} at starting time $t_0$ with $b^p=c$.

\medskip\noindent\textbf{Proof that (ii) implies (i).}
Condition (ii) is the integral-only Datko condition \eqref{eq:datko:int} at all starting times, and $T_\sigma>0$ by hypothesis, so Corollary~\ref{cor:datko:int} applies directly and yields $\|U_\sigma(t,s)\|\le Me^{-\alpha_0(t-s)}$ for $t\ge s\ge0$. This is GES with $\rho=1$ and $\alpha=\alpha_0>0$.
\end{proof}

\subsection{Arbitrary impulse sequences ($\mathcal{S}_{0,\infty}$)}\label{subsec:arbitrary:banach}

We now address the family $\mathcal{S}_{0,\infty}$ in which no constraint whatsoever is placed on the impulse sequence: this is the impulsive-systems analogue of quadratic stability for uncertain or linear parameter-varying systems \cite{Barmish:85,Briat:book1} or of common-Lyapunov stability under arbitrary switching for switched systems \cite{Liberzon:03,Haidar:22}. Since no information on the timing of impulses is available, the Lyapunov functional cannot depend on the current time $t$, and the flow and jump conditions decouple. In contrast to the persistent-flowing and persistent-jumping versions of Theorems~\ref{thm:pf} and~\ref{thm:pj}, both mechanisms must here be strictly contractive.

\begin{proposition}[Necessary conditions]\label{prop:arbitrary:necessary}
  Assume \eqref{eq:syst} is strongly UGES over $\mathcal{S}_{0,\infty}$. Then:
  \begin{enumerate}[(i)]
    \item the $C_0$-semigroup $(S(t))_{t\ge0}$ generated by $A$ is exponentially stable, i.e., $\|S(t)\|\le M_0e^{-\alpha_0 t}$ for some $M_0\ge1$ and $\alpha_0>0$;
    \item the jump operator $J\in L(X)$ is geometrically stable, i.e., $\|J^k\|\le N_0\rho_0^k$ for some $N_0\ge1$ and $\rho_0\in(0,1)$.
  \end{enumerate}
\end{proposition}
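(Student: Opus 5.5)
Both claims follow by evaluating the uniform bound of Definition~\ref{def:stabu} on suitably chosen sequences in $\mathcal{S}_{0,\infty}$. Strong UGES provides constants $M\ge1$, $\alpha>0$, $\rho\in(0,1)$ valid for every $\sigma\in\mathcal{S}_{0,\infty}$ and all $t\ge s\ge0$. The approach is therefore to pick sequences whose propagators reduce to pure semigroup arcs or to (almost) pure powers of $J$.

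\emph{Item (i).} Fix $\tau>0$. Choose any $\sigma\in\mathcal{S}_{0,\infty}$ whose first jump satisfies $t_1>\tau$, for instance $t_k=k(\tau+1)$. On $[0,\tau]$ there is no jump, so $U_\sigma(\tau,0)=S(\tau)$ and $\kappa_\sigma(\tau,0)=0$. The uniform bound then gives $\|S(\tau)\|\le Me^{-\alpha\tau}$. Since $\tau$ is arbitrary, this yields (i) with $M_0=M$ and $\alpha_0=\alpha$. This is essentially Proposition~\ref{prop:necessity:semigroup}, because $\bar T(\mathcal{S}_{0,\infty})=\infty$, and I would cite it directly.

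\emph{Item (ii).} This is the main obstacle, since sequences in $\mathcal{S}_{0,\infty}$ require strictly positive gaps and jumps cannot be stacked at a single instant. I would fix $k\ge1$ and $\varepsilon>0$, and take $\sigma_\varepsilon$ with $t_j=j\varepsilon$ for all $j\ge1$. Then
\[
U_{\sigma_\varepsilon}(k\varepsilon,0)=(JS(\varepsilon))^k
\quad\text{and}\quad
\kappa_{\sigma_\varepsilon}(k\varepsilon,0)=k,
\]
so
\[
\|(JS(\varepsilon))^k\|\le M\rho^k e^{-\alpha k\varepsilon}\le M\rho^k.
\]
Next I would let $\varepsilon\downarrow0$ with $k$ fixed. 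By strong continuity, $S(\varepsilon)y\to y$ for every $y\in X$, and $\|S(\varepsilon)\|\le M_0$ on $[0,1]$. Hence, by an induction on the number of factors (bounded operators times uniformly bounded, strongly convergent factors), $(JS(\varepsilon))^kx\to J^kx$ for each fixed $x$. Taking limits gives $\|J^kx\|\le M\rho^k\|x\|$ for all $x$, that is $\|J^k\|\le M\rho^k$. This is (ii) with $N_0=M$ and $\rho_0=\rho$.

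The only point needing care is this strong-limit step. Norm convergence of $S(\varepsilon)$ to $I$ fails in general, so the argument must be done pointwise in $x$, and only then can one pass to the operator norm via the supremum over the unit ball. The inductive step writes
\[
(JS(\varepsilon))^{m+1}x-J^{m+1}x
= JS(\varepsilon)\bigl((JS(\varepsilon))^mx-J^mx\bigr)+J\bigl(S(\varepsilon)-I\bigr)J^mx,
\]
and both terms tend to zero by uniform boundedness and strong continuity.
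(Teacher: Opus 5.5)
Your proof is correct and follows the same route as the paper: a sequence whose first jump lies beyond $\tau$ isolates $S(\tau)$ for (i), and the equispaced sequence $t_j=j\varepsilon$ with $\varepsilon\downarrow0$ gives $(JS(\varepsilon))^k\to J^k$ strongly for (ii). Your inductive argument for that strong limit works pointwise in $x$ before passing to the operator norm, which supplies a step the paper only asserts.
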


\begin{proof}
For (i), any sequence whose first impulse occurs at $t_1\to+\infty$ exhibits arbitrarily long flow-only intervals; UGES on $\mathcal{S}_{0,\infty}$ therefore forces $\|S(t)x\|\le Me^{-\alpha t}\|x\|$ for some $M,\alpha>0$, which gives (i). For (ii), the sequence $t_k=k\epsilon$ with $\epsilon\downarrow0$ yields $U_\sigma(k\epsilon,0)=(JS(\epsilon))^k\to J^k$ in the strong sense, whence $\|J^k\|\le Me^{-\alpha k\epsilon}\rho^k\le M\rho^k$ in the limit, which gives (ii).
\end{proof}

Both properties are individually necessary. Neither is sufficient on its own, since UGES on $\mathcal{S}_{0,\infty}$ further requires a \emph{common} mechanism through which flow and jump dissipate the same Lyapunov functional.

\begin{theorem}[Arbitrary sequences, non-coercive N\&S]\label{thm:arbitrary:nc}
  Let Assumption~\ref{ass:growth} hold (which, over $\mathcal{S}_{0,\infty}$, requires $\omega_0\le0$ by Remark~\ref{rem:growth:G} and Proposition~\ref{prop:necessity:semigroup}). The following statements are equivalent:
  \begin{enumerate}[(i)]
    \item system \eqref{eq:syst} is strongly UGES over $\mathcal{S}_{0,\infty}$;
    \item there exist constants $c>0$, $p\ge1$, and a \emph{time-independent} functional $V:X\to\mathbb{R}_{\ge0}$, Lipschitz on bounded subsets of $X$, such that for all $x\in X$:
      \begin{subequations}\label{eq:arbitrary:nc}
        \begin{align}
          V(x)&\le c  \|x\|^p,\label{eq:arbitrary:nc:bd}\\
          \underline{D}^+V(x)&\le-\|x\|^p,\label{eq:arbitrary:nc:flow}\\
          V(Jx)-V(x)&\le-\|x\|^p.\label{eq:arbitrary:nc:jump}
        \end{align}
      \end{subequations}
  \end{enumerate}
Moreover, when (i) holds, an explicit time-independent Lyapunov functional witnessing (ii) is given by
\begin{equation}\label{eq:arbitrary:V}
  V(x):=\sup_{\sigma\in\mathcal{S}_{0,\infty}}\left[\int_0^\infty\|U_\sigma(t,0)x\|^p  \dt+\sum_{k\ge1}\|U_\sigma(t_k^-,0)x\|^p\right].
\end{equation}
\end{theorem}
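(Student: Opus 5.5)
The plan is to prove the two implications separately, reusing the machinery of Theorem~\ref{thm:main}. For (ii)$\Rightarrow$(i), fix an arbitrary $\sigma\in\mathcal{S}_{0,\infty}$ and define $V_\sigma(t,x):=V(x)$. This functional is Lipschitz on bounded sets uniformly in $t$ and satisfies the upper bound. Because $V$ has no time dependence, the Dini condition \eqref{eq:arbitrary:nc:flow} is exactly \eqref{eq:nc:flow}, and the jump condition \eqref{eq:arbitrary:nc:jump} gives \eqref{eq:nc:jump} at every $t\in\mathbb{T}_\sigma$, whatever $\sigma$ is. We then run the argument "(iii) implies (ii)" of Theorem~\ref{thm:main}: integrate the flow condition on each inter-jump interval, add the jump decrements, and telescope. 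This yields the Datko condition \eqref{eq:datko:thm} with $b^p=c$, with $b$ independent of $\sigma$.

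We then invoke Lemma~\ref{lem:datko}. Its constants $C_0=G\max(1,\|J\|b)$, $T_0$, $K_0$ and a fixed choice $\theta=1/2$ depend only on $G,\|J\|,b,p$. Here $G$ is the family-wide constant of Assumption~\ref{ass:growth}, which by Remark~\ref{rem:growth:G} is $M_0$. Hence $M,\alpha,\rho$ are uniform in $\sigma$, which is strong UGES. The one point to check carefully is that the lemma's constants never use sequence-specific information; inspecting Steps~1--4 shows they do not.

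For (i)$\Rightarrow$(ii), we use the candidate \eqref{eq:arbitrary:V}. Applying the computation "(i) implies (ii)" of Theorem~\ref{thm:main} with the uniform constants gives
\[
V(x)\le M^p\Bigl(\tfrac{1}{\alpha p}+\tfrac{1}{1-\rho^p}\Bigr)\|x\|^p .
\]
For the Lipschitz property, the termwise estimate in Theorem~\ref{thm:main} bounds $|W_\sigma(x)-W_\sigma(y)|\le L_r\|x-y\|$, where $W_\sigma$ denotes the bracket in \eqref{eq:arbitrary:V}, uniformly in $\sigma$. A supremum of uniformly Lipschitz functions is Lipschitz with the same constant.

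For the jump condition, take any $\sigma$ and form the concatenated sequence $\sigma'$ that has an impulse at $0^+$. Concretely, we use the shift invariance of $A$ and $J$: a sequence starting with a jump at time $0$ applied to $x$ gives $W_{\sigma'}(x)=\|x\|^p+W_{\sigma}(Jx)$, up to an arbitrarily small pre-jump flow $\epsilon$. Taking the supremum over $\sigma$ gives $V(x)\ge\|x\|^p+V(Jx)$, which is \eqref{eq:arbitrary:nc:jump}. For the flow condition, given $\sigma$, prepend a jump-free interval $[0,h]$ and shift the jumps of $\sigma$ by $h$. This gives
\[
V(x)\ge\int_0^h\|S(\tau)x\|^p\,\d\tau+W_\sigma(S(h)x),
\]
so $V(S(h)x)-V(x)\le-\int_0^h\|S(\tau)x\|^p\,\d\tau$. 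Dividing by $h$ and using strong continuity gives $\underline{D}^+V(x)\le-\|x\|^p$.

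The main obstacle is the jump step. The family $\mathcal{S}_{0,\infty}$ requires $t_1>t_0=0$, so a jump cannot literally occur at time $0$. We therefore place it at time $\epsilon$. The concatenation gives
\[
V(x)\ge\int_0^\epsilon\|S(\tau)x\|^p\,\d\tau+\|S(\epsilon)x\|^p+W_\sigma(JS(\epsilon)x),
\]
and we let $\epsilon\downarrow0$. This limit uses the uniform Lipschitz continuity of $V$ together with $S(\epsilon)x\to x$, giving $V(JS(\epsilon)x)\to V(Jx)$ and $\|S(\epsilon)x\|^p\to\|x\|^p$. With these approximation limits handled, the inequality holds after taking the supremum over $\sigma$.
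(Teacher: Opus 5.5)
Your proposal is correct and follows essentially the same route as the paper. For (ii)$\Rightarrow$(i) you telescope to a Datko bound that is uniform in $\sigma$ and then invoke Lemma~\ref{lem:datko}, whose constants depend only on $G,\|J\|,b,p$; for (i)$\Rightarrow$(ii) you use the supremum functional \eqref{eq:arbitrary:V} with shift (flow) and prepend (jump) arguments. The one difference is in the jump step: the paper prepends a formal jump at ``$0^+$'', which is not literally an element of $\mathcal{S}_{0,\infty}$, whereas you place the jump at $\epsilon>0$ and pass to the limit using the already-established Lipschitz continuity of $V$ and the strong continuity of $S$, a cleaner justification of the same inequality $V(x)\ge\|x\|^p+V(Jx)$.
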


\begin{proof}
We prove $(ii)\Rightarrow(i)$ and $(i)\Rightarrow(ii)$.

\medskip\noindent\textbf{Proof that $(ii)\Rightarrow(i)$.} Let $\sigma\in\mathcal{S}_{0,\infty}$, $s\ge0$, and $y\in X$ be given, and let $x(t)=U_\sigma(t,s)y$. Since $V$ is time-independent, the lower Dini condition \eqref{eq:arbitrary:nc:flow} along the flow integrates exactly as in the proof of Theorem~\ref{thm:main}$(iii)\Rightarrow(ii)$ on each inter-jump interval, yielding
  \begin{equation*}
    V(x(t_{k+1}^-))-V(x(t_k))\le -\int_{t_k}^{t_{k+1}}\|U_\sigma(s',s)y\|^p  \ds';
  \end{equation*}
  the jump condition \eqref{eq:arbitrary:nc:jump} gives $V(x(t_{k+1}))-V(x(t_{k+1}^-))\le -\|U_\sigma(t_{k+1}^-,s)y\|^p$. Telescoping over all jumps in $(s,t]$ and using $V\ge0$ with $V(y)\le c\|y\|^p$:
  \begin{equation*}
    \int_s^t\|U_\sigma(s',s)y\|^p  \ds'+\sum_{k:t_k\in(s,t]}\|U_\sigma(t_k^-,s)y\|^p\le c\|y\|^p.
  \end{equation*}
  This is the Datko bound at starting time $s$, with constant uniform in $\sigma\in\mathcal{S}_{0,\infty}$ since $V$ does not depend on $\sigma$. Letting $t\to\infty$ and applying Lemma~\ref{lem:datko} uniformly over $\mathcal{S}_{0,\infty}$ (valid since Assumption~\ref{ass:growth} holds on $\mathcal{S}_{0,\infty}$) yields strong UGES (Lemma~\ref{lem:datko} returns $\alpha>0$, $\rho\in(0,1)$).

\medskip\noindent\textbf{Proof that $(i)\Rightarrow(ii)$.} Define $V$ by \eqref{eq:arbitrary:V}. We verify each property.

\emph{Finiteness and upper bound \eqref{eq:arbitrary:nc:bd}.} For any $\sigma\in\mathcal{S}_{0,\infty}$, UGES with constants $M,\alpha,\rho$ gives, by the estimates of $(i)\Rightarrow(ii)$ in Theorem~\ref{thm:main}, $$\int_0^\infty\|U_\sigma(t,0)x\|^p  \dt\le (M^p/(\alpha p))\|x\|^p\quad\text{and}\quad\sum_{k\ge1}\|U_\sigma(t_k^-,0)x\|^p\le (M^p/(1-\rho^p))\|x\|^p,$$ both uniform in $\sigma$. Hence $\Phi(\sigma,x)\le c\|x\|^p$ uniformly in $\sigma$ with $c:=M^p[1/(\alpha p)+1/(1-\rho^p)]$, where $\Phi(\sigma,x)$ denotes the bracketed quantity in \eqref{eq:arbitrary:V}, and taking the supremum over $\mathcal{S}_{0,\infty}$ gives $V(x)\le c\|x\|^p$.

\emph{Flow condition \eqref{eq:arbitrary:nc:flow}.} Fix $x\in X$ and $h>0$. For any $\sigma\in\mathcal{S}_{0,\infty}$ with $t_1^\sigma>h$ (so that no jump occurs on $[0,h]$), the orbit flows freely on $[0,h]$, hence $U_\sigma(t,0)x=S(t)x$ for $t\in[0,h]$ and $U_\sigma(t,0)x=U_\sigma(t,h)S(h)x$ for $t\ge h$. Splitting the integral at $h$ and reindexing the jump times of $\sigma$ by the backward shift $\sigma^{(h)}$, defined by $t_k^{\sigma^{(h)}}:=t_k^\sigma-h$ (admissible in $\mathcal{S}_{0,\infty}$ since $t_1^\sigma>h$), the propagator identity $U_{\sigma^{(h)}}(t,0)=U_\sigma(t+h,h)$ gives
\begin{equation*}
  \Phi(\sigma,x)=\int_0^h\|S(t)x\|^p  \dt+\Phi(\sigma^{(h)},S(h)x),
\end{equation*}
since the integral over $[h,\infty)$ becomes, after $t'=t-h$, $\int_0^\infty\|U_{\sigma^{(h)}}(t',0)S(h)x\|^p  \dt'$, and each pre-jump value satisfies $\|U_\sigma(t_k^{\sigma-},0)x\|=\|U_{\sigma^{(h)}}(t_k^{\sigma^{(h)}-},0)S(h)x\|$. Taking the supremum over all $\sigma\in\mathcal{S}_{0,\infty}$ with $t_1^\sigma>h$, and using that the backward shift $\sigma\mapsto\sigma^{(h)}$ maps this set onto all of $\mathcal{S}_{0,\infty}$,
\begin{equation*}
  V(x)\ge\sup_{\sigma:  t_1^\sigma>h}\Phi(\sigma,x)=\int_0^h\|S(t)x\|^p  \dt+\sup_{\sigma:  t_1^\sigma>h}\Phi(\sigma^{(h)},S(h)x)=\int_0^h\|S(t)x\|^p  \dt+V(S(h)x).
\end{equation*}
Hence $V(x)-V(S(h)x)\ge\int_0^h\|S(t)x\|^p  \dt$; dividing by $h$ and letting $h\downarrow0$, strong continuity of $S$ gives $\underline{D}^+V(x)\le-\|x\|^p$.

\emph{Jump condition \eqref{eq:arbitrary:nc:jump}.} Fix $x\in X$. For any $\sigma\in\mathcal{S}_{0,\infty}$, let $\widetilde\sigma\in\mathcal{S}_{0,\infty}$ be obtained by prepending a jump at time $0^+$: $t_1^{\widetilde\sigma}=0^+$ and $t_{k+1}^{\widetilde\sigma}=t_k^\sigma$ for $k\ge1$. Then $U_{\widetilde\sigma}(t,0)x=U_\sigma(t,0)Jx$ for $t\ge0$, the pre-jump value at the prepended instant is $x$ itself, and therefore
\begin{equation*}
  \Phi(\widetilde\sigma,x)=\int_0^\infty\|U_\sigma(t,0)Jx\|^p  \dt+\|x\|^p+\sum_{k\ge1}\|U_\sigma(t_k^{\sigma-},0)Jx\|^p=\|x\|^p+\Phi(\sigma,Jx),
\end{equation*}
where the term $\|x\|^p$ is the contribution of the prepended jump. Since the prepended sequences $\widetilde\sigma$ form a subset of $\mathcal{S}_{0,\infty}$,
\begin{equation*}
  V(x)\ge\sup_{\sigma\in\mathcal{S}_{0,\infty}}\Phi(\widetilde\sigma,x)=\|x\|^p+\sup_{\sigma\in\mathcal{S}_{0,\infty}}\Phi(\sigma,Jx)=\|x\|^p+V(Jx),
\end{equation*}
which is \eqref{eq:arbitrary:nc:jump}.

\emph{Lipschitz continuity.} As in Theorem~\ref{thm:main}, using $|  \|U_\sigma(\cdot,0)x\|^p-\|U_\sigma(\cdot,0)y\|^p  |\le pM^pr^{p-1}\|x-y\|$ on bounded $r$-balls and integrating/summing uniformly in $\sigma$.
\end{proof}

\begin{remark}\label{rem:arbitrary:decoupling}
The flow and jump conditions \eqref{eq:arbitrary:nc:flow}, \eqref{eq:arbitrary:nc:jump} are \emph{decoupled}: each must hold separately for the same $V$, with the \emph{same} strict decrement on the right-hand side. There is no trade-off between flow decay and jump expansion as in the persistent-jumping and persistent-flowing theorems (Theorems~\ref{thm:pj}, \ref{thm:pf}), where one mechanism may be neutral provided the other is strict. Indeed, $\mathcal{S}_{0,\infty}$ contains both sequences with $t_1\to\infty$ (only flow active) and sequences with $t_k=k\epsilon\to0$ (only jumps active), so each mechanism must individually drive $V$ down.
\end{remark}

\subsection{Constant dwell-time ($\mathcal{S}_{\mathrm{cst}}(T)$)}\label{subsec:nc:cst}

For the constant dwell-time family $\mathcal{S}_{\mathrm{cst}}(T)$, all inter-jump intervals equal $T>0$. The family contains a single signal $\sigma_T$, and the timer-dependent Lyapunov functional $\bar V(\tau,x)$ lives on $[0,T]\times X$.

\begin{theorem}[Constant dwell-time, non-coercive N\&S]\label{thm:cst:nc}
Let $T>0$ and let Assumption~\ref{ass:growth} hold. The following are equivalent:
\begin{enumerate}[(i)]
  \item the system \eqref{eq:syst} is strongly UGES over $\mathcal{S}_{\mathrm{cst}}(T)$;
  \item (\emph{Datko}) there exist $p\ge1$ and $b>0$ such that, along the unique trajectory $U_{\sigma_T}$ and for all $x\in X$ and $s\ge0$,
    \begin{equation}\label{eq:cst:datko}
      \int_s^\infty\|U_{\sigma_T}(t,s)x\|^p  \dt+\sum_{k:  t_k>s}\|U_{\sigma_T}(t_k^-,s)x\|^p\le b^p\|x\|^p;
    \end{equation}
  \item (\emph{continuous-time hybrid}) there exist $p\ge1$, $c>0$, and a functional $\bar V:[0,T]\times X\to\mathbb{R}_{\ge0}$, Lipschitz on bounded subsets of $X$ uniformly in $\tau$, such that
    \begin{subequations}\label{eq:cst:nc:hybrid}
    \begin{align}
      \bar V(\tau,x)&\le c\|x\|^p,\label{eq:cst:nc:upper}\\
      \underline{D}^+\bar V(\tau,x)&\le -\|x\|^p,\;&&\forall\tau\in[0,T),  x\in X,\label{eq:cst:nc:flow}\\
      \bar V(0,Jx)-\bar V(T,x)&\le -\|x\|^p,\;&&\forall x\in X;\label{eq:cst:nc:jump}
    \end{align}
    \end{subequations}
  \item (\emph{discrete-time}) the monodromy operator $S(T)J$\footnote{The monodromy is the one-period propagator of the periodic sequence $\sigma_T$; its form depends on the base instant, being $S(T)J$ when the period is measured between successive \emph{pre-jump} instants and $JS(T)$ when measured between \emph{post-jump} instants. As $S(T)J$ and $JS(T)$ share the same non-zero spectrum, the geometric-stability conclusion is the same for either choice.} is geometrically stable, i.e., there exist $p\ge1$, $\hat c>0$, and a functional $V_d:X\to\mathbb{R}_{\ge0}$ Lipschitz on bounded subsets of $X$ such that
    \begin{equation}\label{eq:cst:dt}
      V_d(x)\le \hat c\|x\|^p,\qquad V_d(S(T)J  x)-V_d(x)\le -\|x\|^p,\qquad\forall x\in X.
    \end{equation}
\end{enumerate}
\end{theorem}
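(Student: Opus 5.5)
The plan is to route everything through Theorem~\ref{thm:main} applied to the single periodic sequence $\sigma_T$, and then add the timer reduction and the discrete-time criterion. Since $\mathcal{S}_{\mathrm{cst}}(T)=\{\sigma_T\}$, strong UGES over the family coincides with strong GES for $\sigma_T$. Theorem~\ref{thm:main} therefore gives (i)$\Leftrightarrow$(ii) directly. It also gives (i)$\Leftrightarrow$ the existence of a time-varying functional $V(t,x)$ satisfying \eqref{eq:nc:upper}--\eqref{eq:nc:jump}. What remains is to move between this time-varying $V$ and the timer functional $\bar V(\tau,x)$, and to link (i) with (iv).

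\textbf{(i)$\Rightarrow$(iii) via periodicity.} Because $\sigma_T$ is $T$-periodic, $U_{\sigma_T}(t+kT,s+kT)=U_{\sigma_T}(t,s)$, with jumps at $t_k=kT$. The explicit functional \eqref{eq:V:def} is then $T$-periodic in $t$. I set $\bar V(\tau,x):=V(\tau,x)$ for $\tau\in[0,T)$ and $\bar V(T,x):=V(T^-,x)$. The bound \eqref{eq:cst:nc:upper} and the Lipschitz property are inherited. The flow inequality \eqref{eq:cst:nc:flow} is the computation from the proof of Theorem~\ref{thm:main}, applied on $(0,T)$. At $\tau=0$ it also holds, because $\bar V(0,\cdot)$ is the post-jump value and the flow to the right of $0$ is jump-free for a short time. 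The jump inequality \eqref{eq:cst:nc:jump} is \eqref{eq:nc:jump} at $t=T$ combined with periodicity, $V(T,Jx)=V(0,Jx)=\bar V(0,Jx)$.

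\textbf{(iii)$\Rightarrow$(ii).} Conversely, from $\bar V$ I define $V(t,x):=\bar V(t-kT,x)$ for $t\in[kT,(k+1)T)$. The left limit is $V(kT^-,x)=\bar V(T,x)$. This requires the map $\tau\mapsto\bar V(\tau,x)$ to have left limit $\bar V(T,x)$ at $T$. I avoid assuming that by not passing through $V$ at all. Instead I integrate $\underline D^+\bar V\le-\|x\|^p$ along the trajectory on $[0,T]$ using the Lipschitz and absolute-continuity argument of Theorem~\ref{thm:main}, (iii)$\Rightarrow$(ii), which produces $\bar V(T,S(T)y)\le\bar V(0,y)-\int_0^T\|S(r)y\|^p\,\dr$. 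Telescoping with \eqref{eq:cst:nc:jump} then gives the Datko bound \eqref{eq:cst:datko} with $b^p=c$. For a starting time $s$ inside a period, I start the telescoping at the timer value $s-kT$.

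\textbf{(i)$\Leftrightarrow$(iv).} For (i)$\Rightarrow$(iv), note that $(S(T)J)^n=U_{\sigma_T}(nT^-,0^-)$, more precisely $U_{\sigma_T}((n+1)T^-,T^-)$. Its norm is bounded by $\|J\|M\rho^{n-1}e^{-\alpha nT}$ up to constants, so it decays geometrically. I then take $V_d(x):=\sum_{n\ge0}\|(S(T)J)^nx\|^p$. It satisfies $V_d(S(T)Jx)-V_d(x)=-\|x\|^p$, the upper bound $V_d\le\hat c\|x\|^p$, and the Lipschitz property exactly as in Theorem~\ref{thm:pj}.

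For (iv)$\Rightarrow$(i), telescoping gives $\sum_n\|(S(T)J)^nx\|^p\le\hat c\|x\|^p$, a discrete Datko condition. Running the argument of Steps~2B and~3B of Lemma~\ref{lem:datko} gives $\|(S(T)J)^n\|\le 2C\rho_0^n$ for some constant $C$. The boundedness constant needed there is obtained the same way, since each power is bounded by $\hat c^{1/p}$ because every term of the sum is. For general $t\ge s$, I factor $U_{\sigma_T}(t,s)$ into at most two partial-period pieces, each bounded by $G\max(1,\|J\|)$, around a power of $S(T)J$. Because $\kappa\approx(t-s)/T$, the resulting bound $C'\rho_0^{\kappa}$ splits as $C''\rho_0^{\kappa/2}e^{-\alpha(t-s)}$ with $\alpha=-\log(\rho_0)/(2T)$, which is strong GES.

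\textbf{Main obstacle.} I expect the delicate step to be the bookkeeping at the endpoints $\tau=0$ and $\tau=T$ in (iii). This includes the one-sided limit conventions and the need for the Dini-derivative integration to reach the endpoint $T$ without continuity of $\bar V$ in $\tau$. The fallback is to require the flow inequality on the closed interval and to use the integration lemma on $[0,T)$, concluding via Lipschitz continuity in $x$ together with $S(r)y\to S(T)y$.
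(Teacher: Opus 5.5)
Your proof works, and for (i)$\Leftrightarrow$(ii) and (i)$\Rightarrow$(iii) it is essentially the paper's. The paper gets (i)$\Rightarrow$(ii) from the estimates of Theorem~\ref{thm:main} and (ii)$\Rightarrow$(i) from Lemma~\ref{lem:datko}. It builds $\bar V(\tau,x)$ as the Datko left-hand side started at time $\tau$, with $\bar V(T,\cdot)$ defined as the left limit so that $\bar V(T,x)=\|x\|^p+\bar V(0,Jx)$; this is exactly your periodized restriction of \eqref{eq:V:def}.

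The genuine difference is (iv). The paper never passes through stability there. It proves (iii)$\Rightarrow$(iv) by sampling $V_d:=\bar V(T,\cdot)$ and chaining one period of flow from $Jx$ with the jump inequality. It proves (iv)$\Rightarrow$(iii) by the explicit lift $\bar V(\tau,x)=\alpha V_d(S(T-\tau)x)+\int_0^{T-\tau}\|S(r)x\|^p\,\mathrm{d}r$, with $\alpha=1+M_S^pT\|J\|^p$ and $M_S:=\sup_{0\le r\le T}\|S(r)\|$.

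You instead close (i)$\Leftrightarrow$(iv) directly. For necessity you use the discrete Datko functional $V_d(x)=\sum_{n\ge0}\|(S(T)J)^nx\|^p$, which decreases with equality. For sufficiency you use a discrete Datko iteration, a factorization of $U_{\sigma_T}(t,s)$ around a power of $S(T)J$, and $\kappa_{\sigma_T}(t,s)>(t-s)/T-1$. These steps are correct. Your route is more elementary and shows plainly that (iv) is just geometric stability of the monodromy. The paper's route gives a constructive two-way translation between timer and discrete certificates that avoids stability estimates. That lifting is what gets reused in Theorem~\ref{thm:hilbert:cst} (via $\bar P(\tau)=S(T-\tau)^*P_dS(T-\tau)+\cdots$) and in the minimum and range dwell-time results.

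On your ``main obstacle'': you cannot avoid the left-limit issue, and your fallback does not resolve it. Integrating $\underline{D}^+\bar V\le-\|x\|^p$ up to $\bar V(T,S(T)y)$ requires $\tau\mapsto\bar V(\tau,z)$ to be continuous on $[0,T]$, since a function with nonpositive lower right Dini derivative can still jump upward. Lipschitz continuity in $x$ only absorbs $S(\tau)y\to S(T)y$, not the change in the timer argument. Imposing the flow inequality at $\tau=T$ adds nothing, because it looks to the right of $T$.

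Read literally, (iii) holds for every system. Take $\bar V(\tau,x)=\int_0^{T-\tau}\|S(r)x\|^p\,\mathrm{d}r$ for $\tau<T$ and $\bar V(T,x)=\kappa\|x\|^p$ with $\kappa\ge1+M_S^pT\|J\|^p$. This satisfies (iii) even for the unstable case $X=\mathbb{R}$, $A=1$, $J=2$.

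The paper's own (iii)$\Rightarrow$(ii) and (iii)$\Rightarrow$(iv) make the same tacit assumption: it defines $\bar V(T,\cdot)$ as the left limit and treats $\bar V$ as absolutely continuous on each period. So this is not a weakness of your route relative to the paper. The right repair is to read continuity of $\bar V$ in $\tau$ on $[0,T]$ into (iii). The functional you construct in (i)$\Rightarrow$(iii) does satisfy this.
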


\begin{proof}
We prove $(i)\Rightarrow(ii)\Rightarrow(iii)\Rightarrow(ii)\Rightarrow(i)$ and $(iii)\Leftrightarrow(iv)$.

\medskip\noindent\textbf{Proof that $(i)\Rightarrow(ii)$.} The fixed-sequence Datko bound \eqref{eq:cst:datko} for $\sigma=\sigma_T$ follows verbatim from the proof of $(i)\Rightarrow(ii)$ in Theorem~\ref{thm:main}, applied with $\sigma=\sigma_T$ and using the periodicity $\kappa_{\sigma_T}(t_{k}^-,s)=k-K_s-1$ for $t_k>s$ with $K_s=\kappa_{\sigma_T}(s,0)$. The resulting constant is $b^p=M^p(1/(\alpha p)+1/(1-\rho^p))$.

\medskip\noindent\textbf{Proof that $(ii)\Rightarrow(iii)$.} Define $\bar V$ on $[0,T]\times X$ by the explicit sup-functional construction
\begin{equation}\label{eq:cst:V:explicit}
  \bar V(\tau,x):=\int_0^\infty\|U_{\sigma_T}(\tau+s,\tau)x\|^p  \ds+\sum_{k\ge1:  t_k^{\sigma_T}>\tau}\|U_{\sigma_T}(t_k^-,\tau)x\|^p.
\end{equation}
The change of variables $u=\tau+s$ and the convention $K_\tau=\kappa_{\sigma_T}(\tau,0)$ give $$\bar V(\tau,x)=\int_\tau^\infty\|U_{\sigma_T}(u,\tau)x\|^p  \du+\sum_{k:t_k>\tau}\|U_{\sigma_T}(t_k^-,\tau)x\|^p,$$ which is the left-hand side of \eqref{eq:cst:datko} at starting time $s=\tau$. By (ii), $\bar V(\tau,x)\le b^p\|x\|^p$ for all $\tau\in[0,T)$ and all $x\in X$, giving \eqref{eq:cst:nc:upper} with $c=b^p$. The endpoint value is defined as the pre-jump left-limit $\bar V(T,x):=\lim_{\tau\uparrow T}\bar V(\tau,x)$, so that the imminent jump at timer $T$ is included; as $\tau\uparrow T$ the first flow arc $[\tau,T]$ vanishes while the pre-jump term $\|U_{\sigma_T}(T^-,\tau)x\|^p\to\|x\|^p$ is retained, whence
\begin{equation}\label{eq:cst:V:endpoint}
  \bar V(T,x)=\|x\|^p+\int_0^\infty\|U_{\sigma_T}(s,0)Jx\|^p\ds+\sum_{k\ge1}\|U_{\sigma_T}(t_k^-,0)Jx\|^p=\|x\|^p+\bar V(0,Jx),
\end{equation}
using $U_{\sigma_T}(T+s,T^-)=U_{\sigma_T}(T+s,T)J$ and periodicity. In particular $\bar V(T,x)\le(1+b^p\|J\|^p)\|x\|^p$, so \eqref{eq:cst:nc:upper} extends to $\tau=T$ after enlarging $c$.

\emph{Flow condition \eqref{eq:cst:nc:flow}.} For $\tau\in[0,T)$ and $h>0$ small enough that $(\tau,\tau+h)\cap\mathbb{T}_{\sigma_T}=\varnothing$, the same computation as in $(ii)\Rightarrow(iii)$ of Theorem~\ref{thm:main} (with starting time $\tau$ instead of arbitrary $t$) yields:
\begin{equation*}
  \bar V(\tau+h,S(h)x)-\bar V(\tau,x)=-\int_\tau^{\tau+h}\|U_{\sigma_T}(u,\tau)x\|^p  \du=-\int_0^h\|S(r)x\|^p  \dr,
\end{equation*}
since no jump occurs in $(\tau,\tau+h)$. Dividing by $h$ and using strong continuity of $S$, $\underline{D}^+\bar V(\tau,x)=-\|x\|^p$.

\emph{Jump condition \eqref{eq:cst:nc:jump}.} This is immediate from the endpoint identity \eqref{eq:cst:V:endpoint}: subtracting gives $\bar V(0,Jx)-\bar V(T,x)=-\|x\|^p$, which is \eqref{eq:cst:nc:jump} with equality.

\emph{Lipschitz continuity.} The argument from Theorem~\ref{thm:main} carries over verbatim with $\sigma=\sigma_T$, yielding $|\bar V(\tau,x)-\bar V(\tau,y)|\le L_r\|x-y\|$ on $\{\|x\|,\|y\|\le r\}$, uniform in $\tau\in[0,T]$.

\medskip\noindent\textbf{Proof that $(iii)\Rightarrow(ii)$.} Fix $s\ge0$ and $y\in X$. Let $x(t):=U_{\sigma_T}(t,s)y$ and write $\bar V(t):=\bar V(\tau(t),x(t))$ where $\tau(t):=t-t_{k(t)}$ is the timer (with $t_{k(t)}$ the most recent jump time at or before $t$). On each inter-jump interval $(t_k,t_{k+1})=(t_k,t_k+T)$, $\bar V(t)$ is absolutely continuous and \eqref{eq:cst:nc:flow} integrates to:
\begin{equation*}
  \bar V(t_{k+1}^-,x(t_{k+1}^-))-\bar V(t_k,x(t_k))\le -\int_{t_k}^{t_{k+1}}\|U_{\sigma_T}(s',s)y\|^p  \ds',
\end{equation*}
where on the right we used $\tau(t)=t-t_k$ so that $\bar V$ on this interval is parametrized by $\tau\in[0,T]$. At the jump time $t_{k+1}$, \eqref{eq:cst:nc:jump} applied with $x=x(t_{k+1}^-)$ gives:
\begin{equation*}
  \bar V(0,x(t_{k+1}))=\bar V(0,Jx(t_{k+1}^-))\le \bar V(T,x(t_{k+1}^-))-\|x(t_{k+1}^-)\|^p.
\end{equation*}
Telescoping over $k=0,1,\ldots,n$ (with $t_0$ the first jump after $s$ if any, or $\tau(s)=s-t_{k(s)}$ as starting value) and noting $\bar V\ge0$ and $\bar V(\tau(s),y)\le c\|y\|^p$:
\begin{equation*}
  \int_s^t\|U_{\sigma_T}(s',s)y\|^p  \ds'+\sum_{k:t_k\in(s,t]}\|U_{\sigma_T}(t_k^-,s)y\|^p\le c\|y\|^p\qquad\forall t\ge s,
\end{equation*}
which is \eqref{eq:cst:datko} with $b^p=c$.

\medskip\noindent\textbf{Proof that $(ii)\Rightarrow(i)$.} Condition (ii) at all starting times $s\ge0$ provides the hypothesis of Lemma~\ref{lem:datko} applied to the single sequence $\sigma_T$; the conclusion is UGES over $\mathcal{S}_{\mathrm{cst}}(T)=\{\sigma_T\}$.

\medskip\noindent\textbf{Proof that $(iii)\Rightarrow(iv)$.} Define $V_d(x):=\bar V(T,x)$, which inherits the upper bound $V_d(x)\le c\|x\|^p$ and Lipschitz continuity from (iii). Chaining the flow inequality \eqref{eq:cst:nc:flow} on $[0,T]$ along the unforced trajectory $r\mapsto S(r)Jx$ (starting from $Jx$ at $\tau=0$) and \eqref{eq:cst:nc:jump} at $\tau=T$ (with pre-jump state $x$):
\begin{align*}
  V_d(S(T)Jx)&=\bar V(T,S(T)Jx)\\
  &\le \bar V(0,Jx)-\int_0^T\|S(r)Jx\|^p  \dr\qquad\text{(flow on the period starting from $Jx$)}\\
  &\le \bar V(T,x)-\|x\|^p-\int_0^T\|S(r)Jx\|^p  \dr\qquad\text{(jump at $\tau=T$)}\\
  &\le V_d(x)-\|x\|^p,
\end{align*}
giving \eqref{eq:cst:dt} with $\hat c=c$.

\medskip\noindent\textbf{Proof that $(iv)\Rightarrow(iii)$.} Let $M_S:=M_0e^{|\omega_0|T}$ from Assumption~\ref{ass:growth} (so $\|S(r)\|\le M_S$ for $r\in[0,T]$) and set $\alpha:=1+M_S^p  T  \|J\|^p$. Define on $[0,T]\times X$:
\begin{equation}\label{eq:cst:Vbar:from:Vd}
  \bar V(\tau,x)\;:=\;\alpha  V_d(S(T-\tau)x)\;+\;\int_0^{T-\tau}\|S(r)x\|^p  \dr.
\end{equation}
Non-negativity of $\bar V$ is immediate from $V_d\ge0$ and the integrand being non-negative.

\emph{Upper bound.} Using $\|S(r)\|\le M_S$ on $r\in[0,T]$:
\begin{equation*}
  \bar V(\tau,x)\le \alpha  \hat c  M_S^p\|x\|^p+M_S^p  T\|x\|^p=[\alpha  \hat c  M_S^p+M_S^p  T]\|x\|^p,
\end{equation*}
which is \eqref{eq:cst:nc:upper} with $c:=\alpha  \hat c  M_S^p+M_S^p  T$.

\emph{Flow condition.} For $h>0$ with $\tau+h\le T$, using $S(T-\tau-h)S(h)=S(T-\tau)$ and the change of variable $r'=r+h$ in the integral:
\begin{align*}
  \bar V(\tau+h,S(h)x)&=\alpha  V_d(S(T-\tau)x)+\int_h^{T-\tau}\|S(r')x\|^p  \dr'=\bar V(\tau,x)-\int_0^h\|S(r')x\|^p  \dr'.
\end{align*}
Dividing by $h$ and letting $h\to0^+$, $h^{-1}\!\int_0^h\|S(r')x\|^p\,\dr'\to\|x\|^p$ by strong continuity of $S(\cdot)$ at $r'=0$, hence $\underline{D}^+\bar V(\tau,x)=-\|x\|^p$, which is \eqref{eq:cst:nc:flow}.

\emph{Jump condition.} At $\tau=T$, $\bar V(T,x)=\alpha  V_d(x)$. At $\tau=0$ with state $Jx$, $$\bar V(0,Jx)=\alpha  V_d(S(T)Jx)+\int_0^T\|S(r)Jx\|^p  \dr.$$ The difference is
\begin{equation*}
  \bar V(0,Jx)-\bar V(T,x)=\alpha\bigl[V_d(S(T)Jx)-V_d(x)\bigr]+\int_0^T\|S(r)Jx\|^p  \dr.
\end{equation*}
Applying \eqref{eq:cst:dt} (with the $S(T)J$ form) and the bound $\int_0^T\|S(r)Jx\|^p  \dr\le M_S^p  T  \|J\|^p\|x\|^p$:
\begin{equation*}
  \bar V(0,Jx)-\bar V(T,x)\le -\alpha\|x\|^p+M_S^p  T  \|J\|^p\|x\|^p=-(\alpha-M_S^p  T  \|J\|^p)\|x\|^p=-\|x\|^p,
\end{equation*}
which is \eqref{eq:cst:nc:jump}, exact at the choice $\alpha=1+M_S^p  T  \|J\|^p$. Lipschitz continuity of $\bar V$ on bounded sets is inherited from $V_d$, $S(\cdot)$, and the bounded integrand.
\end{proof}

\subsection{Minimum dwell-time ($\mathcal{S}_{\min}(T_{\min})$)}\label{subsec:nc:min}

For $\mathcal{S}_{\min}(T_{\min})$, inter-jump intervals satisfy $t_{k+1}-t_k\ge T_{\min}$ with no upper bound. The timer $\tau=t-t_k$ ranges over $[0,\infty)$, but the Lyapunov functional $\bar V(\tau,x)$ saturates beyond $\tau=T_{\min}$: it is timer-dependent on the mandatory phase $[0,T_{\min}]$ and timer-independent on the free phase $[T_{\min},\infty)$. Since $\mathcal{S}_{\min}(T_{\min})$ admits arbitrarily long jump-free intervals, exponential stability of the underlying semigroup is a necessary condition, as shown below.

\begin{proposition}[Necessity of semigroup stability, minimum dwell-time]\label{prop:min:necessary}
If the system \eqref{eq:syst} is strongly UGES over $\mathcal{S}_{\min}(T_{\min})$ with constants $M\ge1$, $\alpha>0$, $\rho\in(0,1)$, then the $C_0$-semigroup $S(t)$ is exponentially stable, $\|S(\tau)\|\le Me^{-\alpha\tau}$ for all $\tau\ge0$. In particular, each of the equivalent conditions of Theorem~\ref{thm:min:nc} forces $S$ to be exponentially stable.
\end{proposition}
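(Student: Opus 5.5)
The plan is to reduce the statement to Proposition~\ref{prop:necessity:semigroup}, which already gives the conclusion for any family with $\bar T(\mathcal S)=\infty$. It therefore suffices to check that $\mathcal{S}_{\min}(T_{\min})$ has unbounded supremal dwell-time. Alternatively, and to keep the argument self-contained, I would repeat its one-line mechanism directly.

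Fix $\tau\ge0$ and choose a sequence $\sigma\in\mathcal{S}_{\min}(T_{\min})$ whose first jump occurs at $t_1>\tau$. For instance, take $t_k:=\tau+kT_{\min}$ for $k\ge1$. Then every inter-jump interval is at least $T_{\min}$, since $t_1-t_0=\tau+T_{\min}\ge T_{\min}$ and $t_{k+1}-t_k=T_{\min}$. Moreover $t_k\to\infty$, so $\sigma$ is admissible. On $(0,\tau]$ there is no jump, hence $\kappa_\sigma(\tau,0)=0$ and $U_\sigma(\tau,0)=S(\tau)$.

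Applying the strong UGES bound of Definition~\ref{def:stabu} with $s=0$ and $t=\tau$ gives
\[
\|S(\tau)\|=\|U_\sigma(\tau,0)\|\le M\rho^{0}e^{-\alpha\tau}=Me^{-\alpha\tau}.
\]
Since $\tau\ge0$ was arbitrary and the constants $M,\alpha$ are uniform over the family, this holds for all $\tau\ge0$, which is exponential stability of $S$. Here uniformity in $\sigma$ is essential, because a different $\sigma$ is used for each $\tau$. Note that only $s=0$ is needed here, not uniformity in the initial time.

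For the final sentence, I would observe that each equivalent condition of Theorem~\ref{thm:min:nc} is, by that theorem, equivalent to strong UGES over $\mathcal{S}_{\min}(T_{\min})$. The first part then applies.

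There is no real obstacle. The only point needing care is admissibility of the chosen sequence, namely the convention $t_0=0$ and the requirement $t_1-t_0\ge T_{\min}$, which the choice $t_1=\tau+T_{\min}$ handles.
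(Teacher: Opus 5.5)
Your proof is correct and follows the paper's, which observes that $\bar T(\mathcal{S}_{\min}(T_{\min}))=\infty$ and invokes Proposition~\ref{prop:necessity:semigroup}; that proposition's argument is exactly your jump-free-window mechanism, and your sequence with $t_1=\tau+T_{\min}$ is an admissible element of $\mathcal{S}_{\min}(T_{\min})$. The only difference is that you delay the first jump so the bound can be applied at $s=0$ rather than at a jump time $s=t_k$. As you note, this needs only uniformity over the family, not uniformity in the initial time, which makes it a valid and slightly more economical variant.
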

\begin{proof}
The family $\mathcal{S}_{\min}(T_{\min})$ has $\bar T(\mathcal{S}_{\min}(T_{\min}))=\infty$, since inter-jump intervals are unbounded above. Proposition~\ref{prop:necessity:semigroup} applied to this family yields $\|S(\tau)\|\le Me^{-\alpha\tau}$ for all $\tau\ge0$.
\end{proof}

\begin{theorem}[Minimum dwell-time, non-coercive N\&S]\label{thm:min:nc}
Let $T_{\min}>0$ and let Assumption~\ref{ass:growth} hold. The following are equivalent:
\begin{enumerate}[(i)]
  \item the system \eqref{eq:syst} is strongly UGES over $\mathcal{S}_{\min}(T_{\min})$;
  \item (\emph{Datko}) there exist $p\ge1$ and $b>0$ such that \eqref{eq:cst:datko} holds for all $\sigma\in\mathcal{S}_{\min}(T_{\min})$ and all $s\ge0$;
  \item (\emph{continuous-time hybrid}) there exist $p\ge1$, $c>0$, and a functional $\bar V:[0,T_{\min}]\times X\to\mathbb{R}_{\ge0}$, Lipschitz on bounded subsets of $X$ uniformly in $\tau$, such that
    \begin{subequations}\label{eq:min:nc:hybrid}
    \begin{align}
      \bar V(\tau,x)&\le c\|x\|^p,\;&&\forall\tau\in[0,T_{\min}],  x\in X,\label{eq:min:nc:upper}\\
      \underline{D}^+\bar V(\tau,x)&\le -\|x\|^p,\;&&\forall\tau\in[0,T_{\min}],  x\in X\quad\text{(mandatory phase)},\label{eq:min:nc:flow:mandatory}\\
      \underline{D}^+\bar V(T_{\min},x)&\le -\|x\|^p,\;&&\forall x\in X\quad\text{(free phase, at the boundary)},\label{eq:min:nc:flow:free}\\
      \bar V(0,Jx)-\bar V(T_{\min},x)&\le -\|x\|^p,\;&&\forall x\in X;\label{eq:min:nc:jump}
    \end{align}
    \end{subequations}
    (these conditions imply, in particular, that the $C_0$-semigroup $S(t)$ generated by $A$ is exponentially stable).
  \item (\emph{discrete-time}) the $C_0$-semigroup $S(t)$ is exponentially stable, and the family $\{S(\theta)  J:\theta\ge T_{\min}\}$ is uniformly geometrically stable, i.e., there exist $p\ge1$, $\hat c>0$, and a functional $V_d:X\to\mathbb{R}_{\ge0}$ Lipschitz on bounded subsets of $X$ such that
    \begin{equation}\label{eq:min:dt}
      V_d(x)\le \hat c\|x\|^p,\qquad V_d(S(\theta)  J  x)-V_d(x)\le -\|x\|^p,\qquad\forall x\in X,\;\theta\ge T_{\min}.
    \end{equation}
\end{enumerate}
Moreover, when (i) holds, an explicit timer-dependent Lyapunov functional witnessing (iii) is given by
\begin{equation}\label{eq:min:V:explicit}
  \bar V(\tau,x):=\sup_{\sigma\in\mathcal{S}^{(\tau)}}\left[\int_0^\infty\|U_\sigma(s,0)x\|^p  \ds+\sum_{k\ge1}\|U_\sigma(t_k^-,0)x\|^p\right],
\end{equation}
where, for $\tau\in[0,T_{\min}]$,
\begin{equation}\label{eq:min:family:tau}
  \mathcal{S}^{(\tau)}:=\big\{\sigma\in\mathcal{S}_{0,\infty}:\ t_1^\sigma\ge \max(T_{\min}-\tau,0)\ \text{ and }\ t_{k+1}^\sigma-t_k^\sigma\ge T_{\min}\ \text{ for all }k\ge1\big\},
\end{equation}
and $\mathcal{S}^{(\tau)}:=\mathcal{S}^{(T_{\min})}$ for $\tau\ge T_{\min}$. The set $\mathcal{S}^{(\tau)}$ collects the admissible \emph{future} signals when the elapsed time since the last jump equals $\tau$: the next jump must respect the remaining wait $\max(T_{\min}-\tau,0)$, while every later inter-jump interval obeys the minimum dwell-time. Thus $\mathcal{S}^{(0)}=\mathcal{S}_{\min}(T_{\min})$, the sets $\mathcal{S}^{(\tau)}$ increase with $\tau$, and they saturate at $\tau=T_{\min}$, where $\mathcal{S}^{(T_{\min})}=\{\sigma\in\mathcal{S}_{0,\infty}:t_{k+1}^\sigma-t_k^\sigma\ge T_{\min}\ \forall k\ge1\}$ imposes no lower bound on $t_1^\sigma$.
\end{theorem}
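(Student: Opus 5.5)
I will follow the scheme of Theorems~\ref{thm:main}, \ref{thm:arbitrary:nc} and \ref{thm:cst:nc}: prove the cycle $(i)\Rightarrow(ii)\Rightarrow(iii)\Rightarrow(ii)\Rightarrow(i)$ and then $(iii)\Leftrightarrow(iv)$.

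\textbf{(i)$\Rightarrow$(ii).} This is the computation of Theorem~\ref{thm:main}: the constants $M,\alpha,\rho$ do not depend on $\sigma$, so $b^p=M^p(1/(\alpha p)+1/(1-\rho^p))$ works uniformly on $\mathcal{S}_{\min}(T_{\min})$.

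\textbf{(ii)$\Rightarrow$(i).} For each $\sigma$ I apply Lemma~\ref{lem:datko}. All its constants ($C_0,T_0,K_0$) depend only on $b,p,G,\|J\|$. Hence the resulting $M,\alpha,\rho$ are uniform in $\sigma$. Assumption~\ref{ass:growth} with $G<\infty$ on this family is available, and is consistent with Proposition~\ref{prop:min:necessary}.

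\textbf{(ii)$\Rightarrow$(iii).} I take $\bar V$ from \eqref{eq:min:V:explicit} and write $\Phi(\sigma,x)$ for the bracket.
\begin{itemize}
\item \emph{Upper bound.} Each $\sigma\in\mathcal{S}^{(\tau)}$ has all inter-jump intervals after the first at least $T_{\min}$. The shifted sequence is therefore a tail of some element of $\mathcal{S}_{\min}(T_{\min})$ started at a non-jump time $s$, for instance by prepending jumps at $-T_{\min},-2T_{\min},\dots$ suitably translated. So (ii) gives $\Phi\le b^p\|x\|^p$, and the supremum is finite.
\item \emph{Flow.} For $\tau<T_{\min}$ and small $h$, let $\sigma'\in\mathcal{S}^{(\tau+h)}$. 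Shifting it forward by $h$ produces a sequence in $\mathcal{S}^{(\tau)}$ whose first jump occurs after time $h$. As in the proof of Theorem~\ref{thm:arbitrary:nc}, $\Phi(\text{shift},x)=\int_0^h\|S(r)x\|^p\,dr+\Phi(\sigma',S(h)x)$. Taking suprema gives $\bar V(\tau,x)\ge\int_0^h\|S(r)x\|^p\,dr+\bar V(\tau+h,S(h)x)$, hence $\underline{D}^+\bar V\le-\|x\|^p$.
\item \emph{Free phase.} At $\tau=T_{\min}$ the same argument works with $\bar V(\tau,\cdot)$ frozen, because $\mathcal{S}^{(T_{\min})}$ is shift-invariant in this sense.
\item \emph{Jump.} Prepending a jump at $0^+$ to any $\sigma\in\mathcal{S}^{(0)}$ gives an element of $\mathcal{S}^{(T_{\min})}$. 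So $\bar V(T_{\min},x)\ge\|x\|^p+\bar V(0,Jx)$.
\item \emph{Lipschitz.} This follows as in Theorem~\ref{thm:arbitrary:nc}, using the uniform bound from (i).
\end{itemize}

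\textbf{(iii)$\Rightarrow$(ii).} Along a trajectory I use the saturated timer $\min(t-t_k,T_{\min})$. On $[t_k,t_k+T_{\min}]$ I integrate \eqref{eq:min:nc:flow:mandatory}. After that, the free-phase inequality \eqref{eq:min:nc:flow:free} is an inequality for the time-independent functional $\bar V(T_{\min},\cdot)$, so it integrates along the flow. At a jump the timer is saturated because $t_{k+1}-t_k\ge T_{\min}$, so \eqref{eq:min:nc:jump} applies. Telescoping with $\bar V\ge0$ gives \eqref{eq:cst:datko} with $b^p=c$. The only subtlety is the initial timer value at the starting time $s$, which is bounded by \eqref{eq:min:nc:upper}.

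\textbf{(iii)$\Rightarrow$(iv).} I set $V_d:=\bar V(T_{\min},\cdot)$. Exponential stability of $S$ follows from Theorem~\ref{thm:pf}-type reasoning with the free-phase inequality, or from Proposition~\ref{prop:min:necessary} via (i). Then I chain: jump from $x$, flow $T_{\min}$ in the mandatory phase, and flow $\theta-T_{\min}$ in the free phase. This gives $V_d(S(\theta)Jx)\le V_d(x)-\|x\|^p$.

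\textbf{(iv)$\Rightarrow$(iii).} I mimic \eqref{eq:cst:Vbar:from:Vd}. For the mandatory phase I define
\[
\bar V(\tau,x):=\beta\,\inf_{\theta\ge T_{\min}-\tau}\ \cdots
\]
More simply, I take
\[
\bar V(\tau,x):=\beta\,V_F(S(T_{\min}-\tau)x)+\int_0^{T_{\min}-\tau}\|S(r)x\|^p\,dr,
\]
where $V_F(x):=\sup_{\theta\ge0}\bigl[\beta' V_d(S(\theta)x)+\int_0^\theta\|S(r)x\|^p\,dr\bigr]$. This $V_F$ is finite because $S$ is exponentially stable, decreases along the flow at rate $\|x\|^p$, and is bounded by $c\|x\|^p$. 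The jump inequality then reduces to \eqref{eq:min:dt}, after choosing $\beta,\beta'$ large enough to absorb $\int_0^\theta\|S(r)Jx\|^p\,dr\le M^p\|J\|^p\|x\|^p/(\alpha p)$.

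\textbf{Main obstacle.} I expect the hardest part to be this (iv)$\Rightarrow$(iii) construction of the free-phase functional, together with checking the jump inequality uniformly in $\theta$. A secondary difficulty is the embedding argument showing that $\mathcal{S}^{(\tau)}$ tails are covered by (ii).
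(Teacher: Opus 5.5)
Your proposal is correct. For $(i)\Rightarrow(ii)$, $(ii)\Rightarrow(i)$, $(ii)\Rightarrow(iii)$, $(iii)\Rightarrow(ii)$ and $(iii)\Rightarrow(iv)$ it follows the paper's argument essentially step by step: the same supremum functional over $\mathcal{S}^{(\tau)}$, the same forward shift for both flow phases, the same prepended jump at $0^+$, the same saturated-timer telescoping, and the same jump/mandatory/free chain for $V_d:=\bar V(T_{\min},\cdot)$.

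The genuine difference is in $(iv)\Rightarrow(iii)$.
\begin{itemize}
\item The paper uses an \emph{additive} free-phase functional $W=(1+c_S\|J\|^p)V_d^*+V_{\rm flow}$. Here $V_d^*(y)=\sup_{\tau\ge0}V_d(S(\tau)y)$ is non-increasing along the flow and inherits the discrete decay because $\tau+\theta\ge T_{\min}$. The term $V_{\rm flow}(y)=\int_0^\infty\|S(r)y\|^p\,dr$ supplies the strict flow decay and an exact identity at the jump. The paper then stitches $\bar V(\tau,x)=W(S(T_{\min}-\tau)x)+\int_0^{T_{\min}-\tau}\|S(r)x\|^p\,dr$. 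Each property is verified separately, and the mandatory integral cancels exactly at the jump.
\item Your $V_F$ is instead a \emph{joint} supremum, and you can take $\beta=1$. The two integrals concatenate: $\int_0^{T_{\min}}\|S(r)Jx\|^p\,dr+\int_0^{\theta}\|S(r+T_{\min})Jx\|^p\,dr=\int_0^{\theta+T_{\min}}\|S(r)Jx\|^p\,dr\le c_S\|J\|^p\|x\|^p$. Hence your stitched functional is exactly $\bar V(\tau,x)=\sup_{\theta\ge T_{\min}-\tau}\bigl[\beta'V_d(S(\theta)x)+\int_0^\theta\|S(r)x\|^p\,dr\bigr]$.
\item With $\beta'=1+c_S\|J\|^p$ you get $\bar V(0,Jx)-\bar V(T_{\min},x)\le-\|x\|^p$. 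This follows from $\bar V(T_{\min},x)\ge\beta'V_d(x)$ together with \eqref{eq:min:dt} applied at $\theta+T_{\min}$.
\end{itemize}
This is precisely the paper's range-dwell-time construction \eqref{eq:rng:Vbar:sup} with $T_{\max}=\infty$. Exponential stability of $S$ replaces the finite constant $M_S^pT_{\max}$ by $c_S=\int_0^\infty\|S(r)\|^p\,dr$. So your route handles the minimum and range dwell-time cases with one formula. The paper's modular $W$ instead isolates the three mechanisms: flow monotonicity, discrete decay, and strict flow decay.

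Two minor points.
\begin{itemize}
\item You should record that $V_F$ is Lipschitz on bounded sets. It is a supremum of a uniformly Lipschitz family, by $\sup_\theta\|S(\theta)\|\le M_0$ and $c_S<\infty$. Also delete the abandoned ``$\beta\inf_{\theta\ge T_{\min}-\tau}$'' line; with an infimum the free-phase decrease does not follow.
\item In the upper bound of $(ii)\Rightarrow(iii)$, no prepended jumps are needed. Translating $\sigma\in\mathcal{S}^{(\tau)}$ by $T_{\min}$ (or by $\tau$, as the paper does) already lands in $\mathcal{S}_{\min}(T_{\min})$, because $t_0=0$ is not a jump. Since genuine elements of $\mathcal{S}^{(\tau)}$ have $t_1>0$, this gives $c=b^p$ for every $\tau$. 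The ``jump at $0^+$'' is understood as the limit $t_1=\epsilon\downarrow0$, justified by the uniform Lipschitz bound. The paper instead records the more conservative constant $\max(b^p,1+b^p\|J\|^p)$.
\end{itemize}
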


\begin{proof}
We prove $(i)\Rightarrow(ii)\Rightarrow(iii)\Rightarrow(ii)\Rightarrow(i)$ and $(iii)\Leftrightarrow(iv)$.

\medskip\noindent\textbf{Proof that $(i)\Rightarrow(ii)$.} For each $\sigma\in\mathcal{S}_{\min}(T_{\min})$, the bound \eqref{eq:cst:datko} follows uniformly from the proof of $(i)\Rightarrow(ii)$ in Theorem~\ref{thm:main}, with constants $M,\alpha,\rho$ independent of $\sigma$ by UGES.

\medskip\noindent\textbf{Proof that $(ii)\Rightarrow(iii)$.} Define $\bar V$ by \eqref{eq:min:V:explicit}. Every $\sigma\in\mathcal{S}^{(\tau)}$ is the future, re-based to the origin, of some $\sigma'\in\mathcal{S}_{\min}(T_{\min})$ observed at a time whose elapsed dwell equals $\tau$: let $\sigma'$ have first inter-jump interval $t_1^\sigma+\tau\ge T_{\min}$ (using $t_1^\sigma\ge T_{\min}-\tau$) and the same subsequent jumps as $\sigma$, so that $\sigma'\in\mathcal{S}_{\min}(T_{\min})$ and $U_{\sigma'}(\tau+s',\tau)x=U_\sigma(s',0)x$ for all $s'\ge0$. When $t_1^\sigma>0$ the jump of $\sigma'$ at $t_1^\sigma+\tau>\tau$ is counted by the strict Datko sum, so, since \eqref{eq:cst:datko} holds uniformly over all starting times $s\ge0$ and all signals in $\mathcal{S}_{\min}(T_{\min})$, applying it to $\sigma'$ at $s=\tau$ gives $\int_0^\infty\|U_\sigma(s',0)x\|^p\ds'+\sum_{k\ge1}\|U_\sigma(t_k^{\sigma-},0)x\|^p\le b^p\|x\|^p$; hence $\bar V(\tau,x)\le b^p\|x\|^p$ for $\tau\in[0,T_{\min})$. At the saturation timer $\tau=T_{\min}$, $\mathcal{S}^{(T_{\min})}$ additionally contains immediate-jump signals (jump at $0^+$, see the jump-condition step below), whose bracket equals $\|x\|^p+\int_0^\infty\|U_\sigma(s',0)Jx\|^p\ds'+\sum_{k\ge1}\|U_\sigma(t_k^{\sigma-},0)Jx\|^p\le(1+b^p\|J\|^p)\|x\|^p$, the leading $\|x\|^p$ being the pre-jump term omitted by the strict Datko sum. Hence $\bar V(T_{\min},x)\le(1+b^p\|J\|^p)\|x\|^p$, and \eqref{eq:min:nc:upper} holds with $c:=\max(b^p,\,1+b^p\|J\|^p)$.

\emph{Mandatory-phase flow \eqref{eq:min:nc:flow:mandatory}.} Fix $\tau\in[0,T_{\min})$, $x\in X$, $h>0$ with $\tau+h\le T_{\min}$. For any $\sigma\in\mathcal{S}^{(\tau+h)}$ (i.e., $t_1^\sigma\ge T_{\min}-\tau-h$), shift forward by $h$ to get $\sigma^{(h)}$ with $t_k^{\sigma^{(h)}}=t_k^\sigma+h$; then $t_1^{\sigma^{(h)}}\ge T_{\min}-\tau$, so $\sigma^{(h)}\in\mathcal{S}^{(\tau)}$. The propagator relation $U_{\sigma^{(h)}}(s+h,0)=U_\sigma(s,0)S(h)$ gives, after change of variable $s'=s+h$:
\begin{align*}
  \bar V(\tau+h,S(h)x)&=\sup_{\sigma\in\mathcal{S}^{(\tau+h)}}\Big[\int_h^\infty\|U_{\sigma^{(h)}}(s',0)x\|^p  \ds'+\sum_{k}\|U_{\sigma^{(h)}}(t_k^-,0)x\|^p\Big]\\
  &\le \sup_{\sigma^{(h)}\in\mathcal{S}^{(\tau)},  t_1\ge h}\Big[\int_0^\infty\|U_{\sigma^{(h)}}(s',0)x\|^p  \ds'+\sum_{k}\|U_{\sigma^{(h)}}(t_k^-,0)x\|^p\Big]-\int_0^h\|S(s')x\|^p  \ds',
\end{align*}
the last subtraction being valid because every $\sigma^{(h)}$ with $t_1\ge h$ has no jump on $[0,h]$ so $U_{\sigma^{(h)}}(s',0)=S(s')$ there, contributing the same $\int_0^h\|S(s')x\|^p  \ds'$ uniformly. The sup over the smaller subfamily is $\le \bar V(\tau,x)$, hence:
\begin{equation*}
  \bar V(\tau+h,S(h)x)-\bar V(\tau,x)\le -\int_0^h\|S(s')x\|^p  \ds'.
\end{equation*}
Dividing by $h$ and using strong continuity, $\underline{D}^+\bar V(\tau,x)\le -\|x\|^p$.

\emph{Free-phase flow \eqref{eq:min:nc:flow:free}.} At saturation $\tau=T_{\min}$, the set $\mathcal{S}^{(T_{\min})}=\{\sigma\in\mathcal{S}_{0,\infty}:t_{k+1}^\sigma-t_k^\sigma\ge T_{\min}\ \forall k\ge1\}$ imposes no lower bound on $t_1^\sigma$. For any $\sigma\in\mathcal{S}^{(T_{\min})}$, the prepended signal $\widetilde\sigma$ with $t_k^{\widetilde\sigma}=t_k^\sigma+h$ has $t_1^{\widetilde\sigma}=t_1^\sigma+h>0$ and the same subsequent inter-jump intervals, so $\widetilde\sigma\in\mathcal{S}^{(T_{\min})}$. The propagator identity $U_{\widetilde\sigma}(s+h,0)x=U_\sigma(s,0)S(h)x$ and the same calculation yield:
\begin{equation*}
  \bar V(T_{\min},S(h)x)-\bar V(T_{\min},x)\le -\int_0^h\|S(s')x\|^p  \ds',
\end{equation*}
which gives $\underline{D}^+\bar V(T_{\min},x)\le -\|x\|^p$.

\emph{Jump condition \eqref{eq:min:nc:jump}.} Fix $x\in X$. Here $\mathcal{S}^{(0)}=\mathcal{S}_{\min}(T_{\min})$. For any $\sigma\in\mathcal{S}_{\min}(T_{\min})$, define $\sigma^*$ by prepending a single jump at $t_1^{\sigma^*}=0^+$ followed by $\sigma$, with subsequent jumps at $\{t_k^\sigma\}_{k\ge1}$. The inter-jump intervals of $\sigma^*$ following the prepended jump have length $t_1^\sigma\ge T_{\min}$ and then those of $\sigma$ ($\ge T_{\min}$), so $\sigma^*\in\mathcal{S}^{(T_{\min})}$ (its first, degenerate interval $[0,0^+]$ carries no lower-bound constraint at saturation). Then $U_{\sigma^*}(s,0)x=U_\sigma(s,0)Jx$ for $s\ge0$ and:
\begin{align*}
  \int_0^\infty\|U_{\sigma^*}(s,0)x\|^p  \ds&=\int_0^\infty\|U_\sigma(s,0)Jx\|^p  \ds,\\
  \sum_{k\ge1}\|U_{\sigma^*}(t_k^{\sigma^*-},0)x\|^p&=\|x\|^p+\sum_{k\ge1}\|U_\sigma(t_k^{\sigma-},0)Jx\|^p,
\end{align*}
the $\|x\|^p$ term coming from the prepended jump at $0^+$. Since $\sigma^*\in\mathcal{S}^{(T_{\min})}$:
\begin{equation*}
  \|x\|^p+\int_0^\infty\|U_\sigma(s,0)Jx\|^p  \ds+\sum_{k\ge1}\|U_\sigma(t_k^{\sigma-},0)Jx\|^p\le \bar V(T_{\min},x).
\end{equation*}
Taking sup over $\sigma\in\mathcal{S}^{(0)}$ on the left: $\|x\|^p+\bar V(0,Jx)\le \bar V(T_{\min},x)$, i.e., \eqref{eq:min:nc:jump}.

\emph{Lipschitz continuity.} The argument of Theorem~\ref{thm:main} applies uniformly over $\sigma\in\mathcal{S}^{(\tau)}$, each element being the future of an $\mathcal{S}_{\min}(T_{\min})$ signal, hence covered by the uniform-in-$s$ UGES bound, giving $|\bar V(\tau,x)-\bar V(\tau,y)|\le L_r\|x-y\|$ uniform in $\tau$.

\medskip\noindent\textbf{Proof that $(iii)\Rightarrow(ii)$.} Fix $\sigma\in\mathcal{S}_{\min}(T_{\min})$, $s\ge0$, $y\in X$. Let $x(t):=U_\sigma(t,s)y$. On each inter-jump interval $[t_k,t_{k+1})$ with $\theta_k:=t_{k+1}-t_k\ge T_{\min}$, decompose into mandatory phase $[t_k,t_k+T_{\min}]$ and free phase $[t_k+T_{\min},t_{k+1})$. Integrating \eqref{eq:min:nc:flow:mandatory} on $[t_k,t_k+T_{\min}]$ with timer $\tau\in[0,T_{\min}]$:
\begin{equation*}
  \bar V(T_{\min},x(t_k+T_{\min}))-\bar V(0,x(t_k))\le -\int_{t_k}^{t_k+T_{\min}}\|x(s')\|^p  \ds'.
\end{equation*}
Integrating \eqref{eq:min:nc:flow:free} on $[t_k+T_{\min},t_{k+1})$ (where $\bar V\equiv\bar V(T_{\min},\cdot)$ is timer-independent):
\begin{equation*}
  \bar V(T_{\min},x(t_{k+1}^-))-\bar V(T_{\min},x(t_k+T_{\min}))\le -\int_{t_k+T_{\min}}^{t_{k+1}}\|x(s')\|^p  \ds'.
\end{equation*}
At the jump, \eqref{eq:min:nc:jump} gives $\bar V(0,x(t_{k+1}))\le \bar V(T_{\min},x(t_{k+1}^-))-\|x(t_{k+1}^-)\|^p$. Summing and telescoping over $k$, with $\bar V\ge0$ and $\bar V(\tau(s),y)\le c\|y\|^p$, yields \eqref{eq:cst:datko} at starting time $s$ with $b^p=c$, uniformly in $\sigma$.

\medskip\noindent\textbf{Proof that $(ii)\Rightarrow(i)$.} Lemma~\ref{lem:datko} applied uniformly over $\mathcal{S}_{\min}(T_{\min})$ yields UGES. Proposition~\ref{prop:min:necessary} then gives exponential stability of $S$, consistent with the requirement $\omega_0\le0$ from Assumption~\ref{ass:growth} on this family (Remark~\ref{rem:growth:G}).

\medskip\noindent\textbf{Proof that $(iii)\Rightarrow(iv)$.} First, the free-phase flow \eqref{eq:min:nc:flow:free} establishes the semigroup-stability clause of (iv): integrating it along $r\mapsto S(r)x$ gives $$\int_0^\infty\|S(r)x\|^p  \dr\le\bar V(T_{\min},x)\le c\|x\|^p,$$ and Datko's theorem for $C_0$-semigroups \cite{Datko:70} yields $\|S(t)\|\le M_0e^{-\alpha_0 t}$ for some $M_0\ge1$, $\alpha_0>0$. Set $V_d(x):=\bar V(T_{\min},x)$, which inherits the upper bound $V_d(x)\le c\|x\|^p$ (so $\hat c=c$) and Lipschitz continuity from (iii). For $\theta\ge T_{\min}$ and $x\in X$, chain the free-phase flow on $[T_{\min},\theta]$ (inequality~(a)), the mandatory-phase flow on $[0,T_{\min}]$ from $Jx$ (inequality~(b)), and the jump with pre-jump state $x$ (inequality~(c)):
\begin{align*}
  V_d(S(\theta)Jx)=\bar V(T_{\min},S(\theta)Jx)
  &\overset{\text{(a)}}{\le} \bar V(T_{\min},S(T_{\min})Jx)-\int_{T_{\min}}^{\theta}\|S(r)Jx\|^p  \dr\\
  &\overset{\text{(b)}}{\le} \bar V(0,Jx)-\int_0^{\theta}\|S(r)Jx\|^p  \dr\\
  &\overset{\text{(c)}}{\le} \bar V(T_{\min},x)-\|x\|^p-\int_0^{\theta}\|S(r)Jx\|^p  \dr\\
  &\le V_d(x)-\|x\|^p,
\end{align*}
where (a) uses the free-phase flow \eqref{eq:min:nc:flow:free}, (b) the mandatory flow \eqref{eq:min:nc:flow:mandatory}, (c) the jump \eqref{eq:min:nc:jump}, and the last step drops the non-negative integral. This is \eqref{eq:min:dt} with $\hat c=c$.

\medskip\noindent\textbf{Proof that $(iv)\Rightarrow(iii)$.} By the semigroup-stability clause of (iv), fix constants $M_0\ge1$ and $\alpha_0>0$ with
\begin{equation}\label{eq:min:S:expstable}
  \|S(t)\|\le M_0e^{-\alpha_0 t},\qquad t\ge0,
\end{equation}
and write $M_S:=\sup_{r\in[0,T_{\min}]}\|S(r)\|\le M_0$ for the (finite) bound of the semigroup over the mandatory phase. Define the discrete-time flow excess
\begin{equation}\label{eq:min:cS}
  c_S:=\int_0^\infty\|S(r)\|^p  \dr\le\frac{M_0^p}{p  \alpha_0}<\infty,
\end{equation}
which is finite by \eqref{eq:min:S:expstable}. We build the timer-dependent functional in two steps: first a timer-independent functional $W$ on $X$ that is dissipative along both the flow and the discrete map $S(\theta)J$, then a stitched timer-dependent extension $\bar V$.

\emph{Step 1: construction of $W$.} Set
\begin{equation}\label{eq:min:W:def}
  V_d^*(y):=\sup_{\tau\ge0}V_d(S(\tau)y),\qquad V_{\rm flow}(y):=\int_0^\infty\|S(r)y\|^p  \dr,\qquad W:=(1+c_S\|J\|^p)V_d^*+V_{\rm flow}.
\end{equation}
We record four properties of these functionals.
\begin{enumerate}[(a)]
  \item \emph{Upper bound on $V_d^*$.} Using \eqref{eq:min:dt} ($V_d(x)\le\hat c\|x\|^p$) and \eqref{eq:min:S:expstable}, we obtain $$V_d^*(y)\le\hat c\sup_{\tau\ge0}\|S(\tau)y\|^p\le\hat c  M_0^p\|y\|^p.$$
  \item \emph{Flow monotonicity of $V_d^*$.} For $t\ge0$, the following relationship holds $$V_d^*(S(t)y)=\sup_{\tau\ge0}V_d(S(\tau+t)y)\le\sup_{\tau'\ge0}V_d(S(\tau')y)=V_d^*(y),$$ since the shifted index set $\{\tau+t:\tau\ge0\}$ is contained in $\{\tau'\ge0\}$.
  \item \emph{Discrete decay of $V_d^*$.} For $\theta\ge T_{\min}$ and $y\in X$, every $\tau\ge0$ gives $\tau+\theta\ge T_{\min}$, so \eqref{eq:min:dt} applies to $$V_d(S(\tau+\theta)Jy)\le V_d(y)-\|y\|^p\le V_d^*(y)-\|y\|^p.$$ Taking the supremum over $\tau\ge0$ yields $$V_d^*(S(\theta)Jy)\le V_d^*(y)-\|y\|^p.$$
  \item \emph{Properties of $V_{\rm flow}$.} By \eqref{eq:min:S:expstable}, $V_{\rm flow}(y)\le c_S\|y\|^p$ and $V_{\rm flow}(Jy)\le c_S\|J\|^p\|y\|^p$. Along the flow, $$V_{\rm flow}(S(t)y)=\int_t^\infty\|S(u)y\|^p  \du=V_{\rm flow}(y)-\int_0^t\|S(u)y\|^p  \du,$$ so $\underline D^+V_{\rm flow}(y)=-\|y\|^p$ by strong continuity of $S(\cdot)$ at $u=0$. Moreover, for $\theta\ge0$, we have $$V_{\rm flow}(S(\theta)Jy)=V_{\rm flow}(Jy)-\int_0^\theta\|S(r)Jy\|^p  \dr.$$
\end{enumerate}
Combining (a) and (d), $W(y)\le\kappa\|y\|^p$ with
\begin{equation}\label{eq:min:kappa}
  \kappa:=(1+c_S\|J\|^p)\hat c  M_0^p+c_S.
\end{equation}
Combining the flow monotonicity (b) of $V_d^*$ with $\underline D^+V_{\rm flow}=-\|y\|^p$ from (d), $\underline D^+W(y)\le-\|y\|^p$. Finally, for the discrete map with $\theta\ge T_{\min}$,
\begin{align}
  W(S(\theta)Jy)-W(y)
  &=(1+c_S\|J\|^p)\bigl[V_d^*(S(\theta)Jy)-V_d^*(y)\bigr]+\bigl[V_{\rm flow}(S(\theta)Jy)-V_{\rm flow}(y)\bigr]\notag\\
  &\le(1+c_S\|J\|^p)(-\|y\|^p)+V_{\rm flow}(Jy)-V_{\rm flow}(y)-\int_0^\theta\|S(r)Jy\|^p  \dr\notag\\
  &\le-\|y\|^p-c_S\|J\|^p\|y\|^p+c_S\|J\|^p\|y\|^p-\int_0^\theta\|S(r)Jy\|^p  \dr\notag\\
  &=-\|y\|^p-\int_0^\theta\|S(r)Jy\|^p  \dr,\label{eq:min:W:jump}
\end{align}
where the second line uses (c) and the last identity of (d), and the third line uses $V_{\rm flow}(Jy)\le c_S\|J\|^p\|y\|^p$ and $V_{\rm flow}(y)\ge0$.

\emph{Step 2: stitched timer-dependent functional.} Set $\varsigma(\tau):=\max(T_{\min}-\tau,0)$ and
\begin{equation}\label{eq:min:Vbar:stitched}
  \bar V(\tau,x):=W(S(\varsigma(\tau))x)+\int_0^{\varsigma(\tau)}\|S(r)x\|^p  \dr,\qquad(\tau,x)\in[0,T_{\min}]\times X.
\end{equation}
Non-negativity is immediate from $W\ge0$ and the non-negative integrand. We verify each condition of (iii).

\noindent\emph{Upper bound \eqref{eq:min:nc:upper}.} Since $\varsigma(\tau)\in[0,T_{\min}]$, $\|S(\varsigma(\tau))x\|\le M_S\|x\|$, and using $W(z)\le\kappa\|z\|^p$ and $\|S(r)\|\le M_S$ on $[0,T_{\min}]$:
\begin{equation*}
  \bar V(\tau,x)\le\kappa  M_S^p\|x\|^p+M_S^p  \varsigma(\tau)\|x\|^p\le[\kappa  M_S^p+M_S^p  T_{\min}]\|x\|^p,
\end{equation*}
which is \eqref{eq:min:nc:upper} with $c:=\kappa  M_S^p+M_S^p  T_{\min}$.

\noindent\emph{Mandatory-phase flow \eqref{eq:min:nc:flow:mandatory} ($\tau+h\le T_{\min}$).} Here $\varsigma(\tau)=T_{\min}-\tau$ and $\varsigma(\tau+h)=T_{\min}-\tau-h$. Using $S(T_{\min}-\tau-h)S(h)=S(T_{\min}-\tau)$ and the change of variable $r'=r+h$ in the integral:
\begin{align*}
  \bar V(\tau+h,S(h)x)&=W(S(T_{\min}-\tau-h)S(h)x)+\int_0^{T_{\min}-\tau-h}\|S(r)S(h)x\|^p  \dr\\
  &=W(S(T_{\min}-\tau)x)+\int_h^{T_{\min}-\tau}\|S(r')x\|^p  \dr'=\bar V(\tau,x)-\int_0^h\|S(r')x\|^p  \dr'.
\end{align*}
Dividing by $h$ and letting $h\to0^+$, $h^{-1}\!\int_0^h\|S(r')x\|^p\,\dr'\to\|x\|^p$ by strong continuity of $S(\cdot)$ at $r'=0$, hence $\underline{D}^+\bar V(\tau,x)=-\|x\|^p$.

\noindent\emph{Free-phase flow \eqref{eq:min:nc:flow:free} ($\tau\ge T_{\min}$).} Here $\varsigma(\tau)=\varsigma(\tau+h)=0$, so $\bar V(\tau,x)=W(x)$ is timer-independent, and $$\bar V(\tau+h,S(h)x)-\bar V(\tau,x)=W(S(h)x)-W(x)\le-\int_0^h\|S(r)x\|^p  \dr$$ by the integrated flow inequality of $W$ established in Step 1 (combining the flow monotonicity (b) of $V_d^*$ with the identity in (d) for $V_{\rm flow}$). The same limit gives $\underline{D}^+\bar V(\tau,x)\le-\|x\|^p$.

\noindent\emph{Jump condition \eqref{eq:min:nc:jump} ($\tau=\theta\ge T_{\min}$).} Here $\varsigma(\theta)=0$, so $\bar V(\theta,z)=W(z)$, while $\varsigma(0)=T_{\min}$ gives $$\bar V(0,Jz)=W(S(T_{\min})Jz)+\int_0^{T_{\min}}\|S(r)Jz\|^p  \dr.$$ Therefore, applying \eqref{eq:min:W:jump} at $\theta=T_{\min}$:
\begin{align*}
  \bar V(0,Jz)-\bar V(\theta,z)&=W(S(T_{\min})Jz)-W(z)+\int_0^{T_{\min}}\|S(r)Jz\|^p  \dr\\
  &\le-\|z\|^p-\int_0^{T_{\min}}\|S(r)Jz\|^p  \dr+\int_0^{T_{\min}}\|S(r)Jz\|^p  \dr=-\|z\|^p,
\end{align*}
which is \eqref{eq:min:nc:jump}.

\noindent\emph{Lipschitz continuity.} $V_d$ is Lipschitz on bounded sets by (iv); hence so are $V_d^*$ (a supremum of $\|S(\tau)\|$-Lipschitz maps with $\sup_\tau\|S(\tau)\|\le M_0$), $V_{\rm flow}$ (integrand Lipschitz on bounded sets, uniformly integrable by \eqref{eq:min:S:expstable}), and thus $W$. Composition with the bounded maps $S(\varsigma(\tau))$ and the bounded integrand in \eqref{eq:min:Vbar:stitched} gives Lipschitz continuity of $\bar V$ on bounded subsets of $X$, uniformly in $\tau\in[0,T_{\min}]$.
\end{proof}

\subsection{Range dwell-time ($\mathcal{S}_{\mathrm{rng}}(T_{\min},T_{\max})$)}\label{subsec:nc:rng}

For $\mathcal{S}_{\mathrm{rng}}(T_{\min},T_{\max})$, inter-jump intervals satisfy $T_{\min}\le t_{k+1}-t_k\le T_{\max}$. The timer ranges over $[0,T_{\max}]$, and the jump condition must hold for all admissible inter-jump lengths $\theta\in[T_{\min},T_{\max}]$.

\begin{theorem}[Range dwell-time, non-coercive N\&S]\label{thm:rng:nc}
Let $0<T_{\min}\le T_{\max}<\infty$ and let Assumption~\ref{ass:growth} hold. The following are equivalent:
\begin{enumerate}[(i)]
  \item the system \eqref{eq:syst} is strongly UGES over $\mathcal{S}_{\mathrm{rng}}(T_{\min},T_{\max})$;
  \item (\emph{Datko}) there exist $p\ge1$ and $b>0$ such that \eqref{eq:cst:datko} holds for all $\sigma\in\mathcal{S}_{\mathrm{rng}}(T_{\min},T_{\max})$ and all $s\ge0$;
  \item (\emph{continuous-time hybrid}) there exist $p\ge1$, $c>0$, and a functional $\bar V:[0,T_{\max}]\times X\to\mathbb{R}_{\ge0}$, Lipschitz on bounded subsets of $X$ uniformly in $\tau$, such that
    \begin{subequations}\label{eq:rng:nc:hybrid}
    \begin{align}
      \bar V(\tau,x)&\le c\|x\|^p,\;&&\forall\tau\in[0,T_{\max}],  x\in X,\label{eq:rng:nc:upper}\\
      \underline{D}^+\bar V(\tau,x)&\le -\|x\|^p,\;&&\forall\tau\in[0,T_{\max}],  x\in X,\label{eq:rng:nc:flow}\\
      \bar V(0,Jx)-\bar V(\theta,x)&\le -\|x\|^p,\;&&\forall\theta\in[T_{\min},T_{\max}],  x\in X;\label{eq:rng:nc:jump}
    \end{align}
    \end{subequations}
  \item (\emph{discrete-time}) the family $\{S(\theta)  J:\theta\in[T_{\min},T_{\max}]\}$ is uniformly geometrically stable, i.e., there exist $p\ge1$, $\hat c>0$, and a functional $V_d:X\to\mathbb{R}_{\ge0}$ Lipschitz on bounded subsets of $X$ such that
    \begin{equation}\label{eq:rng:dt}
      V_d(x)\le \hat c\|x\|^p,\qquad V_d(S(\theta)  J  x)-V_d(x)\le -\|x\|^p,\qquad\forall x\in X,\;\theta\in[T_{\min},T_{\max}].
    \end{equation}
\end{enumerate}
Moreover, when (i) holds, an explicit timer-dependent Lyapunov functional witnessing (iii) is given by
\begin{equation}\label{eq:rng:V:explicit}
  \bar V(\tau,x):=\sup_{\sigma\in\mathcal{S}^{(\tau)}_{\mathrm{rng}}}\left[\int_0^\infty\|U_\sigma(s,0)x\|^p  \ds+\sum_{k\ge1}\|U_\sigma(t_k^-,0)x\|^p\right],
\end{equation}
with, for $\tau\in[0,T_{\max}]$,
\begin{equation}\label{eq:rng:family:tau}
  \mathcal{S}^{(\tau)}_{\mathrm{rng}}:=\big\{\sigma\in\mathcal{S}_{0,\infty}:\ t_1^\sigma\in[\max(T_{\min}-\tau,0),\,T_{\max}-\tau]\ \text{ and }\ t_{k+1}^\sigma-t_k^\sigma\in[T_{\min},T_{\max}]\ \forall k\ge1\big\}.
\end{equation}
As for the minimum dwell-time, $\mathcal{S}^{(\tau)}_{\mathrm{rng}}$ collects the admissible future signals at elapsed dwell $\tau$: the next jump must fall in the remaining window $[\max(T_{\min}-\tau,0),\,T_{\max}-\tau]$ and every later inter-jump interval lies in $[T_{\min},T_{\max}]$. Since $\max(T_{\min}-\tau,0)\le T_{\max}-\tau$ for all $\tau\in[0,T_{\max}]$, this set is nonempty throughout, with $\mathcal{S}^{(0)}_{\mathrm{rng}}=\mathcal{S}_{\mathrm{rng}}(T_{\min},T_{\max})$.
\end{theorem}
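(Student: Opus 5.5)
We follow the template of Theorems~\ref{thm:cst:nc} and~\ref{thm:min:nc} and prove $(i)\Rightarrow(ii)\Rightarrow(iii)\Rightarrow(ii)\Rightarrow(i)$ together with $(iii)\Leftrightarrow(iv)$.

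The implication $(i)\Rightarrow(ii)$ is the estimate of Theorem~\ref{thm:main}, uniform in $\sigma$ because the UGES constants do not depend on $\sigma$. The implication $(ii)\Rightarrow(i)$ is Lemma~\ref{lem:datko} applied uniformly over the family. Here $\bar T=T_{\max}<\infty$, so Assumption~\ref{ass:growth} imposes no restriction on $S$, in accordance with Remark~\ref{rem:growth:G}. For $(iii)\Rightarrow(ii)$, fix $\sigma$, $s$ and $y$, integrate \eqref{eq:rng:nc:flow} along each arc with timer $\tau=t-t_k\in[0,\theta_k]$, and apply \eqref{eq:rng:nc:jump} at $\theta=\theta_k\in[T_{\min},T_{\max}]$. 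Telescoping with $\bar V\ge0$ and $\bar V(\tau(s),y)\le c\|y\|^p$ then gives the Datko bound with $b^p=c$.

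For $(ii)\Rightarrow(iii)$ we use the sup-functional \eqref{eq:rng:V:explicit}.
\begin{itemize}
\item \emph{Upper bound.} Each $\sigma\in\mathcal{S}^{(\tau)}_{\mathrm{rng}}$ is the re-based future of a signal $\sigma'\in\mathcal{S}_{\mathrm{rng}}$ whose first interval is $t_1^\sigma+\tau\in[T_{\min},T_{\max}]$, observed at time $\tau$. When $t_1^\sigma>0$, this gives $\bar V\le b^p\|x\|^p$. Signals with an immediate jump ($t_1=0^+$, allowed once $\tau\ge T_{\min}$) add the pre-jump term, so $\bar V\le(1+b^p\|J\|^p)\|x\|^p$.
\item \emph{Flow.} For $\tau+h\le T_{\max}$, shift any $\sigma\in\mathcal{S}^{(\tau+h)}_{\mathrm{rng}}$ forward by $h$. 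The window $[\max(T_{\min}-\tau-h,0),T_{\max}-\tau-h]+h$ is contained in $[\max(T_{\min}-\tau,0),T_{\max}-\tau]$, so the shifted signal lies in $\mathcal{S}^{(\tau)}_{\mathrm{rng}}$ and has no jump on $[0,h]$. As in the mandatory-phase step of Theorem~\ref{thm:min:nc}, this gives $\bar V(\tau+h,S(h)x)\le\bar V(\tau,x)-\int_0^h\|S(r)x\|^p\dr$.
\item \emph{Jump.} For $\theta\in[T_{\min},T_{\max}]$, prepend a jump at $0^+$ to any $\sigma\in\mathcal{S}^{(0)}_{\mathrm{rng}}$. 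The resulting signal belongs to $\mathcal{S}^{(\theta)}_{\mathrm{rng}}$, since $0\in[0,T_{\max}-\theta]$, which yields $\|x\|^p+\bar V(0,Jx)\le\bar V(\theta,x)$.
\item \emph{Lipschitz.} As in Theorem~\ref{thm:main}, uniformly over the family.
\end{itemize}

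The delicate point is $\tau=T_{\max}$. There the shift argument is vacuous, and flowing beyond $T_{\max}$ is inadmissible. We either read \eqref{eq:rng:nc:flow} as one-sided on $[0,T_{\max})$, as in Theorem~\ref{thm:cst:nc}, or we note that only the immediate-jump signal remains in $\mathcal{S}^{(T_{\max})}_{\mathrm{rng}}$, so no trajectory flows from that timer value.

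For $(iii)\Rightarrow(iv)$, set $V_d:=\bar V(T_{\min},\cdot)$; we hope it suffices to evaluate at a single timer. For $\theta\in[T_{\min},T_{\max}]$, flow from $Jx$ over $[0,\theta]$ and then apply the jump condition. This gives $\bar V(\theta,S(\theta)Jx)\le\bar V(0,Jx)\le\bar V(\theta',x)-\|x\|^p$ for every $\theta'$. The difficulty is that the left-hand side is evaluated at timer $\theta$, not $T_{\min}$. We therefore take instead $V_d(x):=\sup_{\theta\in[T_{\min},T_{\max}]}\bar V(\theta,x)$. Then $V_d(S(\theta)Jx)$ is bounded by a supremum over $\theta'$ of $\bar V(\theta',S(\theta)Jx)$, and this is where the argument gets stuck: we cannot flow over a mismatched duration. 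A cleaner route is $V_d(x):=\inf_{\theta\in[T_{\min},T_{\max}]}\bar V(\theta,x)$. Since $\bar V(\theta,S(\theta)Jx)\le\bar V(0,Jx)$ for each $\theta$, we get $V_d(S(\theta)Jx)\le\bar V(\theta,S(\theta)Jx)\le\bar V(0,Jx)\le\inf_{\theta'}\bar V(\theta',x)-\|x\|^p$. The infimum keeps the upper bound and the Lipschitz property, and we choose this.

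For $(iv)\Rightarrow(iii)$, mimic \eqref{eq:cst:Vbar:from:Vd}. The main obstacle is that the post-jump timer does not know the upcoming $\theta$. We first replace $V_d$ by $W(y):=\sup_{r\in[0,T_{\max}-T_{\min}]}V_d(S(r)y)$, bounded via $M_S=\sup_{[0,T_{\max}]}\|S\|$. Then define $\bar V(\tau,x):=\alpha W(S(\varsigma(\tau))x)+\int_0^{\varsigma(\tau)}\|S(r)x\|^p\dr$ with $\varsigma(\tau)=\max(T_{\min}-\tau,0)$, using the fact that $W$ is nonincreasing along the flow on the window $[T_{\min},T_{\max}]$ and choosing $\alpha$ large enough to absorb $\int_0^{T_{\min}}\|S(r)Jx\|^p\dr$. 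The monotonicity of $W$ beyond $T_{\min}$ holds only over the bounded horizon. We expect this to need a restriction of the supremum to $r\le T_{\max}-\tau$, i.e.\ a timer-dependent window, and this is the step we anticipate checking most carefully.
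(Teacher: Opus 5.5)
Your steps $(i)\Rightarrow(ii)$, $(ii)\Rightarrow(i)$, $(iii)\Rightarrow(ii)$ and $(ii)\Rightarrow(iii)$ match the paper's proof, and so does $(iii)\Rightarrow(iv)$ in its final form with $V_d:=\inf_{\theta\in[T_{\min},T_{\max}]}\bar V(\theta,\cdot)$. Reading the flow condition one-sidedly at $\tau=T_{\max}$ is also fine, since that is all the telescoping in $(iii)\Rightarrow(ii)$ uses.

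The genuine gap is in $(iv)\Rightarrow(iii)$, and it goes beyond the monotonicity issue you flag. Because $\varsigma(\tau)=\max(T_{\min}-\tau,0)$ vanishes on $[T_{\min},T_{\max}]$, your integral term disappears there, and $\bar V(\tau,\cdot)=\alpha W$ on the whole free phase. Even with the timer-dependent window you anticipate, $\alpha\sup_{r\in[0,T_{\max}-\tau]}V_d(S(r)x)$ is at best non-increasing along the flow. Its difference quotient is exactly $0$ for small $h$ whenever the supremum is attained at some $r>0$. But (iii) requires $\underline{D}^+\bar V(\tau,x)\le-\|x\|^p$ for every $\tau\in[0,T_{\max}]$. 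In Theorem~\ref{thm:min:nc} the strict free-phase decrement came from $V_{\rm flow}=\int_0^\infty\|S(r)\cdot\|^p\,\mathrm{d}r$, which needs $S$ exponentially stable. That hypothesis is part of (iv) there but not here, and with $T_{\max}<\infty$ the semigroup may be unstable.

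The missing idea is to carry the flow integral up to the remaining admissible horizon. The paper takes
\[
  \bar V(\tau,x):=\sup_{\theta\in[\max(\tau,T_{\min}),T_{\max}]}\Big[\beta\,V_d(S(\theta-\tau)x)+\int_0^{\theta-\tau}\|S(r)x\|^p\,\mathrm{d}r\Big],\qquad \beta:=1+M_S^p\,T_{\max}\|J\|^p,
\]
with $M_S:=\sup_{r\in[0,T_{\max}]}\|S(r)\|$. Passing from $(\tau,x)$ to $(\tau+h,S(h)x)$ has three effects: it leaves $V_d(S(\theta-\tau)x)$ unchanged, removes $\int_0^h\|S(r)x\|^p\,\mathrm{d}r$ from every summand, and shrinks the admissible $\theta$-range. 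Monotonicity and the strict decrement therefore come together on both phases.

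At a jump, choosing $\theta=\theta_0$ gives $\bar V(\theta_0,z)\ge\beta V_d(z)$. On the other side, (iv) together with $\int_0^\theta\|S(r)Jz\|^p\,\mathrm{d}r\le M_S^pT_{\max}\|J\|^p\|z\|^p$ gives $\bar V(0,Jz)\le\beta V_d(z)-\|z\|^p$. Note that $\beta$ must absorb the flow integral over all of $[0,T_{\max}]$, not only over $[0,T_{\min}]$ as in your choice of $\alpha$.

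Your additive design can also be rescued in the same spirit: use $\beta\sup_{\theta\in[\max(\tau,T_{\min}),T_{\max}]}V_d(S(\theta-\tau)x)+\int_0^{T_{\max}-\tau}\|S(r)x\|^p\,\mathrm{d}r$.
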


\begin{proof}
We prove $(i)\Rightarrow(ii)\Rightarrow(iii)\Rightarrow(ii)\Rightarrow(i)$ and $(iii)\Leftrightarrow(iv)$.

\medskip\noindent\textbf{Proof that $(i)\Rightarrow(ii)$.} Same as in Theorem~\ref{thm:min:nc}, uniformly in $\sigma\in\mathcal{S}_{\mathrm{rng}}(T_{\min},T_{\max})$.

\medskip\noindent\textbf{Proof that $(ii)\Rightarrow(iii)$.} Define $\bar V$ by \eqref{eq:rng:V:explicit}. Exactly as in Theorem~\ref{thm:min:nc}, every $\sigma\in\mathcal{S}^{(\tau)}_{\mathrm{rng}}$ with $t_1^\sigma>0$ is the future, re-based to the origin, of some $\sigma'\in\mathcal{S}_{\mathrm{rng}}(T_{\min},T_{\max})$ observed at elapsed dwell $\tau$ (take $\sigma'$ with first interval $t_1^\sigma+\tau\in[T_{\min},T_{\max}]$ and the same subsequent jumps as $\sigma$), whose jump at $t_1^\sigma+\tau>\tau$ is counted by the strict Datko sum; the uniform-in-$s$ Datko bound applied to $\sigma'$ at $s=\tau$ then gives $\bar V(\tau,x)\le b^p\|x\|^p$ from such signals. For $\tau\in[T_{\min},T_{\max}]$, however, $\mathcal{S}^{(\tau)}_{\mathrm{rng}}$ also admits immediate-jump signals (jump at $0^+$), whose bracket equals $\|x\|^p+\int_0^\infty\|U_\sigma(s',0)Jx\|^p\ds'+\sum_{k\ge1}\|U_\sigma(t_k^{\sigma-},0)Jx\|^p\le(1+b^p\|J\|^p)\|x\|^p$ (the leading $\|x\|^p$ being the pre-jump term omitted by the strict sum). Hence \eqref{eq:rng:nc:upper} holds with $c:=\max(b^p,\,1+b^p\|J\|^p)$, uniformly in $\tau\in[0,T_{\max}]$.

\emph{Flow condition \eqref{eq:rng:nc:flow}.} Fix $\tau\in[0,T_{\max})$, $x\in X$, $h>0$ with $\tau+h\le T_{\max}$. For $\sigma\in\mathcal{S}^{(\tau+h)}_{\mathrm{rng}}$, the shift $\sigma^{(h)}$ with $t_k^{\sigma^{(h)}}=t_k^\sigma+h$ satisfies $t_1^{\sigma^{(h)}}=t_1^\sigma+h\in[\max(T_{\min}-\tau,h),\,T_{\max}-\tau]\subseteq[\max(T_{\min}-\tau,0),\,T_{\max}-\tau]$ and keeps its later intervals in $[T_{\min},T_{\max}]$, so $\sigma^{(h)}\in\mathcal{S}^{(\tau)}_{\mathrm{rng}}$. The propagator relation, change of variable, and uniform subtraction of $\int_0^h\|S(s')x\|^p  \ds'$ (valid because every $\sigma^{(h)}$ has $t_1^{\sigma^{(h)}}\ge h$, hence no jump on $[0,h]$) give:
\begin{equation*}
  \bar V(\tau+h,S(h)x)-\bar V(\tau,x)\le -\int_0^h\|S(s')x\|^p  \ds',
\end{equation*}
hence $\underline{D}^+\bar V(\tau,x)\le-\|x\|^p$ on $[0,T_{\max})$. By Lipschitz continuity of $\bar V$ in $\tau$, the same bound holds at the boundary $\tau=T_{\max}$.

\emph{Jump condition \eqref{eq:rng:nc:jump}.} Fix $x\in X$ and $\theta\in[T_{\min},T_{\max}]$. Here $\mathcal{S}^{(0)}_{\mathrm{rng}}=\mathcal{S}_{\mathrm{rng}}(T_{\min},T_{\max})$. For any $\sigma\in\mathcal{S}^{(0)}_{\mathrm{rng}}$, prepend a single jump at $0^+$ to obtain $\sigma^*$. Its first future interval has length $0$, which lies in $[\max(T_{\min}-\theta,0),\,T_{\max}-\theta]$ because $\theta\ge T_{\min}$; its subsequent intervals are $t_1^\sigma\in[T_{\min},T_{\max}]$ followed by the inter-jump intervals of $\sigma$, all in $[T_{\min},T_{\max}]$. Hence $\sigma^*\in\mathcal{S}^{(\theta)}_{\mathrm{rng}}$, and:
\begin{equation*}
  \|x\|^p+\bar V(0,Jx)\le \bar V(\theta,x),
\end{equation*}
by the same calculation as in $(ii)\Rightarrow(iii)$ of Theorem~\ref{thm:min:nc}. This gives \eqref{eq:rng:nc:jump} for every $\theta\in[T_{\min},T_{\max}]$.

\emph{Lipschitz continuity.} Argument from Theorem~\ref{thm:min:nc}.

\medskip\noindent\textbf{Proof that $(iii)\Rightarrow(ii)$.} Telescoping on each inter-jump interval $[t_k,t_{k+1})$ of length $\theta_k\in[T_{\min},T_{\max}]$: integrate \eqref{eq:rng:nc:flow} on $[0,\theta_k]$ in the timer variable to get $$\bar V(\theta_k,x(t_{k+1}^-))-\bar V(0,x(t_k))\le -\int_{t_k}^{t_{k+1}}\|x(s')\|^p  \ds'.$$ Applying then \eqref{eq:rng:nc:jump} with $\theta=\theta_k$ yields $\bar V(0,x(t_{k+1}))\le \bar V(\theta_k,x(t_{k+1}^-))-\|x(t_{k+1}^-)\|^p$. Summing and telescoping yields \eqref{eq:cst:datko} at any starting time $s$ with $b^p=c$, uniformly in $\sigma\in\mathcal{S}_{\mathrm{rng}}(T_{\min},T_{\max})$.

\medskip\noindent\textbf{Proof that $(ii)\Rightarrow(i)$.} Lemma~\ref{lem:datko} applied uniformly over $\mathcal{S}_{\mathrm{rng}}(T_{\min},T_{\max})$ yields UGES; Assumption~\ref{ass:growth} is automatic with $G=M_0e^{|\omega_0|T_{\max}}$ since inter-jump intervals are bounded above.

\medskip\noindent\textbf{Proof that $(iii)\Rightarrow(iv)$.} Set $V_d(x):=\inf_{\theta_0\in[T_{\min},T_{\max}]}\bar V(\theta_0,x)$, which inherits $V_d(x)\le c\|x\|^p$ (so $\hat c=c$) and Lipschitz continuity from (iii), the infimum of a uniformly Lipschitz family being Lipschitz. For $\theta\in[T_{\min},T_{\max}]$, $x\in X$, and any $\theta_0\in[T_{\min},T_{\max}]$, chain the flow on $[0,\theta]$ from $Jx$ (inequality~(a)) and the jump at timer value $\theta_0$ (inequality~(b)):
\begin{align*}
  V_d(S(\theta)Jx)\le\bar V(\theta,S(\theta)Jx)
  &\overset{\text{(a)}}{\le}\bar V(0,Jx)-\int_0^\theta\|S(r)Jx\|^p  \dr\\
  &\overset{\text{(b)}}{\le}\bar V(\theta_0,x)-\|x\|^p-\int_0^\theta\|S(r)Jx\|^p  \dr\\
  &\le\bar V(\theta_0,x)-\|x\|^p,
\end{align*}
where the first inequality uses $V_d\le\bar V(\theta,\cdot)$, (a) the flow \eqref{eq:rng:nc:flow} on $[0,\theta]$ from $Jx$, (b) the jump \eqref{eq:rng:nc:jump} with inter-jump length $\theta_0$, and the last step drops the non-negative integral. Taking the infimum over $\theta_0\in[T_{\min},T_{\max}]$ on the right gives $V_d(S(\theta)Jx)\le V_d(x)-\|x\|^p$, which is \eqref{eq:rng:dt} with $\hat c=c$.

\medskip\noindent\textbf{Proof that $(iv)\Rightarrow(iii)$.} Since inter-jump intervals are bounded above by $T_{\max}$, Assumption~\ref{ass:growth} gives the finite bound $M_S:=M_0e^{|\omega_0|T_{\max}}$, so that $\|S(r)\|\le M_S$ for all $r\in[0,T_{\max}]$. Set $\beta:=1+M_S^p  T_{\max}\|J\|^p$ and define, on $[0,T_{\max}]\times X$,
\begin{equation}\label{eq:rng:Vbar:sup}
  \bar V(\tau,x):=\sup_{\theta\in[\max(\tau,T_{\min}),T_{\max}]}\!\Big[\beta  V_d(S(\theta-\tau)x)+\int_0^{\theta-\tau}\|S(r)x\|^p  \dr\Big].
\end{equation}
The supremum is over a non-empty compact interval since $\max(\tau,T_{\min})\le T_{\max}$ for $\tau\in[0,T_{\max}]$, and $\bar V\ge0$ because $V_d\ge0$ and the integrand is non-negative. We verify each condition of (iii).

\noindent\emph{Upper bound \eqref{eq:rng:nc:upper}.} For any admissible $\theta$, using $V_d(z)\le\hat c\|z\|^p$ from \eqref{eq:rng:dt}, $\|S(\theta-\tau)\|\le M_S$, and $\theta-\tau\le T_{\max}$:
\begin{equation*}
  \beta  V_d(S(\theta-\tau)x)+\int_0^{\theta-\tau}\|S(r)x\|^p  \dr\le\beta\hat c  M_S^p\|x\|^p+M_S^p  T_{\max}\|x\|^p,
\end{equation*}
and taking the supremum over $\theta$ gives \eqref{eq:rng:nc:upper} with $c:=\beta\hat c  M_S^p+M_S^p  T_{\max}$.

\noindent\emph{Flow condition \eqref{eq:rng:nc:flow} ($\tau\in[0,T_{\max})$, $\tau+h\le T_{\max}$).} Fix $\theta\in[\max(\tau+h,T_{\min}),T_{\max}]$. Using $S(\theta-\tau-h)S(h)=S(\theta-\tau)$ and the change of variable $r'=r+h$, the summand at $(\tau+h,S(h)x)$ satisfies
\begin{align*}
  &\beta  V_d(S(\theta-\tau-h)S(h)x)+\int_0^{\theta-\tau-h}\|S(r)S(h)x\|^p  \dr
  =\beta  V_d(S(\theta-\tau)x)+\int_h^{\theta-\tau}\|S(r')x\|^p  \dr'\\
  &\qquad=\Big[\beta  V_d(S(\theta-\tau)x)+\int_0^{\theta-\tau}\|S(r')x\|^p  \dr'\Big]-\int_0^h\|S(r')x\|^p  \dr'.
\end{align*}
The bracket is the summand of $\bar V(\tau,x)$ at the same $\theta$. Since the supremation range $[\max(\tau+h,T_{\min}),T_{\max}]$ for $\bar V(\tau+h,S(h)x)$ is contained in the range $[\max(\tau,T_{\min}),T_{\max}]$ for $\bar V(\tau,x)$, taking the supremum over the smaller range:
\begin{equation*}
  \bar V(\tau+h,S(h)x)\le\bar V(\tau,x)-\int_0^h\|S(r)x\|^p  \dr.
\end{equation*}
Dividing by $h$ and letting $h\to0^+$, $h^{-1}\!\int_0^h\|S(r)x\|^p\,\dr\to\|x\|^p$ by strong continuity of $S(\cdot)$ at $r=0$, hence $\underline{D}^+\bar V(\tau,x)\le-\|x\|^p$.

\noindent\emph{Jump condition \eqref{eq:rng:nc:jump} ($\theta_0\in[T_{\min},T_{\max}]$).} On one hand, applying \eqref{eq:rng:dt} ($V_d(S(\theta)Jz)\le V_d(z)-\|z\|^p$ for $\theta\ge T_{\min}$) and $\int_0^\theta\|S(r)Jz\|^p  \dr\le M_S^p  T_{\max}\|J\|^p\|z\|^p$:
\begin{align*}
  \bar V(0,Jz)&=\sup_{\theta\in[T_{\min},T_{\max}]}\!\Big[\beta  V_d(S(\theta)Jz)+\int_0^{\theta}\|S(r)Jz\|^p  \dr\Big]\\
  &\le\sup_{\theta\in[T_{\min},T_{\max}]}\!\Big[\beta(V_d(z)-\|z\|^p)+M_S^p  T_{\max}\|J\|^p\|z\|^p\Big]\\
  &=\beta  V_d(z)-\beta\|z\|^p+M_S^p  T_{\max}\|J\|^p\|z\|^p=\beta  V_d(z)-\|z\|^p,
\end{align*}
the last equality by the choice $\beta=1+M_S^p  T_{\max}\|J\|^p$. On the other hand, choosing $\theta=\theta_0$ in \eqref{eq:rng:Vbar:sup} (admissible since $\theta_0\in[\max(\theta_0,T_{\min}),T_{\max}]=[\theta_0,T_{\max}]$) gives the lower bound $\bar V(\theta_0,z)\ge\beta  V_d(S(0)z)+0=\beta  V_d(z)$. Subtracting,
\begin{equation*}
  \bar V(0,Jz)-\bar V(\theta_0,z)\le[\beta  V_d(z)-\|z\|^p]-\beta  V_d(z)=-\|z\|^p,
\end{equation*}
which is \eqref{eq:rng:nc:jump}.

\noindent\emph{Lipschitz continuity.} $V_d$ is Lipschitz on bounded subsets by (iv); the maps $x\mapsto V_d(S(\theta-\tau)x)$ and $x\mapsto\int_0^{\theta-\tau}\|S(r)x\|^p  \dr$ are therefore Lipschitz on bounded subsets uniformly in $\theta,\tau\in[0,T_{\max}]$ (using $\|S(\cdot)\|\le M_S$), and a supremum of a family of uniformly Lipschitz functions is Lipschitz with the same constant. Hence $\bar V$ is Lipschitz on bounded subsets of $X$ uniformly in $\tau\in[0,T_{\max}]$.
\end{proof}

\section{Coercive Lyapunov stability conditions}
\label{sec:coercive}

The non-coercive conditions of Section~\ref{sec:lyapunov} are both necessary and sufficient for UGES via upper-bounded Lyapunov functionals. In this section we establish the parallel theory in which the Lyapunov functional is additionally \emph{coercive}. We work throughout with the \emph{Lipschitz coercive} (equivalent-norm) class: $V$ is Lipschitz on bounded subsets of $X$ but otherwise unrestricted in structure. Such functionals always exist when UGES holds, via the sup-type construction below; consequently, all theorems in this section are full equivalences (necessary and sufficient) in this class.


The structure of this section therefore fully mirrors Section~\ref{sec:lyapunov}: a subsection for each family of impulse sequences, with each subsection stating a single equivalence theorem connecting (i) UGES, (ii) a coercive continuous-time hybrid Lyapunov criterion, and (iii) a coercive discrete-time criterion.

\subsection{Fixed impulse sequence}\label{subsec:coerc:fixed}

\begin{theorem}[Fixed sequence, coercive N\&S]\label{thm:coerc:banach}
Let $\sigma\in\mathcal{S}_{0,\infty}$ be given and let Assumption~\ref{ass:growth} hold. The following are equivalent:
\begin{enumerate}[(i)]
  \item the system \eqref{eq:syst} is strongly GES (Definition~\ref{def:stab});
  \item there exist constants $p\ge1$, $c\ge1$, $\eta>0$, $\mu\in(0,1)$, and a functional $V:\mathbb{R}_{\ge0}\times X\to\mathbb{R}_{\ge0}$, Lipschitz on bounded subsets of $X$ uniformly in $t$, satisfying
    \begin{align}
      \|x\|^p\;\le\; V(t,x)&\;\le\; c\|x\|^p,\label{eq:coerc:bounds}\\
      \underline{D}^+V(t,x)&\;\le\;-\eta   V(t,x),\label{eq:coerc:flow}\\
      V(t_k,Jx)&\;\le\;\mu   V(t_k^-,x),\label{eq:coerc:jump}
    \end{align}
    for all $t\ge0$, $x\in X$, and all jump times $t_k\in\mathbb{T}_\sigma$.
\end{enumerate}
Moreover, when (i) holds with constants $M\ge1$, $\alpha>0$, $\rho\in(0,1)$, an explicit functional witnessing (ii) is given by
\begin{equation}\label{eq:sup:functional}
  V(t,x)\;:=\;\sup_{\tau\ge t}  e^{\eta(\tau-t)}\nu^{\kappa_\sigma(\tau,t)}  \|U_\sigma(\tau,t)x\|,\qquad t\ge0,\;x\in X,
\end{equation}
for any choice of $\eta\in(0,\alpha)$ and $\nu\in(1,1/\rho)$; this $V$ satisfies \eqref{eq:coerc:bounds}--\eqref{eq:coerc:jump} with $p=1$, $c=M$, the same $\eta$, and $\mu=1/\nu\in(0,1)$.
\end{theorem}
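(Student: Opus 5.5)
The plan is to prove the two implications separately: $(ii)\Rightarrow(i)$ by a comparison argument along trajectories, and $(i)\Rightarrow(ii)$ by verifying that the sup-functional \eqref{eq:sup:functional} has the required properties.

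\textbf{$(ii)\Rightarrow(i)$.} Fix $s\ge0$ and $y\in X$, and set $x(t)=U_\sigma(t,s)y$. On each inter-jump interval the map $t\mapsto V(t,x(t))$ is absolutely continuous, because $V$ is Lipschitz on bounded sets and the mild solution is continuous. Integrating the Dini condition \eqref{eq:coerc:flow} with the comparison lemma for Dini derivatives gives
\[
V(t_{k+1}^-,x(t_{k+1}^-))\le e^{-\eta(t_{k+1}-t_k)}V(t_k,x(t_k)).
\]
At each jump, \eqref{eq:coerc:jump} multiplies $V$ by at most $\mu$. Chaining these estimates yields
\[
V(t,x(t))\le e^{-\eta(t-s)}\mu^{\kappa_\sigma(t,s)}V(s,y).
\]
The coercivity bounds \eqref{eq:coerc:bounds} then give
\[
\|x(t)\|\le c^{1/p}e^{-(\eta/p)(t-s)}(\mu^{1/p})^{\kappa_\sigma(t,s)}\|y\|,
\]
which is strong GES with $M=c^{1/p}$, $\alpha=\eta/p$ and $\rho=\mu^{1/p}$.

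\textbf{$(i)\Rightarrow(ii)$: bounds.} Take $V$ as in \eqref{eq:sup:functional} with $\eta<\alpha$ and $1<\nu<1/\rho$.
\begin{itemize}
\item \emph{Lower bound.} Choosing $\tau=t$ in the supremum gives $V(t,x)\ge\|x\|$.
\item \emph{Upper bound.} GES gives
\[
e^{\eta(\tau-t)}\nu^{\kappa}\|U_\sigma(\tau,t)x\|\le M e^{-(\alpha-\eta)(\tau-t)}(\nu\rho)^{\kappa}\|x\|\le M\|x\|,
\]
so $V(t,x)\le M\|x\|$. Hence $p=1$ and $c=M$.
\item \emph{Lipschitz continuity.} By linearity of $U_\sigma(\tau,t)$, the same estimate gives $|V(t,x)-V(t,y)|\le V(t,x-y)\le M\|x-y\|$, uniformly in $t$.
\end{itemize}

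\textbf{$(i)\Rightarrow(ii)$: flow condition.} For $t\notin\mathbb{T}_\sigma$ and $h$ small enough that $(t,t+h]$ contains no jump, we have $U_\sigma(\tau,t+h)S(h)x=U_\sigma(\tau,t)x$ and $\kappa_\sigma(\tau,t+h)=\kappa_\sigma(\tau,t)$ for $\tau\ge t+h$. Therefore
\[
V(t+h,S(h)x)=e^{-\eta h}\sup_{\tau\ge t+h}e^{\eta(\tau-t)}\nu^{\kappa_\sigma(\tau,t)}\|U_\sigma(\tau,t)x\|\le e^{-\eta h}V(t,x).
\]
Dividing $V(t+h,S(h)x)-V(t,x)$ by $h$ and letting $h\downarrow0$ gives $\underline{D}^+V(t,x)\le-\eta V(t,x)$.

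\textbf{$(i)\Rightarrow(ii)$: jump condition.} This is the delicate step, because of the convention that $\kappa_\sigma(\tau,t_k)$ does not count the jump at $t_k$. We first interpret $V(t_k^-,x)$ as the functional started from the pre-jump state, i.e. with propagator $U_\sigma(\tau,t_k^-)=U_\sigma(\tau,t_k)J$ and jump count $\kappa_\sigma(\tau,t_k^-)=\kappa_\sigma(\tau,t_k)+1$. This should coincide with the limit $\lim_{t\uparrow t_k}V(t,x)$; we will check it directly, noting that the supremum over $\tau\in[t,t_k)$ tends to $\|x\|$ by strong continuity. With this interpretation,
\[
V(t_k^-,x)\ge\sup_{\tau\ge t_k}e^{\eta(\tau-t_k)}\nu^{\kappa_\sigma(\tau,t_k)+1}\|U_\sigma(\tau,t_k)Jx\|=\nu\,V(t_k,Jx).
\]
This gives $V(t_k,Jx)\le\nu^{-1}V(t_k^-,x)$, which is \eqref{eq:coerc:jump} with $\mu=1/\nu$.

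The main obstacle is exactly this careful justification of $V(t_k^-,x)$ as a left limit, together with the off-by-one bookkeeping in the jump counter.
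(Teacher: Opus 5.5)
Your proposal is correct and follows the paper's proof essentially step for step: a comparison along trajectories giving $M=c^{1/p}$, $\alpha=\eta/p$, $\rho=\mu^{1/p}$; then, for the sup-functional, the $\tau=t$ lower bound, the GES upper bound, subadditivity for the Lipschitz constant $M$, the shift identity giving $V(t+h,S(h)x)\le e^{-\eta h}V(t,x)$, and $U_\sigma(\tau,t_k^-)=U_\sigma(\tau,t_k)J$ with the unit counter offset at jumps. Your reading of $V(t_k^-,x)$ as the true left limit $\max(\|x\|,\nu V(t_k,Jx))$ is in fact slightly more careful than the paper, which asserts $V(t_k,Jx)=\nu^{-1}V(t_k^-,x)$ where in general only $\le$ holds; one justification does need fixing, though: $t\mapsto V(t,x(t))$ is not absolutely continuous merely because $V$ is Lipschitz in $x$ and the mild solution is continuous, but the Dini comparison lemma only needs this map to be continuous on each inter-jump interval, which follows from continuity of $V$ in $t$ between impulses (implicit in the paper's setting) together with the uniform Lipschitz bound.
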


\begin{proof}
\medskip\noindent\textbf{Proof that $(ii)\Rightarrow(i)$.} Let $s\ge0$ and $x_0\in X$ be given, and let $x(t)=U_\sigma(t,s)x_0$. On each inter-jump interval $(t_k,t_{k+1})$, condition \eqref{eq:coerc:flow} and the Gr\"onwall inequality yield:
\begin{equation*}
  V(t,x(t))\;\le\; V(t_k,x(t_k))  e^{-\eta(t-t_k)},\qquad t\in[t_k,t_{k+1}).
\end{equation*}
At each jump time $t_{k+1}$, $x(t_{k+1})=Jx(t_{k+1}^-)$ and \eqref{eq:coerc:jump} give:
\begin{equation*}
  V(t_{k+1},x(t_{k+1}))\;\le\;\mu   V(t_{k+1}^-,x(t_{k+1}^-))\;\le\;\mu  e^{-\eta(t_{k+1}-t_k)}  V(t_k,x(t_k)).
\end{equation*}
Iterating over all $n=\kappa_\sigma(t,s)$ jumps in $(s,t]$:
\begin{equation*}
  V(t,x(t))\;\le\; V(s,x_0)  \mu^{\kappa_\sigma(t,s)}  e^{-\eta(t-s)}.
\end{equation*}
Applying both bounds of \eqref{eq:coerc:bounds}:
\begin{equation*}
  \|U_\sigma(t,s)x_0\|^p\;\le\; c  \mu^{\kappa_\sigma(t,s)}  e^{-\eta(t-s)}  \|x_0\|^p.
\end{equation*}
Taking the $p$-th root, this is strongly GES with $M=c^{1/p}$, $\rho=\mu^{1/p}\in(0,1)$, and $\alpha=\eta/p>0$ (both strict, since $\eta>0$ and $\mu\in(0,1)$).

\medskip\noindent\textbf{Proof that $(i)\Rightarrow(ii)$, via the explicit construction \eqref{eq:sup:functional}.} Assume UGES with constants $M, \alpha, \rho$ and pick $\eta\in(0,\alpha)$, $\nu\in(1,1/\rho)$.

\emph{Finiteness and upper bound.} For each $\tau\ge t$, UGES gives:
\begin{equation*}
  e^{\eta(\tau-t)}\nu^{\kappa_\sigma(\tau,t)}\|U_\sigma(\tau,t)x\|\;\le\; M  e^{(\eta-\alpha)(\tau-t)}(\nu\rho)^{\kappa_\sigma(\tau,t)}\|x\|.
\end{equation*}
Since $\eta<\alpha$ and $\nu\rho<1$, both factors $e^{(\eta-\alpha)(\tau-t)}$ and $(\nu\rho)^{\kappa_\sigma(\tau,t)}$ are bounded by $1$, so $V(t,x)\le M\|x\|$, giving the upper bound in \eqref{eq:coerc:bounds} with $c=M$, $p=1$.

\emph{Lower bound.} Taking $\tau=t$ in the sup: $V(t,x)\ge e^0\nu^0\|U_\sigma(t,t)x\|=\|x\|$.

\emph{Flow condition \eqref{eq:coerc:flow}.} For $h>0$ small enough that $(t,t+h)\cap\mathbb{T}_\sigma=\varnothing$, $U_\sigma(t+h,t)=S(h)$, $U_\sigma(\tau,t+h)S(h)=U_\sigma(\tau,t)$ for $\tau\ge t+h$, and $\kappa_\sigma(\tau,t+h)=\kappa_\sigma(\tau,t)$ for $\tau\ge t+h$. Hence:
\begin{equation*}
  V(t+h,S(h)x)=\sup_{\tau\ge t+h}e^{\eta(\tau-t-h)}\nu^{\kappa_\sigma(\tau,t)}\|U_\sigma(\tau,t)x\|=e^{-\eta h}\sup_{\tau\ge t+h}e^{\eta(\tau-t)}\nu^{\kappa_\sigma(\tau,t)}\|U_\sigma(\tau,t)x\|\;\le\;e^{-\eta h}V(t,x).
\end{equation*}
Hence $\underline{D}^+V(t,x)\le -\eta V(t,x)$.

\emph{Jump condition \eqref{eq:coerc:jump}.} At $t_k\in\mathbb{T}_\sigma$, using $U_\sigma(\tau,t_k)J=U_\sigma(\tau,t_k^-)$ and $\kappa_\sigma(\tau,t_k)=\kappa_\sigma(\tau,t_k^-)-1$ for $\tau\ge t_k$:
\begin{align*}
  V(t_k,Jx)&=\sup_{\tau\ge t_k}e^{\eta(\tau-t_k)}\nu^{\kappa_\sigma(\tau,t_k)}\|U_\sigma(\tau,t_k^-)x\|\\
  &=\frac{1}{\nu}\sup_{\tau\ge t_k}e^{\eta(\tau-t_k)}\nu^{\kappa_\sigma(\tau,t_k^-)}\|U_\sigma(\tau,t_k^-)x\|=\frac{1}{\nu}V(t_k^-,x).
\end{align*}
This gives \eqref{eq:coerc:jump} with $\mu=1/\nu<1$.

\emph{Lipschitz continuity.} The sup of linear-in-$x$ functions is subadditive, and combined with the UGES upper bound, $V$ is Lipschitz on $X$ with constant $M$.
\end{proof}

The theorem treats the strongly-GES (hybrid) case $\eta>0$, $\mu\in(0,1)$; relaxing it to $\eta>0$, $\mu=1$ recovers a persistent-flowing coercive criterion (no jump contraction required) and to $\eta=0$, $\mu<1$ a persistent-jumping one (no flow decay required), covering the two degenerate regimes of Definition~\ref{def:stab}. The coercive lower bound in \eqref{eq:coerc:bounds} allows the stability estimate to be read directly from $V$, in contrast to the non-coercive case of Section~\ref{sec:lyapunov} where only an upper bound is imposed.


\subsection{Arbitrary impulse sequences ($\mathcal{S}_{0,\infty}$)}\label{subsec:coerc:arbitrary}

For arbitrary impulse sequences, the coercive functional is time-independent, mirroring the structure of Theorem~\ref{thm:arbitrary:nc}.

\begin{theorem}[Arbitrary sequences, coercive N\&S]\label{thm:coerc:arbitrary}
Let Assumption~\ref{ass:growth} hold (requiring $\omega_0\le0$ on $\mathcal{S}_{0,\infty}$, by Remark~\ref{rem:growth:G}). The following are equivalent:
\begin{enumerate}[(i)]
  \item the system \eqref{eq:syst} is strongly UGES over $\mathcal{S}_{0,\infty}$;
  \item there exist $p\ge1$, $c\ge1$, $\eta>0$, $\mu\in(0,1)$, and a time-independent functional $V:X\to\mathbb{R}_{\ge0}$, Lipschitz on bounded subsets of $X$, such that for all $x\in X$:
    \begin{align}
      \|x\|^p\le V(x)&\le c\|x\|^p,\label{eq:coerc:arb:bounds}\\
      \underline{D}^+V(x)&\le -\eta V(x),\label{eq:coerc:arb:flow}\\
      V(Jx)&\le \mu V(x);\label{eq:coerc:arb:jump}
    \end{align}
  \item the same time-independent functional $V$ of (ii) is simultaneously a Lipschitz coercive Lyapunov function for the semigroup and for the discrete iteration: there exist $p\ge1$, $c\ge1$, $\eta>0$, $\mu\in(0,1)$, and $V:X\to\mathbb{R}_{\ge0}$ Lipschitz on bounded subsets of $X$ satisfying
  \begin{equation}\label{eq:coerc:arb:integrated}
    \|x\|^p\le V(x)\le c\|x\|^p,\qquad V(S(t)x)\le e^{-\eta t}V(x)\;\forall t\ge0,\qquad V(Jx)\le\mu V(x).
  \end{equation}
\end{enumerate}
\end{theorem}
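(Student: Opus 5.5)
I plan to prove the cycle $(i)\Rightarrow(iii)\Rightarrow(ii)\Rightarrow(i)$, with the explicit time-independent sup-functional as the main construction.

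\textbf{$(i)\Rightarrow(iii)$.} Assume strong UGES over $\mathcal{S}_{0,\infty}$ with constants $M,\alpha,\rho$. Pick $\eta\in(0,\alpha)$ and $\nu\in(1,1/\rho)$, and set
\begin{equation*}
V(x):=\sup_{\sigma\in\mathcal{S}_{0,\infty}}\ \sup_{\tau\ge0} e^{\eta\tau}\nu^{\kappa_\sigma(\tau,0)}\|U_\sigma(\tau,0)x\|,
\end{equation*}
with $p=1$. This mirrors \eqref{eq:sup:functional}, but takes an additional supremum over sequences, as in \eqref{eq:arbitrary:V}.

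The bounds $\|x\|\le V(x)\le M\|x\|$ follow exactly as in Theorem~\ref{thm:coerc:banach}. The lower bound comes from taking $\tau=0$, and the upper bound uses $\eta<\alpha$ and $\nu\rho<1$ uniformly in $\sigma$. Lipschitz continuity with constant $M$ holds because $V$ is a supremum of seminorms, each bounded by $M\|\cdot\|$.

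For the flow inequality $V(S(t)x)\le e^{-\eta t}V(x)$, take any $\sigma$ and form the forward-shifted sequence $\sigma^{(t)}$ with jumps $t_k^\sigma+t$. Then $U_{\sigma^{(t)}}(\tau+t,0)x=U_\sigma(\tau,0)S(t)x$, and the jump counts agree. Hence the summand of $V(S(t)x)$ at $(\sigma,\tau)$ equals $e^{-\eta t}$ times the summand of $V(x)$ at $(\sigma^{(t)},\tau+t)$, and taking suprema gives the claim.

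For the jump inequality, prepend a jump at $0^+$ to $\sigma$ to obtain $\widetilde\sigma$, exactly as in the proof of Theorem~\ref{thm:arbitrary:nc}. Then $U_{\widetilde\sigma}(\tau,0)x=U_\sigma(\tau,0)Jx$ and $\kappa_{\widetilde\sigma}=\kappa_\sigma+1$, so $\nu V(Jx)\le V(x)$, which gives $\mu=1/\nu$.

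The main obstacle is that $0^+$ is not a genuine element of $\mathcal{S}_{0,\infty}$. I would handle it by placing the first jump at $\epsilon>0$ and letting $\epsilon\downarrow0$. For each fixed $(\sigma,\tau)$, strong continuity of $S$ gives $U_{\widetilde\sigma_\epsilon}(\tau,0)x\to U_\sigma(\tau,0)Jx$, with the later jump times shifted or kept fixed consistently. The desired inequality then survives this limit, since it only bounds individual summands from below. The same approximation device is the one already used implicitly in Theorem~\ref{thm:arbitrary:nc}.

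\textbf{$(iii)\Rightarrow(ii)$.} The integrated inequality $V(S(h)x)\le e^{-\eta h}V(x)$ yields $\underline{D}^+V(x)\le\limsup_{h\downarrow0}(e^{-\eta h}-1)V(x)/h=-\eta V(x)$. The bounds and the jump condition are identical in (ii) and (iii).

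\textbf{$(ii)\Rightarrow(i)$.} Fix $\sigma\in\mathcal{S}_{0,\infty}$ and $s\ge0$. Along a trajectory, $t\mapsto V(x(t))$ is continuous on each flow interval, since $V$ is Lipschitz on bounded sets and solutions are continuous. The Dini condition then gives $e^{\eta t}V(x(t))$ non-increasing on each flow interval, by the standard comparison lemma for Dini derivatives of continuous functions. Chaining this with the jump condition reproduces the argument of Theorem~\ref{thm:coerc:banach}: $\|U_\sigma(t,s)x\|^p\le c\,\mu^{\kappa_\sigma(t,s)}e^{-\eta(t-s)}\|x\|^p$. The constants $M=c^{1/p}$, $\alpha=\eta/p$, $\rho=\mu^{1/p}$ are independent of $\sigma$ because $V$ is, so this is strong UGES over $\mathcal{S}_{0,\infty}$.
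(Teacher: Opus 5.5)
Your proposal is correct and follows essentially the paper's proof. You use the same joint sup-functional $V(x)=\sup_{\sigma}\sup_{\tau\ge0}e^{\eta\tau}\nu^{\kappa_\sigma(\tau,0)}\|U_\sigma(\tau,0)x\|$, the same shift argument for the flow (your forward shift is the paper's backward shift read in reverse), the same prepend argument for the jump, and the same fixed-sequence coercive telescoping for $(ii)\Rightarrow(i)$. Routing the cycle as $(i)\Rightarrow(iii)\Rightarrow(ii)$ lets the integrated flow bound come directly from the construction, and your $\epsilon\downarrow0$ approximation properly justifies the $0^+$ jump, which the paper uses informally even though $\mathcal{S}_{0,\infty}$ requires $t_1>0$.
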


\begin{proof}
$(ii)\Rightarrow(i)$ follows from Theorem~\ref{thm:coerc:banach} applied with the time-independent $V$: the coercive bounds and flow / jump decrease conditions (with $\eta>0$, $\mu\in(0,1)$) are uniform in $\sigma\in\mathcal{S}_{0,\infty}$ since $V$ does not depend on $\sigma$, and the conclusion is strongly UGES over $\mathcal{S}_{0,\infty}$.

$(i)\Rightarrow(ii)$ uses the time-independent \emph{joint} sup-construction
\begin{equation*}
  V(x):=\sup_{\sigma\in\mathcal{S}_{0,\infty}}\Psi(\sigma,x),\qquad
  \Psi(\sigma,x):=\sup_{s\ge0}e^{\eta s}\nu^{\kappa_\sigma(s,0)}\|U_\sigma(s,0)x\|,
\end{equation*}
where $\eta\in(0,\alpha_0)$ and $\nu\in(1,1/\rho_0)$, with $M\ge1$, $\alpha_0>0$, $\rho_0\in(0,1)$ the UGES constants from Proposition~\ref{prop:arbitrary:necessary}. This is the fixed-sequence functional \eqref{eq:sup:functional} evaluated at $t=0$ and further supremized over all admissible sequences; in contrast to an additive combination of separate flow and jump suprema, it contracts under \emph{both} mechanisms, because every orbit it retains is a genuine trajectory of the family along which flow and jumps act jointly.

\emph{Bounds.} UGES gives $\|U_\sigma(s,0)x\|\le M\rho_0^{\kappa_\sigma(s,0)}e^{-\alpha_0 s}\|x\|$ uniformly in $\sigma$, hence $e^{\eta s}\nu^{\kappa_\sigma(s,0)}\|U_\sigma(s,0)x\|\le M(\nu\rho_0)^{\kappa_\sigma(s,0)}e^{-(\alpha_0-\eta)s}\|x\|\le M\|x\|$ since $\nu\rho_0<1$ and $\alpha_0-\eta>0$; thus $V(x)\le M\|x\|$. Taking $s=0$ (so $\kappa_\sigma(0,0)=0$ and $U_\sigma(0,0)=I$) gives $V(x)\ge\|x\|$. This is \eqref{eq:coerc:arb:bounds} with $p=1$, $c=M$.

\emph{Flow condition.} Fix $x\in X$ and $h>0$. For any $\sigma\in\mathcal{S}_{0,\infty}$ with $t_1^\sigma>h$, no jump occurs on $[0,h]$, so the backward shift $\sigma^{(h)}$ ($t_k^{\sigma^{(h)}}:=t_k^\sigma-h$, admissible since $t_1^\sigma>h$) satisfies $U_{\sigma^{(h)}}(s',0)=U_\sigma(s'+h,h)$ and $\kappa_\sigma(s,0)=\kappa_{\sigma^{(h)}}(s-h,0)$ for $s\ge h$. The substitution $s'=s-h$ turns the portion of $\Psi(\sigma,x)$ with $s\ge h$ into $e^{\eta h}\Psi(\sigma^{(h)},S(h)x)$, so $\Psi(\sigma,x)\ge e^{\eta h}\Psi(\sigma^{(h)},S(h)x)$. Since $\sigma\mapsto\sigma^{(h)}$ maps $\{\sigma:t_1^\sigma>h\}$ onto all of $\mathcal{S}_{0,\infty}$, taking the supremum gives $V(x)\ge e^{\eta h}V(S(h)x)$, i.e.\ $V(S(h)x)\le e^{-\eta h}V(x)$. Dividing $V(S(h)x)-V(x)\le(e^{-\eta h}-1)V(x)$ by $h$ and letting $h\downarrow0$ yields $\underline{D}^+V(x)\le-\eta V(x)$, which is \eqref{eq:coerc:arb:flow}.

\emph{Jump condition.} Fix $x\in X$. For any $\sigma\in\mathcal{S}_{0,\infty}$, prepend a jump at $0^+$ to obtain $\widetilde\sigma\in\mathcal{S}_{0,\infty}$ ($t_1^{\widetilde\sigma}=0^+$, $t_{k+1}^{\widetilde\sigma}=t_k^\sigma$), so that $U_{\widetilde\sigma}(s,0)x=U_\sigma(s,0)Jx$ and $\kappa_{\widetilde\sigma}(s,0)=\kappa_\sigma(s,0)+1$ for $s>0$. The supremum over $s>0$ in $\Psi(\widetilde\sigma,x)$ then equals $\nu\,\Psi(\sigma,Jx)$, whence $V(x)\ge\Psi(\widetilde\sigma,x)\ge\nu\,\Psi(\sigma,Jx)$; taking the supremum over $\sigma$ gives $V(x)\ge\nu\,V(Jx)$, i.e.\ $V(Jx)\le(1/\nu)V(x)$, which is \eqref{eq:coerc:arb:jump} with $\mu=1/\nu\in(0,1)$.

$(ii)\Leftrightarrow(iii)$ is the integrated form of (ii). $(ii)\Rightarrow(iii)$: along the continuous trajectory $t\mapsto S(t)x$, set $g(t):=V(S(t)x)$. Then $g$ is continuous (since $V$ is Lipschitz on bounded subsets and $S(\cdot)x$ is continuous) and the Dini inequality \eqref{eq:coerc:arb:flow} reads $\underline D^+g(t)\le-\eta g(t)$ for all $t\ge0$. By the standard Dini comparison lemma for continuous functions \cite{Lakshmikantham:91}, $g(t)\le g(0)e^{-\eta t}$, which is the flow inequality of \eqref{eq:coerc:arb:integrated}. The jump inequality is identical to \eqref{eq:coerc:arb:jump}. $(iii)\Rightarrow(ii)$: from $V(S(h)x)\le e^{-\eta h}V(x)$, $V(S(h)x)-V(x)\le(e^{-\eta h}-1)V(x)$; dividing by $h$ and taking $\liminf$ as $h\to0^+$ gives $\underline D^+V(x)\le-\eta V(x)$, which is \eqref{eq:coerc:arb:flow}.
\end{proof}

\subsection{Constant dwell-time ($\mathcal{S}_{\mathrm{cst}}(T)$)}\label{subsec:coerc:cst}

For the constant dwell-time family, the coercive functional is timer-dependent on the interval $[0,T]$, mirroring the structure of Theorem~\ref{thm:cst:nc}.

\begin{theorem}[Constant dwell-time, coercive N\&S]\label{thm:coerc:cst}
Let $T>0$ and Assumption~\ref{ass:growth} hold. The following are equivalent:
\begin{enumerate}[(i)]
  \item the system \eqref{eq:syst} is strongly UGES over $\mathcal{S}_{\mathrm{cst}}(T)$;
  \item there exist $p\ge1$, $c\ge1$, $\eta\ge0$, $\mu\in(0,1]$ with $\eta+(-\log\mu)>0$, and a functional $\bar V:[0,T]\times X\to\mathbb{R}_{\ge0}$, Lipschitz on bounded subsets of $X$ uniformly in $\tau$, satisfying
    \begin{subequations}\label{eq:coerc:cst:hybrid}
    \begin{align}
      \|x\|^p\le\bar V(\tau,x)&\le c\|x\|^p,\label{eq:coerc:cst:bounds}\\
      \underline{D}^+\bar V(\tau,x)&\le -\eta  \bar V(\tau,x),\label{eq:coerc:cst:flow}\\
      \bar V(0,Jx)&\le \mu  \bar V(T,x);\label{eq:coerc:cst:jump}
    \end{align}
    \end{subequations}
  \item the monodromy $M_T=JS(T)$ (post-jump sampling, cf.\ Theorem~\ref{thm:cst:nc}) admits a coercive Lipschitz Lyapunov function: there exist $\hat c\ge1$, $\hat\mu\in(0,1)$, and $V_d:X\to\mathbb{R}_{\ge0}$ Lipschitz on bounded subsets such that $\|x\|^p\le V_d(x)\le\hat c\|x\|^p$ and $V_d(M_T x)\le\hat\mu V_d(x)$.
\end{enumerate}
\end{theorem}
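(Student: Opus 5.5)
I will prove the cycle $(i)\Rightarrow(ii)\Rightarrow(i)$ together with $(ii)\Rightarrow(iii)\Rightarrow(i)$, reusing the constructions of Theorem~\ref{thm:coerc:banach} and Theorem~\ref{thm:cst:nc}.

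\textbf{$(ii)\Rightarrow(i)$.} I will run the Gr\"onwall/telescoping argument of Theorem~\ref{thm:coerc:banach} with the timer $\tau(t)=t-t_{k(t)}$.
\begin{itemize}
\item On each flow arc of length $T$, \eqref{eq:coerc:cst:flow} and the Dini comparison lemma give $\bar V(T,x(t_{k+1}^-))\le e^{-\eta T}\bar V(0,x(t_k))$.
\item At each jump, \eqref{eq:coerc:cst:jump} contributes a factor $\mu$.
\item Hence $\|U_{\sigma_T}(t,s)y\|^p\le c\,\mu^{\kappa}e^{-\eta(t-s)}\|y\|^p$.
\end{itemize}
This is GES with $\alpha=\eta/p$ and $\rho=\mu^{1/p}$, where $\alpha-\log\rho>0$. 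Only one of the two mechanisms may be strict here. Since $\kappa_{\sigma_T}(t,s)\ge (t-s)/T-1$, I will trade jump decay for time decay, and vice versa, to obtain both $\alpha'>0$ and $\rho'<1$. Concretely, write $\mu^\kappa e^{-\eta(t-s)}\le C\,\bar\rho^{\kappa}e^{-\bar\alpha(t-s)}$ by splitting the total exponent in half between the two factors. This is a routine but necessary step, because the statement asks for strong UGES.

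\textbf{$(i)\Rightarrow(ii)$.} I will take the sup-functional \eqref{eq:sup:functional} for $\sigma_T$ and reparametrize it by the timer: $\bar V(\tau,x):=V(\tau,x)$ for $\tau\in[0,T)$. By periodicity, $V(t_k+\tau,\cdot)=V(\tau,\cdot)$ for every $k$, so this is well defined on a single period. At the endpoint I set $\bar V(T,x):=V(T^-,x)=\sup_{r\ge T}e^{\eta(r-T)}\nu^{\kappa(r,T^-)}\|U(r,T^-)x\|$. This endpoint definition is the delicate point: the supremum must range over the pre-jump state, so that the jump factor $\nu$ is included and the computation in Theorem~\ref{thm:coerc:banach} gives $\bar V(0,Jx)=\nu^{-1}\bar V(T,x)$.
\begin{itemize}
\item The bounds $\|x\|\le\bar V\le M\|x\|$ (with $p=1$) follow as before. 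For $\tau=T$, take $r=T^-$ for the lower bound, and use $\nu\rho<1$ together with the GES bound $M\rho^{\kappa(r,T^-)}$ for the upper bound.
\item The flow and Lipschitz estimates transfer verbatim.
\end{itemize}

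\textbf{$(ii)\Rightarrow(iii)$.} I will set $V_d(x):=\bar V(0,x)$. Integrating the flow condition over one period gives $\bar V(T,S(T)x)\le e^{-\eta T}\bar V(0,x)$, and the jump condition then gives $V_d(JS(T)x)\le \mu e^{-\eta T}V_d(x)$. So $\hat\mu=\mu e^{-\eta T}<1$, because $\eta+(-\log\mu)>0$, and the bounds are inherited.

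\textbf{$(iii)\Rightarrow(i)$.} The estimate $V_d(M_T^n x)\le\hat\mu^nV_d(x)$ gives $\|(JS(T))^n\|\le \hat c^{1/p}\hat\mu^{n/p}$. For arbitrary $t\ge s$, I will factor $U_{\sigma_T}(t,s)$ into a leading partial arc from $s$ to the next jump, $n$ full monodromies, and a trailing arc. Each partial arc is bounded using $\|S(r)\|\le M_0e^{|\omega_0|T}$ for $r\le T$ and $\|J\|$. Since $n\ge\kappa-2$ and $t-s\le(\kappa+1)T$, this gives a geometric bound in $\kappa$. I then convert part of it into exponential time decay as in the first step, which yields strong UGES.

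I expect the main obstacle to be the endpoint bookkeeping in $(i)\Rightarrow(ii)$, namely the pre- versus post-jump values of $\bar V$ at $\tau=T$ under periodicity; the other implications are routine.
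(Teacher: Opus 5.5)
Your proposal is correct, and its core coincides with the paper's proof. It uses the same timer-projected sup-functional built from Theorem~\ref{thm:coerc:banach}, the same Gr\"onwall/telescoping argument for (ii)$\Rightarrow$(i), and the same one-period chaining $V_d(JS(T)x)\le\mu e^{-\eta T}V_d(x)$ for (ii)$\Rightarrow$(iii).

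The only structural difference is how you close the cycle. You prove (iii)$\Rightarrow$(i) directly by factoring $U_{\sigma_T}(t,s)=S(t-t_b)\,(JS(T))^{\kappa-1}J\,S(t_a-s)$, with $t_a,t_b$ the first and last jumps in $(s,t]$, and then reuse (i)$\Rightarrow$(ii). The paper instead proves (iii)$\Rightarrow$(ii) by showing that the forward construction \eqref{eq:coerc:cst:V:explicit} is finite, which rests on exactly the same geometric propagator bound. Your ordering avoids re-verifying the functional, while the paper's ties $\nu,\eta$ directly to $\hat\mu$.

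You are also more explicit than the paper at two points where care is genuinely needed.

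\emph{Endpoint $\tau=T$.} Read literally, \eqref{eq:coerc:cst:V:explicit} at $\tau=T$ starts from the post-jump state and equals $\bar V(0,\cdot)$ by periodicity, which breaks the jump inequality. So the pre-jump left limit is indeed the right endpoint. That limit is $\bar V(T,x)=\max\bigl(\|x\|,\nu\bar V(0,Jx)\bigr)$, so the supremum must contain the $r=T^-$ term. You use that term for the lower bound, but your formula ranges over $r\ge T$ only, and then coercivity fails whenever $Jx=0$. Consequently the jump relation is $\bar V(0,Jx)\le\nu^{-1}\bar V(T,x)$, not an equality.

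\emph{Strong UGES.} Since (ii) allows $\eta=0$ or $\mu=1$, it is the relation $|\kappa_{\sigma_T}(t,s)-(t-s)/T|<1$ that upgrades your estimate to \emph{strong} UGES. The paper leaves this step implicit.
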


\begin{proof}
$(ii)\Rightarrow(i)$: integrating the flow $\eta$-decay \eqref{eq:coerc:cst:flow} over each period $[t_k,t_k+T]$ and applying the jump $\mu$-contraction \eqref{eq:coerc:cst:jump} give $\bar V(0,x(t_{k+1}))\le\mu e^{-\eta T}\bar V(0,x(t_k))$, with $\mu e^{-\eta T}=e^{-(\eta T-\log\mu)}<1$ since $\eta+(-\log\mu)>0$; as the period $T$ is fixed, telescoping together with the coercive lower bound $\|x\|^p\le\bar V$ gives geometric decay in $t$, i.e.\ UGES over $\mathcal{S}_{\mathrm{cst}}(T)$ (the within-period bound following from the flow decay).

$(i)\Rightarrow(ii)$ uses the explicit construction \eqref{eq:sup:functional} of Theorem~\ref{thm:coerc:banach} applied to $\sigma_T$ and projected onto the timer:
\begin{equation}\label{eq:coerc:cst:V:explicit}
  \bar V(\tau,x):=\sup_{s\ge0}e^{\eta s}\nu^{\kappa_{\sigma_T}(\tau+s,\tau)}\|U_{\sigma_T}(\tau+s,\tau)x\|,\qquad\tau\in[0,T],  x\in X.
\end{equation}
By periodicity of $\sigma_T$, this is well-defined on $[0,T]\times X$. The coercive bounds, flow $\eta$-decay, and jump $\mu$-contraction follow from the proof of Theorem~\ref{thm:coerc:banach} applied with starting timer $\tau$ in place of starting time $t$, giving constants $c=M$, $\eta\in(0,\alpha)$, $\mu=1/\nu\in(0,1)$ for any $\nu\in(1,1/\rho)$ (the bounded dwell-time makes UGES over $\mathcal{S}_{\mathrm{cst}}(T)$ equivalent to strong GES, so $\alpha>0$, $\rho\in(0,1)$ may be assumed).

$(ii)\Leftrightarrow(iii)$. From continuous time to discrete time: set $V_d(x):=\bar V(0,x)$. Chaining \eqref{eq:coerc:cst:flow} on $[0,T]$ along the unforced trajectory $r\mapsto S(r)x$ and applying \eqref{eq:coerc:cst:jump} at $\tau=T$:
\begin{equation*}
  V_d(M_T x)=\bar V(0,JS(T)x)\le \mu  \bar V(T,S(T)x)\le\mu  e^{-\eta T}\bar V(0,x)=\mu e^{-\eta T}V_d(x),
\end{equation*}
which is the coercive discrete contraction with $\hat\mu:=\mu e^{-\eta T}\in(0,1)$ and $\hat c:=c$. Conversely, suppose $V_d$ satisfies $\|x\|\le V_d(x)\le\hat c\|x\|$ and $V_d(M_T x)\le\hat\mu V_d(x)$ with $\hat\mu\in(0,1)$. Iterating the contraction gives $\|M_T^k x\|\le V_d(M_T^k x)\le\hat c\,\hat\mu^k\|x\|$, i.e.\ geometric decay of the sampled monodromy sequence. Choose $\nu\in(1,1/\hat\mu)$ and $\eta\in\bigl(0,-\tfrac1T\log(\nu\hat\mu)\bigr)$, so that $e^{\eta T}\nu\hat\mu<1$. The functional witnessing (ii) is then the \emph{forward} construction \eqref{eq:coerc:cst:V:explicit}: its coercive lower bound (the $s=0$ term), flow $\eta$-decay, and jump $\mu$-contraction with $\mu=1/\nu$ hold structurally, exactly as in $(i)\Rightarrow(ii)$ above; and its finiteness with upper bound $c\|x\|$ follows from the geometric decay, since on the $k$-th future period $\|U_{\sigma_T}(\tau+s,\tau)x\|\le C\hat\mu^{k}\|x\|$ (with $C$ depending only on $G$, $\|J\|$, $\hat c$, by the growth bound of Assumption~\ref{ass:growth} with $\bar T=T<\infty$), so the supremum terms are dominated by $C(e^{\eta T}\nu\hat\mu)^{k}\|x\|$, a convergent geometric series.
\end{proof}

\subsection{Minimum dwell-time ($\mathcal{S}_{\min}(T_{\min})$)}\label{subsec:coerc:min}

For the minimum dwell-time family, the coercive functional is timer-dependent on the mandatory phase $[0,T_{\min}]$ and frozen on the free phase beyond $T_{\min}$, mirroring the structure of Theorem~\ref{thm:min:nc}.

\begin{theorem}[Minimum dwell-time, coercive N\&S]\label{thm:coerc:min}
Let $T_{\min}>0$ and Assumption~\ref{ass:growth} hold. The following are equivalent:
\begin{enumerate}[(i)]
  \item the system \eqref{eq:syst} is strongly UGES over $\mathcal{S}_{\min}(T_{\min})$;
  \item there exist $p\ge1$, $c\ge1$, $\eta>0$, $\mu\in(0,1)$, and a functional $\bar V:[0,T_{\min}]\times X\to\mathbb{R}_{\ge0}$, Lipschitz on bounded subsets uniformly in $\tau$, satisfying \eqref{eq:coerc:cst:bounds}, and
    \begin{subequations}\label{eq:coerc:min:hybrid}
    \begin{align}
    \underline{D}^+\bar V(\tau,x)&\le-\eta\bar V(\tau,x),\; \forall\tau\in[0,T_{\min}]\\
    \underline{D}^+\bar V(T_{\min},x)&\le-\eta\bar V(T_{\min},x)\\
    \bar V(0,Jx)&\le\mu\bar V(T_{\min},x)
  \end{align}
  \end{subequations}
  (these conditions imply, in particular, that the $C_0$-semigroup $S(t)$ generated by $A$ is exponentially stable)
  \item the $C_0$-semigroup $S(t)$ is exponentially stable and the family $\mathcal{M}=\{JS(\theta):\theta\ge T_{\min}\}$ admits a coercive Lipschitz Lyapunov function uniformly: there exist $\hat c\ge1$, $\hat\mu\in(0,1)$, and $V_d:X\to\mathbb{R}_{\ge0}$ such that $\|x\|^p\le V_d(x)\le\hat c\|x\|^p$ and $V_d(Mx)\le\hat\mu V_d(x)$ for all $M\in\mathcal{M}$.
\end{enumerate}
\end{theorem}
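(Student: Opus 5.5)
I will prove the cycle $(ii)\Rightarrow(i)\Rightarrow(ii)$ and $(ii)\Leftrightarrow(iii)$, mirroring Theorems~\ref{thm:coerc:cst} and \ref{thm:min:nc}.

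\textbf{Proof that (ii) implies (i).} Along a trajectory with $\sigma\in\mathcal{S}_{\min}(T_{\min})$ I run the timer $\tau=\min(t-t_k,T_{\min})$. On the mandatory phase I integrate the first Dini inequality with the comparison lemma \cite{Lakshmikantham:91}, and on the free phase the second (saturated) one. This gives $\bar V(\tau(t),x(t))\le e^{-\eta(t-t_k)}\bar V(0,x(t_k))$ on each inter-jump interval. The jump inequality then contributes a factor $\mu$ per jump. Telescoping and using the coercive bounds gives $\|x(t)\|^p\le c\,\mu^{\kappa_\sigma(t,s)}e^{-\eta(t-s)}\|x(s)\|^p$, uniformly in $\sigma$ and $s$, which is strong UGES with $M=c^{1/p}$, $\alpha=\eta/p$ and $\rho=\mu^{1/p}$. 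In addition, the free-phase condition alone gives $\bar V(T_{\min},S(t)x)\le e^{-\eta t}\bar V(T_{\min},x)$, which yields the parenthetical exponential stability of $S$.

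\textbf{Proof that (i) implies (ii).} I combine the sup-functional \eqref{eq:sup:functional} with the future families $\mathcal{S}^{(\tau)}$ of \eqref{eq:min:family:tau}:
\[
\bar V(\tau,x):=\sup_{\sigma\in\mathcal{S}^{(\tau)}}\sup_{s\ge0}e^{\eta s}\nu^{\kappa_\sigma(s,0)}\|U_\sigma(s,0)x\|,\qquad \eta\in(0,\alpha),\ \nu\in(1,1/\rho).
\]
\emph{Bounds.} The lower bound $\|x\|$ comes from the term $s=0$. For the upper bound $M\|x\|$, I use that every $\sigma\in\mathcal{S}^{(\tau)}$ is the re-based future of an $\mathcal{S}_{\min}$ signal and that UGES holds uniformly in the initial time. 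At $\tau=T_{\min}$ the family also contains immediate-jump signals, so the bound there is only $\max(M,\nu M\|J\|)\|x\|$; this is still finite, so I enlarge $c$.

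\emph{Flow.} The flow conditions come from the forward shift $\sigma\mapsto\sigma^{(h)}$ used in Theorem~\ref{thm:min:nc}, which maps $\mathcal{S}^{(\tau+h)}$ into $\mathcal{S}^{(\tau)}$ (and $\mathcal{S}^{(T_{\min})}$ into itself). Restricting the inner sup to $s\ge h$ gives $\bar V(\tau+h,S(h)x)\le e^{-\eta h}\bar V(\tau,x)$, exactly as in Theorem~\ref{thm:coerc:arbitrary}.

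\emph{Jump.} The jump condition comes from prepending a jump at $0^+$ to $\sigma\in\mathcal{S}^{(0)}$, which produces an element of $\mathcal{S}^{(T_{\min})}$. This gives $\nu\,\bar V(0,Jx)\le\bar V(T_{\min},x)$, i.e.\ the jump inequality with $\mu=1/\nu$.

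\emph{Lipschitz continuity.} $\bar V$ is a supremum of seminorm-like functions of $x$ that are uniformly bounded by $c'\|x\|$, so it is globally Lipschitz in $x$ uniformly in $\tau$.

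\textbf{(ii)$\Leftrightarrow$(iii).} For (ii)$\Rightarrow$(iii), set $V_d:=\bar V(0,\cdot)$, which samples post-jump. For $\theta\ge T_{\min}$, I chain mandatory flow on $[0,T_{\min}]$, then free flow on $[T_{\min},\theta]$, then the jump. This gives $V_d(JS(\theta)x)\le\mu e^{-\eta\theta}V_d(x)\le\mu V_d(x)$, so $\hat\mu=\mu$; exponential stability of $S$ was obtained above.

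For (iii)$\Rightarrow$(ii), I first turn the discrete contraction into a trajectory bound. Using the coercive $V_d$, $\|(JS(\theta_k))\cdots(JS(\theta_1))x\|^p\le\hat c\hat\mu^k\|x\|^p$ for any $\theta_i\ge T_{\min}$. Combined with $\|S(t)\|\le M_0e^{-\alpha_0t}$, this gives the uniform hybrid estimate: within an interval the flow factor is bounded by $M_0$, and the jump factor $\|J\|$ is absorbed through the post-jump sampling. The resulting rate is $\alpha>0$ from the flow and $\rho$ from $\hat\mu^{1/p}$ interpolated, as in Step~4 of Lemma~\ref{lem:datko}, since every interval has length at least $T_{\min}$. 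Hence (i) holds, and the construction of the previous paragraph supplies (ii).

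\textbf{Main obstacle.} I expect the delicate point to be the saturated timer value $T_{\min}$. There are two issues: the bound on $\bar V(T_{\min},\cdot)$ must absorb the immediate-jump signals through $\nu\|J\|$, and the free-phase inequality must hold with $\eta>0$, which is exactly where exponential stability of $S$ (Proposition~\ref{prop:min:necessary}) is used.
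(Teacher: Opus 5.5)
Your proposal is correct and follows the paper's proof essentially step for step. It uses the same sup-over-future-signals functional \eqref{eq:coerc:min:V:explicit} with the forward-shift and prepend arguments, takes $V_d:=\bar V(0,\cdot)$ for (ii)$\Rightarrow$(iii), and routes the converse through (i) by interpolating a jump-geometric bound (with the first partial arc bounded separately by $M_0\|J\|$) against a flow-exponential bound, after which the construction gives (ii).

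The one step left implicit is the interpolation. The flow-side bound is $M_0(M_0\|J\|)^{\kappa_\sigma(t,s)}e^{-\alpha_0(t-s)}$, so, unlike Step~4 of Lemma~\ref{lem:datko}, the interpolation weight $\lambda\in(0,1)$ must be taken close to the jump-side bound so that $\hat\mu^{\lambda/p}(M_0\|J\|)^{1-\lambda}<1$. The minimum dwell-time is used only to make each full segment $JS(\theta_i)$ an element of $\mathcal{M}$, not to convert jump count into elapsed time.
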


\begin{proof}
$(ii)\Rightarrow(i)$. The hybrid coercive conditions translate to UGES on $\mathcal{S}_{\min}(T_{\min})$ by the same telescoping argument as in the proof of $(iii)\Rightarrow(ii)$ in Theorem~\ref{thm:min:nc}, but with the coercive lower bound $\|x\|^p\le \bar V(\tau,x)$ now giving a direct exponential decay reading: integrating $\eta$-decay on each inter-jump interval $[t_k,t_k+\theta_k]$ with $\theta_k\ge T_{\min}$ and applying the jump $\mu$-contraction gives $\bar V(0,x(t_{k+1}))\le \mu e^{-\eta\theta_k}\bar V(0,x(t_k))\le \mu e^{-\eta T_{\min}}\bar V(0,x(t_k))$, hence $\|x(t_{k+1})\|^p\le c  (\mu e^{-\eta T_{\min}})^{k+1}\|x_0\|^p$. The free-phase decay ensures the analogous bound between jumps.

$(i)\Rightarrow(ii)$. The explicit construction is
\begin{equation}\label{eq:coerc:min:V:explicit}
  \bar V(\tau,x):=\sup_{\sigma\in\mathcal{S}^{(\tau)}}\sup_{s\ge0}e^{\eta s}\nu^{\kappa_\sigma(s,0)}\|U_\sigma(s,0)x\|,
\end{equation}
with $\mathcal{S}^{(\tau)}$ as in the proof of Theorem~\ref{thm:min:nc}. Coercive bounds follow from the uniform-in-$s$ UGES bound applied to the $\mathcal{S}_{\min}(T_{\min})$ signals of which the elements of $\mathcal{S}^{(\tau)}$ are futures, exactly as in the upper-bound step of Theorem~\ref{thm:min:nc}. The mandatory-phase flow decay, free-phase flow decay, and jump contraction follow from the same shift / prepend arguments as in the proof of $(ii)\Rightarrow(iii)$ in Theorem~\ref{thm:min:nc}, transposed to the multiplicative (coercive) form via the calculations of Theorem~\ref{thm:coerc:banach}. The necessary exponential stability of $S(t)$ follows from the persistent-flowing sub-case (any $\sigma\in\mathcal{S}_{\min}(T_{\min})$ with $t_1\to\infty$ exhibits arbitrarily long flow-only intervals).

$(ii)\Leftrightarrow(iii)$. From continuous time to discrete time: $V_d(x):=\bar V(0,x)$. For any $\theta\ge T_{\min}$, chain the mandatory flow on $[0,T_{\min}]$, the free flow on $[T_{\min},\theta]$, and the jump at $\tau=T_{\min}$ (after which the timer resets):
\begin{equation*}
  V_d(JS(\theta)x)=\bar V(0,JS(\theta)x)\le \mu  \bar V(T_{\min},S(\theta)x)\le\mu e^{-\eta(\theta-T_{\min})}\bar V(T_{\min},S(T_{\min})x)\le\mu e^{-\eta\theta}\bar V(0,x).
\end{equation*}
Hence $V_d(Mx)\le\hat\mu V_d(x)$ uniformly over $M=JS(\theta)\in\mathcal{M}$ with $\hat\mu:=\mu e^{-\eta T_{\min}}<1$; together with the exponential stability of $S(t)$ implied by (ii) (the free-phase decay, cf.\ the statement), this establishes (iii).

Conversely, assume (iii): $\|S(t)\|\le M_0 e^{-\alpha_0 t}$ for some $M_0\ge1$, $\alpha_0>0$, and $V_d(Mx)\le\hat\mu V_d(x)$ uniformly over $\mathcal{M}$ with $\|x\|\le V_d(x)\le\hat c\|x\|$. Two propagator bounds hold along any $\sigma\in\mathcal{S}_{\min}(T_{\min})$, uniformly in the starting time $s\ge0$. \emph{First (jump-geometric).} From post-jump to post-jump, $U_\sigma$ is a composition of maps $JS(\theta_i)\in\mathcal{M}$ ($\theta_i\ge T_{\min}$), each contracting $V_d$ by $\hat\mu$. If $s$ is a regular (post-jump) instant this gives $\|U_\sigma(t,s)x\|\le M_0\hat c\,\hat\mu^{\kappa_\sigma(t,s)}\|x\|$. If instead $s$ lies in the interior of an inter-jump interval, the first arc $U_\sigma(t_{\rm first},s)=JS(t_{\rm first}-s)$ has length $t_{\rm first}-s$ possibly below $T_{\min}$, so it is \emph{not} an $\mathcal{M}$-contraction and is only bounded, $\|U_\sigma(t_{\rm first},s)x\|\le M_0\|J\|\,\|x\|$; the remaining $\kappa_\sigma(t,s)-1$ jumps are genuine $\mathcal{M}$-maps, so $\|U_\sigma(t,s)x\|\le (M_0^2\|J\|\hat c/\hat\mu)\,\hat\mu^{\kappa_\sigma(t,s)}\|x\|$. In either case $\|U_\sigma(t,s)x\|\le C_1\hat\mu^{\kappa_\sigma(t,s)}\|x\|$ with $C_1:=\max(M_0\hat c,\,M_0^2\|J\|\hat c/\hat\mu)$. \emph{Second (time-exponential).} Factorising $U_\sigma(t,s)$ into its arcs and jumps and using $\|S(\cdot)\|\le M_0e^{-\alpha_0\cdot}$ and $\|J\|$ gives $\|U_\sigma(t,s)x\|\le M_0(M_0\|J\|)^{\kappa_\sigma(t,s)}e^{-\alpha_0(t-s)}\|x\|$. Interpolating these two bounds as in Step~4 of Lemma~\ref{lem:datko}, the first to the power $\theta$, the second to $1-\theta$, with $\theta\in(0,1)$ close enough to $1$ that $\rho:=\hat\mu^{\theta}(M_0\|J\|)^{1-\theta}<1$, yields $\|U_\sigma(t,s)x\|\le M\rho^{\kappa_\sigma(t,s)}e^{-\alpha(t-s)}\|x\|$ with $\rho\in(0,1)$ and $\alpha:=\alpha_0(1-\theta)>0$, uniformly over $\mathcal{S}_{\min}(T_{\min})$; this is UGES, i.e.\ (i). The functional witnessing (ii) is then the forward construction \eqref{eq:coerc:min:V:explicit}, which is finite and satisfies the coercive, flow, and jump conditions by the already-established implication $(i)\Rightarrow(ii)$. The additional hypothesis that $S(t)$ be exponentially stable is genuinely needed here and cannot be dropped: taking $A=\mathrm{diag}(1,-1)$ (so $S(t)=\mathrm{diag}(e^{t},e^{-t})$, unstable) and $J=\mathrm{diag}(0,1)$ gives $JS(\theta)=\mathrm{diag}(0,e^{-\theta})$, for which $V_d(x)=\|x\|$ satisfies $V_d(JS(\theta)x)\le e^{-T_{\min}}V_d(x)$; yet a sequence with $t_1\to\infty$ makes $\|S(t)x\|$ diverge, so the system is not UGES.
\end{proof}

\subsection{Range dwell-time ($\mathcal{S}_{\mathrm{rng}}(T_{\min},T_{\max})$)}\label{subsec:coerc:rng}

For the range dwell-time family, the coercive functional is timer-dependent on $[0,T_{\max}]$ with the jump criterion imposed over all admissible periods, mirroring the structure of Theorem~\ref{thm:rng:nc}.

\begin{theorem}[Range dwell-time, coercive N\&S]\label{thm:coerc:rng}
Let $0<T_{\min}\le T_{\max}<\infty$ and Assumption~\ref{ass:growth} hold. The following are equivalent:
\begin{enumerate}[(i)]
  \item the system \eqref{eq:syst} is strongly UGES over $\mathcal{S}_{\mathrm{rng}}(T_{\min},T_{\max})$;
  \item there exist $p\ge1$, $c\ge1$, $\eta\ge0$, $\mu\in(0,1]$ with $\eta+(-\log\mu)>0$, and a functional $\bar V:[0,T_{\max}]\times X\to\mathbb{R}_{\ge0}$, Lipschitz on bounded subsets uniformly in $\tau$, satisfying
    \begin{subequations}\label{eq:coerc:rng:hybrid}
    \begin{align}
      \|x\|^p\le\bar V(\tau,x)&\le c\|x\|^p,\\
      \underline{D}^+\bar V(\tau,x)&\le -\eta  \bar V(\tau,x),\;\forall\tau\in[0,T_{\max}],\\
      \bar V(0,Jx)&\le\mu  \bar V(\theta,x),\;\forall\theta\in[T_{\min},T_{\max}];
    \end{align}
    \end{subequations}
  \item the family $\mathcal{M}=\{JS(\theta):\theta\in[T_{\min},T_{\max}]\}$ admits a coercive Lipschitz Lyapunov function uniformly.
\end{enumerate}
\end{theorem}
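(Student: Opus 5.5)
We will follow the template of Theorems~\ref{thm:coerc:cst} and~\ref{thm:coerc:min}. The cycle is $(ii)\Rightarrow(i)\Rightarrow(ii)$ together with $(ii)\Leftrightarrow(iii)$, where $(iii)\Rightarrow(ii)$ is routed through $(i)$.

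\textbf{Proof of $(ii)\Rightarrow(i)$.} Fix $\sigma\in\mathcal{S}_{\mathrm{rng}}(T_{\min},T_{\max})$, an initial time $s$, and the timer value $\tau(s)$. Along $x(t)=U_\sigma(t,s)y$, the Dini comparison lemma applied to the flow condition on each inter-jump interval gives $\bar V(\theta_k,x(t_{k+1}^-))\le e^{-\eta\theta_k}\bar V(0,x(t_k))$. The jump condition at $\theta=\theta_k\in[T_{\min},T_{\max}]$ then gives $\bar V(0,x(t_{k+1}))\le \mu e^{-\eta\theta_k}\bar V(0,x(t_k))$. Telescoping and using the coercive bounds yields
\[
\|U_\sigma(t,s)\|^p\le c\,\mu^{\kappa_\sigma(t,s)}e^{-\eta(t-s)} .
\]
This is GES with $\alpha=\eta/p$ and $\rho=\mu^{1/p}$, but possibly in a degenerate regime. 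To obtain the strong form, we use that $T_{\min}\le\theta_k\le T_{\max}$ forces $\kappa_\sigma(t,s)\ge (t-s)/T_{\max}-1$ and $t-s\ge T_{\min}(\kappa_\sigma(t,s)-1)$. Either of these lets us redistribute decay between the two factors (if $\mu<1$, split $\mu^\kappa=\mu^{\kappa/2}\mu^{\kappa/2}$ and convert one half into time decay; if $\eta>0$, convert part of the time decay into jump decay). This produces strict $\alpha>0$ and $\rho<1$ uniformly in $\sigma$.

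\textbf{Proof of $(i)\Rightarrow(ii)$.} We use the sup-construction
\[
\bar V(\tau,x):=\sup_{\sigma\in\mathcal{S}^{(\tau)}_{\mathrm{rng}}}\sup_{s\ge0}e^{\eta s}\nu^{\kappa_\sigma(s,0)}\|U_\sigma(s,0)x\|,
\]
with $\eta\in(0,\alpha)$ and $\nu\in(1,1/\rho)$, and with $\mathcal{S}^{(\tau)}_{\mathrm{rng}}$ as in \eqref{eq:rng:family:tau}.
\begin{itemize}
\item The lower bound comes from the $s=0$ term.
\item For the upper bound, each future signal with $t_1^\sigma>0$ is realised by some $\sigma'\in\mathcal{S}_{\mathrm{rng}}$ started at time $\tau$, so uniform-in-$s$ UGES gives the bound $M\|x\|$. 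Immediate-jump signals (allowed when $\tau\ge T_{\min}$) contribute at most $\nu M\|J\|\|x\|$. We therefore take $c=\max(M,\nu M\|J\|)$ and $p=1$.
\item The flow decay follows from the forward-shift argument of Theorem~\ref{thm:rng:nc}: $\sigma\in\mathcal{S}^{(\tau+h)}_{\mathrm{rng}}$ maps to $\sigma^{(h)}\in\mathcal{S}^{(\tau)}_{\mathrm{rng}}$, which yields $\bar V(\tau+h,S(h)x)\le e^{-\eta h}\bar V(\tau,x)$, exactly as in Theorem~\ref{thm:coerc:banach}.
\item The jump contraction follows from the prepend argument: for every $\theta\in[T_{\min},T_{\max}]$, a jump prepended at $0^+$ is admissible in $\mathcal{S}^{(\theta)}_{\mathrm{rng}}$, which gives $\nu\bar V(0,Jx)\le\bar V(\theta,x)$, so $\mu=1/\nu$.
\item Lipschitz continuity holds with constant $c$, since $\bar V(\tau,\cdot)$ is a supremum of seminorms, each bounded by $c\|\cdot\|$.
\end{itemize}

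\textbf{Proof of $(ii)\Rightarrow(iii)$.} Set $V_d:=\bar V(0,\cdot)$. Chaining the flow decay over $[0,\theta]$ with the jump contraction at $\theta$ gives $V_d(JS(\theta)x)\le\mu e^{-\eta\theta}V_d(x)\le\hat\mu V_d(x)$, where $\hat\mu=\mu e^{-\eta T_{\min}}<1$ because $\eta-\log\mu>0$.

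\textbf{Proof of $(iii)\Rightarrow(i)$.} Iterating $V_d$ over post-jump instants gives $\hat c\,\hat\mu^{k}$ decay. A start $s$ in the interior of an interval costs only one extra factor of at most $M_S\|J\|/\hat\mu$, where $M_S=\sup_{[0,T_{\max}]}\|S\|<\infty$ is finite by Assumption~\ref{ass:growth} since $T_{\max}<\infty$. The residual arc after the last jump is likewise bounded by $M_S$. This yields $\|U_\sigma(t,s)\|\le C_1\hat\mu^{\kappa_\sigma(t,s)}$. Since $\kappa_\sigma(t,s)\ge (t-s)/T_{\max}-1$, half of this geometric decay can be converted into time decay, giving strong UGES. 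The witness for (ii) is then the forward construction above.

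\textbf{Expected obstacle.} The main difficulty is the uniformity bookkeeping. Start times inside an interval must be handled in $(iii)\Rightarrow(i)$, and the immediate-jump members of $\mathcal{S}^{(\tau)}_{\mathrm{rng}}$ must be bounded in the upper bound of $(i)\Rightarrow(ii)$. Both are handled by using $T_{\max}<\infty$ to bound all arcs by $M_S$.
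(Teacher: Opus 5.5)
Your proposal is correct and follows essentially the same route as the paper: telescoping for (ii)$\Rightarrow$(i), the forward sup-construction over $\mathcal{S}^{(\tau)}_{\mathrm{rng}}$ with the shift/prepend arguments for (i)$\Rightarrow$(ii), $V_d:=\bar V(0,\cdot)$ for (ii)$\Rightarrow$(iii), and geometric decay of the $\mathcal{M}$-iterates feeding back into the forward construction for the converse. The differences are refinements only: you route (iii)$\Rightarrow$(ii) through strong UGES, as the paper does in Theorem~\ref{thm:coerc:min}, rather than tuning $\nu,\eta$ directly so that $e^{\eta T_{\max}}\nu\hat\mu<1$; you also make explicit three points the paper leaves implicit, namely the conversion to the strong regime in (ii)$\Rightarrow$(i), the $\nu M\|J\|$ contribution of immediate-jump signals, and the cost of a mid-interval start.
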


\begin{proof}
$(ii)\Rightarrow(i)$. The hybrid coercive conditions translate to UGES on $\mathcal{S}_{\mathrm{rng}}(T_{\min},T_{\max})$ by the same telescoping argument as in $(iii)\Rightarrow(ii)$ of Theorem~\ref{thm:rng:nc}: for $\sigma\in\mathcal{S}_{\mathrm{rng}}(T_{\min},T_{\max})$ with inter-jump lengths $\theta_k\in[T_{\min},T_{\max}]$, integrating $\eta$-decay on $[0,\theta_k]$ in the timer and applying the jump $\mu$-contraction at $\tau=\theta_k$ give $\bar V(0,x(t_{k+1}))\le\mu e^{-\eta\theta_k}\bar V(0,x(t_k))\le\mu e^{-\eta T_{\min}}\bar V(0,x(t_k))$, with $\mu e^{-\eta T_{\min}}=e^{-(\eta T_{\min}-\log\mu)}<1$ since $\eta+(-\log\mu)>0$.

$(i)\Rightarrow(ii)$. The explicit construction is
\begin{equation}\label{eq:coerc:rng:V:explicit}
  \bar V(\tau,x):=\sup_{\sigma\in\mathcal{S}^{(\tau)}_{\mathrm{rng}}}\sup_{s\ge0}e^{\eta s}\nu^{\kappa_\sigma(s,0)}\|U_\sigma(s,0)x\|,\qquad\tau\in[0,T_{\max}],
\end{equation}
with $\mathcal{S}^{(\tau)}_{\mathrm{rng}}$ as in the proof of Theorem~\ref{thm:rng:nc}. Coercive bounds, flow $\eta$-decay, and the jump $\mu$-contraction for every $\theta\in[T_{\min},T_{\max}]$ follow from the shift / prepend arguments of $(ii)\Rightarrow(iii)$ in Theorem~\ref{thm:rng:nc}, transposed to the multiplicative coercive form via Theorem~\ref{thm:coerc:banach} (the bounded dwell-time $T_{\max}<\infty$ makes UGES over $\mathcal{S}_{\mathrm{rng}}(T_{\min},T_{\max})$ equivalent to strong GES, so $\alpha>0$, $\rho\in(0,1)$ may be assumed in the construction).

$(ii)\Leftrightarrow(iii)$. From continuous time to discrete time: $V_d(x):=\bar V(0,x)$ satisfies $V_d(JS(\theta)x)\le\mu e^{-\eta\theta}V_d(x)\le\mu e^{-\eta T_{\min}}V_d(x)$ for every $\theta\in[T_{\min},T_{\max}]$, by chaining flow on $[0,\theta]$ and jump at $\tau=\theta$; write $\hat\mu:=\mu e^{-\eta T_{\min}}\in(0,1)$. Conversely, suppose the uniform contraction $V_d(Mx)\le\hat\mu V_d(x)$ holds over all $M\in\mathcal{M}=\{JS(\theta):\theta\in[T_{\min},T_{\max}]\}$, with $\|x\|\le V_d(x)\le\hat c\|x\|$. Any composition of $k$ maps from $\mathcal{M}$ contracts $V_d$ by $\hat\mu^k$, so for every $\sigma\in\mathcal{S}_{\mathrm{rng}}(T_{\min},T_{\max})$ the post-jump states satisfy $\|U_\sigma(t_k,0)x\|\le\hat c\,\hat\mu^k\|x\|$, i.e.\ geometric decay in the jump count $k$. Choose $\nu\in(1,1/\hat\mu)$ and $\eta\in\bigl(0,-\tfrac{1}{T_{\max}}\log(\nu\hat\mu)\bigr)$, so that $e^{\eta T_{\max}}\nu\hat\mu<1$. The functional witnessing (ii) is then the forward construction \eqref{eq:coerc:rng:V:explicit}: its coercive lower bound (the $s=0$ term), flow $\eta$-decay, and jump $\mu$-contraction with $\mu=1/\nu$ hold structurally, exactly as in $(i)\Rightarrow(ii)$; and its finiteness with upper bound $c\|x\|$ follows from the geometric decay, since a state $s$ ahead lying in the $k$-th future inter-jump interval obeys $\|U_\sigma(\tau+s,\tau)x\|\le G\,\hat c\,\hat\mu^{k}\|x\|$ (the growth bound of Assumption~\ref{ass:growth}, finite as $\bar T=T_{\max}<\infty$, applied to the mid-interval arc of length $\le T_{\max}$), whence the supremand is dominated by $G\hat c\,(e^{\eta T_{\max}}\nu\hat\mu)^{k}\|x\|$, a convergent geometric series.
\end{proof}

\section{Stability conditions on Hilbert spaces}
\label{sec:hilbert}

When the state space is a Hilbert space $H$ (with inner product $\langle\cdot,\cdot\rangle_H$ and induced norm $\|\cdot\|_H$), the Lyapunov functional takes the explicit quadratic form $V(t,x)=\langle x,P(t)x\rangle_H$, reducing the abstract conditions of the previous sections to operator inequalities. We denote by $L(H)$ the algebra of bounded linear operators on $H$, by $A^*$ the Hilbert adjoint of $A$, and write $P\succeq Q$ (resp.\ $P\succ Q$) if $P-Q$ is positive semidefinite (resp.\ positive definite) in the sense that $\langle h,(P-Q)h\rangle_H\ge0$ (resp.\ $>0$) for all $h\in H$ (resp.\ $h\ne0$).


\begin{definition}\label{def:operator:classes}
  Let $\sigma\in\mathcal{S}_{0,\infty}$ be given. Define the following classes of operator-valued functions:
\begin{align*}
  \mathcal{P}_\sigma&:=\left\{P:\mathbb{R}_{\ge0}\to L(H)\;\middle|\;\begin{array}{l}
    P(t)=P(t)^*,\;\exists  c_P>0:\; 0\prec P(t)\preceq c_P I,\\
    P\text{ differentiable on each }(t_k,t_{k+1}),\\
    \lim_{s\uparrow u}P(s)\text{ exists for all }u\in\mathbb{T}_\sigma
    \end{array}\right\},\\
  \mathcal{Q}&:=\left\{Q:\mathbb{R}_{\ge0}\to L(H)\;\middle|\;\begin{array}{l}
    Q(t)=Q(t)^*,\;\exists  \alpha_q,\alpha_Q>0:\;\alpha_q I\preceq Q(t)\preceq\alpha_Q I,\\
    Q\text{ piecewise continuous, right-continuous}
    \end{array}\right\},\\
  \mathcal{R}&:=\left\{R:\mathbb{Z}_{\ge1}\to L(H)\;\middle|\;\begin{array}{l}
    R(k)=R(k)^*,\;\exists  \alpha_r,\alpha_R>0:\;\alpha_r I\preceq R(k)\preceq\alpha_R I
    \end{array}\right\}.
\end{align*}
\end{definition}

\subsection{Fixed impulse sequence}\label{subsec:hilbert:fixed}

The following theorem provides an N\&S characterization of strong GES on Hilbert spaces in terms of an operator Lyapunov equation, where the free parameters $Q$ and $R$ play the role of the weighting operators in the functional.

\begin{theorem}[Fixed sequence, quadratic N\&S]\label{thm:hilbert:nc}
Let $\sigma\in\mathcal{S}_{0,\infty}$ be given, let Assumption~\ref{ass:growth} hold, and suppose the system \eqref{eq:syst} acts on a Hilbert space $H$ with $A$ generating a $C_0$-semigroup $S(t)$ on $H$. The following statements are equivalent:
\begin{enumerate}[(i)]
  \item The system \eqref{eq:syst} is strongly GES (Definition~\ref{def:stab}).

  \item For every choice of $Q\in\mathcal{Q}$ and $R\in\mathcal{R}$, there exists $P\in\mathcal{P}_\sigma$ such that
    \begin{align}
      \langle d,(\dot{P}(t)+A^*P(t)+P(t)A+Q(t))d\rangle_H&=0,\label{eq:hilbert:flow:eq}\quad d\in D(A),\; t\notin\mathbb{T}_\sigma,\\
      J^*P(t_k)J-P(t_k^-)+R(k)&=0,\label{eq:hilbert:jump:eq}\quad k\ge1.
    \end{align}
\end{enumerate}
Moreover, the explicit solution $P=\bar{P}$ is given by
\begin{equation}\label{eq:explicit:P}
  \bar{P}(t):=\int_t^\infty U_\sigma(s,t)^*Q(s)U_\sigma(s,t)  \ds+\sum_{k:  t_k>t}U_\sigma(t_k^-,t)^*R(k)U_\sigma(t_k^-,t).
\end{equation}
\end{theorem}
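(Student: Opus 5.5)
The plan is to prove the two implications by reducing to the non-coercive Banach-space result, Theorem~\ref{thm:main} with $p=2$. The quadratic functional $V(t,x)=\langle x,\bar P(t)x\rangle_H$ will be checked to be a weighted version of the functional \eqref{eq:V:def}.

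\textbf{(i)$\Rightarrow$(ii).} Fix $Q\in\mathcal{Q}$ and $R\in\mathcal{R}$, and define $\bar P$ by \eqref{eq:explicit:P}. Strong GES gives $\|U_\sigma(s,t)\|\le M\rho^{\kappa_\sigma(s,t)}e^{-\alpha(s-t)}$. Exactly as in the proof of (i)$\Rightarrow$(ii) of Theorem~\ref{thm:main} with $p=2$, the integral and the series defining $\bar P(t)$ then converge in operator norm, uniformly in $t$, with
\begin{equation*}
  \bar P(t)\preceq M^2\bigl(\alpha_Q/(2\alpha)+\alpha_R/(1-\rho^2)\bigr)I .
\end{equation*}
Self-adjointness is immediate. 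Strict positivity follows from $Q(s)\succeq\alpha_q I$ and strong continuity of $S$ near $s=t$, which give $\langle x,\bar P(t)x\rangle\ge\alpha_q\int_t^{t+\epsilon}\|S(s-t)x\|^2\ds>0$ for $x\ne0$. The left limits at jump times exist by the same argument used for $V(t_k^-,x)$ in Theorem~\ref{thm:main}.

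For the jump equation, I will use $U_\sigma(\cdot,t_k^-)=U_\sigma(\cdot,t_k)J$ and split off the $k$-th term of the sum, as in the jump step of Theorem~\ref{thm:main}. This gives
\begin{equation*}
  \bar P(t_k^-)=J^*\bar P(t_k)J+R(k),
\end{equation*}
which is \eqref{eq:hilbert:jump:eq}. For the flow equation, I fix $t\in(t_k,t_{k+1})$ and $d\in D(A)$. The sum part is constant in $t$ on that interval as a quadratic form evaluated along $S(h)d$. The computation of Theorem~\ref{thm:main} then yields
\begin{equation*}
  \langle S(h)d,\bar P(t+h)S(h)d\rangle-\langle d,\bar P(t)d\rangle=-\int_t^{t+h}\langle S(s-t)d,Q(s)S(s-t)d\rangle\ds .
\end{equation*}
Dividing by $h$ and using right-continuity of $Q$ gives $-\langle d,Q(t)d\rangle$. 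Since $d\in D(A)$, the left side expands as $\langle d,\dot{\bar P}(t)d\rangle+2\mathrm{Re}\langle Ad,\bar P(t)d\rangle$, which yields \eqref{eq:hilbert:flow:eq}.

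\textbf{Main obstacle.} The delicate point is differentiability of $\bar P$ on $(t_k,t_{k+1})$ in the sense required by $\mathcal{P}_\sigma$. I will handle it weakly. Writing $\bar P(t)=S(t_{k+1}-t)^*\bar P(t_{k+1}^-)S(t_{k+1}-t)+\int_t^{t_{k+1}}S(s-t)^*Q(s)S(s-t)\ds$ shows that $t\mapsto\langle d,\bar P(t)d\rangle$ is differentiable for $d\in D(A)$. I will interpret $\dot P$ in this weak sense, which matches how \eqref{eq:hilbert:flow:eq} is stated only on $D(A)$. Points where $Q$ jumps will need one-sided derivatives.

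\textbf{(ii)$\Rightarrow$(i).} I take $Q=I$ and $R=I$ and the resulting $P$, and set $V(t,x)=\langle x,P(t)x\rangle$. Then $V\le c_P\|x\|^2$, and $V$ is Lipschitz on bounded sets since $P$ is bounded. The jump condition is exactly $V(t_k,Jx)-V(t_k^-,x)=-\|x\|^2$. For the flow, \eqref{eq:hilbert:flow:eq} gives $\frac{\d}{\dt}V(t,x(t))=-\|x(t)\|^2$ along classical trajectories with $x(t)\in D(A)$. I will integrate this on each inter-jump interval and extend by density of $D(A)$ and continuity of $U_\sigma$; this extension is the reason for the hypothesis $JD(A)\subset D(A)$. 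Telescoping as in (iii)$\Rightarrow$(ii) of Theorem~\ref{thm:main} gives the Datko condition \eqref{eq:datko:thm} with $p=2$ and $b^2=c_P$. Lemma~\ref{lem:datko} then gives strong GES.
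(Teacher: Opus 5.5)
Your proposal is correct and follows essentially the paper's own proof. For (i)$\Rightarrow$(ii) you build the explicit $\bar P$, bound it via the GES estimate, and obtain the jump identity from $U_\sigma(\cdot,t_k^-)=U_\sigma(\cdot,t_k)J$. For (ii)$\Rightarrow$(i) you feed $V(t,x)=\langle x,P(t)x\rangle_H$ into the non-coercive machinery of Theorem~\ref{thm:main}; your direct telescoping to the Datko bound followed by Lemma~\ref{lem:datko} is just Theorem~\ref{thm:main}(iii)$\Rightarrow$(ii)$\Rightarrow$(i) unpacked, and your weak treatment of $\dot{\bar P}$ on $D(A)$ and your strong-continuity positivity argument make explicit points the paper passes over.
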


\begin{proof}
\emph{(i) $\Rightarrow$ (ii).} Assume GES. Fix $Q\in\mathcal{Q}$ and $R\in\mathcal{R}$, and define $\bar P(t)$ by \eqref{eq:explicit:P}.

\emph{Positivity.} For any $x\in H$, $x\ne0$: the integral contains $\|Q(s)^{1/2}U_\sigma(s,t)x\|_H^2\ge0$ and the integrand at $s=t$ is $\|Q(t)^{1/2}x\|_H^2>0$ (since $Q(t)\succ0$), so $\langle x,\bar P(t)x\rangle_H>0$, giving $\bar P(t)\succ0$.

\emph{Boundedness.} By GES and Theorem~\ref{thm:main}(i)$\Rightarrow$(ii) applied with $p=2$, the Datko condition holds at all starting times with some uniform $b>0$. With $\|Q(s)^{1/2}\|_{L(H)}^2\le\alpha_Q$ and $\|R(k)^{1/2}\|_{L(H)}^2\le\alpha_R$:
\begin{align*}
  \langle x,\bar P(t)x\rangle_H
  &\le\alpha_Q\int_t^\infty\|U_\sigma(s,t)x\|_H^2  \ds+\alpha_R\sum_{k:  t_k>t}\|U_\sigma(t_k^-,t)x\|_H^2 \le\max(\alpha_Q,\alpha_R)  b^2\|x\|_H^2,
\end{align*}
so $\bar P(t)\preceq\max(\alpha_Q,\alpha_R)b^2  I=:c_P I$. Hence $\bar P\in\mathcal{P}_\sigma$.

\emph{Flow equation \eqref{eq:hilbert:flow:eq}.} For $t\notin\mathbb{T}_\sigma$ and $d\in D(A)$, differentiating $\langle d,\bar P(t)d\rangle_H$ using $\frac{\partial}{\partial t}U_\sigma(s,t)=-U_\sigma(s,t)A$ yields $\frac{d}{dt}\bar P(t)=-A^*\bar P(t)-\bar P(t)A-Q(t)$, which is \eqref{eq:hilbert:flow:eq}.

\emph{Jump equation \eqref{eq:hilbert:jump:eq}.} Evaluating $\bar P$ at $t_k^-$ and $t_k$ via $U_\sigma(s,t_k^-)=U_\sigma(s,t_k)J$ gives $\bar P(t_k^-)=J^*\bar P(t_k)J+R(k)$, i.e.\ \eqref{eq:hilbert:jump:eq}.

\emph{(ii) $\Rightarrow$ (i).} Suppose $P\in\mathcal{P}_\sigma$ satisfies \eqref{eq:hilbert:flow:eq}--\eqref{eq:hilbert:jump:eq}. Define $V(t,x)=\langle x,P(t)x\rangle_H$. Then $V(t,x)\le c_P\|x\|_H^2$ (upper bound from $P\preceq c_P I$). Along the flow: $$\frac{d}{dt}V(t,x(t))=-\langle x,Q(t)x\rangle_H\le-\alpha_q\|x\|_H^2$$ with $\alpha_q>0$ from the coercivity of $Q\in\mathcal{Q}$. At jumps: $$V(t_k,Jx)-V(t_k^-,x)=-\langle x,R(k)x\rangle_H\le-\alpha_r\|x\|_H^2$$ with $\alpha_r>0$ from the coercivity of $R\in\mathcal{R}$. So $V$ satisfies the non-coercive conditions of Theorem~\ref{thm:main}(iii) with $\|x\|^p$ replaced by $\min(\alpha_q,\alpha_r)\|x\|^2$, and Theorem~\ref{thm:main}(iii)$\Rightarrow$(i) yields GES.
\end{proof}

\begin{remark}\label{rem:hilbert:gap}
The functional $\bar P(t)$ in \eqref{eq:explicit:P} is the quadratic realization of the non-coercive Lyapunov functional \eqref{eq:V:def} for $p=2$. By construction, $\bar P(t)\succ0$ pointwise whenever $Q\succ0$ and $R\succ0$, but a \emph{uniform} lower bound $\bar P(t)\succeq\alpha_m I$ with $\alpha_m>0$ is \emph{not} guaranteed by the construction. This is the impulsive analogue of the phenomenon identified in \cite{Mironchenko:19}: there exist exponentially stable $C_0$-semigroups on Hilbert spaces for which the integral operator is bounded and pointwise positive definite yet not uniformly coercive. The uniform lower bound is the additional structural requirement of the (sufficient-only) coercive corollary below.
\end{remark}

The coercive case is now a corollary of Theorem~\ref{thm:hilbert:nc}: strengthening the equalities to inequalities and adding a coercive lower bound on $P$ yields a sufficient condition.

\begin{corollary}[Fixed sequence, coercive quadratic]\label{cor:hilbert:coerc}
Let $\sigma\in\mathcal{S}_{0,\infty}$ be given. If there exist $P\in\mathcal{P}_\sigma$, $Q\in\mathcal{Q}$, $R\in\mathcal{R}$, and a scalar $\alpha_m>0$ such that
\begin{align}
  P(t)&\;\succeq\;\alpha_m I,\label{eq:hilbert:coerc:lower}\\
  \dot{P}(t)+A^*P(t)+P(t)A+Q(t)&\;\preceq\;0,\label{eq:hilbert:coerc:flow}\\
  J^*P(t_k)J-P(t_k^-)+R(k)&\;\preceq\;0,\label{eq:hilbert:coerc:jump}
\end{align}
hold for all $t\notin\mathbb{T}_\sigma$ (in the sense of quadratic forms on $D(A)$) and all $k\ge1$, then the system \eqref{eq:syst} is strongly GES (for the fixed sequence $\sigma$).
\end{corollary}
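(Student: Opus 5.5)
The plan is to reduce the corollary to the coercive Banach-space characterization of Theorem~\ref{thm:coerc:banach}, direction (ii)$\Rightarrow$(i), applied with the quadratic functional $V(t,x):=\langle x,P(t)x\rangle_H$ and $p=2$. A direct reduction to the non-coercive Theorem~\ref{thm:main}(iii)$\Rightarrow$(i) would also work, exactly as in the proof of Theorem~\ref{thm:hilbert:nc}. However, that route needs Assumption~\ref{ass:growth}, which is not listed among the hypotheses here. The coercive route avoids this assumption, because the coercive lower bound \eqref{eq:hilbert:coerc:lower} makes the uniform-boundedness step of the Datko lemma unnecessary. The Gr\"onwall argument in the proof of Theorem~\ref{thm:coerc:banach}(ii)$\Rightarrow$(i) does not use the growth bound.

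\textbf{Sandwich bounds.} Combining \eqref{eq:hilbert:coerc:lower} with $P\preceq c_PI$ gives
\[
\alpha_m\|x\|^2\le V(t,x)\le c_P\|x\|^2 .
\]
After rescaling $V$ by $1/\alpha_m$, this is \eqref{eq:coerc:bounds} with $p=2$ and $c=c_P/\alpha_m$. Lipschitz continuity on bounded sets, uniformly in $t$, follows from
\[
|V(t,x)-V(t,y)|=|\langle x-y,P(t)(x+y)\rangle|\le 2c_P r\|x-y\| .
\]

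\textbf{Flow decrease.} For $t\notin\mathbb{T}_\sigma$ and $d\in D(A)$, the trajectory $S(h)d$ stays in $D(A)$ and is differentiable. Hence $h\mapsto V(t+h,S(h)d)$ has derivative at $h=0$ equal to
\[
\langle d,(\dot P+A^*P+PA)d\rangle\le-\langle d,Q(t)d\rangle\le-\alpha_q\|d\|^2\le-(\alpha_q/c_P)V(t,d).
\]
So \eqref{eq:coerc:flow} holds with $\eta=\alpha_q/c_P$ on $D(A)$. The main obstacle is extending this to all $x\in H$, since the Dini derivative is taken for general $x$ and \eqref{eq:hilbert:coerc:flow} is only stated as a quadratic form on $D(A)$. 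I will work with the integrated form instead. For $d\in D(A)$ and $h$ small, integrating gives
\[
V(t+h,S(h)d)-V(t,d)\le-\int_0^h\langle S(r)d,Q(t+r)S(r)d\rangle\,\dr .
\]
Both sides are continuous in $d$ in the $H$-norm: the left by boundedness of $P$ and $S(h)$, the right by boundedness of $Q$ and $S$ on $[0,h]$. By density of $D(A)$ the inequality therefore holds for all $x\in H$. Dividing by $h$, using strong continuity and the piecewise right-continuity of $Q$, and then $Q\succeq\alpha_qI$, gives $\underline D^+V(t,x)\le-\alpha_q\|x\|^2\le-\eta V(t,x)$. Alternatively one can feed the integrated inequality directly into the Gr\"onwall step, which is all the proof of Theorem~\ref{thm:coerc:banach} actually uses.

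\textbf{Jump contraction.} From \eqref{eq:hilbert:coerc:jump},
\[
V(t_k,Jx)\le V(t_k^-,x)-\langle x,R(k)x\rangle\le(1-\alpha_r/c_P)V(t_k^-,x).
\]
Set $\mu:=1-\alpha_r/c_P$. Since $P(t_k^-)\succeq R(k)\succeq\alpha_rI$ we have $\alpha_r\le c_P$, so $\mu\in[0,1)$. If $\mu=0$, replace it by any small positive value so that $\mu\in(0,1)$.

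\textbf{Conclusion.} All hypotheses of Theorem~\ref{thm:coerc:banach}(ii) now hold, and its telescoping proof yields
\[
\|U_\sigma(t,s)\|\le (c_P/\alpha_m)^{1/2}\mu^{\kappa_\sigma(t,s)/2}e^{-\eta(t-s)/2},
\]
which is strong GES.
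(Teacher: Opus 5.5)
Your proof is correct, but it takes a different route from the paper. The paper never uses the coercive bound \eqref{eq:hilbert:coerc:lower}. It checks that $V(t,x)=\langle x,P(t)x\rangle_H$ satisfies the additive, non-coercive conditions of Theorem~\ref{thm:main}(iii) with decrement $\min(\alpha_q,\alpha_r)\|x\|_H^2$, and concludes through (iii)$\Rightarrow$(i), that is, through the impulsive Datko Lemma~\ref{lem:datko}. It remarks explicitly that the lower bound on $P$ is not needed for GES. You proceed in two steps instead:
\begin{itemize}
  \item You use the upper bound $c_P$ to convert the additive decrements into multiplicative rates $\eta=\alpha_q/c_P$ and $\mu=1-\alpha_r/c_P$.
  \item You use the lower bound $\alpha_m$ to read norm decay directly off $V$.
\end{itemize}
The conclusion then follows from the elementary Gr\"onwall/telescoping argument in the proof of Theorem~\ref{thm:coerc:banach}(ii)$\Rightarrow$(i).

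\textbf{What your route buys.} It gives explicit constants $M=(c_P/\alpha_m)^{1/2}$, $\alpha=\alpha_q/(2c_P)$ and $\rho=(1-\alpha_r/c_P)^{1/2}$. As you correctly observe, it also does not rely on Assumption~\ref{ass:growth}. The corollary does not list that assumption, yet the paper's appeal to Theorem~\ref{thm:main} tacitly requires it. You are right to invoke the proof rather than the statement of Theorem~\ref{thm:coerc:banach}, since that statement also carries the assumption.

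\textbf{What the paper's route buys.} It shows that the coercivity hypothesis is superfluous for the conclusion: $P(t)$ need only be pointwise positive definite, in line with Theorem~\ref{thm:hilbert:nc}. The price is the growth assumption and non-explicit constants coming from the Datko lemma.

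One minor point: in a complex Hilbert space your Lipschitz identity should read $\langle x,Px\rangle-\langle y,Py\rangle=\mathrm{Re}\,\langle x-y,P(x+y)\rangle$. This does not affect the bound.
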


\begin{proof}
Define $V(t,x):=\langle x,P(t)x\rangle_H$; since $P\in\mathcal{P}_\sigma$, $V$ is Lipschitz on bounded subsets of $H$ (uniformly in $t$) and $V(t,x)\le c_P\|x\|_H^2$. For $x\in D(A)$ and $t\notin\mathbb{T}_\sigma$, the flow inequality \eqref{eq:hilbert:coerc:flow} gives $\langle x,(\dot P(t)+A^*P(t)+P(t)A)x\rangle_H\le-\langle x,Q(t)x\rangle_H\le-\alpha_q\|x\|_H^2$, whence, integrating along the flow,
\begin{equation*}
  V(t+h,S(h)x)-V(t,x)\le-\alpha_q\int_0^h\|S(s)x\|_H^2\,\ds .
\end{equation*}
This integrated inequality extends from $D(A)$ to all of $H$ by density and continuity in $x$; dividing by $h$ and letting $h\downarrow0$, strong continuity of $S(\cdot)$ gives $\underline{D}^+V(t,x)\le-\alpha_q\|x\|_H^2$. The jump inequality \eqref{eq:hilbert:coerc:jump} gives $V(t_k,Jx)-V(t_k^-,x)\le-\langle x,R(k)x\rangle_H\le-\alpha_r\|x\|_H^2$, valid on all of $H$ since $J$ and $P$ are bounded. These are exactly the non-coercive conditions of Theorem~\ref{thm:main}(iii) with $p=2$ (the strict decrement $\|x\|^p$ replaced by $\min(\alpha_q,\alpha_r)\|x\|_H^2$), so Theorem~\ref{thm:main}(iii)$\Rightarrow$(i) yields strong GES. The argument uses the weights $Q,R$ only through their coercive \emph{lower} bounds; in particular $\dot P+A^*P+PA$, a form on $D(A)$ that is generally unbounded, need not define a bounded operator, so no appeal to membership of $-(\dot P+A^*P+PA)$ in $\mathcal{Q}$ is made.

The coercive lower bound \eqref{eq:hilbert:coerc:lower} is not needed for GES per se (which follows already from the non-coercive theorem above), but is the structural assumption distinguishing this corollary: it forces $V(t,x)=\langle x,P(t)x\rangle$ to be a coercive functional with $\alpha_m\|x\|^2\le V(t,x)\le c_P\|x\|^2$, which is more restrictive than the non-coercive existence result. The converse direction (i.e., GES $\Rightarrow$ existence of $P$ satisfying \eqref{eq:hilbert:coerc:lower} together with \eqref{eq:hilbert:coerc:flow}--\eqref{eq:hilbert:coerc:jump}) does \emph{not} hold in general, for the reason discussed in Remark~\ref{rem:hilbert:gap}.
\end{proof}

\subsection{Arbitrary impulse sequences ($\mathcal{S}_{0,\infty}$)}\label{subsec:hilbert:arbitrary}

On a Hilbert space $H$, the time-independent functional $V$ of Theorem~\ref{thm:arbitrary:nc} can be taken to be quadratic, leading to operator-equation sufficient conditions. In contrast to the dwell-time settings, the equivalence with UGES fails: there exist UGES systems on Hilbert spaces, even with exponentially stable $S(t)$, for which no quadratic $V$ satisfying the operator equations below exists.

\begin{theorem}[Arbitrary sequences, quadratic sufficient]\label{thm:hilbert:arbitrary:nc}
Let $H$ be a Hilbert space, $A$ generate a $C_0$-semigroup on $H$, and $J\in L(H)$.
If there exist self-adjoint operators $Q,R\in L(H)$ \emph{coercive}, i.e., $Q\succeq\alpha_q I$, $R\succeq\alpha_r I$ for some $\alpha_q,\alpha_r>0$, and a self-adjoint $P\in L(H)$ with $0\prec P\preceq c_P I$ for some $c_P>0$, such that
    \begin{subequations}\label{eq:hilbert:arbitrary:eq}
    \begin{align}
      \langle d,(A^*P+PA+Q)d\rangle_H&=0,\qquad d\in D(A),\label{eq:hilbert:arbitrary:flow:eq}\\
      J^*PJ-P+R&=0,\label{eq:hilbert:arbitrary:jump:eq}
    \end{align}
    \end{subequations}
then the system \eqref{eq:syst} is strongly UGES over $\mathcal{S}_{0,\infty}$ (Definition~\ref{def:stabu}). Assumption~\ref{ass:growth} is automatic and need not be required a priori: \eqref{eq:hilbert:arbitrary:flow:eq} forces the semigroup $S(t)$ to be exponentially stable, hence Assumption~\ref{ass:growth} holds with $\omega_0<0$ a posteriori.
\end{theorem}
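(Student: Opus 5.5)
The plan is to reduce the statement to Theorem~\ref{thm:arbitrary:nc}, direction (ii)$\Rightarrow$(i), using the time-independent quadratic functional $V(x):=\langle x,Px\rangle_H$ with $p=2$. We must check four things: the upper bound, Lipschitz continuity, the flow decrease and the jump decrease. We must also verify Assumption~\ref{ass:growth} independently, because the statement claims it need not be assumed.

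The upper bound $V(x)\le c_P\|x\|_H^2$ is immediate. Lipschitz continuity on bounded sets follows from $|V(x)-V(y)|=|\langle x-y,P(x+y)\rangle|\le 2rc_P\|x-y\|$. The jump condition comes directly from \eqref{eq:hilbert:arbitrary:jump:eq}:
\[
V(Jx)-V(x)=-\langle x,Rx\rangle\le-\alpha_r\|x\|^2 \quad\text{for all } x\in H,
\]
which is valid on all of $H$ since $J,P,R$ are bounded.

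For the flow, take $d\in D(A)$. Then $S(t)d\in D(A)$, and $t\mapsto\langle S(t)d,PS(t)d\rangle$ is $C^1$ with derivative $\langle S(t)d,(A^*P+PA)S(t)d\rangle=-\langle S(t)d,QS(t)d\rangle$ by \eqref{eq:hilbert:arbitrary:flow:eq}. Integrating gives
\[
V(S(h)d)-V(d)=-\int_0^h\langle S(r)d,QS(r)d\rangle\,\dr\le-\alpha_q\int_0^h\|S(r)d\|^2\,\dr .
\]
By density of $D(A)$ and continuity of both sides in $d$ (the semigroup is bounded on $[0,h]$), this inequality extends to all $x\in H$. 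Dividing by $h$ and using strong continuity gives $\underline{D}^+V(x)\le-\alpha_q\|x\|^2$. After rescaling $V$ by $1/\min(\alpha_q,\alpha_r)$, these are exactly conditions \eqref{eq:arbitrary:nc:bd}--\eqref{eq:arbitrary:nc:jump}, with constant $c=c_P/\min(\alpha_q,\alpha_r)$.

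The step I expect to need care is Assumption~\ref{ass:growth} over $\mathcal{S}_{0,\infty}$, where $\bar T=\infty$; this requires $\omega_0\le0$. Letting $h\to\infty$ in the integrated flow inequality gives $\alpha_q\int_0^\infty\|S(r)x\|^2\,\dr\le V(x)\le c_P\|x\|^2$. Datko's theorem for $C_0$-semigroups \cite{Datko:70} then yields $\|S(t)\|\le M_0e^{-\alpha_0t}$ with $\alpha_0>0$. The assumption therefore holds with $g\equiv M_0$, $G=M_0$ and $\mu=\max(2,M_0\|J\|)$, as in Remark~\ref{rem:growth:G}.

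With the assumption in place, Theorem~\ref{thm:arbitrary:nc}(ii)$\Rightarrow$(i) applies. Its proof telescopes the flow and jump decrements along every $\sigma\in\mathcal{S}_{0,\infty}$ to obtain a Datko bound with constant $c$ that is uniform in $\sigma$ and in the starting time. Lemma~\ref{lem:datko} then gives $\alpha>0$ and $\rho\in(0,1)$, i.e.\ strong UGES. A subtlety to check is that Lemma~\ref{lem:datko} yields constants $M,\alpha,\rho$ depending only on $G,\|J\|,b,p$. Inspecting its proof confirms this ($C_0$, $T_0$ and $K_0$ involve nothing else), so the bound is genuinely uniform over the family.
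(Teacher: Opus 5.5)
Your proposal is correct and follows the paper's proof essentially step by step. It uses the same quadratic $V(x)=\langle x,Px\rangle_H$, the same integrated flow inequality extended from $D(A)$ by density, the same Datko argument (letting $h\to\infty$) to secure Assumption~\ref{ass:growth} a priori with $G=M_0$, and the same rescaling by $\min(\alpha_q,\alpha_r)$ before invoking Theorem~\ref{thm:arbitrary:nc}(ii)$\Rightarrow$(i). Your extra check that the constants of Lemma~\ref{lem:datko} depend only on $G$, $\|J\|$, $b$ and $p$ (hence are uniform over $\mathcal{S}_{0,\infty}$) is a correct elaboration of what the paper uses implicitly.
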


\begin{proof}
Set $V(x):=\langle x,Px\rangle_H$. Then $V$ is Lipschitz on bounded subsets of $H$ and $0\le V(x)\le c_P\|x\|^2$. From \eqref{eq:hilbert:arbitrary:flow:eq}, for $x\in D(A)$,
\[
  \tfrac{d}{dt}V(S(t)x)\big|_{t=0^+}=\langle Ax,Px\rangle_H+\langle x,PAx\rangle_H=-\langle x,Qx\rangle_H\le-\alpha_q\|x\|^2,
\]
and integration along $\dot x=Ax$ gives, for $x\in D(A)$ and $h\ge0$,
\[
  V(S(h)x)-V(x)\le-\alpha_q\!\int_0^h\!\|S(s)x\|^2  \ds.
\]
By density of $D(A)$ in $H$ and continuity in $x$, this extends to all $x\in H$. Dividing by $h$ and letting $h\to0^+$, strong continuity of $S(\cdot)$ at $s=0$ gives $\underline D^+V(x)\le-\alpha_q\|x\|^2$. From \eqref{eq:hilbert:arbitrary:jump:eq}, $V(Jx)-V(x)=-\langle x,Rx\rangle_H\le-\alpha_r\|x\|^2$.

\emph{Assumption~\ref{ass:growth} holds a priori.} Letting $h\to\infty$ in the integrated flow inequality above and using $V\ge0$ gives $\alpha_q\int_0^\infty\|S(s)x\|^2\ds\le V(x)\le c_P\|x\|^2$ for every $x\in H$. By Datko's theorem for $C_0$-semigroups \cite{Datko:70}, $S(t)$ is therefore exponentially stable, so $\omega_0<0$ and Assumption~\ref{ass:growth} holds on $\mathcal{S}_{0,\infty}$ with $G=M_0$ (Remark~\ref{rem:growth:G}); this justification uses only \eqref{eq:hilbert:arbitrary:flow:eq}, independently of any impulsive result, so the invocation of Theorem~\ref{thm:arbitrary:nc} below is legitimate.

Set $\gamma:=\min(\alpha_q,\alpha_r)>0$. Then $\gamma^{-1}V$ satisfies $0\le\gamma^{-1}V(x)\le(c_P/\gamma)\|x\|^2$, $\underline D^+(\gamma^{-1}V)(x)\le-\|x\|^2$, and $(\gamma^{-1}V)(Jx)-(\gamma^{-1}V)(x)\le-\|x\|^2$; these are the bounds \eqref{eq:arbitrary:nc:bd}--\eqref{eq:arbitrary:nc:jump} of Theorem~\ref{thm:arbitrary:nc} with $p=2$. Since Assumption~\ref{ass:growth} has just been established a priori, Theorem~\ref{thm:arbitrary:nc}(ii)$\Rightarrow$(i) applies and yields strong UGES over $\mathcal{S}_{0,\infty}$.
\end{proof}


The coercive case is a corollary in which a uniform lower bound on $P$ is additionally assumed.

\begin{corollary}[Arbitrary sequences, coercive quadratic]\label{cor:hilbert:arbitrary:coerc}
If there exist $P\in L(H)$ self-adjoint and uniformly coercive with $\alpha_m I\preceq P\preceq c_P I$ for some $c_P\ge\alpha_m>0$, and $\epsilon>0$, such that
\begin{subequations}\label{eq:arbitrary:coerc}
\begin{align}
  A^*P+PA&\preceq -\epsilon I,\label{eq:arbitrary:coerc:flow}\\
  J^*PJ-P&\preceq -\epsilon I,\label{eq:arbitrary:coerc:jump}
\end{align}
\end{subequations}
then the system \eqref{eq:syst} is strongly UGES over $\mathcal{S}_{0,\infty}$.
\end{corollary}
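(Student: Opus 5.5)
The plan is to reduce the statement to the sufficient direction of the time-independent theory, in the same way Theorem~\ref{thm:hilbert:arbitrary:nc} does. The main candidate is the coercive route via Theorem~\ref{thm:coerc:arbitrary}(ii)$\Rightarrow$(i), with the non-coercive route through Theorem~\ref{thm:arbitrary:nc} as a fallback. We set $V(x):=\langle x,Px\rangle_H$. Then $\alpha_m\|x\|^2\le V(x)\le c_P\|x\|^2$, and $V$ is Lipschitz on bounded subsets with constant $2c_Pr$ on the ball of radius $r$, since $P$ is bounded. Dividing by $\alpha_m$ gives the normalization $\|x\|^2\le V/\alpha_m\le (c_P/\alpha_m)\|x\|^2$ required in \eqref{eq:coerc:arb:bounds} with $p=2$.

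\textbf{Flow step.} For $x\in D(A)$, the function $t\mapsto V(S(t)x)$ is differentiable with derivative $\langle S(t)x,(A^*P+PA)S(t)x\rangle_H\le-\epsilon\|S(t)x\|^2\le-(\epsilon/c_P)V(S(t)x)$, using \eqref{eq:arbitrary:coerc:flow} understood as quadratic forms on $D(A)$. Gr\"onwall then gives $V(S(h)x)\le e^{-\eta h}V(x)$ with $\eta:=\epsilon/c_P$. Both sides are continuous in $x$, because $S(h)$ and $P$ are bounded, so density of $D(A)$ extends the inequality to all of $H$. This is the integrated flow condition of Theorem~\ref{thm:coerc:arbitrary}(iii). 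The Dini form \eqref{eq:coerc:arb:flow} follows as in the (iii)$\Rightarrow$(ii) part of that proof.

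\textbf{Jump step.} From \eqref{eq:arbitrary:coerc:jump}, $V(Jx)\le V(x)-\epsilon\|x\|^2\le(1-\epsilon/c_P)V(x)$. Setting $\mu:=1-\epsilon/c_P$, we need $\mu\in(0,1)$. The bound $\mu<1$ is immediate. If $\mu\le0$, then $V(Jx)\le 0$ for all $x$, which forces $J=0$ because $P$ is coercive; that case is excluded by the standing hypothesis $J\neq0$. In any event one may replace $\epsilon$ by a smaller positive number so that $\mu\in(0,1)$, since shrinking $\epsilon$ keeps both inequalities valid.

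\textbf{Main obstacle.} The remaining issue is the standing Assumption~\ref{ass:growth} on $\mathcal{S}_{0,\infty}$, which requires $\omega_0\le0$. As in Theorem~\ref{thm:hilbert:arbitrary:nc}, this is established a posteriori: the flow step already gives $\alpha_m\|S(t)x\|^2\le V(S(t)x)\le e^{-\eta t}c_P\|x\|^2$, hence $\|S(t)\|\le\sqrt{c_P/\alpha_m}\,e^{-\eta t/2}$, so the assumption holds with $G=M_0$. With all conditions of Theorem~\ref{thm:coerc:arbitrary}(ii) verified, the implication (ii)$\Rightarrow$(i) yields strong UGES. Alternatively, one can bypass that theorem with a direct estimate: along any $\sigma$, chaining the flow and jump contractions gives $\|U_\sigma(t,s)x\|^2\le(c_P/\alpha_m)\mu^{\kappa_\sigma(t,s)}e^{-\eta(t-s)}\|x\|^2$ uniformly in $\sigma$ and $s$. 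This gives strong UGES with $M=\sqrt{c_P/\alpha_m}$, $\rho=\sqrt\mu$ and $\alpha=\eta/2$.
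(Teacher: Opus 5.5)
Your proof is correct, but it follows a different route from the paper.

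\textbf{The paper's route.} It never uses $P\succeq\alpha_m I$ and keeps the \emph{additive} decrements $V(S(h)x)-V(x)\le-\epsilon\int_0^h\|S(s)x\|^2\,\mathrm{d}s$ and $V(Jx)-V(x)\le-\epsilon\|x\|^2$. Letting $h\to\infty$ and invoking the classical Datko theorem, it obtains exponential stability of $S$, hence Assumption~\ref{ass:growth}. It then applies the non-coercive Theorem~\ref{thm:arbitrary:nc}(ii)$\Rightarrow$(i). That proof telescopes to a Datko integral-plus-sum bound uniform in $\sigma$ and concludes through the impulsive Datko Lemma~\ref{lem:datko}, so the constants are not explicit.

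\textbf{Your route.} You use $V\le c_P\|x\|^2$ to turn both decrements into \emph{multiplicative} contractions, with $\eta=\epsilon/c_P$ and $\mu=1-\epsilon/c_P$. You then use $V\ge\alpha_m\|x\|^2$ to return to the norm. This is either the coercive Theorem~\ref{thm:coerc:arbitrary}(ii)$\Rightarrow$(i) or, in your direct variant, a plain Gr\"onwall chain along the factorization of $U_\sigma(t,s)$ into semigroup arcs and jumps.

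\textbf{What each route buys.} Yours is more elementary and self-contained. It gives explicit constants $M=\sqrt{c_P/\alpha_m}$, $\alpha=\epsilon/(2c_P)$ and $\rho=\sqrt{1-\epsilon/c_P}$. The direct variant needs neither Assumption~\ref{ass:growth} nor any Datko-type argument, since exponential stability of $S$ falls out as a byproduct. The paper's route has the nominal advantage of not needing $P\succeq\alpha_m I$, but for this corollary that advantage is only apparent. Since $P\succeq0$, the jump inequality gives $\langle x,Px\rangle\ge\langle Jx,PJx\rangle+\epsilon\|x\|^2\ge\epsilon\|x\|^2$. So $P\succeq\epsilon I$ holds automatically, and your argument runs with $\alpha_m=\epsilon$. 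The same observation applies to Theorem~\ref{thm:hilbert:arbitrary:nc}, where $P=J^*PJ+R\succeq\alpha_r I$.
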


\begin{proof}
Define the time-independent $V(x):=\langle x,Px\rangle_H$; it is Lipschitz on bounded subsets and $V(x)\le c_P\|x\|_H^2$. For $x\in D(A)$ the flow inequality \eqref{eq:arbitrary:coerc:flow} gives $\langle x,(A^*P+PA)x\rangle_H\le-\epsilon\|x\|_H^2$, hence $V(S(h)x)-V(x)\le-\epsilon\int_0^h\|S(s)x\|_H^2\,\ds$; this integrated bound extends from $D(A)$ to all of $H$ by density and continuity in $x$, and letting $h\downarrow0$ gives $\underline{D}^+V(x)\le-\epsilon\|x\|_H^2$; moreover, letting $h\to\infty$ in the same integrated inequality and using $V\ge0$ gives $\epsilon\int_0^\infty\|S(s)x\|_H^2\ds\le V(x)\le c_P\|x\|_H^2$, so $S(t)$ is exponentially stable (Datko's theorem \cite{Datko:70}) and Assumption~\ref{ass:growth} holds a priori on $\mathcal{S}_{0,\infty}$. The jump inequality \eqref{eq:arbitrary:coerc:jump} gives $V(Jx)-V(x)=\langle x,(J^*PJ-P)x\rangle_H\le-\epsilon\|x\|_H^2$ on all of $H$ ($J,P$ bounded). These are the (time-independent) non-coercive conditions of Theorem~\ref{thm:arbitrary:nc}(ii), so Theorem~\ref{thm:arbitrary:nc}(ii)$\Rightarrow$(i) yields strong UGES over $\mathcal{S}_{0,\infty}$. Only the coercive lower bounds are used; the generally unbounded form $A^*P+PA$ need not define a bounded operator, so no class-membership of $-(A^*P+PA)$ is claimed. The coercive lower bound $P\succeq\alpha_m I$ further forces $V$ to be coercive, which need not be available in general (Remark~\ref{rem:hilbert:gap}).
\end{proof}




\subsection{Constant dwell-time ($\mathcal{S}_{\mathrm{cst}}(T)$)}\label{subsec:hilbert:cst}

The following theorem specializes the timer-dependent conditions of Theorem~\ref{thm:cst:nc} to the quadratic class, where they become an operator Lyapunov ODE on $[0,T]$ coupled to a jump equation; as in the fixed-sequence case, the characterization is necessary and sufficient.

\begin{theorem}[Constant dwell-time, quadratic N\&S]\label{thm:hilbert:cst}
Let $T>0$ and let Assumption~\ref{ass:growth} hold. The following are equivalent:
\begin{enumerate}[(i)]
  \item the system \eqref{eq:syst} is strongly UGES over $\mathcal{S}_{\mathrm{cst}}(T)$;
  \item for every choice of $Q\in\mathcal{Q}([0,T])$ and $R\succ0$ in $L(H)$, there exists a self-adjoint, strongly differentiable $\bar P:[0,T]\to L(H)$ with $0\prec\bar P(\tau)\preceq c_{\bar P} I$ for some $c_{\bar P}>0$, satisfying the operator Lyapunov ODE
    \begin{equation}\label{eq:hilbert:cst:flow:ode}
      \langle d,(\dot{\bar P}(\tau)+A^*\bar P(\tau)+\bar P(\tau)A+Q(\tau))d\rangle_H=0,\qquad d\in D(A),\;\tau\in(0,T),
    \end{equation}
    and the jump equation
    \begin{equation}\label{eq:hilbert:cst:jump:eq}
      J^*\bar P(0)J-\bar P(T)+R=0;
    \end{equation}
  \item the monodromy $M_T=S(T)J$ (pre-jump sampling, as in Theorem~\ref{thm:cst:nc}) admits an operator Lyapunov function: for every coercive $R_d\in L(H)$, there exists $P_d\in L(H)$ self-adjoint with $0\prec P_d\preceq c_{P_d} I$ for some $c_{P_d}>0$ such that
    \begin{equation}\label{eq:hilbert:cst:dt}
      M_T^*P_d M_T - P_d + R_d = 0,
    \end{equation}
    with associated discrete-time quadratic Lyapunov function $V_d(x):=\langle x,P_d x\rangle_H$.
\end{enumerate}
\end{theorem}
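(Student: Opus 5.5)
I will prove the cycle $(i)\Rightarrow(ii)\Rightarrow(i)$ and $(ii)\Leftrightarrow(iii)$. In each direction I reuse the non-coercive Banach results (Theorems~\ref{thm:main} and \ref{thm:cst:nc}) with $p=2$, and take the quadratic witnesses from the explicit constructions.

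\textbf{(i)$\Rightarrow$(ii).} Fix $Q\in\mathcal{Q}([0,T])$ and a coercive $R\succ0$ (coercivity of $R$ is what Theorem~\ref{thm:hilbert:nc} uses). Extend $Q$ periodically to $\mathbb{R}_{\ge0}$ by $Q(t):=Q(t-t_{k(t)})$, and set $R(k):\equiv R$; both lie in $\mathcal{Q}$ and $\mathcal{R}$ respectively. Theorem~\ref{thm:hilbert:nc}, applied to $\sigma_T$, gives $\bar P_{\rm full}$ through \eqref{eq:explicit:P}. By periodicity of $\sigma_T$ and of the weights, the change of variables used in \eqref{eq:cst:V:explicit} shows $\bar P_{\rm full}(t_k+\tau)=\bar P_{\rm full}(\tau)$ for $\tau\in[0,T)$. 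I then define $\bar P(\tau):=\bar P_{\rm full}(\tau)$ on $[0,T)$ and $\bar P(T):=\bar P_{\rm full}(T^-)$. The flow equation \eqref{eq:hilbert:flow:eq} restricts to \eqref{eq:hilbert:cst:flow:ode}. The jump equation $\bar P(T^-)=J^*\bar P(T)J+R$ becomes \eqref{eq:hilbert:cst:jump:eq} once periodicity is used to replace $\bar P_{\rm full}(T)$ by $\bar P(0)$. Positivity and the bound $\bar P\preceq c I$ are inherited directly.

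\textbf{(ii)$\Rightarrow$(i).} Take any admissible $Q$ and $R$, say $Q\equiv I$ and $R=I$. Set $\bar V(\tau,x)=\langle x,\bar P(\tau)x\rangle$. For $x\in D(A)$, integrating \eqref{eq:hilbert:cst:flow:ode} along $S(\cdot)x$ gives
\[
\bar V(\tau+h,S(h)x)-\bar V(\tau,x)\le-\alpha_q\int_0^h\|S(r)x\|^2\dr .
\]
This extends from $D(A)$ to all of $H$ by density, exactly as in Corollary~\ref{cor:hilbert:coerc}. The jump equation gives $\bar V(0,Jx)-\bar V(T,x)=-\langle x,Rx\rangle$. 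After rescaling by $\min(\alpha_q,\alpha_r)$, these are the conditions of Theorem~\ref{thm:cst:nc}(iii) with $p=2$, so (i) follows.

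\textbf{(ii)$\Rightarrow$(iii)} is the Hilbert analogue of the step $(iii)\Rightarrow(iv)$ in Theorem~\ref{thm:cst:nc}. Fix a coercive $R_d$ and choose $Q\equiv0$, which leaves the flow as an exact propagation. The catch is that $Q\equiv0$ is not in $\mathcal{Q}$, and existence in (ii) is only guaranteed for $Q\in\mathcal{Q}$. My fallback is to avoid (ii) and go through (i): it yields $\|(S(T)J)^k\|\le M\rho^k e^{-\alpha kT}$. I then define $P_d:=\sum_{k\ge0}(M_T^*)^kR_dM_T^k$, which converges in norm and solves \eqref{eq:hilbert:cst:dt} by the telescoping identity. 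It is bounded and satisfies $P_d\succeq R_d\succ0$.

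\textbf{(iii)$\Rightarrow$(i).} Take $R_d=I$. The function $V_d=\langle\cdot,P_d\,\cdot\rangle$ then satisfies \eqref{eq:cst:dt} with $p=2$, and Theorem~\ref{thm:cst:nc}(iv)$\Rightarrow$(i) applies. The Lipschitz property holds because $P_d$ is bounded.

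\textbf{Main obstacle.} The delicate step is the regularity claimed in (ii). The $\bar P$ from \eqref{eq:explicit:P} is only differentiable in the weak, quadratic-form sense on $D(A)$, not strongly differentiable in $L(H)$. I would first try to interpret \enquote{strongly differentiable} as differentiability of $\tau\mapsto\langle d,\bar P(\tau)d\rangle$ for $d\in D(A)$, which is consistent with how \eqref{eq:hilbert:cst:flow:ode} is stated. I would also check continuity of $Q$ at the period boundary: if $Q$ is only piecewise continuous, the derivative identity holds only at continuity points of $Q$.
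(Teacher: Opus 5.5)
Your proof is correct. For $(i)\Leftrightarrow(ii)$ it follows the paper's route, which is the quadratic specialization of Theorem~\ref{thm:cst:nc}. Your periodic extension of $Q$ and use of Theorem~\ref{thm:hilbert:nc} with $R(k)\equiv R$ is a natural way to make that specialization precise for general weights.

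The real difference is in $(ii)\Leftrightarrow(iii)$, where the paper argues directly at the operator level without passing through stability.

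\emph{Paper's route.} For $(ii)\Rightarrow(iii)$ it removes exactly the obstacle you hit with $Q\equiv0$. It takes $Q=\epsilon I$ with $\epsilon$ small enough that $R:=R_d-J^*\big(\int_0^TS(s)^*QS(s)\,\ds\big)J$ remains coercive, applies (ii) to this pair, and sets $P_d:=\bar P(T)$. The integrated flow identity $S(T)^*\bar P(T)S(T)=\bar P(0)-\int_0^TS(s)^*QS(s)\,\ds$, combined with the jump equation, then gives the Stein equation. For $(iii)\Rightarrow(ii)$ it sets $R_d:=R+J^*\big(\int_0^TS(s)^*Q(s)S(s)\,\ds\big)J$. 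It then reconstructs $\bar P(\tau):=S(T-\tau)^*P_dS(T-\tau)+\int_\tau^TS(s-\tau)^*Q(s)S(s-\tau)\,\ds$.

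\emph{Your route.} You close the loop through (i) instead. You use the Neumann series $P_d=\sum_{k\ge0}(M_T^*)^kR_dM_T^k$ for $(i)\Rightarrow(iii)$ and Theorem~\ref{thm:cst:nc}(iv)$\Rightarrow$(i) for the converse.

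\emph{What each buys.} Your route is shorter given the Banach results. It also identifies $P_d$ as the unique solution of the Stein equation, which must coincide with the paper's $\bar P(T)$ since $M_T$ is power stable. The paper's route is self-contained in the quadratic setting and does not invoke the Datko-lemma machinery for this equivalence. It also gives an explicit two-way dictionary between the timer certificate $\bar P(\tau)$ and the discrete certificate $P_d$, which is what one wants computationally.

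Two minor remarks. First, since $(S(T)J)^k=U_{\sigma_T}(t_{j+k}^-,t_j)J$ and $\kappa_{\sigma_T}(t_{j+k}^-,t_j)=k-1$, the bound you actually get is $\|M_T^k\|\le M\|J\|\rho^{k-1}e^{-\alpha kT}$ rather than $M\rho^ke^{-\alpha kT}$. This is harmless for convergence. Second, the regularity caveat you raise applies equally to the paper's reconstructed $\bar P(\tau)$. For unbounded $A$ it is in general differentiable only in the quadratic-form sense on $D(A)$, and only at continuity points of $Q$. Your weak interpretation is therefore the one under which both arguments actually go through.
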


\begin{proof}
$(i)\Leftrightarrow(ii)$ is the quadratic specialization of Theorem~\ref{thm:cst:nc}: take $\bar V(\tau,x)=\langle x,\bar P(\tau)x\rangle_H$ with $\bar P$ the explicit construction \eqref{eq:cst:V:explicit} adapted to $p=2$. The flow and jump computations specialize as in the proof of Theorem~\ref{thm:hilbert:nc}.

$(ii)\Rightarrow(iii)$: given any coercive $R_d\in L(H)$, choose any $Q\in\mathcal{Q}([0,T])$ small enough that $$R:=R_d-J^*\!\int_0^T\!S(s)^*Q(s)S(s)  \ds\,J\succ0$$ is coercive (e.g.\ $Q=\epsilon I$ with $\epsilon\in(0,\alpha_{R_d}/(\|J\|^2  \int_0^T\!\|S(s)\|^2  \ds))$, where $\alpha_{R_d}>0$ denotes the coercivity constant of $R_d$). Apply (ii) with this $Q$ and $R$ to obtain $\bar P:[0,T]\to L(H)$ self-adjoint, strongly differentiable, with $0\prec\bar P(\tau)\preceq c_{\bar P}I$, satisfying \eqref{eq:hilbert:cst:flow:ode}--\eqref{eq:hilbert:cst:jump:eq}. Set $P_d:=\bar P(T)$. Integrating the flow equation along the unforced trajectory $r\mapsto S(r)x$ yields, for $x\in D(A)$,
\[
  \langle S(T)x,\bar P(T)S(T)x\rangle_H-\langle x,\bar P(0)x\rangle_H=-\!\int_0^T\!\langle S(s)x,Q(s)S(s)x\rangle_H  \ds,
\]
i.e.\ $S(T)^*\bar P(T)S(T)=\bar P(0)-\int_0^T S(s)^*Q(s)S(s)  \ds$ in the sense of quadratic forms on $D(A)$, hence on $H$ by density and boundedness of both sides. Combined with the jump equation $J^*\bar P(0)J=P_d-R$ (using $\bar P(T)=P_d$):
\begin{align*}
   M_T^*P_d M_T=J^*S(T)^*P_d S(T)J&=J^*\!\left[\bar P(0)-\!\int_0^T\!S(s)^*Q(s)S(s)  \ds\right]\!J\\
   &=(P_d-R)-J^*\!\int_0^T\!S(s)^*Q(s)S(s)  \ds\,J,
\end{align*}
so $M_T^*P_d M_T-P_d+R_d=0$ holds by the choice of $R$. The bound $0\prec P_d=\bar P(T)\preceq c_{\bar P}I$ gives $c_{P_d}:=c_{\bar P}$.

$(iii)\Rightarrow(ii)$: given any $Q\in\mathcal{Q}([0,T])$ and $R\succ0$ coercive in $L(H)$, set $R_d:=R+J^*\!\int_0^T\!S(s)^*Q(s)S(s)  \ds\,J$, which is coercive since $R$ and $Q$ are. Since $Q\in\mathcal{Q}([0,T])$ is piecewise continuous and uniformly bounded, and $S(\cdot)$ is strongly continuous and uniformly bounded on $[0,T]$, the integral $\int_0^T S(s-\tau)^*Q(s)S(s-\tau)  \ds$ is well-defined as a bounded operator for each $\tau\in[0,T]$. Apply (iii) with this $R_d$ to obtain $P_d\in L(H)$ self-adjoint with $0\prec P_d\preceq c_{P_d}I$ satisfying $M_T^*P_d M_T-P_d+R_d=0$. Define
\[
  \bar P(\tau):=S(T-\tau)^*P_d S(T-\tau)+\!\int_\tau^T\!S(s-\tau)^*Q(s)S(s-\tau)  \ds.
\]
Then $\bar P(T)=P_d$, $\bar P(\tau)$ is strongly differentiable in $\tau$, and $\bar P(0)=S(T)^*P_d S(T)+\int_0^T\!S(s)^*Q(s)S(s)  \ds$. Direct differentiation in $\tau$, using $\frac{d}{d\tau}S(T-\tau)^*X S(T-\tau)=-A^*S(T-\tau)^*X S(T-\tau)-S(T-\tau)^*X S(T-\tau)A$ on $D(A)$ and $\frac{d}{d\tau}\int_\tau^T S(s-\tau)^*Q(s)S(s-\tau)  \ds=-Q(\tau)-A^*\!\int_\tau^T\!S(s-\tau)^*Q(s)S(s-\tau)  \ds-\int_\tau^T\!S(s-\tau)^*Q(s)S(s-\tau)  \ds\,A$ on $D(A)$, gives \eqref{eq:hilbert:cst:flow:ode}. The jump equation:
\[
  J^*\bar P(0)J-\bar P(T)+R=M_T^*P_d M_T+J^*\!\int_0^T\!S(s)^*Q(s)S(s)  \ds\,J-P_d+R=M_T^*P_d M_T-P_d+R_d=0
\]
by the choice of $R_d$. The bound $\bar P(\tau)\preceq c_{\bar P}I$ with $c_{\bar P}:=M_S^2(c_{P_d}+T\alpha_Q)$, where $M_S:=\sup_{s\in[0,T]}\|S(s)\|<\infty$ and $\alpha_Q$ is the upper-bound constant of $Q\in\mathcal{Q}([0,T])$, follows from the construction.
\end{proof}

\begin{corollary}[Constant dwell-time, coercive quadratic]\label{cor:hilbert:cst:coerc}
If there exist constants $\alpha_m,c_P,\epsilon>0$ and a self-adjoint, strongly differentiable $\bar P:[0,T]\to L(H)$ with $\alpha_m I\preceq\bar P(\tau)\preceq c_P I$ satisfying
\begin{equation}\label{eq:hilbert:cst:flow:LMI}
  \dot{\bar P}(\tau)+A^*\bar P(\tau)+\bar P(\tau)A\preceq -\epsilon I,\quad\tau\in(0,T),\qquad J^*\bar P(0)J - \bar P(T)\preceq -\epsilon I,
\end{equation}
then the system \eqref{eq:syst} is strongly UGES over $\mathcal{S}_{\mathrm{cst}}(T)$.
\end{corollary}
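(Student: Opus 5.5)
The plan is to reduce the corollary to the non-coercive hybrid criterion of Theorem~\ref{thm:cst:nc}(iii) with $p=2$, exactly as Corollary~\ref{cor:hilbert:coerc} was reduced to Theorem~\ref{thm:main}(iii). The candidate functional is $\bar V(\tau,x):=\langle x,\bar P(\tau)x\rangle_H$ on $[0,T]\times H$. Since $\bar P$ is self-adjoint with $\bar P(\tau)\preceq c_PI$, we get $0\le\bar V(\tau,x)\le c_P\|x\|_H^2$. Lipschitz continuity on bounded sets, uniformly in $\tau$, follows from $|\bar V(\tau,x)-\bar V(\tau,y)|\le 2c_P r\|x-y\|_H$ on the ball of radius $r$.

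\textbf{Flow condition.} For $x\in D(A)$ and $\tau\in[0,T)$, the map $h\mapsto\langle S(h)x,\bar P(\tau+h)S(h)x\rangle_H$ is differentiable for $\tau+h\in(0,T)$. This uses strong differentiability of $\bar P$ together with $S(h)x\in D(A)$ and $\tfrac{d}{dh}S(h)x=AS(h)x$ (a product-rule argument). Its derivative is $\langle z,(\dot{\bar P}+A^*\bar P+\bar PA)z\rangle_H\le-\epsilon\|z\|_H^2$, with $z=S(h)x$. Integrating gives
\[
\bar V(\tau+h,S(h)x)-\bar V(\tau,x)\le-\epsilon\int_0^h\|S(r)x\|_H^2\,\dr .
\]
Both sides are continuous in $x$, since $\bar P$ and $S$ are uniformly bounded on $[0,T]$, so the bound extends from $D(A)$ to all of $H$ by density. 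Dividing by $h$ and letting $h\downarrow0$, strong continuity of $S$ yields $\underline D^+\bar V(\tau,x)\le-\epsilon\|x\|_H^2$. The endpoint $\tau=0$ is covered because the integrated inequality only needs the ODE on the open interval together with continuity of $\bar P$ at $0$.

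\textbf{Jump condition.} For all $x\in H$, $\bar V(0,Jx)-\bar V(T,x)=\langle x,(J^*\bar P(0)J-\bar P(T))x\rangle_H\le-\epsilon\|x\|_H^2$, which holds directly because $J$ and $\bar P$ are bounded.

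Rescaling $\bar V$ by $\epsilon^{-1}$ then gives exactly \eqref{eq:cst:nc:upper}--\eqref{eq:cst:nc:jump} with $p=2$ and $c=c_P/\epsilon$. Theorem~\ref{thm:cst:nc}, via (iii)$\Rightarrow$(ii)$\Rightarrow$(i), yields strong UGES over $\mathcal{S}_{\mathrm{cst}}(T)$. Assumption~\ref{ass:growth} is automatic here since $\bar T=T<\infty$ (Remark~\ref{rem:growth:G}). The coercive lower bound $\alpha_m I\preceq\bar P$ is not needed for the conclusion; it only makes $\bar V$ an equivalent norm. Alternatively, one could invoke Theorem~\ref{thm:coerc:cst}(ii) using $\eta=\epsilon/c_P$ and $\mu=1-\epsilon/c_P$.

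The main obstacle is the justification of the flow inequality on all of $H$. The form $A^*\bar P+\bar PA$ is only defined on $D(A)$, so we must differentiate $\langle S(h)x,\bar P(\tau+h)S(h)x\rangle$ carefully for $x\in D(A)$ and pass to general $x$ only through the integrated inequality, as in the proof of Corollary~\ref{cor:hilbert:coerc}.
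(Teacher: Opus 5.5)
Your proposal is correct and follows essentially the same route as the paper. It uses the quadratic timer functional $\bar V(\tau,x)=\langle x,\bar P(\tau)x\rangle_H$, establishes the integrated flow inequality on $D(A)$ and extends it to $H$ by density before letting $h\downarrow0$, computes the jump condition directly, and appeals to Theorem~\ref{thm:cst:nc}(iii)$\Rightarrow$(i) without using the coercive lower bound. Your extra details (product rule, Lipschitz constant, $\epsilon^{-1}$ rescaling, the endpoint $\tau=0$, and Assumption~\ref{ass:growth} holding automatically since $\bar T=T<\infty$) just make explicit what the paper's proof leaves implicit.
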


\begin{proof}
Define the timer functional $\bar V(\tau,x):=\langle x,\bar P(\tau)x\rangle_H$ on $[0,T]\times X$; it is Lipschitz on bounded subsets (uniformly in $\tau$) and $\bar V(\tau,x)\le c_P\|x\|_H^2$. For $x\in D(A)$, the flow inequality \eqref{eq:hilbert:cst:flow:LMI} gives $\langle x,(\dot{\bar P}(\tau)+A^*\bar P(\tau)+\bar P(\tau)A)x\rangle_H\le-\epsilon\|x\|_H^2$, hence $\bar V(\tau+h,S(h)x)-\bar V(\tau,x)\le-\epsilon\int_0^h\|S(s)x\|_H^2\,\ds$; this integrated bound extends from $D(A)$ to all of $H$ by density and continuity in $x$, and $h\downarrow0$ gives $\underline{D}^+\bar V(\tau,x)\le-\epsilon\|x\|_H^2$. The jump inequality gives $\bar V(0,Jx)-\bar V(T,x)=\langle x,(J^*\bar P(0)J-\bar P(T))x\rangle_H\le-\epsilon\|x\|_H^2$ on all of $H$ ($J,\bar P$ bounded). These are the non-coercive timer conditions of Theorem~\ref{thm:cst:nc}(iii), so Theorem~\ref{thm:cst:nc}(iii)$\Rightarrow$(i) yields UGES over $\mathcal{S}_{\mathrm{cst}}(T)$. Only the coercive lower bounds are used; the generally unbounded form $\dot{\bar P}+A^*\bar P+\bar P A$ need not define a bounded operator. The uniform lower bound $\bar P\succeq\alpha_m I$ may not exist in general (Remark~\ref{rem:hilbert:gap}).
\end{proof}

\subsection{Minimum dwell-time ($\mathcal{S}_{\min}(T_{\min})$)}\label{subsec:hilbert:min}

For the minimum dwell-time family, the quadratic specialization of Theorem~\ref{thm:min:nc} carries an operator Lyapunov ODE on the mandatory phase $[0,T_{\min}]$, a Lyapunov inequality on the free phase, and a jump equation. Unlike the fixed-sequence and constant dwell-time cases, the condition is only sufficient, the quadratic class being strictly smaller than the non-coercive Lipschitz one.

\begin{theorem}[Minimum dwell-time, quadratic sufficient]\label{thm:hilbert:min}
Let $T_{\min}>0$, $H$ a Hilbert space, $A$ generate a $C_0$-semigroup on $H$, $J\in L(H)$, and let Assumption~\ref{ass:growth} hold. If there exist a self-adjoint, strongly differentiable $\bar P:[0,T_{\min}]\to L(H)$ with $0\prec\bar P(\tau)\preceq c_P I$ for some $c_P>0$, and self-adjoint coercive operators $Q:[0,T_{\min}]\to L(H)$, $Q_\infty,R\in L(H)$ satisfying $Q(\tau)\succeq\alpha_q I$, $Q_\infty\succeq\alpha_q^\infty I$, $R\succeq\alpha_r I$ for some $\alpha_q,\alpha_q^\infty,\alpha_r>0$, such that
    \begin{subequations}\label{eq:hilbert:min:eq}
    \begin{align}
      \langle d,(\dot{\bar P}(\tau)+A^*\bar P(\tau)+\bar P(\tau)A+Q(\tau))d\rangle_H&=0,\quad d\in D(A),\;\tau\in(0,T_{\min}),\label{eq:hilbert:min:flow}\\
      \langle d,(A^*\bar P(T_{\min})+\bar P(T_{\min})A+Q_\infty)d\rangle_H&\le0,\quad d\in D(A),\label{eq:hilbert:min:free}\\
      J^*\bar P(0)J-\bar P(T_{\min})+R&=0,\label{eq:hilbert:min:jump}
    \end{align}
    \end{subequations}
then the system \eqref{eq:syst} is strongly UGES over $\mathcal{S}_{\min}(T_{\min})$.
\end{theorem}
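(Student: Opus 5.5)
The plan is to reduce the statement to the implication $(iii)\Rightarrow(i)$ of Theorem~\ref{thm:min:nc}, exactly as Corollary~\ref{cor:hilbert:cst:coerc} reduces to Theorem~\ref{thm:cst:nc}. Define the timer functional $\bar V(\tau,x):=\gamma^{-1}\langle x,\bar P(\tau)x\rangle_H$ on $[0,T_{\min}]\times H$, where $\gamma:=\min(\alpha_q,\alpha_q^\infty,\alpha_r)>0$, and check the four conditions \eqref{eq:min:nc:upper}--\eqref{eq:min:nc:jump} with $p=2$.

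The upper bound is immediate: $\bar V(\tau,x)\le (c_P/\gamma)\|x\|_H^2$. Lipschitz continuity on bounded sets, uniformly in $\tau$, follows from $\|\bar P(\tau)\|\le c_P$ via $|\langle x,\bar Px\rangle-\langle y,\bar Py\rangle|\le 2c_P r\|x-y\|$.

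\emph{Mandatory phase.} For $d\in D(A)$ and $\tau+h\le T_{\min}$, I differentiate $s\mapsto\langle S(s)d,\bar P(\tau+s)S(s)d\rangle_H$. Here $S(s)d\in D(A)$ and $\bar P$ is strongly differentiable, so \eqref{eq:hilbert:min:flow} gives the derivative $-\langle S(s)d,Q(\tau+s)S(s)d\rangle\le-\alpha_q\|S(s)d\|^2$. Integrating yields
\[
\bar V(\tau+h,S(h)d)-\bar V(\tau,d)\le-\int_0^h\|S(s)d\|^2\ds.
\]
Both sides are continuous in $d$, so the inequality extends to all $x\in H$ by density. Dividing by $h$ and using strong continuity gives \eqref{eq:min:nc:flow:mandatory}.

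\emph{Free phase.} The identical argument with the frozen operator $\bar P(T_{\min})$ and the inequality \eqref{eq:hilbert:min:free} gives $\bar V(T_{\min},S(h)x)-\bar V(T_{\min},x)\le-\int_0^h\|S(s)x\|^2\ds$, hence \eqref{eq:min:nc:flow:free}.

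\emph{Jump.} Because $J$ and $\bar P$ are bounded, \eqref{eq:hilbert:min:jump} gives directly
\[
\bar V(0,Jx)-\bar V(T_{\min},x)=-\gamma^{-1}\langle x,Rx\rangle\le-\|x\|^2
\]
on all of $H$.

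Theorem~\ref{thm:min:nc}(iii)$\Rightarrow$(ii)$\Rightarrow$(i) then yields strong UGES over $\mathcal{S}_{\min}(T_{\min})$. That chain rests on Lemma~\ref{lem:datko}, which needs Assumption~\ref{ass:growth}; this is assumed in the statement. It is also consistent with the conditions: letting $h\to\infty$ in the free-phase integrated inequality gives $\int_0^\infty\|S(s)x\|^2\ds\le c_P\gamma^{-1}\|x\|^2$, and Datko's theorem \cite{Datko:70} then makes $S$ exponentially stable, so $\omega_0<0$ as Remark~\ref{rem:growth:G} requires.

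The only delicate step is the differentiation along the flow. On $D(A)$ the form $\dot{\bar P}+A^*\bar P+\bar PA$ is unbounded, so I would work only with the scalar function of $s$ for $d\in D(A)$, using the product rule with the strong derivative of $\bar P$ and the fact that $S(s)d$ is $C^1$ in $H$. I would then pass to general $x$ only through the integrated inequality. I would avoid any operator-level manipulation of $A^*\bar P$.
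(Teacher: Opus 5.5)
Your proposal is correct and follows essentially the same route as the paper. The paper also takes the quadratic timer functional $\langle x,\bar P(\tau)x\rangle_H$, frozen at $\bar P(T_{\min})$ in the free phase and rescaled by $\gamma=\min(\alpha_q,\alpha_q^\infty,\alpha_r)$, verifies conditions (iii) of Theorem~\ref{thm:min:nc} (differentiation along $S(s)d$ for $d\in D(A)$, extension by density, direct jump identity), and then invokes (iii)$\Rightarrow$(i). Your free-phase step via the integrated inequality, rather than only the derivative at $s=0$, and your Datko observation that the free-phase condition forces $S$ to be exponentially stable (consistent with Assumption~\ref{ass:growth} on this family) are slightly more careful than the paper's write-up but do not change the argument.
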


\begin{proof}
Define $\bar V(\tau,x):=\langle x,\bar P(\tau)x\rangle_H$ on $[0,T_{\min}]\times H$ and extend by $\bar V(\tau,x):=\langle x,\bar P(T_{\min})x\rangle_H$ for $\tau\ge T_{\min}$. Then $0\le\bar V(\tau,x)\le c_P\|x\|^2$ and $\bar V$ is Lipschitz on bounded subsets of $H$.

\emph{Mandatory flow \eqref{eq:min:nc:flow:mandatory}.} For $\tau+h\le T_{\min}$ and $x\in D(A)$, $S(s)x\in D(A)$ for $s\in[0,h]$ and
\[
  \tfrac{d}{ds}\bar V(\tau+s,S(s)x)=\langle S(s)x,[\dot{\bar P}(\tau+s)+A^*\bar P(\tau+s)+\bar P(\tau+s)A]S(s)x\rangle_H=-\langle S(s)x,Q(\tau+s)S(s)x\rangle_H.
\]
Integration from $0$ to $h$ and \eqref{eq:hilbert:min:flow} give $\bar V(\tau+h,S(h)x)-\bar V(\tau,x)\le-\alpha_q\!\int_0^h\!\|S(s)x\|^2\,\ds$ for $x\in D(A)$, which extends to all $x\in H$ by density and continuity in $x$. Dividing by $h$ and letting $h\to0^+$, strong continuity of $S(\cdot)$ at $s=0$ gives $\underline D^+\bar V(\tau,x)\le-\alpha_q\|x\|^2$.

\emph{Free-phase flow \eqref{eq:min:nc:flow:free}.} For $\tau\ge T_{\min}$, $\bar V(\tau,x)=\langle x,\bar P(T_{\min})x\rangle_H$. For $x\in D(A)$,
\[
  \tfrac{d}{ds}\bar V(T_{\min},S(s)x)\big|_{s=0}=\langle x,[A^*\bar P(T_{\min})+\bar P(T_{\min})A]x\rangle_H\le-\langle x,Q_\infty x\rangle_H\le-\alpha_q^\infty\|x\|^2,
\]
using \eqref{eq:hilbert:min:free}. The same density and limit argument gives $\underline D^+\bar V(T_{\min},x)\le-\alpha_q^\infty\|x\|^2$ on $H$.

\emph{Jump \eqref{eq:min:nc:jump}.} For $\theta\ge T_{\min}$ and $z\in H$, $\bar V(\theta,z)=\langle z,\bar P(T_{\min})z\rangle_H$ and \eqref{eq:hilbert:min:jump} gives
\[
  \bar V(0,Jz)-\bar V(\theta,z)=\langle z,[J^*\bar P(0)J-\bar P(T_{\min})]z\rangle_H=-\langle z,Rz\rangle_H\le-\alpha_r\|z\|^2.
\]

Setting $\gamma:=\min(\alpha_q,\alpha_q^\infty,\alpha_r)>0$, the rescaled $\gamma^{-1}\bar V$ satisfies \eqref{eq:min:nc:upper}--\eqref{eq:min:nc:jump} of Theorem~\ref{thm:min:nc} with $p=2$ and $c=c_P/\gamma$. Theorem~\ref{thm:min:nc}(iii)$\Rightarrow$(i) yields UGES over $\mathcal{S}_{\min}(T_{\min})$.
\end{proof}

The condition is sufficient but not necessary. Even when UGES holds, the explicit timer-dependent functional \eqref{eq:min:V:explicit} witnessing condition (iii) of Theorem~\ref{thm:min:nc} is a sup over admissible signals, and its quadratic realization $\bar P(\tau)$ generally satisfies only operator \emph{inequalities}, not the equations \eqref{eq:hilbert:min:flow}--\eqref{eq:hilbert:min:jump}.

The following monotonicity lemma, in the spirit of the dwell-time analysis of Geromel and Colaneri \cite{Geromel:06b} and \cite{Briat:13d,Briat:16c}, shows that for a \emph{constant} positive operator $P$ a single jump inequality at $\theta=T_{\min}$ propagates to all admissible dwell-times $\theta\ge T_{\min}$, provided the flow inequality $A^*P+PA\preceq-\epsilon I$ holds.

\begin{lemma}[Monotonicity of the dwell-time jump inequality]\label{lem:gc:monotone}
Let $P\in L(H)$ be self-adjoint with $\langle d,(A^*P+PA)d\rangle_H\le0$ for all $d\in D(A)$. Then for every $x\in H$ the map
\[
  \theta\longmapsto\langle x,J^*S(\theta)^*PS(\theta)Jx\rangle_H
\]
is non-increasing on $[0,\infty)$. Consequently, if in addition $J^*S(T_{\min})^*PS(T_{\min})J-P\preceq-\delta I$ for some $\delta>0$, then
\[
  J^*S(\theta)^*PS(\theta)J-P\preceq-\delta I\qquad\text{for all }\theta\ge T_{\min}.
\]
\end{lemma}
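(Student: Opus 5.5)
The plan is to reduce the statement to monotonicity of the quadratic form $\phi_y(t):=\langle S(t)y,PS(t)y\rangle_H$ along the free semigroup flow, with $y:=Jx$. Once $\phi_y$ is shown to be non-increasing in $t$ for every $y\in H$, the first claim follows by taking $y=Jx$. The second claim then follows in one line: for $\theta\ge T_{\min}$,
\[
\langle x,(J^*S(\theta)^*PS(\theta)J-P)x\rangle_H\le\langle x,(J^*S(T_{\min})^*PS(T_{\min})J-P)x\rangle_H\le-\delta\|x\|_H^2 .
\]

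To prove the monotonicity of $\phi_y$, I first take $y\in D(A)$. Then $S(t)y\in D(A)$ and $t\mapsto S(t)y$ is continuously differentiable in $H$ with derivative $AS(t)y$. Since $P$ is bounded and self-adjoint, $\phi_y$ is $C^1$ and
\[
\dot\phi_y(t)=\langle AS(t)y,PS(t)y\rangle_H+\langle S(t)y,PAS(t)y\rangle_H=\langle d,(A^*P+PA)d\rangle_H\big|_{d=S(t)y}\le0 .
\]
This is where the hypothesis enters, read as the form $2\,\mathrm{Re}\langle Ad,Pd\rangle_H$ on $D(A)$. Integrating gives $\phi_y(t_2)\le\phi_y(t_1)$ for all $t_2\ge t_1\ge0$.

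To pass from $D(A)$ to all of $H$, I use density. For fixed $t$, the map $y\mapsto\phi_y(t)$ is continuous on $H$, because $S(t)$ and $P$ are bounded. Approximating an arbitrary $y$ by $y_n\in D(A)$ and passing to the limit in $\phi_{y_n}(t_2)\le\phi_{y_n}(t_1)$ preserves the inequality for every $y\in H$. Hence $\phi_y$ is non-increasing for every $y\in H$, in particular for $y=Jx$. This needs no assumption that $JD(A)\subset D(A)$, although that inclusion is available anyway.

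The main obstacle is only this technical point: the derivative computation is legitimate solely on $D(A)$, so the $H$-level statement must go through density. I will write the density step explicitly rather than differentiating formally for general $x$. Note also that only the non-strict flow inequality is needed; the strict version $A^*P+PA\preceq-\epsilon I$ mentioned before the lemma is not used here.
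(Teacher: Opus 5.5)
Your proposal is correct and matches the paper's proof essentially step for step. You differentiate $\langle S(\theta)y,PS(\theta)y\rangle_H$ for $y\in D(A)$, use the form hypothesis to get a non-positive derivative, extend to all $y\in H$ (in particular $y=Jx$) by density of $D(A)$ and boundedness of $S(\theta)$ and $P$, and finally compare with $\theta=T_{\min}$ and subtract $P$; your explicit density step, fixing $t_1\le t_2$ and passing to the limit in $y$, is simply a more careful write-up of the paper's argument.
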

\begin{proof}
Fix $x\in H$ and set $y:=Jx$. For $y\in D(A)$, the orbit $S(\theta)y\in D(A)$ and, by strong differentiability of the semigroup on $D(A)$,
\[
  \tfrac{d}{d\theta}\langle S(\theta)y,PS(\theta)y\rangle_H=\langle S(\theta)y,(A^*P+PA)S(\theta)y\rangle_H\le0,
\]
so $\theta\mapsto\langle S(\theta)y,PS(\theta)y\rangle_H=\langle x,J^*S(\theta)^*PS(\theta)Jx\rangle_H$ is non-increasing for $y\in D(A)$; the bound extends to all $y\in H$, hence all $x\in H$, by density of $D(A)$ and continuity of $S(\cdot)$ and $P$. In particular, for $\theta\ge T_{\min}$,
\[
  \langle x,J^*S(\theta)^*PS(\theta)Jx\rangle_H\le\langle x,J^*S(T_{\min})^*PS(T_{\min})Jx\rangle_H,
\]
so $J^*S(\theta)^*PS(\theta)J\preceq J^*S(T_{\min})^*PS(T_{\min})J$. Subtracting $P$ and using the hypothesis at $\theta=T_{\min}$ gives $J^*S(\theta)^*PS(\theta)J-P\preceq J^*S(T_{\min})^*PS(T_{\min})J-P\preceq-\delta I$.
\end{proof}

\begin{corollary}[Minimum dwell-time, coercive quadratic]\label{cor:hilbert:min:coerc}
If there exist constants $\alpha_m,c_P,\epsilon>0$ and a self-adjoint, strongly differentiable $\bar P:[0,T_{\min}]\to L(H)$ with $\alpha_m I\preceq\bar P(\tau)\preceq c_P I$ satisfying \eqref{eq:hilbert:cst:flow:LMI} on $[0,T_{\min}]$ together with the free-phase inequality $A^*\bar P(T_{\min})+\bar P(T_{\min})A\preceq -\epsilon I$, then the system \eqref{eq:syst} is strongly UGES over $\mathcal{S}_{\min}(T_{\min})$.
\end{corollary}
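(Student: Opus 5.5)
The plan is to reduce the corollary to the non-coercive minimum dwell-time characterization, Theorem~\ref{thm:min:nc}(iii)$\Rightarrow$(i), in the same way Corollary~\ref{cor:hilbert:cst:coerc} was reduced to Theorem~\ref{thm:cst:nc}. Set $\bar V(\tau,x):=\langle x,\bar P(\tau)x\rangle_H$ for $\tau\in[0,T_{\min}]$ and freeze it at $\bar V(\tau,x):=\langle x,\bar P(T_{\min})x\rangle_H$ for $\tau\ge T_{\min}$. Since $\bar P$ is bounded uniformly by $c_P$, $\bar V$ is Lipschitz on bounded subsets of $H$ uniformly in $\tau$ and satisfies $\bar V(\tau,x)\le c_P\|x\|_H^2$. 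The lower bound $\alpha_m I\preceq\bar P$ plays no role in the argument; it only makes $\bar V$ coercive.

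The next step is to check the three dissipation conditions \eqref{eq:min:nc:flow:mandatory}--\eqref{eq:min:nc:jump} with decrement $\epsilon\|x\|^2$.

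\emph{Mandatory phase.} For $x\in D(A)$ and $\tau+h\le T_{\min}$, differentiating $s\mapsto\langle S(s)x,\bar P(\tau+s)S(s)x\rangle_H$ and applying the first inequality of \eqref{eq:hilbert:cst:flow:LMI} gives
\[
  \bar V(\tau+h,S(h)x)-\bar V(\tau,x)\le-\epsilon\int_0^h\|S(s)x\|^2\,\ds .
\]
This extends to all of $H$ by density and continuity, and letting $h\downarrow0$ yields the Dini bound, exactly as in the proof of Theorem~\ref{thm:hilbert:min}.

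\emph{Free phase.} The inequality $A^*\bar P(T_{\min})+\bar P(T_{\min})A\preceq-\epsilon I$ gives the same integrated estimate for the frozen form. This settles \eqref{eq:min:nc:flow:free}.

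\emph{Jump.} In the timer convention of Theorem~\ref{thm:min:nc}, every jump occurs at a timer value $\theta\ge T_{\min}$, where $\bar V(\theta,\cdot)=\bar V(T_{\min},\cdot)$. The jump condition therefore reduces to $\bar V(0,Jx)-\bar V(T_{\min},x)\le-\epsilon\|x\|^2$, which is the second inequality of \eqref{eq:hilbert:cst:flow:LMI} read with $T=T_{\min}$. Lemma~\ref{lem:gc:monotone} is not needed here, because the freezing already handles all $\theta\ge T_{\min}$.

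Rescaling by $\epsilon^{-1}$ then gives conditions (iii) of Theorem~\ref{thm:min:nc} with $p=2$ and $c=c_P/\epsilon$.

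The step that needs care is Assumption~\ref{ass:growth}. The family $\mathcal{S}_{\min}(T_{\min})$ has unbounded dwell-times, so the assumption requires $\omega_0\le0$, and the corollary does not assume it. I would obtain it a posteriori, as in Theorem~\ref{thm:hilbert:arbitrary:nc}. Let $h\to\infty$ in the integrated free-phase inequality and use $\bar V\ge0$:
\[
  \epsilon\int_0^\infty\|S(s)x\|^2\,\ds\le c_P\|x\|^2 .
\]
Datko's theorem \cite{Datko:70} then gives exponential stability of $S$, hence Assumption~\ref{ass:growth} holds with $G=M_0$. This argument uses only the free-phase inequality, so it does not presuppose any impulsive result.

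The remaining subtlety is that $\dot{\bar P}+A^*\bar P+\bar PA$ is interpreted only as a quadratic form on $D(A)$. This is handled by the density argument above. With both points in place, Theorem~\ref{thm:min:nc}(iii)$\Rightarrow$(i) yields strong UGES over $\mathcal{S}_{\min}(T_{\min})$.
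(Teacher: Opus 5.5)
Your proposal is correct and follows essentially the same route as the paper's proof. You build the quadratic $\bar V$ (frozen at $\bar P(T_{\min})$ beyond $T_{\min}$) and check the mandatory-phase, free-phase and jump conditions of Theorem~\ref{thm:min:nc}(iii) by integrating on $D(A)$ and extending by density. You then recover Assumption~\ref{ass:growth} a posteriori from the free-phase inequality via Datko's theorem before invoking (iii)$\Rightarrow$(i).
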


\begin{proof}
Define $\bar V(\tau,x):=\langle x,\bar P(\tau)x\rangle_H$ on $[0,T_{\min}]\times X$; it is Lipschitz on bounded subsets and $\bar V(\tau,x)\le c_P\|x\|_H^2$. For $x\in D(A)$, the mandatory- and free-phase inequalities give $\langle x,(\dot{\bar P}+A^*\bar P+\bar P A)x\rangle_H\le-\epsilon\|x\|_H^2$ for $\tau\in(0,T_{\min})$ and $\langle x,(A^*\bar P(T_{\min})+\bar P(T_{\min})A)x\rangle_H\le-\epsilon\|x\|_H^2$ at $\tau=T_{\min}$; integrating each along the flow gives $\bar V(\tau+h,S(h)x)-\bar V(\tau,x)\le-\epsilon\int_0^h\|S(s)x\|_H^2\,\ds$, which extends from $D(A)$ to all of $H$ by density and continuity in $x$, so that $h\downarrow0$ yields $\underline{D}^+\bar V(\tau,x)\le-\epsilon\|x\|_H^2$ on $[0,T_{\min}]$. In particular, letting $h\to\infty$ in the free-phase integrated inequality $\bar V(T_{\min},S(h)x)-\bar V(T_{\min},x)\le-\epsilon\int_0^h\|S(s)x\|_H^2\ds$ and using $\bar V\ge0$ gives $\epsilon\int_0^\infty\|S(s)x\|_H^2\ds\le\bar V(T_{\min},x)\le c_P\|x\|_H^2$, so $S(t)$ is exponentially stable (Datko's theorem \cite{Datko:70}) and Assumption~\ref{ass:growth} holds on $\mathcal{S}_{\min}(T_{\min})$, as required by Theorem~\ref{thm:min:nc}. The jump inequality gives $\bar V(0,Jx)-\bar V(T_{\min},x)=\langle x,(J^*\bar P(0)J-\bar P(T_{\min}))x\rangle_H\le-\epsilon\|x\|_H^2$ on all of $H$ ($J,\bar P$ bounded). These are the non-coercive timer conditions of Theorem~\ref{thm:min:nc}(iii), so Theorem~\ref{thm:min:nc}(iii)$\Rightarrow$(i) yields UGES over $\mathcal{S}_{\min}(T_{\min})$. Only the coercive lower bounds are used; the generally unbounded forms $\dot{\bar P}+A^*\bar P+\bar P A$ and $A^*\bar P(T_{\min})+\bar P(T_{\min})A$ need not define bounded operators. The coercive lower bound $\bar P\succeq\alpha_m I$ further forces $\bar V$ to be coercive, which need not be available in general (Remark~\ref{rem:hilbert:gap}).
\end{proof}

Lemma~\ref{lem:gc:monotone} yields a certificate based on a single \emph{constant} operator $P$, namely the discrete-time Lyapunov operator $V_d(x)=\langle x,Px\rangle_H$ for the map $S(\theta)J$. This is condition (iv) of Theorem~\ref{thm:min:nc}, not the timer-dependent hybrid operator $\bar P(\tau)$ of (iii): the latter still varies with the timer, with $\bar P(0)\ne\bar P(T_{\min})$ in general. The point of the lemma is that, for this constant $P$, the discrete-time decrease need only be tested at the single dwell-time $\theta=T_{\min}$, since the flow inequality $A^*P+PA\preceq-\epsilon I$ then propagates it to all $\theta\ge T_{\min}$.

\begin{corollary}[Minimum dwell-time, constant coercive certificate]\label{cor:hilbert:min:const}
If there exist a self-adjoint $P\in L(H)$ with $\alpha_m I\preceq P\preceq c_P I$ for some $\alpha_m,c_P>0$, and $\epsilon>0$, such that
\begin{equation}\label{eq:hilbert:min:const:lmi}
  \langle d,(A^*P+PA)d\rangle_H\le-\epsilon\|d\|^2\;\;\forall d\in D(A),\qquad J^*S(T_{\min})^*PS(T_{\min})J-P\preceq-\epsilon I,
\end{equation}
then the system \eqref{eq:syst} is strongly UGES over $\mathcal{S}_{\min}(T_{\min})$.
\end{corollary}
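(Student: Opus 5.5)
The plan is to reduce the statement to condition (iv) of Theorem~\ref{thm:min:nc} with the quadratic discrete-time functional $V_d(x):=\gamma^{-1}\langle x,Px\rangle_H$, $\gamma:=\epsilon$, and to obtain the semigroup-stability clause of (iv) from the flow inequality. Theorem~\ref{thm:min:nc}(iv)$\Rightarrow$(i) then yields strong UGES over $\mathcal{S}_{\min}(T_{\min})$.

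\emph{Semigroup stability and Assumption~\ref{ass:growth}.} For $d\in D(A)$, the map $s\mapsto\langle S(s)d,PS(s)d\rangle_H$ is differentiable with derivative $\langle S(s)d,(A^*P+PA)S(s)d\rangle_H\le-\epsilon\|S(s)d\|^2$. Integrating gives
\[
\langle S(h)d,PS(h)d\rangle_H-\langle d,Pd\rangle_H\le-\epsilon\int_0^h\|S(s)d\|^2\ds .
\]
This inequality extends to all of $H$ by density of $D(A)$ and continuity of both sides in $d$. Letting $h\to\infty$ and using $P\succeq0$ gives $\epsilon\int_0^\infty\|S(s)x\|^2\ds\le c_P\|x\|^2$. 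Datko's theorem \cite{Datko:70} then shows that $S$ is exponentially stable, so $\omega_0<0$, and by Remark~\ref{rem:growth:G} Assumption~\ref{ass:growth} holds on $\mathcal{S}_{\min}(T_{\min})$ with $G=M_0$. This also settles the semigroup clause of (iv). The Lipschitz clause is immediate, since $V_d$ is a bounded quadratic form and $V_d(x)\le(c_P/\epsilon)\|x\|^2$.

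\emph{Uniform discrete decrease.} The flow inequality implies $\langle d,(A^*P+PA)d\rangle_H\le0$, so Lemma~\ref{lem:gc:monotone} applies with $\delta=\epsilon$. It gives $J^*S(\theta)^*PS(\theta)J-P\preceq-\epsilon I$ for all $\theta\ge T_{\min}$. Hence
\[
V_d(S(\theta)Jx)-V_d(x)=\epsilon^{-1}\langle x,(J^*S(\theta)^*PS(\theta)J-P)x\rangle_H\le-\|x\|^2 .
\]
This is \eqref{eq:min:dt} with $p=2$ and $\hat c=c_P/\epsilon$. All clauses of Theorem~\ref{thm:min:nc}(iv) therefore hold.

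\emph{Main obstacle.} The only delicate step is the density extension of the integrated flow inequality from $D(A)$ to $H$ together with the Datko step, because $A^*P+PA$ is only a form on $D(A)$. This is handled exactly as in Corollary~\ref{cor:hilbert:arbitrary:coerc}. The coercive lower bound $\alpha_m I\preceq P$ is not actually needed, since Theorem~\ref{thm:min:nc} is non-coercive.

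As an alternative route, one can build the timer-dependent $\bar P(\tau)$ from $P$ as in the $(iv)\Rightarrow(iii)$ step of Theorem~\ref{thm:min:nc} and invoke Corollary~\ref{cor:hilbert:min:coerc}. The direct appeal to (iv) is shorter.
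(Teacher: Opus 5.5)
Your proof is correct and follows the paper's argument: Lemma~\ref{lem:gc:monotone} propagates the jump inequality from $\theta=T_{\min}$ to all $\theta\ge T_{\min}$, the flow inequality yields exponential stability of $S$, and Theorem~\ref{thm:min:nc}(iv)$\Rightarrow$(i) is applied with $V_d(x)=\epsilon^{-1}\langle x,Px\rangle_H$, $p=2$ and $\hat c=c_P/\epsilon$. The only deviation is the semigroup step: the paper uses the coercive bound $P\succeq\alpha_m I$ with a Gr\"onwall estimate, whereas your Datko argument needs only $P\succeq0$, and you also make explicit that Assumption~\ref{ass:growth} then holds, which the paper leaves implicit.
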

\begin{proof}
The flow inequality $\langle d,(A^*P+PA)d\rangle_H\le-\epsilon\|d\|^2\le0$ is the hypothesis of Lemma~\ref{lem:gc:monotone}; together with the jump inequality at $\theta=T_{\min}$ it gives $J^*S(\theta)^*PS(\theta)J-P\preceq-\epsilon I$ for all $\theta\ge T_{\min}$, that is, $V_d(x):=\langle x,Px\rangle_H$ satisfies $V_d(S(\theta)Jx)-V_d(x)\le-\epsilon\|x\|^2$ for all $\theta\ge T_{\min}$. Moreover $A^*P+PA\preceq-\epsilon I$ makes the semigroup exponentially stable (integrating $\frac{d}{dt}\langle S(t)x,PS(t)x\rangle_H\le-\epsilon\|S(t)x\|^2$ and using $P\succeq\alpha_m I$ gives $\|S(t)x\|^2\le(c_P/\alpha_m)e^{-(\epsilon/c_P)t}\|x\|^2$). Hence the augmented discrete-time condition (iv) of Theorem~\ref{thm:min:nc} holds with $p=2$, $V_d(x)=\langle x,Px\rangle_H$ rescaled by $\epsilon^{-1}$, and $\hat c=c_P/\epsilon$; Theorem~\ref{thm:min:nc}(iv)$\Rightarrow$(i) yields UGES over $\mathcal{S}_{\min}(T_{\min})$.
\end{proof}

\subsection{Range dwell-time ($\mathcal{S}_{\mathrm{rng}}(T_{\min},T_{\max})$)}\label{subsec:hilbert:rng}

For the range dwell-time family, the quadratic specialization of Theorem~\ref{thm:rng:nc} imposes an operator Lyapunov ODE along the flow on $[0,T_{\max}]$ together with a jump equation for every admissible period $\theta\in[T_{\min},T_{\max}]$; as in the minimum dwell-time case, it is only sufficient.

\begin{theorem}[Range dwell-time, quadratic sufficient]\label{thm:hilbert:rng}
Let $0<T_{\min}\le T_{\max}<\infty$, $H$ a Hilbert space, $A$ generate a $C_0$-semigroup on $H$, $J\in L(H)$, and let Assumption~\ref{ass:growth} hold. If there exist a self-adjoint, strongly differentiable $\bar P:[0,T_{\max}]\to L(H)$ with $0\prec\bar P(\tau)\preceq c_P I$ for some $c_P>0$, and self-adjoint coercive operators $Q:[0,T_{\max}]\to L(H)$, $R:[T_{\min},T_{\max}]\to L(H)$ satisfying $Q(\tau)\succeq\alpha_q I$, $R(\theta)\succeq\alpha_r I$ for some $\alpha_q,\alpha_r>0$, such that
    \begin{subequations}\label{eq:hilbert:rng:eq}
    \begin{align}
      \langle d,(\dot{\bar P}(\tau)+A^*\bar P(\tau)+\bar P(\tau)A+Q(\tau))d\rangle_H&=0,\quad d\in D(A),\;\tau\in(0,T_{\max}),\label{eq:hilbert:rng:flow}\\
      J^*\bar P(0)J-\bar P(\theta)+R(\theta)&=0,\quad\theta\in[T_{\min},T_{\max}],\label{eq:hilbert:rng:jump}
    \end{align}
    \end{subequations}
then the system \eqref{eq:syst} is strongly UGES over $\mathcal{S}_{\mathrm{rng}}(T_{\min},T_{\max})$.
\end{theorem}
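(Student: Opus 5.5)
The plan is to reduce the statement to Theorem~\ref{thm:rng:nc}(iii)$\Rightarrow$(i), exactly as Theorem~\ref{thm:hilbert:min} was reduced to Theorem~\ref{thm:min:nc}. We work with the quadratic timer functional $\bar V(\tau,x):=\langle x,\bar P(\tau)x\rangle_H$ on $[0,T_{\max}]\times H$. Since $\bar P$ is bounded uniformly by $c_P$, we get $0\le\bar V(\tau,x)\le c_P\|x\|^2$. We also get Lipschitz continuity on bounded subsets, uniformly in $\tau$: $|\bar V(\tau,x)-\bar V(\tau,y)|\le c_P(\|x\|+\|y\|)\|x-y\|$. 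Here Assumption~\ref{ass:growth} is automatic because $T_{\max}<\infty$, so it imposes no restriction.

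\emph{Flow condition.} For $\tau\in[0,T_{\max})$, $h>0$ with $\tau+h\le T_{\max}$, and $x\in D(A)$, the orbit satisfies $S(s)x\in D(A)$. Strong differentiability of $\bar P$ and of $S(\cdot)x$ then gives
\[
  \tfrac{d}{ds}\bar V(\tau+s,S(s)x)=\langle S(s)x,[\dot{\bar P}+A^*\bar P+\bar PA](\tau+s)S(s)x\rangle_H=-\langle S(s)x,Q(\tau+s)S(s)x\rangle_H,
\]
by \eqref{eq:hilbert:rng:flow}. Integrating and using $Q\succeq\alpha_q I$ yields $\bar V(\tau+h,S(h)x)-\bar V(\tau,x)\le-\alpha_q\int_0^h\|S(s)x\|^2\ds$. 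Both sides are continuous in $x$, so density of $D(A)$ extends this bound to all of $H$. Dividing by $h$ and letting $h\downarrow0$ gives $\underline D^+\bar V(\tau,x)\le-\alpha_q\|x\|^2$.

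\emph{Jump condition and conclusion.} For every $\theta\in[T_{\min},T_{\max}]$ and $z\in H$, equation \eqref{eq:hilbert:rng:jump} gives
\[
  \bar V(0,Jz)-\bar V(\theta,z)=\langle z,(J^*\bar P(0)J-\bar P(\theta))z\rangle_H=-\langle z,R(\theta)z\rangle_H\le-\alpha_r\|z\|^2 .
\]
This is valid on all of $H$ because $J$ and $\bar P(\cdot)$ are bounded. Set $\gamma:=\min(\alpha_q,\alpha_r)$. Then $\gamma^{-1}\bar V$ satisfies \eqref{eq:rng:nc:upper}--\eqref{eq:rng:nc:jump} with $p=2$ and $c=c_P/\gamma$. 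Theorem~\ref{thm:rng:nc}(iii)$\Rightarrow$(i) then yields strong UGES over $\mathcal{S}_{\mathrm{rng}}(T_{\min},T_{\max})$.

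\emph{Main obstacle.} The one delicate point is the flow condition at the boundary timer $\tau=T_{\max}$, where there is no room to flow forward. Theorem~\ref{thm:rng:nc}(iii) formally asks for the Dini inequality on the closed interval $[0,T_{\max}]$. In the (iii)$\Rightarrow$(ii) telescoping, however, this inequality is only integrated over $[0,\theta_k]\subseteq[0,T_{\max}]$, and a jump always occurs by timer $T_{\max}$. I would handle this by extending $\bar V(\tau,\cdot):=\bar V(T_{\max},\cdot)$ for $\tau\ge T_{\max}$, as was done in Theorem~\ref{thm:hilbert:min}. Alternatively, I would note that only the integrated version on $[0,\theta]$ is used, and that this version follows from \eqref{eq:hilbert:rng:flow} by continuity of $\bar P$ at $T_{\max}$.
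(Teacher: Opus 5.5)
Your argument matches the paper's proof step for step. It uses the same quadratic timer functional $\bar V(\tau,x)=\langle x,\bar P(\tau)x\rangle_H$; the flow identity is differentiated on $D(A)$, integrated, and extended by density; the jump identity is read off \eqref{eq:hilbert:rng:jump}; and the result follows by rescaling with $\gamma=\min(\alpha_q,\alpha_r)$ and invoking Theorem~\ref{thm:rng:nc}(iii)$\Rightarrow$(i).

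At the boundary timer $\tau=T_{\max}$, however, you should drop your first option. The constant extension works in Theorem~\ref{thm:hilbert:min} only because that theorem carries the extra free-phase hypothesis \eqref{eq:hilbert:min:free}. Here the extension would require $\langle x,(A^*\bar P(T_{\max})+\bar P(T_{\max})A)x\rangle_H\le-\gamma\|x\|^2$ on $D(A)$. That condition forces $S$ to be exponentially stable (by Datko's theorem), which is not assumed. It fails, for example, for $H=\mathbb{R}$ and $A=a>0$, where the form equals $2a\bar P(T_{\max})>0$ even though the hypotheses of the theorem can be met with a small jump gain.

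Your second option is the correct one. The telescoping in Theorem~\ref{thm:rng:nc} only integrates the flow over timer intervals contained in $[0,\theta_k]\subseteq[0,T_{\max}]$. Your own computation with $\tau+h\le T_{\max}$ already delivers that integrated inequality directly. This is more careful than the paper's one-line appeal to Lipschitz continuity of $\bar V$ in $\tau$.
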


\begin{proof}
Define $\bar V(\tau,x):=\langle x,\bar P(\tau)x\rangle_H$ on $[0,T_{\max}]\times H$. Then $0\le\bar V(\tau,x)\le c_P\|x\|^2$ and $\bar V$ is Lipschitz on bounded subsets of $H$.

\emph{Flow \eqref{eq:rng:nc:flow}.} For $\tau\in[0,T_{\max})$, $\tau+h\le T_{\max}$, and $x\in D(A)$,
\[
  \tfrac{d}{ds}\bar V(\tau+s,S(s)x)=-\langle S(s)x,Q(\tau+s)S(s)x\rangle_H\le-\alpha_q\|S(s)x\|^2.
\]
Integrating from $0$ to $h$, extending to $H$ by density and continuity in $x$, dividing by $h$ and letting $h\to0^+$, strong continuity of $S(\cdot)$ at $s=0$ gives $\underline D^+\bar V(\tau,x)\le-\alpha_q\|x\|^2$. By Lipschitz continuity of $\bar V$ in $\tau$, the bound extends to $\tau=T_{\max}$.

\emph{Jump \eqref{eq:rng:nc:jump}.} For $\theta\in[T_{\min},T_{\max}]$ and $z\in H$, \eqref{eq:hilbert:rng:jump} gives
\[
  \bar V(0,Jz)-\bar V(\theta,z)=\langle z,[J^*\bar P(0)J-\bar P(\theta)]z\rangle_H=-\langle z,R(\theta)z\rangle_H\le-\alpha_r\|z\|^2.
\]

Setting $\gamma:=\min(\alpha_q,\alpha_r)>0$, the rescaled $\gamma^{-1}\bar V$ satisfies \eqref{eq:rng:nc:upper}--\eqref{eq:rng:nc:jump} of Theorem~\ref{thm:rng:nc} with $p=2$ and $c=c_P/\gamma$. Theorem~\ref{thm:rng:nc}(iii)$\Rightarrow$(i) yields UGES over $\mathcal{S}_{\mathrm{rng}}(T_{\min},T_{\max})$.
\end{proof}


\begin{corollary}[Range dwell-time, coercive quadratic]\label{cor:hilbert:rng:coerc}
If there exist constants $\alpha_m,c_P,\epsilon>0$ and a self-adjoint, strongly differentiable $\bar P:[0,T_{\max}]\to L(H)$ with $\alpha_m I\preceq\bar P(\tau)\preceq c_P I$ satisfying
\begin{equation*}
  \dot{\bar P}(\tau)+A^*\bar P(\tau)+\bar P(\tau)A\preceq -\epsilon I,\quad\tau\in(0,T_{\max}),\qquad J^*\bar P(0)J-\bar P(\theta)\preceq -\epsilon I,\quad\theta\in[T_{\min},T_{\max}],
\end{equation*}
then the system \eqref{eq:syst} is strongly UGES over $\mathcal{S}_{\mathrm{rng}}(T_{\min},T_{\max})$.
\end{corollary}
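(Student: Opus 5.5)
The plan is to follow the pattern of Corollaries~\ref{cor:hilbert:cst:coerc} and~\ref{cor:hilbert:min:coerc}. We reduce the operator inequalities to the non-coercive timer conditions of Theorem~\ref{thm:rng:nc}(iii) with $p=2$ and then invoke the implication (iii)$\Rightarrow$(i) of that theorem. Assumption~\ref{ass:growth} is automatic on $\mathcal{S}_{\mathrm{rng}}(T_{\min},T_{\max})$, since $\bar T=T_{\max}<\infty$ gives $G=M_0e^{|\omega_0|T_{\max}}$ (Remark~\ref{rem:growth:G}). So no separate semigroup-stability argument is required here, unlike in the minimum dwell-time case.

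The first step is to set $\bar V(\tau,x):=\langle x,\bar P(\tau)x\rangle_H$ on $[0,T_{\max}]\times H$. This functional is nonnegative, and since $\bar P$ is uniformly bounded by $c_P$ it satisfies $\bar V(\tau,x)\le c_P\|x\|^2$ and is Lipschitz on bounded sets uniformly in $\tau$.

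For the flow, fix $\tau\in[0,T_{\max})$ with $\tau+h\le T_{\max}$, and take $x\in D(A)$ so that $S(s)x\in D(A)$. We differentiate $s\mapsto\bar V(\tau+s,S(s)x)$, using strong differentiability of $\bar P$ and of the orbit. The flow inequality then gives a derivative at most $-\epsilon\|S(s)x\|^2$. Integrating over $[0,h]$ yields $\bar V(\tau+h,S(h)x)-\bar V(\tau,x)\le-\epsilon\int_0^h\|S(s)x\|^2\ds$. This inequality extends to all $x\in H$ by density of $D(A)$ and continuity of both sides in $x$. Dividing by $h$ and using strong continuity of $S$ at $0$ gives $\underline D^+\bar V(\tau,x)\le-\epsilon\|x\|^2$. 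At the endpoint $\tau=T_{\max}$ we use the same continuity remark as in the proof of Theorem~\ref{thm:hilbert:rng}. This endpoint is in any case never flowed beyond, because a jump is forced there.

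The jump condition is immediate. For every $\theta\in[T_{\min},T_{\max}]$ and $z\in H$ we have $\bar V(0,Jz)-\bar V(\theta,z)=\langle z,(J^*\bar P(0)J-\bar P(\theta))z\rangle\le-\epsilon\|z\|^2$, which holds on all of $H$ because $J$ and $\bar P$ are bounded. The rescaled functional $\epsilon^{-1}\bar V$ then satisfies \eqref{eq:rng:nc:upper}--\eqref{eq:rng:nc:jump} with $c=c_P/\epsilon$, and Theorem~\ref{thm:rng:nc}(iii)$\Rightarrow$(i) gives strong UGES. The lower bound $\bar P\succeq\alpha_m I$ is not used.

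The main obstacle is the rigour of the flow step. The expression $\dot{\bar P}+A^*\bar P+\bar PA$ is only a form on $D(A)$, so we work with quadratic forms on $D(A)$ throughout and never treat it as a bounded operator. The product-rule differentiation also needs care: it combines strong differentiability of $\bar P$ with the $C^1$ property of $S(\cdot)x$ for $x\in D(A)$. After that, the integrated inequality is passed to all of $H$ by density rather than by extending the form.
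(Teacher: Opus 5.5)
Your proposal is correct and follows essentially the same route as the paper. Both take the quadratic $\bar V(\tau,x)=\langle x,\bar P(\tau)x\rangle_H$, prove the integrated flow inequality on $D(A)$ and extend it by density, check the jump inequality for every $\theta\in[T_{\min},T_{\max}]$, and conclude via Theorem~\ref{thm:rng:nc}(iii)$\Rightarrow$(i). Your remarks on Assumption~\ref{ass:growth} being automatic for bounded dwell-times, the $\epsilon^{-1}$ rescaling, and the endpoint $\tau=T_{\max}$ only make explicit what the paper leaves implicit, though nonnegativity of $\bar V$ does use $\bar P\succeq0$, which the lower bound supplies.
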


\begin{proof}
Define $\bar V(\tau,x):=\langle x,\bar P(\tau)x\rangle_H$ on $[0,T_{\max}]\times X$; it is Lipschitz on bounded subsets and $\bar V(\tau,x)\le c_P\|x\|_H^2$. For $x\in D(A)$, the flow inequality gives $\langle x,(\dot{\bar P}+A^*\bar P+\bar P A)x\rangle_H\le-\epsilon\|x\|_H^2$, hence $\bar V(\tau+h,S(h)x)-\bar V(\tau,x)\le-\epsilon\int_0^h\|S(s)x\|_H^2\,\ds$; this integrated bound extends from $D(A)$ to all of $H$ by density and continuity in $x$, and $h\downarrow0$ gives $\underline{D}^+\bar V(\tau,x)\le-\epsilon\|x\|_H^2$. The jump inequality gives, for every $\theta\in[T_{\min},T_{\max}]$, $\bar V(0,Jx)-\bar V(\theta,x)=\langle x,(J^*\bar P(0)J-\bar P(\theta))x\rangle_H\le-\epsilon\|x\|_H^2$ on all of $H$ ($J,\bar P$ bounded). These are the non-coercive timer conditions of Theorem~\ref{thm:rng:nc}(iii), so Theorem~\ref{thm:rng:nc}(iii)$\Rightarrow$(i) yields UGES over $\mathcal{S}_{\mathrm{rng}}(T_{\min},T_{\max})$. Only the coercive lower bounds are used; the generally unbounded form $\dot{\bar P}+A^*\bar P+\bar P A$ need not define a bounded operator.
\end{proof}

\begin{remark}[Coercivity and convex synthesis]\label{rem:coercive:synthesis}
Beyond guaranteeing an equivalent-norm Lyapunov functional, coercivity of the certificate $\bar P(\tau)$ is what makes the conditions amenable to \emph{controller design}. A coercive operator, satisfying $\bar P(\tau)\succeq\alpha_m I$ with $\alpha_m>0$, is boundedly invertible, so one may perform a congruence transformation by $\bar P(\tau)^{-1}$ (equivalently, work with the dual variable $\bar Q(\tau):=\bar P(\tau)^{-1}$) without leaving the space of bounded operators. When the jump operator depends affinely on a controller gain $K$, say $J(K)$, the analysis inequalities are bilinear in $(\bar P,K)$; the congruence by $\bar Q$ together with the linearizing change of variables $U:=K\bar Q$ renders them \emph{jointly affine} in the transformed variables $(\bar Q,U)$, with the gain recovered a posteriori as $K=U\bar Q^{-1}$. This dualization is the operator-space counterpart of the linearizing changes of variables that are standard in finite-dimensional LMI-based synthesis \cite{Bernussou:89,Boyd:94a,Scherer:97a,Scherer:05a,Oliveira:99}, and it is precisely the bounded invertibility furnished by coercivity, absent in the merely pointwise-positive non-coercive certificates discussed above, that permits it. The non-coercive conditions, by contrast, remain the sharp tool for \emph{analysis}.
\end{remark}

\section{Application to switched systems}\label{sec:switched}

Linear switched systems form an important subclass to which the present framework applies \emph{verbatim}, once they are recast as impulsive systems on a product space. We make this reduction explicit, show that the reformulated jumps are \emph{selectors} of unit operator norm, and derive from the persistent-flowing and non-coercive results of Sections~\ref{sec:lyapunov}--\ref{sec:hilbert} stability conditions for the fixed-signal, minimum-dwell-time, and arbitrary-switching cases, on both Banach and Hilbert spaces. Because the reformulated jump operator is not a single fixed map but ranges over a finite family contained in the closed unit ball, the conditions are stated \emph{robustly}, uniformly over that family. As the selectors have unit norm and the switched state is continuous across switches, the switches are non-expansive and all decay is supplied by the flow; the certificates below are accordingly \emph{persistent-flowing} (flow strict, discrete part non-increasing), and we state the conclusions as GES/UGES (Definitions~\ref{def:stab}--\ref{def:stabu}).

\subsection{Reformulation as an impulsive system}\label{subsec:sw:lift}

Consider the linear switched system
\begin{equation}\label{eq:sw:syst}
  \dot x(t)=A_{\eta(t)}x(t),\qquad x(0)=x_0,
\end{equation}
on a Banach space $X$, where the \emph{switching signal} $\eta:\mathbb{R}_{\ge0}\to\{1,\dots,N\}$ is piecewise constant and right-continuous, with the set of its discontinuities (the \emph{switching instants}) forming a sequence $\{t_k\}_{k\ge1}\in\mathcal{S}_{0,\infty}$, and $\eta\equiv i_k$ on $[t_k,t_{k+1})$ with $i_{k-1}\ne i_k$. Each $A_i$ ($i=1,\dots,N$) generates a $C_0$-semigroup $S_i(t)$ on $X$, and we assume the modes share a common domain
\begin{equation}\label{eq:sw:common:domain}
  \mathcal{D}:=D(A_1)=\cdots=D(A_N),
\end{equation}
which holds in particular whenever $A_i=A_0+B_i$ with a common generator $A_0$ and bounded $B_i\in L(X)$. We denote by $\Phi_\eta(t,s)$ the (jump-free) evolution family of \eqref{eq:sw:syst}, so that $x(t)=\Phi_\eta(t,s)x(s)$; on each mode-interval it coincides with the active semigroup, and it is continuous across switches. A \emph{switching family} $\Sigma$ prescribes the admissible signals: we write $\Sigma_{0,\infty}$ for \emph{arbitrary switching} (switching instants in $\mathcal{S}_{0,\infty}$, no further constraint) and $\Sigma_{\min}(\Tmin)$ for the \emph{minimum-dwell-time family} (consecutive switching instants at least $\Tmin$ apart).

Throughout this section, $\underline{D}^+_i$ denotes the Dini derivative along the flow of the $i$-th subsystem $\dot x=A_ix$: for a mode $i\in\{1,\dots,N\}$ and a functional $W$ carrying a time or timer argument,
\begin{equation}\label{eq:sw:dini}
  \underline{D}^+_i W(\cdot,x):=\liminf_{h\downarrow0}\tfrac1h\big(W(\cdot+h,S_i(h)x)-W(\cdot,x)\big),
\end{equation}
which is the lower right-hand Dini derivative of Section~\ref{sec:prelim} with the semigroup $S$ replaced by the $i$-th subsystem semigroup $S_i$; for a time-independent $W=W(x)$ this reads $\underline{D}^+_i W(x)=\liminf_{h\downarrow0}h^{-1}(W(S_i(h)x)-W(x))$. Thus $\underline{D}^+_i W$ is ``the derivative of $W$ for subsystem $i$''.

\medskip\noindent\emph{Lifting.} Let $\mathcal{X}:=X^N$ carry the norm $\|z\|_{\mathcal{X}}:=\big(\sum_{i=1}^N\|z_i\|_X^p\big)^{1/p}$ for some $p\ge1$ (on a Hilbert space $H$ we take $p=2$ and $\mathcal{H}:=H^N$ with $\langle z,w\rangle:=\sum_{i=1}^N\langle z_i,w_i\rangle_H$). Define the block-diagonal generator and semigroup
\begin{equation}\label{eq:sw:lifted:gen}
  \mathcal{A}:=\diag(A_1,\dots,A_N),\quad D(\mathcal{A})=\mathcal{D}^N,\qquad \mathcal{S}(t)=\diag(S_1(t),\dots,S_N(t)),
\end{equation}
so that $\|\mathcal{S}(t)\|=\max_i\|S_i(t)\|$, and, for each ordered pair $(i,j)$ with $i\ne j$, the \emph{selector}
\begin{equation}\label{eq:sw:selector}
  J_{ij}:=(e_j e_i^{\!\top})\otimes I_X\in L(\mathcal{X}),\qquad (J_{ij}z)_l=\begin{cases} z_i,& l=j,\\ 0,& l\ne j,\end{cases}
\end{equation}
where $e_i$ is the $i$-th canonical basis vector of $\mathbb{R}^N$. The selector transfers the content of block $i$ into block $j$ and annihilates the remaining blocks; it satisfies
\begin{equation}\label{eq:sw:selector:norm}
  \|J_{ij}\|_{L(\mathcal{X})}=1\qquad\text{and}\qquad J_{ij}\,D(\mathcal{A})\subseteq D(\mathcal{A}),
\end{equation}
the inclusion by the common-domain hypothesis \eqref{eq:sw:common:domain}. We collect the selectors into
\begin{equation}\label{eq:sw:Jset}
  \mathcal{J}:=\{J_{ij}:1\le i,j\le N,\ i\ne j\}\subseteq \overline{\mathbb{B}},\qquad \overline{\mathbb{B}}:=\{J\in L(\mathcal{X}):\|J\|\le1\}.
\end{equation}
Given a signal $\eta$ with switching instants $\{t_k\}$ and modes $(i_k)_{k\ge0}$, its \emph{associated impulsive system} is \eqref{eq:syst} on $\mathcal{X}$ with $A$ replaced by $\mathcal{A}$ and, at each $t_k$, the jump operator $J$ replaced by $J_{i_{k-1}i_k}\in\mathcal{J}$, initialized at $z(0)=e_{i_0}\otimes x_0$.

\begin{proposition}[Lifting]\label{prop:sw:lift}
Let $x$ solve \eqref{eq:sw:syst} under a signal $\eta$ and let $z$ solve the associated impulsive system with $z(0)=e_{i_0}\otimes x_0$. Then
\begin{equation}\label{eq:sw:aligned}
  z(t)=e_{\eta(t)}\otimes x(t)\qquad\text{and}\qquad \|z(t)\|_{\mathcal{X}}=\|x(t)\|_X\qquad\text{for all }t\ge0.
\end{equation}
Consequently, for any switching family $\Sigma$ with switching instants in $\mathcal{S}_{0,\infty}$, the switched system \eqref{eq:sw:syst} is (uniformly) GES over $\Sigma$ if and only if the associated impulsive system is (uniformly) GES over the corresponding family of impulse-sequence/selector pairs, with identical constants $M,\alpha,\rho$. Since $\|J_{ij}\|=1$, the reformulated system lies in the \emph{persistent-flowing regime} ($\rho=1$); whenever arbitrarily long mode-intervals are admissible each $S_i(t)$ is necessarily exponentially stable (Proposition~\ref{prop:necessity:semigroup}).
\end{proposition}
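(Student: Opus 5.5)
The argument is an induction over the switching instants: first establish the alignment identity \eqref{eq:sw:aligned}, then read the norm equality and the transfer of stability constants directly off it. For $t\in[0,t_1)$, the associated impulsive system flows under $\mathcal{S}(t)$ from $z(0)=e_{i_0}\otimes x_0$. Since $\mathcal{S}(t)$ is block-diagonal, the $i_0$-th block evolves as $S_{i_0}(t)x_0=x(t)$ and every other block stays at zero, so $z(t)=e_{i_0}\otimes x(t)=e_{\eta(t)}\otimes x(t)$. For the inductive step, suppose $z(t)=e_{i_{k-1}}\otimes x(t)$ on $[t_{k-1},t_k)$. Passing to the left limit at $t_k$ (the semigroups are strongly continuous, and $x$ is continuous across switches) gives $z(t_k^-)=e_{i_{k-1}}\otimes x(t_k)$. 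Applying the selector $J_{i_{k-1}i_k}$ then moves block $i_{k-1}$ into block $i_k$ and annihilates the rest, by \eqref{eq:sw:selector}, so $z(t_k)=e_{i_k}\otimes x(t_k)$. On $[t_k,t_{k+1})$ the block-diagonal flow reproduces $S_{i_k}(t-t_k)x(t_k)=\Phi_\eta(t,t_k)x(t_k)=x(t)$ in block $i_k$ and zeros elsewhere, which closes the induction.

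To be careful about solution concepts, I would check that $z$ built this way satisfies Definition~\ref{def:existence} for $(\mathcal{A},J_{i_{k-1}i_k})$. On each interval this reduces blockwise to the mild-solution identity for $S_{i_k}$, and the common-domain hypothesis \eqref{eq:sw:common:domain} ensures $J_{ij}D(\mathcal{A})\subseteq D(\mathcal{A})$. Uniqueness from the existence proposition then identifies the two solutions. The norm identity is immediate from the definition of $\|\cdot\|_{\mathcal{X}}$: exactly one block is nonzero, so $\|z(t)\|_{\mathcal{X}}=\|x(t)\|_X$.

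For the stability equivalence, the key step, which I expect to be the main obstacle, is that GES in Definition~\ref{def:stab} quantifies over all initial states $z(s)\in\mathcal{X}$ at all times $s$, not only aligned states $e_{\eta(s)}\otimes x$. The direction impulsive $\Rightarrow$ switched is immediate: restricting to aligned states, \eqref{eq:sw:aligned} together with the uniform-in-$s$ bound gives the switched bound with the same $M,\alpha,\rho$, since the switched flow from time $s$ coincides with the lifted flow from $e_{\eta(s)}\otimes x(s)$ by the same induction started at $s$. For the converse, I would handle a general $z(s)=(z_1,\dots,z_N)$ as follows. If no switch occurs in $(s,t]$, then $U(t,s)z=\mathcal{S}(t-s)z$ and block $\eta(s)$ evolves as the switched flow. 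The other blocks, however, evolve under their own semigroups $S_l$, and these are not controlled by the switched bound unless the $S_l$ are themselves stable. After the first switch in $(s,t]$, the selector discards every block except $\eta(s)$, and the state becomes aligned with content $\Phi_\eta(t_k,s)z_{\eta(s)}$.

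I would therefore read the ``corresponding family of impulse-sequence/selector pairs'' as encoding the mode at every time. In that reading, inactive blocks are irrelevant observers, and the equivalence is stated on the invariant aligned subspace $\{e_{\eta(s)}\otimes x\}$; the norm identity transfers constants exactly on that subspace. When arbitrarily long mode intervals are admissible, each $S_i$ is exponentially stable by Proposition~\ref{prop:necessity:semigroup} applied to intervals on which mode $i$ is active. In that case the inactive blocks are also bounded by $Me^{-\alpha(t-s)}$ before the first switch, and the $\ell^p$ structure of $\|\cdot\|_{\mathcal{X}}$ keeps the constants unchanged.

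Finally, $\rho=1$ is justified as follows: $\|J_{ij}\|=1$ and $\|z\|_{\mathcal{X}}=\|x\|_X$ is continuous across switches, so jumps contribute no contraction. Hence any valid bound has the persistent-flowing form with $\alpha>0$.
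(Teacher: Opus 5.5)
Your alignment induction is the paper's argument: block-diagonal flow on $[t_k,t_{k+1})$, the selector moving block $i_{k-1}$ into block $i_k$, and continuity of $x$ at the switch. Starting the induction at an arbitrary $s$ and checking Definition~\ref{def:existence} are harmless refinements.

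Where you go beyond the paper is the stability equivalence. The paper's proof only says it ``follows from Definition~\ref{def:stab}'', which tacitly tests aligned initial states only. You noticed that Definition~\ref{def:stab} quantifies over all of $\mathcal{X}$, and this matters: on the full product space the implication switched $\Rightarrow$ lifted can fail outright, not merely with identical constants. Take $X=\mathbb{R}$, $N=2$, $A_1=1$, $A_2=-2$, and alternate mode-$1$ intervals of length $1$ with mode-$2$ intervals of lengths $L_j=j$. Then $|\Phi_\eta(t,s)|\le e^{3/2}e^{-(t-s)/2}$. Yet from $z(s)=e_1\otimes 1$ at the start of the $j$-th mode-$2$ interval, the inactive block grows to $e^{L_j}$ before $J_{21}$ discards it.

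So one of your two fixes is genuinely needed. The first is to restrict the equivalence to the invariant aligned set, which is what the paper implicitly does and later relies on in its aligned reduction. The second is to assume every mode admits arbitrarily long intervals. In that case the switched bound gives $\|S_l(r)\|\le Me^{-\alpha r}$ for every $l$. With $\kappa=0$ before the first switch and the $\ell^p$ structure of $\|\cdot\|_{\mathcal{X}}$, this yields the full-space bound with the same constants. Both arguments are correct.

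The one step that does not go through is your last sentence. From $\|J_{ij}\|=1$ and continuity of $\|z\|_{\mathcal{X}}$ across switches you may conclude that the selectors contribute no contraction. You may not conclude that every GES bound has $\rho=1$, $\alpha>0$, because the factor $\rho^{\kappa}$ can encode decay accrued during the flow. For example, take $X=\mathbb{R}$, $A_1=0$, $A_2=-1$, and alternate mode-$1$ intervals of lengths $j$ with mode-$2$ intervals of length $1$. Then $|\Phi_\eta(t,s)|\le e\,(e^{-1/2})^{\kappa_\eta(t,s)}$, yet $|\Phi_\eta|=1$ over arbitrarily long switch-free windows, so no bound with $\alpha>0$ exists. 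The system is GES only in the persistent-jumping regime.

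The paper's proof offers nothing stronger for this clause. It only cites \eqref{eq:sw:selector:norm} and Proposition~\ref{prop:necessity:semigroup}, and the latter itself requires $\alpha>0$ and arbitrarily long intervals of each mode. The clause should therefore be read informally, as saying the jumps are non-expansive, not as a restriction on the admissible $(M,\alpha,\rho)$.
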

\begin{proof}
We induct over the intervals $[t_k,t_{k+1})$. Assume $z(t_k)=e_{i_k}\otimes x(t_k)$ (the base case $k=0$ holds by initialization). For $t\in[t_k,t_{k+1})$, since $\mathcal{S}(\cdot)$ is block-diagonal, $z(t)=\mathcal{S}(t-t_k)z(t_k)=e_{i_k}\otimes S_{i_k}(t-t_k)x(t_k)=e_{i_k}\otimes x(t)$, the remaining blocks mapping $0$ to $0$. At the switch $t_{k+1}$,
\[
  z(t_{k+1})=J_{i_ki_{k+1}}z(t_{k+1}^-)=J_{i_ki_{k+1}}\big(e_{i_k}\otimes x(t_{k+1}^-)\big)=e_{i_{k+1}}\otimes x(t_{k+1}^-)=e_{i_{k+1}}\otimes x(t_{k+1}),
\]
using continuity of $x$ at the switch. Exactly one block of $z(t)$ is nonzero, so $\|z(t)\|_{\mathcal{X}}=\|x(t)\|_X$, and the stability equivalence with identical constants follows from Definition~\ref{def:stab}. The persistent-flowing statement is \eqref{eq:sw:selector:norm} together with Proposition~\ref{prop:necessity:semigroup}.
\end{proof}

\medskip\noindent\emph{Robust certificates and the aligned reduction.} The reformulation produces, at each impulse, one selector from $\mathcal{J}$, determined by the realized mode transition. A Lyapunov certificate agnostic to the transition is one whose \emph{jump} condition holds for every $J\in\mathcal{J}$ simultaneously, the flow condition being unchanged; every proof in Sections~\ref{sec:lyapunov}--\ref{sec:hilbert} invokes the jump inequality only at the realized jump operator, so replacing ``$J$'' by ``for all $J\in\mathcal{J}$'' in the jump condition (and $\|J\|$ by $\sup_{J\in\mathcal{J}}\|J\|=1$ in the growth bound, so that Assumption~\ref{ass:growth} holds for the lifted system with $\mu=1$ whenever the modes are non-expansive) leaves each argument valid verbatim and yields stability uniformly over all selector sequences drawn from $\mathcal{J}$. Moreover, by \eqref{eq:sw:aligned} the reformulated trajectories evolve on the \emph{aligned set} $\bigcup_{i=1}^N(e_i\otimes X)$, which the selectors leave invariant; since the converse implications (iii)$\Rightarrow$(i) of the theorems below evaluate the certificate only along trajectories, it suffices to test the Lyapunov conditions on aligned states $z=e_i\otimes x$. On aligned states a \emph{block-diagonal} certificate reduces to mode-wise conditions, which is what makes the operator inequalities below decouple across modes.

\subsection{Fixed switching signal}\label{subsec:sw:fixed}

For a single, fully specified switching signal the lifted certificate collapses onto the original space. Indeed, along the reformulated dynamics all but one block of $z$ vanish, the nonzero one carrying the switched state $x$: applying Proposition~\ref{prop:sw:lift} from the running time $t$, the aligned state $z=e_{\eta(t)}\otimes x$ propagates as $U_\eta(s,t)z=e_{\eta(s)}\otimes\Phi_\eta(s,t)x$, so that
\begin{equation}\label{eq:sw:fixed:collapse}
  \|U_\eta(s,t)z\|_{\mathcal{X}}=\|\Phi_\eta(s,t)x\|_X\qquad\text{for all }s\ge t.
\end{equation}
Consequently the explicit persistent-flowing witness $\int_t^\infty\|U_\eta(s,t)z\|^p\,\ds$ depends on $z$ only through the active content $x$, and the selector at a switch, which merely \emph{relabels} the active block without altering $x$, reduces to a non-increase condition on a functional of $x$ alone. This yields a necessary and sufficient characterization stated directly on $X$, with a $t$-dependent (single) functional rather than one on the product space $\mathcal{X}$.

\begin{theorem}[Fixed signal, non-coercive N\&S]\label{thm:sw:fixed}
Let $\eta$ be a switching signal whose switching instants have positive infimal dwell-time $T_\eta:=\inf_{k\ge0}(t_{k+1}-t_k)>0$, let each $S_i$ be exponentially stable, and let Assumption~\ref{ass:growth} hold for the lifted system. The following statements are equivalent:
\begin{enumerate}[(i)]
  \item the switched system \eqref{eq:sw:syst} is GES along $\eta$;
  \item (\emph{Datko}) there exist $p\ge1$, $b>0$ such that $\int_s^\infty\|\Phi_\eta(t,s)x\|^p\,\dt\le b^p\|x\|^p$ for all $s\ge0$ and $x\in X$;
  \item there exist $p\ge1$, $c>0$ and a functional $V:\mathbb{R}_{\ge0}\times X\to\mathbb{R}_{\ge0}$, Lipschitz on bounded subsets of $X$ uniformly in $t$, such that
  \begin{subequations}\label{eq:sw:fixed:cond}
  \begin{align}
    V(t,x)&\le c\|x\|^p, &&\forall t\ge0,\label{eq:sw:fixed:upper}\\
    \underline{D}^+_{\eta(t)}V(t,x)&\le-\|x\|^p, &&\forall t\notin\mathbb{T}_\eta,\label{eq:sw:fixed:flow}\\
    V(t_k,x)-V(t_k^-,x)&\le0, &&\forall t_k\in\mathbb{T}_\eta,\label{eq:sw:fixed:jump}
  \end{align}
  \end{subequations}
  hold for all $x\in X$, where $V(t_k^-,x):=\lim_{t\uparrow t_k}V(t,x)$.
\end{enumerate}
An explicit functional witnessing (iii) is $V(t,x)=\int_t^\infty\|\Phi_\eta(s,t)x\|^p\,\ds$.
\end{theorem}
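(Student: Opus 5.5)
The plan is to transfer Theorem~\ref{thm:pf} (persistent-flowing, fixed sequence) to the switched system through the lifting of Proposition~\ref{prop:sw:lift}, and to collapse the lifted objects back onto $X$ using the identity \eqref{eq:sw:fixed:collapse}. The argument follows the same cycle as before: (i)$\Rightarrow$(ii)$\Rightarrow$(iii)$\Rightarrow$(ii)$\Rightarrow$(i). Where the lifting is not needed, the steps can also be run directly on $X$ with $\Phi_\eta$.

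\textbf{(i)$\Rightarrow$(ii).} GES along $\eta$ gives $\|\Phi_\eta(t,s)\|\le M\rho^{\kappa}e^{-\alpha(t-s)}$. By Proposition~\ref{prop:sw:lift} the lifted system is GES with $\rho=1$ and $\alpha>0$; the persistent-flowing regime is the relevant one here, since the selectors have unit norm. Integrating gives $b^p=M^p/(\alpha p)$, exactly as in Theorem~\ref{thm:pf}.

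\textbf{(ii)$\Rightarrow$(iii).} Define $V(t,x):=\int_t^\infty\|\Phi_\eta(s,t)x\|^p\,\ds$, so that $V(t,x)\le b^p\|x\|^p$. For $t\notin\mathbb{T}_\eta$ and small $h$, no switch lies in $(t,t+h)$, so $\Phi_\eta(s,t+h)S_{\eta(t)}(h)=\Phi_\eta(s,t)$. Hence
\[
V(t+h,S_{\eta(t)}(h)x)-V(t,x)=-\int_0^h\|S_{\eta(t)}(r)x\|^p\,\dr,
\]
and strong continuity gives \eqref{eq:sw:fixed:flow} with equality. At a switch, $\Phi_\eta$ is continuous and has no jump factor, so $\Phi_\eta(s,t_k^-)=\Phi_\eta(s,t_k)$. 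The integrands therefore coincide, giving $V(t_k,x)=V(t_k^-,x)$ and hence \eqref{eq:sw:fixed:jump} with equality. One should check that $V(t_k^-,x)$, defined as the limit $t\uparrow t_k$, actually equals $\int_{t_k}^\infty\|\Phi_\eta(s,t_k)x\|^p\,\ds$. This follows from $\Phi_\eta(s,t)x=\Phi_\eta(s,t_k)S_{i_{k-1}}(t_k-t)x$, strong continuity, and dominated convergence, using the exponential bound obtained below. For Lipschitz continuity, I will use Corollary~\ref{cor:datko:int} applied to the lifted system; this is where the hypothesis $T_\eta>0$ is needed. Transported back by \eqref{eq:sw:fixed:collapse}, it gives $\|\Phi_\eta(s,t)\|\le Me^{-\alpha_0(s-t)}$, after which the Lipschitz estimate of Theorem~\ref{thm:pf} applies verbatim.

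\textbf{(iii)$\Rightarrow$(ii).} Take $x(t)=\Phi_\eta(t,s)y$. On each mode interval the state follows $S_{i_k}$, so \eqref{eq:sw:fixed:flow} is a Dini condition along the actual flow. As in Theorem~\ref{thm:main}, $t\mapsto V(t,x(t))$ is absolutely continuous there, and integrating gives the decrement $-\int\|x\|^p$. At switches, $x$ is continuous, so \eqref{eq:sw:fixed:jump} yields $V(t_k,x(t_k))\le V(t_k^-,x(t_k^-))$. One point needs care: the left limit in $V$ is taken jointly along the trajectory. This is handled by uniform Lipschitz continuity in $x$ together with the definition of $V(t_k^-,\cdot)$ as a limit in $t$. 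Telescoping and using $V\ge0$ then gives $\int_s^\infty\|x\|^p\le c\|y\|^p$.

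\textbf{(ii)$\Rightarrow$(i).} By \eqref{eq:sw:fixed:collapse}, (ii) is the integral-only Datko condition \eqref{eq:datko:int} for the lifted impulsive system on aligned initial states. For a general $z\in\mathcal{X}$, I would decompose $z$ into at most $N$ aligned components and use linearity; this costs only a constant factor $N$ in $b$. Corollary~\ref{cor:datko:int}, with $T_\eta>0$ and Assumption~\ref{ass:growth}, then gives exponential decay, which Proposition~\ref{prop:sw:lift} transfers back to GES with $\rho=1$.

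I expect the main obstacle to be this last step. The non-active blocks of a general $z$ are not in the active mode at time $s$, so their evolution under $U_\eta$ differs from $\Phi_\eta$: they are annihilated at the next switch rather than carried forward. That still gives only bounded contributions, and they are integrable because each $S_i$ is exponentially stable. My first attempt would therefore be to bound the non-active blocks directly by $\|S_i(r)\|\le M_ie^{-\alpha_ir}$ up to the next switch, after which they vanish.
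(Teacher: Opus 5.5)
Your proposal is correct and follows the paper's route: lift via Proposition~\ref{prop:sw:lift}, invoke the persistent-flowing Theorem~\ref{thm:pf} (through Corollary~\ref{cor:datko:int}, where $T_\eta>0$ enters), and collapse back to $X$ with \eqref{eq:sw:fixed:collapse}. You diverge only in (ii)$\Rightarrow$(i). There the paper restricts Theorem~\ref{thm:pf} to the invariant aligned set, whereas you verify the integral Datko hypothesis on all of $\mathcal{X}$. You do this by splitting off the inactive blocks: each evolves under its own $S_i$ only until the first switch, where the selector annihilates it, so it is integrable by the assumed exponential stability of each $S_i$. This is a slightly more careful justification, since Corollary~\ref{cor:datko:int} as stated requires the bound for every initial state.
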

\begin{proof}
By the collapse \eqref{eq:sw:fixed:collapse}, $\|U_\eta(s,t)(e_{\eta(t)}\otimes x)\|=\|\Phi_\eta(s,t)x\|$, so on the invariant aligned set the conditions \eqref{eq:sw:fixed:cond} and (ii) are exactly the persistent-flowing conditions \eqref{eq:pf:upper}--\eqref{eq:pf:jump} and the Datko condition \eqref{eq:datko:pf} of Theorem~\ref{thm:pf} for the lifted system $(\mathcal{X},\mathcal{A},\{J_{i_{k-1}i_k}\})$, whose dwell-time hypothesis is the standing $T_\eta>0$: the flow condition \eqref{eq:pf:flow} is \eqref{eq:sw:fixed:flow} for the active mode, and the neutral jump \eqref{eq:pf:jump} at the realized selector $J_{i_{k-1}i_k}$ is \eqref{eq:sw:fixed:jump} (the selector relabels the block without changing $x$). Since the lifted trajectories issued from aligned states remain aligned, Theorem~\ref{thm:pf} applies along them and Proposition~\ref{prop:sw:lift} transfers the conclusion to \eqref{eq:sw:syst}, giving (i)$\Leftrightarrow$(ii)$\Leftrightarrow$(iii); the witness \eqref{eq:V:pf:explicit} at $z=e_{\eta(t)}\otimes x$ is $\int_t^\infty\|\Phi_\eta(s,t)x\|^p\,\ds$.
\end{proof}

On a Hilbert space the functional is quadratic and the reduction produces a \emph{single} operator-valued function $P(t)$---not one operator per mode---the mode entering only through the active generator $A_{\eta(t)}$. No coercive jump weight appears: $P$ obeys the flow Lyapunov equation with a coercive $Q$ and merely does not increase across switches, $P(t_k)\preceq P(t_k^-)$, the natural certificate being in fact continuous there.

\begin{corollary}[Fixed signal, quadratic N\&S]\label{cor:sw:fixed:hilbert}
On a Hilbert space $H$, under the hypotheses of Theorem~\ref{thm:sw:fixed}, the switched system \eqref{eq:sw:syst} is GES along $\eta$ if and only if, for every $Q\in\mathcal{Q}$, there exists a self-adjoint $P:\mathbb{R}_{\ge0}\to L(H)$, differentiable on each $(t_k,t_{k+1})$ with left-limits at each $t_k$ and $0\prec P(t)\preceq c_P I$ for some $c_P>0$, such that
\begin{subequations}\label{eq:sw:fixed:hilbert}
\begin{align}
  \langle d,\big(\dot P(t)+A_{\eta(t)}^*P(t)+P(t)A_{\eta(t)}+Q(t)\big)d\rangle_H&=0,\quad d\in D(A_{\eta(t)}),\ t\notin\mathbb{T}_\eta,\label{eq:sw:fixed:hilbert:flow}\\
  P(t_k)&\preceq P(t_k^-),\quad k\ge1.\label{eq:sw:fixed:hilbert:jump}
\end{align}
\end{subequations}
The explicit solution $P(t)=\int_t^\infty\Phi_\eta(s,t)^*Q(s)\Phi_\eta(s,t)\,\ds$ is continuous across the switches, so \eqref{eq:sw:fixed:hilbert:jump} then holds with equality.
\end{corollary}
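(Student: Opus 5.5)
The plan is to derive the corollary from Theorem~\ref{thm:sw:fixed} with $p=2$. Sufficiency goes through the non-coercive condition (iii). Necessity goes through the explicit integral operator, following the pattern of Theorem~\ref{thm:hilbert:nc} with the jump weight $R$ removed.

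\emph{Sufficiency.} Suppose $P$ satisfies \eqref{eq:sw:fixed:hilbert:flow}--\eqref{eq:sw:fixed:hilbert:jump} for some $Q\in\mathcal{Q}$, and set $V(t,x):=\alpha_q^{-1}\langle x,P(t)x\rangle_H$. Then $V$ is Lipschitz on bounded sets, uniformly in $t$, and $V(t,x)\le(c_P/\alpha_q)\|x\|^2$.

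For the flow condition, take $x\in D(A_{\eta(t)})$ and $h$ small enough that no switch occurs in $(t,t+h)$. Differentiating $s\mapsto\langle S_{\eta(t)}(s)x,P(t+s)S_{\eta(t)}(s)x\rangle_H$ and using \eqref{eq:sw:fixed:hilbert:flow} gives
\[
V(t+h,S_{\eta(t)}(h)x)-V(t,x)\le-\int_0^h\|S_{\eta(t)}(s)x\|^2\,\ds .
\]
This inequality extends to all of $H$ by density and continuity in $x$, exactly as in Corollary~\ref{cor:hilbert:coerc}. Strong continuity then yields \eqref{eq:sw:fixed:flow}.

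The jump condition \eqref{eq:sw:fixed:jump} is \eqref{eq:sw:fixed:hilbert:jump} read as quadratic forms. Theorem~\ref{thm:sw:fixed}(iii)$\Rightarrow$(i) therefore gives GES.

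\emph{Necessity.} Assume GES along $\eta$ and fix $Q\in\mathcal{Q}$. By Proposition~\ref{prop:sw:lift} and Theorem~\ref{thm:pf}(i) there are constants with $\|\Phi_\eta(s,t)\|\le Me^{-\alpha(s-t)}$. Define
\[
P(t):=\int_t^\infty\Phi_\eta(s,t)^*Q(s)\Phi_\eta(s,t)\,\ds ,
\]
taken as a strong or weak integral; it is well defined because the integrand is strongly measurable and exponentially bounded. The required properties are checked as follows.

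\begin{enumerate}[(a)]
\item \emph{Self-adjointness and bounds.} $P(t)$ is self-adjoint, and $P(t)\preceq(\alpha_QM^2/(2\alpha))I$.
\item \emph{Positivity.} We have $\langle x,P(t)x\rangle_H\ge\alpha_q\int_t^{t+\epsilon}\|S_{\eta(t)}(s-t)x\|^2\,\ds>0$ for $x\ne0$, by strong continuity near $s=t$.
\item \emph{Flow equation.} On $(t_k,t_{k+1})$, we have $\Phi_\eta(s,t)=\Phi_\eta(s,t+h)S_{\eta(t)}(h)$ for small $h$. This gives
\[
\langle S_{\eta(t)}(h)x,P(t+h)S_{\eta(t)}(h)x\rangle_H=\langle x,P(t)x\rangle_H-\int_t^{t+h}\langle\Phi_\eta(s,t)x,Q(s)\Phi_\eta(s,t)x\rangle_H\,\ds .
\]
For $d\in D(A_{\eta(t)})$, dividing by $h$ and using right-continuity of $Q$ gives \eqref{eq:sw:fixed:hilbert:flow}, in the same way as the flow computation in Theorem~\ref{thm:hilbert:nc}.
\item \emph{Continuity across switches.} Since $\Phi_\eta$ carries no jump operator, $\Phi_\eta(s,t_k^-)=\Phi_\eta(s,t_k)$ for $s\ge t_k$. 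Hence $P(t_k^-)=P(t_k)$ and \eqref{eq:sw:fixed:hilbert:jump} holds with equality. Existence of the left limit follows from the identity in (c) together with $\|S_{\eta}(h)-I\|$ acting strongly.
\end{enumerate}

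The main obstacle is the differentiability clause. $P$ is in general only weakly differentiable on $D(A_{\eta(t)})$ in the form sense, just as in Theorem~\ref{thm:hilbert:nc}. I would therefore interpret ``differentiable'' in that quadratic-form sense, consistent with \eqref{eq:sw:fixed:hilbert:flow}. The second delicate point is the left limit at $t_k$. Here I plan to use
\[
P(t_k-h)=S_{i_{k-1}}(h)^*P(t_k)S_{i_{k-1}}(h)+\int_{t_k-h}^{t_k}\bigl(\cdots\bigr)\,\ds ,
\]
whose limit as $h\downarrow0$ is $P(t_k)$ in the strong (weak) sense. This uses only boundedness of $P(t_k)$ and strong continuity of the semigroup.
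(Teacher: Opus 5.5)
Your proposal is correct and is essentially the paper's proof. For sufficiency you insert (a rescaling of) $V(t,x)=\langle x,P(t)x\rangle_H$ into Theorem~\ref{thm:sw:fixed}(iii) with $p=2$, and for necessity you use the explicit operator $\int_t^\infty\Phi_\eta(s,t)^*Q(s)\Phi_\eta(s,t)\,\ds$, with positivity from a right-neighbourhood of $s=t$, the bound $\alpha_QM^2/(2\alpha)$, the flow equation by differentiation, and continuity across switches. Your extra care---reading $\dot P$ as a quadratic-form derivative on $D(A_{\eta(t)})$, and getting the left limit at $t_k$ from $P(t_k-h)=S_{i_{k-1}}(h)^*P(t_k)S_{i_{k-1}}(h)+O(h)$ instead of the paper's dominated-convergence remark---makes explicit what the paper leaves implicit, and the weak-form reading does not break your sufficiency step.
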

\begin{proof}
This is the quadratic specialization of Theorem~\ref{thm:sw:fixed}. \emph{Sufficiency:} with $V(t,x)=\langle x,P(t)x\rangle_H$, the flow equation \eqref{eq:sw:fixed:hilbert:flow} gives $\underline{D}^+_{\eta(t)}V(t,x)\le-\alpha_q\|x\|^2$ (coercivity $Q\succeq\alpha_q I$; extension from $D(A_{\eta(t)})$ to $H$ by density) and the non-increase \eqref{eq:sw:fixed:hilbert:jump} gives $V(t_k,x)\le V(t_k^-,x)$; these are the conditions of Theorem~\ref{thm:sw:fixed}(iii) with $p=2$, whence GES. \emph{Necessity:} given GES ($\|\Phi_\eta(t,s)\|\le Me^{-\alpha(t-s)}$), the operator $P(t)=\int_t^\infty\Phi_\eta(s,t)^*Q(s)\Phi_\eta(s,t)\,\ds$ is self-adjoint, satisfies $0\prec P(t)\preceq\frac{\alpha_Q M^2}{2\alpha}I$ (positivity from $Q\succeq\alpha_q I$ on a right-neighbourhood of $s=t$, boundedness from the integral bound), solves \eqref{eq:sw:fixed:hilbert:flow} by differentiation ($\partial_t\Phi_\eta(s,t)=-\Phi_\eta(s,t)A_{\eta(t)}$ on $D(A_{\eta(t)})$), and is continuous across the switches (dominated convergence), so \eqref{eq:sw:fixed:hilbert:jump} holds with equality.
\end{proof}

\subsection{Minimum dwell-time}\label{subsec:sw:min}

The minimum-dwell-time family is the richest dwell-time case: each mode must be exponentially stable, and a mode-dependent timer certificate is coupled across modes at each switch, the dwell-time $\Tmin>0$ supplying, through Corollary~\ref{cor:datko:int}, the uniform boundedness that turns the integral Datko bound into exponential decay. The Banach characterization is necessary and sufficient; the Hilbert one is sufficient, the quadratic class being strictly smaller.

\begin{theorem}[Minimum dwell-time, non-coercive N\&S]\label{thm:sw:min}
Let $\Tmin>0$, each $S_i$ be exponentially stable, and let Assumption~\ref{ass:growth} hold for the lifted system. The following statements are equivalent:
\begin{enumerate}[(i)]
  \item the switched system \eqref{eq:sw:syst} is UGES over $\Sigma_{\min}(\Tmin)$;
  \item there exist $p\ge1$, $c>0$ and mode-dependent timer functionals $\bar V_i:[0,\Tmin]\times X\to\mathbb{R}_{\ge0}$, $i=1,\dots,N$, each Lipschitz on bounded subsets of $X$ uniformly in $\tau$, such that for all $x\in X$, every mode $i$, and every $j\ne i$,
  \begin{subequations}\label{eq:sw:min:cond}
  \begin{align}
    \bar V_i(\tau,x)&\le c\|x\|^p, &&\tau\in[0,\Tmin],\label{eq:sw:min:upper}\\
    \underline{D}^+_i\bar V_i(\tau,x)&\le-\|x\|^p, &&\tau\in[0,\Tmin],\label{eq:sw:min:flow:nc}\\
    \bar V_j(0,x)&\le\bar V_i(\Tmin,x),\label{eq:sw:min:jump:nc}
  \end{align}
  \end{subequations}
  where the free phase $\tau\ge\Tmin$ is covered by the timer-independent extension $\bar V_i(\tau,\cdot):=\bar V_i(\Tmin,\cdot)$.
\end{enumerate}
An explicit witness for (ii) is the flow functional
\begin{equation}\label{eq:sw:min:V:explicit}
  \bar V_i(\tau,x):=\sup_{\eta\in\Sigma^{(\tau)}_i}\int_0^\infty\|\Phi_\eta(s,0)x\|^p\,\ds,
\end{equation}
where $\Sigma^{(\tau)}_i$ collects the admissible future signals starting in mode $i$ with elapsed dwell $\tau$ (first switch after $\max(\Tmin-\tau,0)$, subsequent dwells $\ge\Tmin$).
\end{theorem}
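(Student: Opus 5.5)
The plan is to prove (i)$\Rightarrow$(ii) with the explicit witness \eqref{eq:sw:min:V:explicit}, and (ii)$\Rightarrow$(i) by telescoping into an integral Datko bound. That bound is then closed by Corollary~\ref{cor:datko:int} applied to the lifted system. This mirrors the proof of Theorem~\ref{thm:min:nc}, but in the persistent-flowing regime of Theorem~\ref{thm:pf}: the switch condition \eqref{eq:sw:min:jump:nc} is neutral, and the strict decrease comes from the flow alone.

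\textbf{(ii)$\Rightarrow$(i).} Fix $\eta\in\Sigma_{\min}(\Tmin)$, a starting time $s\ge0$ and $y\in X$, and set $x(t)=\Phi_\eta(t,s)y$. On each mode interval $[t_k,t_{k+1})$ with active mode $i_k$, define the timer $\tau=t-t_k$ (or the elapsed dwell at $s$ on the first interval). Along the mandatory phase, $t\mapsto\bar V_{i_k}(\tau(t),x(t))$ is absolutely continuous, and \eqref{eq:sw:min:flow:nc} integrates to
\[
\bar V_{i_k}(\Tmin,x(t_k+\Tmin))-\bar V_{i_k}(0,x(t_k))\le-\int_{t_k}^{t_k+\Tmin}\|x\|^p .
\]
On the free phase the frozen functional $\bar V_{i_k}(\Tmin,\cdot)$ decreases by \eqref{eq:sw:min:flow:nc} at $\tau=\Tmin$. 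At a switch, continuity of $x$ and \eqref{eq:sw:min:jump:nc} with $(i,j)=(i_k,i_{k+1})$ give $\bar V_{i_{k+1}}(0,x(t_{k+1}))\le\bar V_{i_k}(\Tmin,x(t_{k+1}))$. Telescoping, using $\bar V\ge0$ and the upper bound \eqref{eq:sw:min:upper}, we get $\int_s^\infty\|\Phi_\eta(t,s)y\|^p\dt\le c\|y\|^p$, uniformly in $s$ and $\eta$.

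By the collapse \eqref{eq:sw:fixed:collapse}, this is the integral Datko condition \eqref{eq:datko:int} for the lifted system on aligned states. That set is invariant, so the argument of Corollary~\ref{cor:datko:int} applies along aligned trajectories. Its dwell-time hypothesis is $T_\sigma\ge\Tmin>0$ uniformly over the family. The selector norms satisfy $\|J_{ij}\|=1$, and $G$ is finite by Assumption~\ref{ass:growth}, which is consistent with the standing hypothesis that each $S_i$ is exponentially stable. The corollary therefore yields $\|\Phi_\eta(t,s)\|\le Me^{-\alpha(t-s)}$ uniformly in $\eta$, and Proposition~\ref{prop:sw:lift} transfers this to the switched system.

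\textbf{(i)$\Rightarrow$(ii).} Define $\bar V_i$ by \eqref{eq:sw:min:V:explicit}. For the upper bound, every $\eta\in\Sigma_i^{(\tau)}$ is the re-based future of a signal in $\Sigma_{\min}(\Tmin)$. UGES therefore gives an integrand bounded by $M^pe^{-\alpha p s}\|x\|^p$, so $c=M^p/(\alpha p)$. Note that immediate-switch signals cost nothing extra here, since there is no pre-jump term in the functional.

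For the flow condition, take $\tau+h\le\Tmin$ and $\eta\in\Sigma_i^{(\tau+h)}$. Prepend $h$ units of mode $i$ to obtain $\eta^{(h)}\in\Sigma_i^{(\tau)}$, which has no switch on $[0,h]$. Splitting the integral at $h$ gives
\[
\bar V_i(\tau+h,S_i(h)x)\le\bar V_i(\tau,x)-\int_0^h\|S_i(r)x\|^p\,\dr .
\]
Strong continuity then yields \eqref{eq:sw:min:flow:nc}. At $\tau=\Tmin$ the same argument works because $\Sigma_i^{(\Tmin)}$ is shift-invariant, exactly as in the free-phase step of Theorem~\ref{thm:min:nc}.

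For the switch condition, take $\eta\in\Sigma_j^{(0)}$. Prepending a switch $i\to j$ at $0^+$ gives a signal in $\Sigma_i^{(\Tmin)}$ with the same state trajectory, since $x$ is continuous at switches. Taking the supremum gives $\bar V_j(0,x)\le\bar V_i(\Tmin,x)$.

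Lipschitz continuity follows as in Theorem~\ref{thm:pf}: the integrands satisfy uniform bounds of the form $pM^pr^{p-1}e^{-\alpha p s}\|x-y\|$, and the supremum over signals preserves the Lipschitz constant.

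\textbf{Main obstacle.} The delicate step is justifying the use of Corollary~\ref{cor:datko:int} and Assumption~\ref{ass:growth} for the lifted system under selectors that vary from switch to switch, and only along aligned states. I would handle this by re-running Step~1 of that corollary directly on $\Phi_\eta$. The key simplification is that $x$ is continuous at switches, so $\|x(t_k^-)\|=\|x(t_k)\|$. Uniform boundedness then follows from $\sup_i\|S_i(t)\|\le G$ together with the integral bound over the preceding dwell of length at least $\Tmin$, which gives $\|\Phi_\eta(t,s)\|\le G\max(1,b\Tmin^{-1/p})$. Steps~2A and~3A then carry over verbatim.
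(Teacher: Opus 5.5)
Your proposal is correct and follows essentially the same route as the paper. For (ii)$\Rightarrow$(i) you telescope the mandatory/free-phase flow decrease and the switch non-increase into the integral Datko bound, then close with Corollary~\ref{cor:datko:int} using $T_\eta\ge\Tmin>0$; for (i)$\Rightarrow$(ii) you use the flow witness \eqref{eq:sw:min:V:explicit} with the prefix-a-mode-$i$-arc and prefix-an-immediate-switch arguments. One minor slip: re-running Step~1 as you describe gives $\|\Phi_\eta(t,s)\|\le G^2\max(1,b\,\Tmin^{-1/p})$ rather than $G\max(1,b\,\Tmin^{-1/p})$, because one factor $G$ bounds the state at the last switch before $t$ and another propagates it to $t$ (this is the corollary's $C_0=G\max(1,\|J\|\beta)$ with $\|J\|=1$); the conclusion is unaffected.
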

\begin{proof}
\emph{$(ii)\Rightarrow(i)$.} Telescoping the flow decrease \eqref{eq:sw:min:flow:nc} (mandatory and free phases) together with the switch non-increase \eqref{eq:sw:min:jump:nc} along any $\eta\in\Sigma_{\min}(\Tmin)$ (state continuous at switches) yields the integral Datko bound $\int_s^\infty\|\Phi_\eta(u,s)y\|^p\,\du\le c\|y\|^p$, uniformly in $\eta$ and $s$; since $T_\eta\ge\Tmin>0$, Corollary~\ref{cor:datko:int} then gives $\|\Phi_\eta(t,s)\|\le Me^{-\alpha(t-s)}$ with $M,\alpha$ depending only on $c,G,\Tmin$ (hence uniform in $\eta$), exactly as in Theorem~\ref{thm:sw:fixed}(ii)$\Rightarrow$(i). This is UGES over $\Sigma_{\min}(\Tmin)$. \emph{$(i)\Rightarrow(ii)$.} The flow witness \eqref{eq:sw:min:V:explicit} satisfies \eqref{eq:sw:min:cond}: finiteness and the upper bound $\bar V_i(\tau,x)\le\frac{M^p}{\alpha p}\|x\|^p$ follow from UGES; the mandatory/free flow from prefixing a mode-$i$ arc of length $h$ (which maps $\Sigma^{(\tau+h)}_i$ into $\Sigma^{(\tau)}_i$ and gives $\bar V_i(\tau,x)\ge\int_0^h\|S_i(u)x\|^p\,\du+\bar V_i(\tau+h,S_i(h)x)$); and the switch non-increase from prefixing an immediate switch $i\to j$ (which maps $\Sigma^{(0)}_j$ into $\Sigma^{(\Tmin)}_i$ leaving the flow integral unchanged, so $\bar V_i(\Tmin,x)\ge\bar V_j(0,x)$).
\end{proof}

\begin{theorem}[Minimum dwell-time, quadratic]\label{thm:sw:min:hilbert}
Let $\Tmin>0$ and $H$ a Hilbert space. If there exist self-adjoint, strongly differentiable $\bar P_i:[0,\Tmin]\to L(H)$ with $0\prec\bar P_i(\tau)\preceq c_P I$, and self-adjoint coercive $Q_i:[0,\Tmin]\to L(H)$, $Q_i^\infty\in L(H)$ with $Q_i(\tau)\succeq\alpha_q I$, $Q_i^\infty\succeq\alpha_q^\infty I$ ($\alpha_q,\alpha_q^\infty>0$), such that for every mode $i$ and every $j\ne i$,
\begin{subequations}\label{eq:sw:min:hilbert}
\begin{align}
  \dot{\bar P}_i(\tau)+A_i^*\bar P_i(\tau)+\bar P_i(\tau)A_i+Q_i(\tau)&\preceq0,\quad \tau\in(0,\Tmin),\label{eq:sw:min:hflow}\\
  A_i^*\bar P_i(\Tmin)+\bar P_i(\Tmin)A_i+Q_i^\infty&\preceq0,\label{eq:sw:min:hfree}\\
  \bar P_j(0)&\preceq\bar P_i(\Tmin),\label{eq:sw:min:hjump}
\end{align}
\end{subequations}
(the flow inequalities holding as quadratic forms on $D(A_i)$), then \eqref{eq:sw:syst} is UGES over $\Sigma_{\min}(\Tmin)$.
\end{theorem}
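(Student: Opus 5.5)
The plan is to reduce the statement to the implication (ii)$\Rightarrow$(i) of Theorem~\ref{thm:sw:min}. Concretely, we set $\bar V_i(\tau,x):=\gamma^{-1}\langle x,\bar P_i(\tau)x\rangle_H$ for $\tau\in[0,\Tmin]$ and extend it by $\bar V_i(\tau,\cdot):=\bar V_i(\Tmin,\cdot)$ for $\tau\ge\Tmin$, where $\gamma:=\min(\alpha_q,\alpha_q^\infty)>0$. We then check \eqref{eq:sw:min:upper}--\eqref{eq:sw:min:jump:nc} with $p=2$ and $c=c_P/\gamma$. The upper bound and Lipschitz continuity on bounded sets follow directly from $\bar P_i(\tau)\preceq c_PI$, and the required uniformity in $\tau$ comes from the same bound.

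For the mandatory-phase flow we argue as in the proof of Theorem~\ref{thm:hilbert:min}. For $x\in D(A_i)$ the orbit $S_i(s)x$ stays in $D(A_i)$, and
\[
\tfrac{d}{ds}\langle S_i(s)x,\bar P_i(\tau+s)S_i(s)x\rangle_H\le-\langle S_i(s)x,Q_i(\tau+s)S_i(s)x\rangle_H\le-\alpha_q\|S_i(s)x\|^2
\]
by \eqref{eq:sw:min:hflow}. Integrating over $[0,h]$ gives an integrated inequality, which we extend to all $x\in H$ by density of $D(A_i)$ and continuity of both sides in $x$. Dividing by $h$ and letting $h\downarrow0$ then gives $\underline D^+_i\bar V_i(\tau,x)\le-\|x\|^2$ after rescaling by $\gamma$. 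The free phase is handled the same way using \eqref{eq:sw:min:hfree} with the constant operator $\bar P_i(\Tmin)$. That integrated inequality also shows $\alpha_q^\infty\int_0^\infty\|S_i(s)x\|^2\,\ds\le c_P\|x\|^2$, so Datko's theorem makes each $S_i$ exponentially stable. This gives the exponential-stability hypothesis of Theorem~\ref{thm:sw:min} a posteriori. Since each $S_i$ is exponentially stable and the selectors have unit norm, Assumption~\ref{ass:growth} holds for the lifted system with $g(t)=\max_iM_i$ and any $\mu>1$ (Remark~\ref{rem:growth:G}). The switch condition \eqref{eq:sw:min:jump:nc} is \eqref{eq:sw:min:hjump} read as quadratic forms, which is valid on all of $H$ because both operators are bounded.

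Once these checks are done, Theorem~\ref{thm:sw:min}(ii)$\Rightarrow$(i) gives UGES over $\Sigma_{\min}(\Tmin)$. If it is preferable not to rely on that theorem's standing hypotheses, the telescoping can be done directly. Along any $\eta\in\Sigma_{\min}(\Tmin)$ starting at time $s$ with initial timer $\tau(s)$ and initial state $y$, chaining the mandatory flow, the free flow and the switch non-increase (the state being continuous at switches) yields $\int_s^\infty\|\Phi_\eta(u,s)y\|^2\du\le(c_P/\gamma)\|y\|^2$ uniformly in $\eta$ and $s$. Corollary~\ref{cor:datko:int} then applies to the lifted system through Proposition~\ref{prop:sw:lift}, with $T_\eta\ge\Tmin$.

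I expect the main obstacle to be the density and regularity step. Its validity depends on $\bar P_i$ being strongly differentiable and on the forms $A_i^*\bar P_i+\bar P_iA_i$ being meaningful only on $D(A_i)$. The delicate point is justifying the product-rule differentiation of $s\mapsto\langle S_i(s)x,\bar P_i(\tau+s)S_i(s)x\rangle$ and the passage from $D(A_i)$ to $H$; I would handle it exactly as in Theorem~\ref{thm:hilbert:min}. A secondary point is the role of the initial timer when the starting time $s$ lies mid-interval. This is covered because the upper bound holds for every $\tau$.
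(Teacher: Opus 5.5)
Your proposal is correct and follows the paper's proof essentially verbatim. You define the quadratic timer functionals $\langle x,\bar P_i(\tau)x\rangle_H$, rescale them by $\gamma=\min(\alpha_q,\alpha_q^\infty)$ and freeze them for $\tau\ge T_{\min}$, obtain the flow conditions by integrating along $S_i$ and extending from $D(A_i)$ by density, read the switch condition directly from \eqref{eq:sw:min:hjump}, extract exponential stability of each $S_i$ from \eqref{eq:sw:min:hfree} via Datko's theorem, and conclude by Theorem~\ref{thm:sw:min}(ii)$\Rightarrow$(i). One harmless slip: exponential stability of the $S_i$ together with unit-norm selectors only yields the growth bound of Remark~\ref{rem:growth:G} with $\mu\ge\max_iM_i$, not with an arbitrary $\mu>1$; this does not matter, since Assumption~\ref{ass:growth} only asks for some $\mu>1$ and only $G$ enters Corollary~\ref{cor:datko:int}.
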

\begin{proof}
With $\bar V_i(\tau,x)=\langle x,\bar P_i(\tau)x\rangle_H$ (extended by $\bar V_i(\Tmin,\cdot)$ for $\tau\ge\Tmin$), the flow inequalities \eqref{eq:sw:min:hflow}--\eqref{eq:sw:min:hfree} give $\underline{D}^+_i\bar V_i(\tau,x)\le-\min(\alpha_q,\alpha_q^\infty)\|x\|^2$ (coercivity of $Q_i,Q_i^\infty$; extension from $D(A_i)$ to $H$ by density), and the non-increase \eqref{eq:sw:min:hjump} gives $\bar V_j(0,x)-\bar V_i(\Tmin,x)=\langle x,(\bar P_j(0)-\bar P_i(\Tmin))x\rangle_H\le0$; these are the conditions \eqref{eq:sw:min:cond} of Theorem~\ref{thm:sw:min}(ii) with $p=2$, whence UGES over $\Sigma_{\min}(\Tmin)$. The standing Assumption~\ref{ass:growth} holds since \eqref{eq:sw:min:hfree} forces each $S_i$ exponentially stable (integrate along $S_i$ and let $h\to\infty$: $\alpha_q^\infty\int_0^\infty\|S_i(r)x\|^2\,\dr\le\langle x,\bar P_i(\Tmin)x\rangle_H\le c_P\|x\|^2$; Datko's theorem \cite{Datko:70}).
\end{proof}

\begin{remark}\label{rem:sw:geromel}
Conditions \eqref{eq:sw:min:hilbert} are the infinite-dimensional, operator-valued counterpart of the clock-dependent minimum-dwell-time conditions of Geromel and Colaneri \cite{Geromel:06b} and Briat \cite{Briat:13d,Briat:16c}: a mode-dependent timer certificate $\bar P_i(\tau)$ \emph{strictly} decreasing along mode $i$ over the mandatory phase $[0,\Tmin]$ and the free phase (coercive $Q_i,Q_i^\infty$), coupled across modes at a switch by the non-increase \eqref{eq:sw:min:hjump}. Consistent with the state being continuous at switches, the discrete step is non-expansive; all strictness resides in the flow, the accumulated mandatory-phase contraction making $\bar P_i(\Tmin)$ dominate $\bar P_j(0)$. On $H=\mathbb{R}^n$ they are a finite family of LMIs in $(\bar P_i,Q_i,Q_i^\infty)$, amenable to the convex synthesis dualization of Remark~\ref{rem:coercive:synthesis} when the modes depend affinely on a controller gain.
\end{remark}

\subsection{Arbitrary switching}\label{subsec:sw:arb}

Under arbitrary switching the state is continuous at the switches, and stability is governed by a \emph{common} non-coercive Lyapunov functional decreasing along every mode. We assume throughout this subsection that the modes are uniformly non-expansive after a common equivalent renorming,
\begin{equation}\label{eq:sw:nonexpansive}
  \|S_i(t)\|\le1\qquad\text{for all }i\in\{1,\dots,N\}\text{ and }t\ge0,
\end{equation}
which is the specialization of Assumption~\ref{ass:growth} to $\mu=1$ (Remark~\ref{rem:growth:G}). For linear systems this entails no loss of generality for the arbitrary-switching UGES problem: UGES over $\Sigma_{0,\infty}$ is equivalent to the existence of a common norm renormalizing every mode into a non-expansive semigroup, the infinite-dimensional analogue of a Barabanov/extremal norm \cite{Chitour:25}.

\begin{theorem}[Arbitrary switching, non-coercive N\&S]\label{thm:sw:arb}
Under \eqref{eq:sw:common:domain} and \eqref{eq:sw:nonexpansive}, the following statements are equivalent:
\begin{enumerate}[(i)]
  \item the switched system \eqref{eq:sw:syst} is UGES over $\Sigma_{0,\infty}$ (arbitrary switching);
  \item there exist $p\ge1$, $c>0$ and a common functional $V:X\to\mathbb{R}_{\ge0}$, Lipschitz on bounded subsets, such that
  \begin{equation}\label{eq:sw:arb:cond}
    V(x)\le c\|x\|^p\quad\text{and}\quad \underline{D}^+_i V(x)\le-\|x\|^p\quad\text{for every mode }i.
  \end{equation}
\end{enumerate}
An explicit common functional witnessing (ii) is
\begin{equation}\label{eq:sw:arb:V}
  V(x):=\sup_{\eta}\int_0^\infty\|\Phi_\eta(t,0)x\|^p\,\dt,
\end{equation}
the supremum ranging over all admissible signals.
\end{theorem}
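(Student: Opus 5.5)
The plan is to prove the two implications directly on $X$, mirroring Theorem~\ref{thm:arbitrary:nc} and Theorem~\ref{thm:sw:min}. Working on $X$ avoids a detour through the lifted space. The switches leave the state unchanged, so the jump condition is vacuous and only the flow condition for each mode has to be checked.

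\emph{(ii)$\Rightarrow$(i).} Fix a signal $\eta$, a starting time $s\ge0$ and $y\in X$, and set $x(t)=\Phi_\eta(t,s)y$. On each mode interval $[t_k,t_{k+1})$ the trajectory is the orbit of $S_{i_k}$. The map $t\mapsto V(x(t))$ is continuous there, because $V$ is Lipschitz on bounded sets. Integrating $\underline{D}^+_{i_k}V\le-\|x\|^p$ with the Dini comparison argument of Theorem~\ref{thm:main}(iii)$\Rightarrow$(ii) gives $V(x(t_{k+1}))-V(x(t_k))\le-\int_{t_k}^{t_{k+1}}\|x\|^p$. Since $x$ is continuous at switches and $V$ is time-independent, there is no jump term, and telescoping yields $\int_s^\infty\|\Phi_\eta(u,s)y\|^p\,\du\le c\|y\|^p$ uniformly in $\eta$ and $s$.

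The main obstacle is turning this integral bound into exponential decay without a dwell-time bound. Corollary~\ref{cor:datko:int} needs $T_\eta>0$, which arbitrary switching does not provide. Here \eqref{eq:sw:nonexpansive} replaces it: every finite composition of the $S_i$ has norm at most $1$, so $\|\Phi_\eta(t,s)\|\le1$ and Step~1 (uniform boundedness with $C_0=1$) holds trivially. Steps~2A and 3A of Lemma~\ref{lem:datko} then apply verbatim, using only the integral bound and the evolution identity $\Phi_\eta(t,s)=\Phi_\eta(t,\tau)\Phi_\eta(\tau,s)$. They give $\|\Phi_\eta(t,s)\|\le2e^{-\alpha_0(t-s)}$ with $\alpha_0=\log2/(2c^{1/p})^p$, independent of $\eta$, which is UGES. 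If signals with accumulating switches are a concern, they are excluded anyway, since switching instants lie in $\mathcal{S}_{0,\infty}$.

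\emph{(i)$\Rightarrow$(ii).} Define $V$ by \eqref{eq:sw:arb:V}. UGES with constants $M,\alpha$ gives $V(x)\le\frac{M^p}{\alpha p}\|x\|^p$. The Lipschitz property follows as in Theorem~\ref{thm:main}, using the uniform exponential bound inside the supremum. For the flow condition of mode $i$, fix $h>0$ and any signal $\eta$. Form the concatenated signal that stays in mode $i$ on $[0,h)$ and then follows $\eta$ shifted by $h$. It is admissible under arbitrary switching; if $\eta$ starts in mode $i$ there is simply no switch at $h$. Its flow integral equals $\int_0^h\|S_i(u)x\|^p\,\du+\int_0^\infty\|\Phi_\eta(t,0)S_i(h)x\|^p\,\dt$. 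Taking suprema gives $V(x)\ge\int_0^h\|S_i(u)x\|^p\,\du+V(S_i(h)x)$. Dividing by $h$ and using strong continuity of $S_i$ yields $\underline{D}^+_iV(x)\le-\|x\|^p$. The only bookkeeping point is that the concatenated signal has switching instants in $\mathcal{S}_{0,\infty}$ and that the signals of $\Sigma_{0,\infty}$ may start in any mode; both are immediate from the definition of the family.
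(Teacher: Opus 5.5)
Your proposal is correct and follows the paper's proof essentially step for step. For (ii)$\Rightarrow$(i), you integrate the Dini condition on each mode interval and telescope to a uniform integral Datko bound, then use the nonexpansiveness hypothesis to get $C_0=1$, so that Steps~2A--3A of Lemma~\ref{lem:datko} give uniform exponential decay; for (i)$\Rightarrow$(ii), you take the supremum functional and prepend a mode-$i$ arc to obtain the flow condition. Your appeal to the Dini comparison argument for the merely continuous map $t\mapsto V(x(t))$ is, if anything, slightly more careful than the paper, which writes $\frac{d}{dt}V(x(t))$ directly.
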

\begin{proof}
\emph{(ii)$\Rightarrow$(i).} Fix a signal $\eta$, $s\ge0$, $x_0\in X$, and set $x(t)=\Phi_\eta(t,s)x_0$. The state is continuous at the switches, and on each mode-interval $\frac{d}{dt}V(x(t))=\underline{D}^+_{\eta(t)}V(x(t))\le-\|x(t)\|^p$; integrating and using $V\ge0$ and $V(x_0)\le c\|x_0\|^p$,
\begin{equation}\label{eq:sw:arb:datko}
  \int_s^\infty\|\Phi_\eta(t,s)x_0\|^p\,\dt\le c\|x_0\|^p,
\end{equation}
uniformly in $\eta$ and $s$. By \eqref{eq:sw:nonexpansive} the propagator, a composition of the maps $S_i(\cdot)$, satisfies $\|\Phi_\eta(t,s)\|\le1$. Steps~2A and~3A of the proof of Lemma~\ref{lem:datko}, which use only the integral bound \eqref{eq:sw:arb:datko} together with the uniform bound of Step~1 (here $C_0=1$) and involve no jump term, apply to the jump-free evolution family $\Phi_\eta$ (for which $\kappa_\eta\equiv0$) and yield $\|\Phi_\eta(t,s)\|\le M e^{-\alpha_0(t-s)}$ for some $M\ge1$, $\alpha_0>0$ independent of $\eta$; equivalently, this is the Datko theorem for the evolution family $\Phi_\eta$ \cite{Datko:70}. Hence \eqref{eq:sw:syst} is UGES over $\Sigma_{0,\infty}$.

\emph{(i)$\Rightarrow$(ii).} Define $V$ by \eqref{eq:sw:arb:V}. UGES gives $\int_0^\infty\|\Phi_\eta(t,0)x\|^p\,\dt\le(M^p/(\alpha p))\|x\|^p$ uniformly in $\eta$, so $V$ is finite with $V(x)\le c\|x\|^p$, $c:=M^p/(\alpha p)$; Lipschitz continuity on bounded subsets follows as in Theorem~\ref{thm:arbitrary:nc} from $|\,\|\Phi_\eta(t,0)x\|^p-\|\Phi_\eta(t,0)y\|^p\,|\le pM^p r^{p-1}\|x-y\|$ on $r$-balls, integrated in $t$ and taken to the supremum in $\eta$. For the flow condition, fix a mode $i$ and $h>0$. The admissible family is invariant under prepending a mode-$i$ arc of length $h$: for any $\eta$ let $\eta^{[i,h]}$ denote $\eta$ preceded by mode $i$ on $[0,h]$, so that $\Phi_{\eta^{[i,h]}}(t,0)=\Phi_\eta(t-h,0)S_i(h)$ for $t\ge h$ and $=S_i(t)$ for $t\le h$, whence
\[
  \int_0^\infty\|\Phi_{\eta^{[i,h]}}(t,0)x\|^p\,\dt=\int_0^h\|S_i(t)x\|^p\,\dt+\int_0^\infty\|\Phi_\eta(t,0)S_i(h)x\|^p\,\dt.
\]
Taking the supremum over $\eta$, and using that $\eta\mapsto\eta^{[i,h]}$ maps admissible signals into admissible signals,
\[
  V(x)\ge\int_0^h\|S_i(t)x\|^p\,\dt+V(S_i(h)x),
\]
so $V(x)-V(S_i(h)x)\ge\int_0^h\|S_i(t)x\|^p\,\dt$; dividing by $h$ and letting $h\downarrow0$, strong continuity of $S_i$ gives $\underline{D}^+_iV(x)\le-\|x\|^p$, for every mode $i$.
\end{proof}

\begin{remark}\label{rem:sw:haidar}
Theorem~\ref{thm:sw:arb} is the switched-system, common non-coercive Lyapunov characterization of arbitrary-switching UGES \cite{Haidar:22,Liberzon:03}, recovered here through the impulsive reformulation; the state being continuous at the switches, no jump condition appears, and the certificate need not be coercive. This is the arbitrary-switching analogue of the non-coercive necessary and sufficient conditions of Sections~\ref{sec:lyapunov}, in the same spirit as the fixed-sequence Theorem~\ref{thm:main}.
\end{remark}

On a Hilbert space the common functional may be taken quadratic, and the aligned collapse of Sections~\ref{subsec:sw:fixed}--\ref{subsec:sw:min} then \emph{forces} it to be governed by a single operator. Indeed, a block-diagonal quadratic certificate $\mathcal{V}(z)=\langle z,\mathcal{P}z\rangle$ with $\mathcal{P}=\diag(P_1,\dots,P_N)\succ0$ meets the persistent-flowing conditions on the lifted system---flow-strict along $\mathcal{A}$ and non-increasing at the selectors---only if the selector condition $J_{ij}^*\mathcal{P}J_{ij}-\mathcal{P}\preceq0$, evaluated at the aligned state $z=e_i\otimes x$, gives
\begin{equation}\label{eq:sw:arb:collapse}
  \langle x,P_j x\rangle_H\le\langle x,P_i x\rangle_H\quad\text{for all }i\ne j,\qquad\text{i.e.}\quad P_j\preceq P_i,
\end{equation}
so that applying \eqref{eq:sw:arb:collapse} to both orderings $(i,j)$ and $(j,i)$ forces $P_1=\cdots=P_N=:P$. The block certificate thus collapses to a \emph{single common} operator $P$ on $H$, and the per-mode flow-strict condition becomes \eqref{eq:sw:arb:hilbert}; the selector coupling is precisely what makes the common quadratic Lyapunov function unavoidable in this class, recovering the classical arbitrary-switching condition.

\begin{corollary}[Arbitrary switching, quadratic]\label{cor:sw:arb:hilbert}
On a Hilbert space $H$, suppose there exist a self-adjoint $P\in L(H)$ with $0\prec P\preceq c_P I$ and $\epsilon>0$ such that
\begin{equation}\label{eq:sw:arb:hilbert}
  \langle d,(A_i^*P+PA_i)d\rangle_H\le-\epsilon\|d\|^2\qquad\forall d\in D(A_i),\ \forall i.
\end{equation}
Then \eqref{eq:sw:syst} is UGES over $\Sigma_{0,\infty}$; if moreover $P\succeq\alpha_m I$ for some $\alpha_m>0$ (a coercive common quadratic Lyapunov function), the conclusion holds without the non-expansiveness hypothesis \eqref{eq:sw:nonexpansive}.
\end{corollary}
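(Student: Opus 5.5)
The plan is to reduce the first claim to Theorem~\ref{thm:sw:arb}(ii)$\Rightarrow$(i), and to prove the coercive claim directly with a Gr\"onwall argument, so that non-expansiveness is never used there.

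\textbf{First claim.} Set $V(x):=\epsilon^{-1}\langle x,Px\rangle_H$. This $V$ is Lipschitz on bounded subsets of $H$ because $P$ is bounded, and it satisfies $0\le V(x)\le (c_P/\epsilon)\|x\|^2$.

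Fix a mode $i$ and take $x\in D(A_i)$. Then $S_i(s)x\in D(A_i)$ for all $s\ge0$, and differentiating gives
\[
\tfrac{d}{ds}\langle S_i(s)x,PS_i(s)x\rangle_H=\langle S_i(s)x,(A_i^*P+PA_i)S_i(s)x\rangle_H\le-\epsilon\|S_i(s)x\|^2 .
\]
Integrating from $0$ to $h$ yields $V(S_i(h)x)-V(x)\le-\int_0^h\|S_i(s)x\|^2\ds$. Both sides are continuous in $x$, since $S_i(s)$ is bounded uniformly on $[0,h]$. Hence the inequality extends from the dense set $D(A_i)$ to all of $H$. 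Dividing by $h$ and letting $h\downarrow0$, strong continuity gives $\underline{D}^+_iV(x)\le-\|x\|^2$ for every mode $i$.

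These are exactly conditions \eqref{eq:sw:arb:cond} with $p=2$ and $c=c_P/\epsilon$. Under \eqref{eq:sw:common:domain} and \eqref{eq:sw:nonexpansive}, Theorem~\ref{thm:sw:arb}(ii)$\Rightarrow$(i) then gives UGES over $\Sigma_{0,\infty}$. Non-expansiveness enters here only to provide the uniform bound $C_0=1$ on $\Phi_\eta$ that the Datko argument requires.

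\textbf{Coercive claim.} Now assume $P\succeq\alpha_mI$ and drop \eqref{eq:sw:nonexpansive}. Set $W(x):=\langle x,Px\rangle_H$, so that $\alpha_m\|x\|^2\le W(x)\le c_P\|x\|^2$.

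On $D(A_i)$ the flow inequality gives $\frac{d}{ds}W(S_i(s)x)\le-\epsilon\|S_i(s)x\|^2\le-(\epsilon/c_P)W(S_i(s)x)$. Gr\"onwall's inequality yields $W(S_i(h)x)\le e^{-(\epsilon/c_P)h}W(x)$, first on $D(A_i)$ and then on all of $H$ by density and continuity.

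Along any signal $\eta$ the state is continuous at the switches, and $W$ is one common functional for all modes. Chaining the mode-wise contractions over the mode-intervals therefore gives $W(\Phi_\eta(t,s)x)\le e^{-(\epsilon/c_P)(t-s)}W(x)$. Using the two-sided bounds on $W$, this becomes
\[
\|\Phi_\eta(t,s)x\|\le\sqrt{c_P/\alpha_m}\,e^{-\epsilon(t-s)/(2c_P)}\|x\| ,
\]
with constants independent of $\eta$ and $s$. This is UGES over $\Sigma_{0,\infty}$ (persistent-flowing, $\rho=1$), as identified through Proposition~\ref{prop:sw:lift}.

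\textbf{Main obstacle.} The only delicate step is the density extension. A pointwise Dini bound cannot be passed to limits directly, so the inequality has to be extended in integrated form, or in multiplicative form in the coercive case. The mode-by-mode Gr\"onwall chaining across switches, which concatenates finitely many semigroup arcs on each bounded interval, also deserves a brief explicit check.
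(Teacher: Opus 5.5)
Your proposal is correct and follows the paper's proof. The first claim reduces to Theorem~\ref{thm:sw:arb}(ii)$\Rightarrow$(i) with $p=2$, after rescaling by $\epsilon^{-1}$ and extending the integrated flow inequality from $D(A_i)$ to $H$ by density; the coercive claim follows from the mode-wise Gr\"onwall bound chained across switches, which gives $\|\Phi_\eta(t,s)x\|^2\le(c_P/\alpha_m)e^{-(\epsilon/c_P)(t-s)}\|x\|^2$ uniformly in $\eta$ without \eqref{eq:sw:nonexpansive}, and your explicit handling of the density step in integrated (resp.\ multiplicative) form is exactly what the paper's brief reference to Corollary~\ref{cor:hilbert:arbitrary:coerc} relies on.
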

\begin{proof}
Set $V(x)=\langle x,Px\rangle_H$. Inequality \eqref{eq:sw:arb:hilbert} gives, along each mode, $\underline{D}^+_iV(x)\le-\epsilon\|x\|^2$ (extending from $D(A_i)$ to $H$ by density and continuity, as in Corollary~\ref{cor:hilbert:arbitrary:coerc}), and $V(x)\le c_P\|x\|^2$; Theorem~\ref{thm:sw:arb}(ii)$\Rightarrow$(i) with $p=2$ (after dividing by $\epsilon$) yields UGES. If $P\succeq\alpha_m I$, then $V$ is coercive, $\alpha_m\|x\|^2\le V(x)\le c_P\|x\|^2$, and the flow inequality integrates to $\frac{d}{dt}V(x(t))\le-\epsilon\|x(t)\|^2\le-(\epsilon/c_P)V(x(t))$ along any switching signal; Gr\"onwall gives $V(x(t))\le e^{-(\epsilon/c_P)(t-s)}V(x(s))$, whence $\|\Phi_\eta(t,s)x\|^2\le(c_P/\alpha_m)e^{-(\epsilon/c_P)(t-s)}\|x\|^2$ uniformly in $\eta$, i.e.\ UGES, without recourse to \eqref{eq:sw:nonexpansive}.
\end{proof}

\section{Examples}
\label{sec:examples}

We illustrate the abstract theory through four examples. Examples~\ref{ex:jump} and~\ref{ex:flow} apply the timer-dependent Hilbert-space conditions of Sections~\ref{subsec:hilbert:cst}--\ref{subsec:hilbert:rng} to two PDE systems involving a transport equation: one where stability is driven by the jumps, and one where it is driven by the flow. The third example in Section~\ref{ex:pie} introduces a new application, the sampled-data control of a linear time-delay system, which is reformulated as an impulsive system on an infinite-dimensional state space; the analysis conditions from Sections~\ref{subsec:hilbert:cst}--\ref{subsec:hilbert:rng} are then applied explicitly, and the coercive certificates lend themselves to the convex synthesis dualization of Remark~\ref{rem:coercive:synthesis}. Example~\ref{ex:boundary} exploits the same Partial Integral Equation machinery in a more demanding setting: the sampled-data \emph{boundary} control of a reaction-diffusion equation, where the input operator is unbounded in the conventional state space but becomes bounded in the PIE representation, the conversion being made rigorous by the PIE lift of Proposition~\ref{prop:pie:lift}.

\subsection{Example 1: Jump-induced stability of a transport equation}\label{ex:jump}

Consider the advection equation on $[0,1]$ with periodic boundary condition and periodic multiplicative jumps:
\begin{equation}\label{eq:transport1}
  \partial_t x(t,z)=-\partial_z x(t,z),\quad z\in(0,1),\quad x(t,0)=x(t,1),\quad x(kT^+,z)=\theta(z)  x(kT^-,z),
\end{equation}
where $\theta\in W^{1,\infty}(0,1)$ with $\theta(0)=\theta(1)$ and $\theta^\star:=\operatorname{ess\,sup}_{z\in[0,1]}|\theta(z)|$. The state space is $H=L^2(0,1)$, with the transport operator $A:=-\partial_z$ on $D(A)=\{x\in H^1(0,1):x(0)=x(1)\}$ generating the periodic-shift semigroup $S(t)x(z)=x(z-t\bmod 1)$. The jump operator is $Jx:=\theta(\cdot)  x$; the requirements $\theta\in W^{1,\infty}$ and $\theta(0)=\theta(1)$ ensure $J\in L(H)$ with $JD(A)\subseteq D(A)$, as assumed in \eqref{eq:syst}.

The unforced flow ($J=I$) is marginally stable: $\|S(t)\|=1$ for all $t\ge0$, since the shift is an isometry on $L^2(0,1)$. Stability must therefore be induced entirely by the jumps.

\paragraph{Analysis via Theorem~\ref{thm:hilbert:cst}.} Choose the timer-dependent operator $\bar P(\tau):=e^{-\alpha\tau}I$ for some $\alpha>0$.

\emph{Bounds:} $e^{-\alpha T}I\preceq\bar P(\tau)\preceq I$ for $\tau\in[0,T]$, giving $\alpha_m=e^{-\alpha T}$ and $c_P=1$.

\emph{Flow equation \eqref{eq:hilbert:cst:flow:ode}:} For $d\in D(A)$,
\begin{align}
  \langle d,(\dot{\bar P}(\tau)+A^*\bar P(\tau)+\bar P(\tau)A)d\rangle
  &=e^{-\alpha\tau}\left(-\alpha\|d\|^2+\langle d,Ad\rangle+\langle Ad,d\rangle\right)\notag\\
  &=e^{-\alpha\tau}\left(-\alpha\|d\|^2-2  \mathrm{Re}\langle d,\partial_z d\rangle\right)\notag\\
  &=e^{-\alpha\tau}\left(-\alpha\|d\|^2+|d(0)|^2-|d(1)|^2\right)=-\alpha e^{-\alpha\tau}\|d\|^2
\end{align}
where we used integration by parts and the periodic boundary condition $d(0)=d(1)$. Hence \eqref{eq:hilbert:cst:flow:ode} holds with the bounded coercive weight $Q(\tau)=\alpha e^{-\alpha\tau}I\succeq\alpha e^{-\alpha T}I$.

\emph{Jump equation \eqref{eq:hilbert:cst:jump:eq}:} For $x\in H$,
\begin{align}
  \langle Jx,\bar P(0)Jx\rangle-\langle x,\bar P(T^-)x\rangle
  &=\int_0^1\theta(z)^2x(z)^2  dz-e^{-\alpha T}\int_0^1x(z)^2  dz
  =\int_0^1(\theta(z)^2-e^{-\alpha T})x(z)^2  dz.
\end{align}
This is $\le-\delta\|x\|^2$ with $\delta:=e^{-\alpha T}-(\theta^\star)^2$ provided
\begin{equation}\label{eq:ex1:cond}
  e^{-\alpha T}>(\theta^\star)^2,\qquad\text{i.e.,}\quad T<\frac{-2\ln\theta^\star}{\alpha}.
\end{equation}
Since the flow boundary terms cancel here (periodic condition $d(0)=d(1)$), the weight $Q(\tau)=\alpha e^{-\alpha\tau}I$ and the jump residual $R=(e^{-\alpha T}-\theta(\cdot)^2)I$ are \emph{bounded} and coercive, i.e.\ $Q\in\mathcal{Q}$ and $R\in\mathcal{R}$; the necessary-and-sufficient Theorem~\ref{thm:hilbert:cst} therefore applies directly, and the system \eqref{eq:transport1} is GES whenever \eqref{eq:ex1:cond} holds.

\begin{remark}
The certificate $\bar P(\tau)=e^{-\alpha\tau}I$ is conservative: the parameter $\alpha>0$ is a free weighting rate, not a property of the dynamics, so condition \eqref{eq:ex1:cond} should be read by optimizing over $\alpha$. Since $\theta^\star$ is fixed, taking $\alpha\to0^+$ makes the bound $-2\ln\theta^\star/\alpha\to\infty$, so \emph{any} period $T>0$ is admitted as soon as $\theta^\star<1$; conversely a larger $\alpha$ tightens the admissible $T$. This matches the exact stability mechanism: the shift semigroup is an isometry ($\|S(t)\|=1$) and each period contracts the state by exactly $\theta^\star$, so the true necessary and sufficient condition is simply $\theta^\star<1$, \emph{independently of $T$}. The $T$--$\alpha$ trade-off in \eqref{eq:ex1:cond} is therefore an artifact of fixing $\alpha$ rather than a genuine restriction. In the degenerate limit $\theta^\star\to0$ (fully contractive jump) the conclusion is immediate for any $T>0$.
\end{remark}

Figure~\ref{fig:ex1} shows the space-time evolution of the state for two values of the jump gain, computed by exact integration along the characteristics with periodic boundary feeding and a multiplicative reset $x(kT^+,z)=\theta^\star x(kT^-,z)$ at each period $T=0.3$. For $\theta^\star=0.70<1$ the solution decays to zero, while for $\theta^\star=1.15>1$ it grows without bound, in agreement with the necessary and sufficient condition $\theta^\star<1$ identified above and independently of the period.

\begin{figure}[t]
\centering
\includegraphics[width=0.48\linewidth]{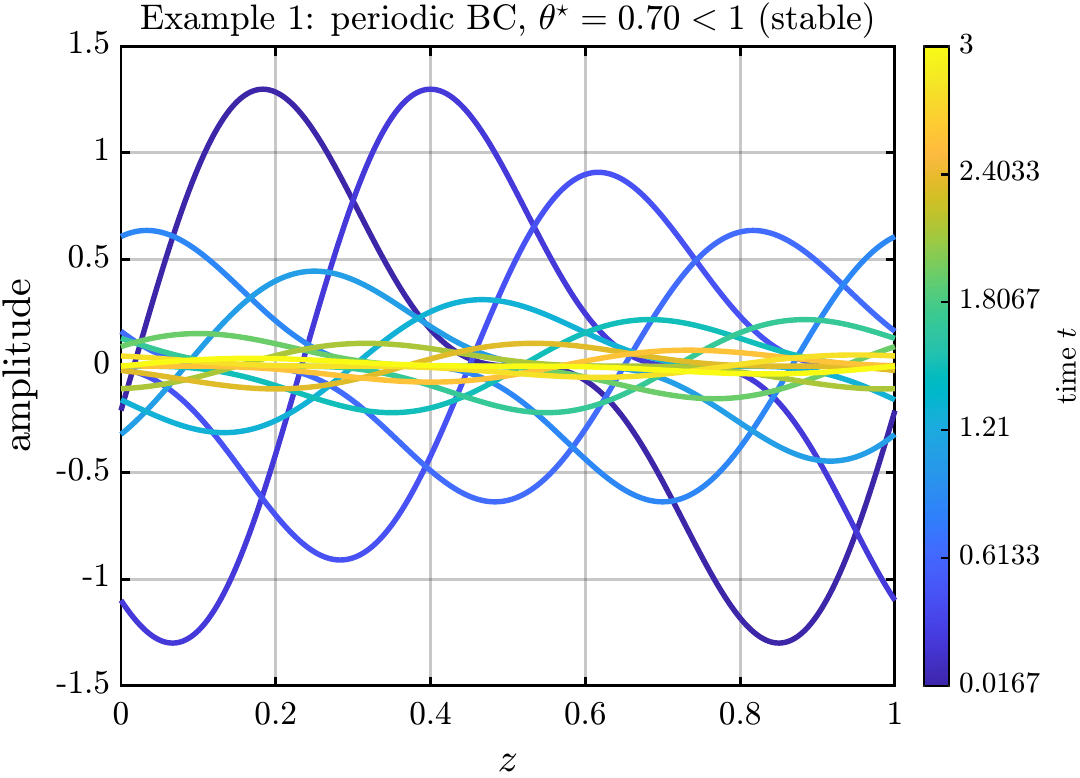}\hfill
\includegraphics[width=0.48\linewidth]{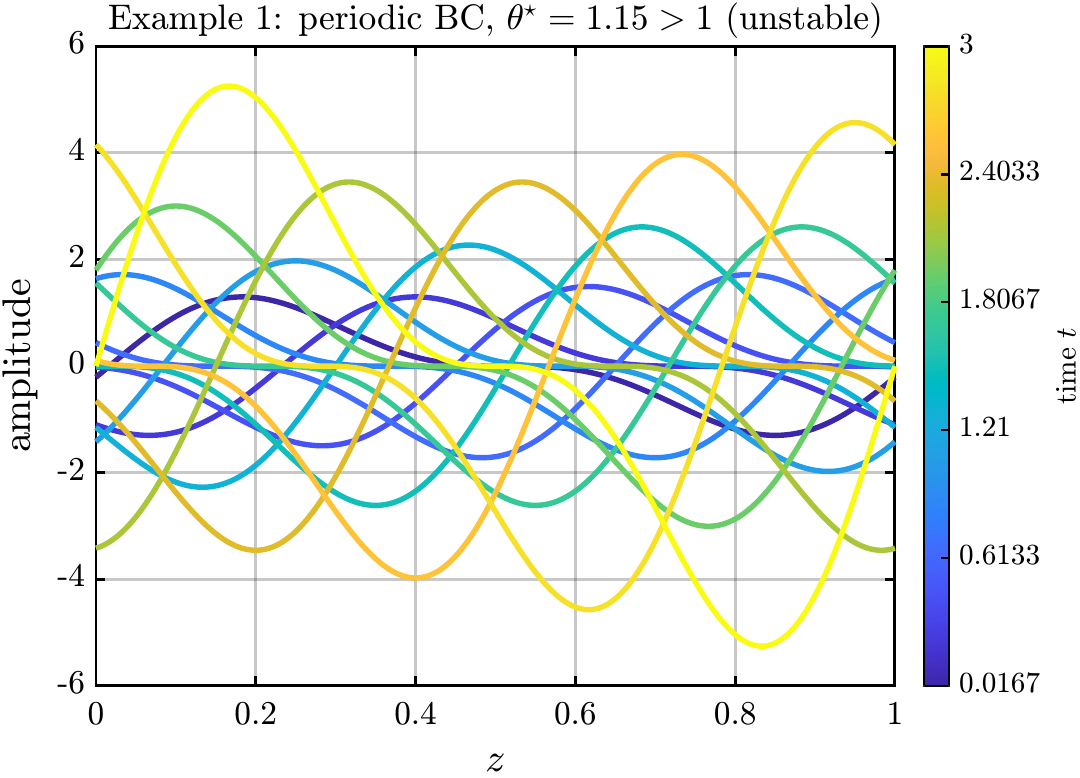}
\caption{Example~\ref{ex:jump} (jump-induced stability, periodic boundary condition, $T=0.3$). Spatial profiles $x(\cdot,z)$ at successive times, colored by time from early (dark) to late (bright), the time scale being given by the colorbar. Left: $\theta^\star=0.70<1$, the profile decays to zero (UGES). Right: $\theta^\star=1.15>1$, it grows. The isometric transport flow neither contracts nor expands; stability is decided entirely by the jumps.}\label{fig:ex1}
\end{figure}

\subsection{Example 2: Flow-induced stability with potentially unstable jumps}\label{ex:flow}

Modify \eqref{eq:transport1} by replacing the periodic boundary condition with a dissipative one:
\begin{equation}\label{eq:transport2}
  \partial_t x(t,z)=-\partial_z x(t,z),\quad x(t,0)=\nu   x(t,1),\quad x(kT^+,z)=\theta(z)  x(kT^-,z),
\end{equation}
where $\nu\in(-1,1)\setminus\{0\}$ (boundary absorption), $\theta\in W^{1,\infty}(0,1)$ with $\theta(0)=\theta(1)$ (so that $JD(A)\subseteq D(A)$ with $D(A)=\{x\in H^1(0,1):x(0)=\nu x(1)\}$), and $\theta^\star:=\operatorname{ess\,sup}_{z}|\theta(z)|$ \emph{may exceed} $1$ (potentially destabilizing jumps). We work with the weighted inner product $\langle x,y\rangle_\mu:=\int_0^1e^{-\mu z}x(z)y(z)  dz$ for $\mu>0$, whose induced norm $\|\cdot\|_\mu$ is equivalent to the $L^2$-norm.

\paragraph{Analysis via Theorem~\ref{thm:hilbert:rng} (range dwell-time).} Choose the timer-dependent weight $\bar P(\tau):=e^{\beta\tau}I$, where $\beta>0$ is a rate to be selected (the operator is a scalar multiple of the identity; the weighting lives in the inner product $\langle\cdot,\cdot\rangle_\mu$).

\emph{Flow condition \eqref{eq:hilbert:rng:flow}.} Differentiating $V(\tau,x)=\langle x,\bar P(\tau)x\rangle_\mu=e^{\beta\tau}\|x\|_\mu^2$ along the transport flow,
\begin{equation}
  \frac{d}{dt}V = \beta  e^{\beta\tau}\|x\|_\mu^2 + e^{\beta\tau}\cdot 2  \mathrm{Re}\langle x,-\partial_z x\rangle_\mu .
\end{equation}
For the boundary term, integration by parts against the weight gives
\begin{equation}
  2  \mathrm{Re}\langle x,-\partial_z x\rangle_\mu
  = -\int_0^1 e^{-\mu z}\partial_z(x^2)  dz
  = -\bigl[e^{-\mu z}x^2\bigr]_0^1 - \mu\!\int_0^1 e^{-\mu z}x^2  dz
  = \bigl(x(0)^2-e^{-\mu}x(1)^2\bigr) - \mu\|x\|_\mu^2 .
\end{equation}
Using the dissipative boundary condition $x(0)=\nu x(1)$, so that $x(0)^2=\nu^2x(1)^2$,
\begin{equation}\label{eq:ex2:bdy}
  2  \mathrm{Re}\langle x,-\partial_z x\rangle_\mu
  = \bigl(\nu^2-e^{-\mu}\bigr)x(1)^2 - \mu\|x\|_\mu^2 .
\end{equation}
We now \emph{tune the weight} by choosing $\mu:=-2\ln|\nu|=2\ln\tfrac{1}{|\nu|}>0$, so that $e^{-\mu}=\nu^2$ and the boundary coefficient $\nu^2-e^{-\mu}$ vanishes. The boundary point-evaluation then disappears identically from \eqref{eq:ex2:bdy}:
\begin{equation}
  2  \mathrm{Re}\langle x,-\partial_z x\rangle_\mu=-\mu\|x\|_\mu^2,
\end{equation}
a \emph{bounded} coercive relation: the weight is chosen so that the boundary outflow exactly balances the transport. Consequently, for any $\beta\in(0,\mu)$,
\begin{equation}
  \frac{d}{dt}V = (\beta-\mu)  e^{\beta\tau}\|x\|_\mu^2 = (\beta-\mu)  V ,
\end{equation}
and the associated weight $Q(\tau):=-(\dot{\bar P}+A^*\bar P+\bar P A)=(\mu-\beta)e^{\beta\tau}I$ is a bounded, coercive multiple of the identity, i.e.\ $Q\in\mathcal{Q}$; no boundary point-evaluation term survives. The bounded-weight hypotheses of the (sufficient) Theorem~\ref{thm:hilbert:rng} therefore hold directly. The flow supplies a contraction margin of order $\mu-\beta$; the bulk of the decay is produced at the jumps.

\emph{Jump condition \eqref{eq:hilbert:rng:jump}.} Since $\bar P(0)=I$ and $\bar P(\theta_0)=e^{\beta\theta_0}I$, for any dwell-time $\theta_0\in[T_{\min},T_{\max}]$,
\begin{equation}
  \langle Jx,\bar P(0)Jx\rangle_\mu - \langle x,\bar P(\theta_0)x\rangle_\mu
  =\int_0^1 e^{-\mu z}\bigl(\theta(z)^2-e^{\beta\theta_0}\bigr)x(z)^2  dz .
\end{equation}
This is $\le-\delta\|x\|_\mu^2$ provided $\theta(z)^2-e^{\beta\theta_0}\le-\delta$ for a.e.\ $z$ and all $\theta_0\in[T_{\min},T_{\max}]$. As $e^{\beta\theta_0}$ increases in $\theta_0$, the worst case is $\theta_0=T_{\min}$, so it suffices that $(\theta^\star)^2<e^{\beta T_{\min}}$, with margin $\delta=e^{\beta T_{\min}}-(\theta^\star)^2>0$. Equivalently,
\begin{equation}\label{eq:ex2:cond}
  (\theta^\star)^2 < e^{\beta T_{\min}},\qquad\text{i.e.,}\quad T_{\min}>\frac{2\ln\theta^\star}{\beta}.
\end{equation}
Taking $\beta\to\mu^-$ (any $\beta$ strictly less than $\mu=-2\ln|\nu|$) makes the bound least restrictive, approaching the dwell-time requirement $T_{\min}>\dfrac{\ln\theta^\star}{\ln(1/|\nu|)}$. Under \eqref{eq:ex2:cond} with any $\beta\in(0,-2\ln|\nu|)$, Theorem~\ref{thm:hilbert:rng} gives UGES for all $\sigma\in\mathcal{S}_{\mathrm{rng}}(T_{\min},T_{\max})$ and any $T_{\max}<\infty$.

\begin{remark}
Condition \eqref{eq:ex2:cond} captures the balance between the flow dissipation (boundary absorption $|\nu|<1$, quantified by the admissible rate $\beta<\mu=-2\ln|\nu|$) and the jump expansion ($\theta^\star$, possibly $>1$): the minimum dwell-time $T_{\min}$ must be large enough for the flow to overcome the destabilizing jumps, $T_{\min}>2\ln\theta^\star/\beta$. When $\theta^\star\le1$ the jumps are already non-expansive and any $T_{\min}>0$ works. This is a minimum-dwell-time condition, dual to the short-period condition of Example~\ref{ex:jump}.
\end{remark}

Figure~\ref{fig:ex2} illustrates the dual mechanism with dissipative boundary $\nu=0.5$ and expansive jumps $\theta^\star=1.2>1$, for which the threshold predicted by \eqref{eq:ex2:cond} is $T_{\min}^\star=\ln\theta^\star/\ln(1/|\nu|)\approx0.263$. With period $T=0.50>T_{\min}^\star$ the boundary absorption has enough time between jumps to overcome the expansion and the state decays, whereas with $T=0.15<T_{\min}^\star$ the jumps win and the state grows, confirming the minimum-dwell-time nature of the condition.

\begin{figure}[t]
\centering
\includegraphics[width=0.48\linewidth]{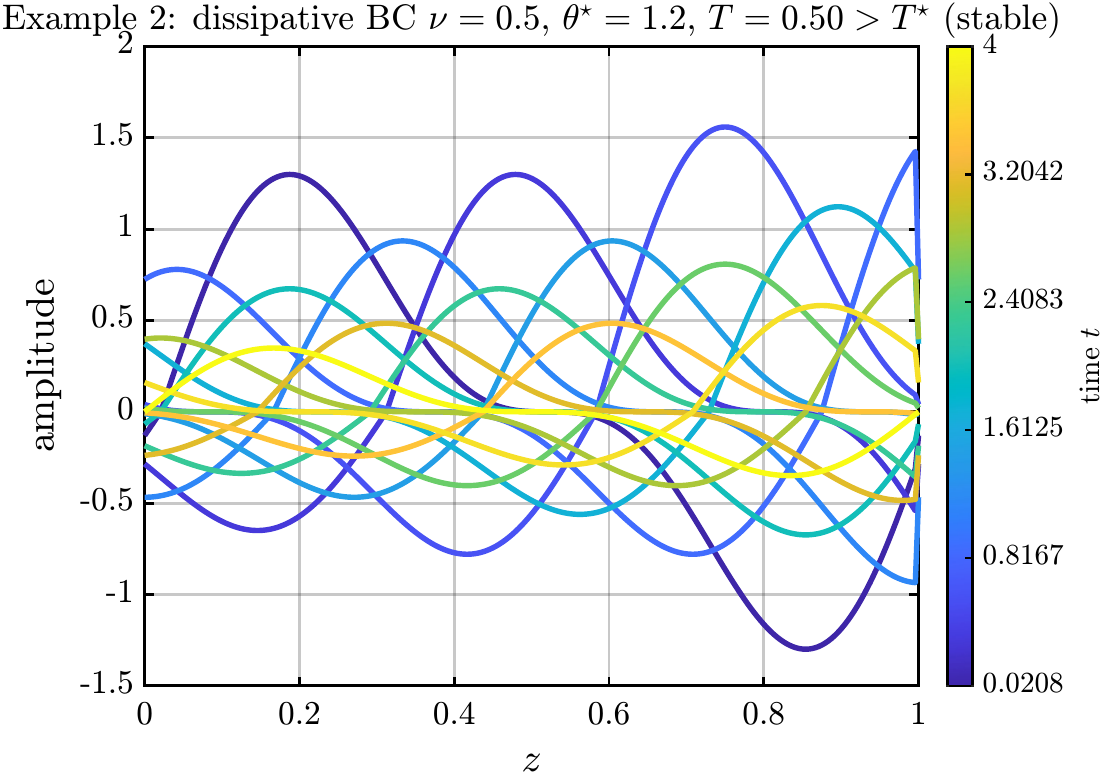}\hfill
\includegraphics[width=0.48\linewidth]{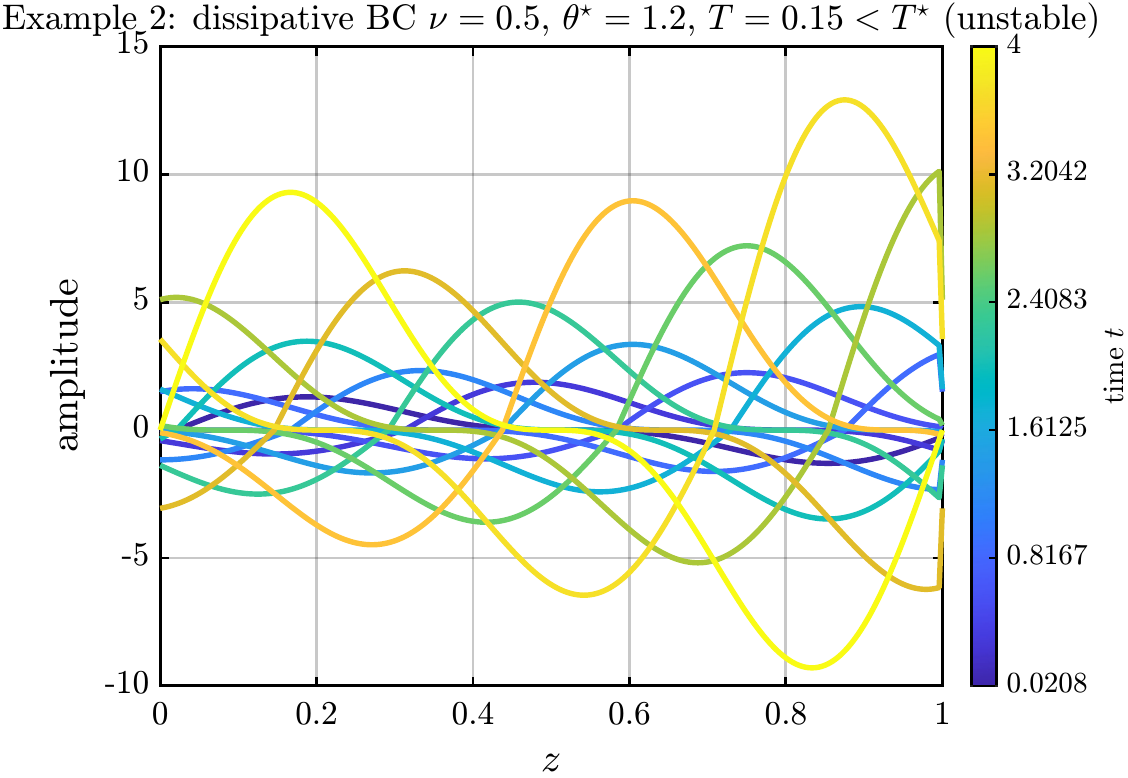}
\caption{Example~\ref{ex:flow} (flow-induced stability, dissipative boundary $\nu=0.5$, expansive jumps $\theta^\star=1.2$). Spatial profiles $x(\cdot,z)$ at successive times, colored by time from early (dark) to late (bright), the time scale being given by the colorbar. Left: $T=0.50>T_{\min}^\star\approx0.263$, the flow dissipation dominates and the profile converges. Right: $T=0.15<T_{\min}^\star$, the expansive jumps dominate and it diverges.}\label{fig:ex2}
\end{figure}

\subsection{Example 3: Impulsive time-delay systems in the Partial Integral Equation (PIE) framework}\label{ex:pie}

We now show that the hybrid conditions of Sections~\ref{subsec:nc:cst}--\ref{subsec:nc:rng} admit an exact operator-algebraic formulation in the Partial Integral Equation (PIE) framework of Peet and co-workers \cite{Peet:21pie,Shivakumar:21,Peet:20}, in which the Lyapunov-Krasovskii functional is parametrized by a positive \emph{Partial Integral (PI) operator} and both the flow and the jump conditions become Linear PI Inequalities (LPIs) amenable to \texttt{PIETOOLS} \cite{PIETOOLS:24}. The contribution of this example is to extend that framework, developed for non-impulsive systems, to the impulsive (hybrid) setting through a \emph{timer-dependent} PI operator.

\paragraph{State space and notation for partial integral operators.}
Throughout this example we work on the Hilbert space
\begin{equation*}
  H = \mathbb{R}^n \times L^2([-\tau,0];\mathbb{R}^n),
\end{equation*}
whose elements are pairs $\mathbf{x} = \mathrm{col}(v,\phi)$ with finite-dimensional component $v \in \mathbb{R}^n$ and infinite-dimensional component $\phi \in L^2([-\tau,0];\mathbb{R}^n)$. The latter plays the role of a \emph{history function}: $\phi(\theta)$ is the value of the function at offset $\theta \in [-\tau,0]$ from the current time.

A \emph{four-parameter partial integral (4-PI) operator} on $H$ is a bounded linear operator specified by four data:
\begin{itemize}
  \item a constant matrix $P \in \mathbb{R}^{n \times n}$ acting on the finite-dimensional component;
  \item a measurable map $Q_1 : [-\tau,0] \to \mathbb{R}^{n \times n}$ feeding the distributed component into the finite-dimensional output through an integral;
  \item a measurable map $Q_2 : [-\tau,0] \to \mathbb{R}^{n \times n}$ feeding the finite-dimensional component into the distributed output as a multiplication;
  \item a triple of measurable maps $\{R_0, R_1, R_2\}$ acting on the distributed component, with $R_0 : [-\tau,0] \to \mathbb{R}^{n \times n}$ a pointwise multiplier, and $R_1, R_2 : [-\tau,0]^2 \to \mathbb{R}^{n \times n}$ two kernels.
\end{itemize}
With these data, the 4-PI operator
\begin{equation}\label{eq:pie:op}
  \mathcal{P}\!\left[\begin{smallmatrix}P & Q_1\\ Q_2 & \{R_0,R_1,R_2\}\end{smallmatrix}\right]
\end{equation}
is defined to send $\mathbf{x}=\mathrm{col}(v,\phi)$ to the element of $H$ whose finite-dimensional output is
\begin{equation*}
  Pv + \int_{-\tau}^0 Q_1(\theta)\,\phi(\theta)\,d\theta
\end{equation*}
and whose distributed output, evaluated at $s \in [-\tau,0]$, is
\begin{equation*}
  Q_2(s)v \;+\; R_0(s)\phi(s) \;+\; \int_{-\tau}^s R_1(s,\theta)\,\phi(\theta)\,d\theta \;+\; \int_s^0 R_2(s,\theta)\,\phi(\theta)\,d\theta.
\end{equation*}
The integration variable in both integrals is $\theta \in [-\tau,0]$; the upper integration limit is the running point $s$ for the $R_1$-term and the right endpoint $0$ for the $R_2$-term. The triple $\{R_0,R_1,R_2\}$ thus encodes \emph{three independent actions} on the distributed component: $R_0$ is a pointwise (no-integral) multiplier, $R_1$ integrates over $[-\tau,s]$ (history below the running point), and $R_2$ integrates over $[s,0]$ (history above the running point). Concretely:
\begin{itemize}
  \item the symbol $\{I,0,0\}$ means $R_0 = I$, $R_1 = 0$, $R_2 = 0$, so the action on the distributed component is the identity $\phi \mapsto \phi$;
  \item the symbol $\{0,0,-I\}$ means $R_0 = 0$, $R_1 = 0$, $R_2 = -I$, so the action is $\phi \mapsto -\int_s^0 \phi(\theta)\,d\theta$, the running integral from $s$ to $0$ with a minus sign.
\end{itemize}
The PI operators form a $*$-algebra: sums, compositions and adjoints of PI operators are PI operators with explicitly computable parameters \cite{Shivakumar:21}.

\paragraph{The PIE representation: from delay equation to bounded operators.}
For the linear delay system
\begin{equation}\label{eq:pie:delay}
  \dot{x}(t)=A_0 x(t)+A_1 x(t-\tau),\qquad x(t)\in\mathbb{R}^n,
\end{equation}
the state at time $t$ in the classical Krasovskii sense is the pair $\mathrm{col}(x(t), x_t)$, where $x_t \in L^2([-\tau,0];\mathbb{R}^n)$ is defined by $x_t(\theta) := x(t+\theta)$. The generator of this dynamics on $\mathbb{R}^n \times L^2([-\tau,0];\mathbb{R}^n)$ is an unbounded operator (because differentiation in $\theta$ on the distributed part is unbounded). The PIE approach circumvents this by changing the state variable.

The key identity is: for any $s \in [-\tau,0]$,
\begin{equation}\label{eq:pie:identity}
  x_t(s) = x(t+s) = x(t) - \int_s^0 \frac{d}{dr}\big[x(t+r)\big]\,dr = x(t) - \int_s^0 \dot{x}(t+r)\,dr,
\end{equation}
where $r$ is the integration variable running from $s$ up to $0$. Writing $\dot{x}_t(r) := \dot{x}(t+r)$ for the derivative history, the identity rewrites as $x_t(s) = x(t) - \int_s^0 \dot{x}_t(r)\,dr$. Applied with $s = -\tau$ it gives the delayed value
\begin{equation}\label{eq:pie:delay:int}
  x(t-\tau) = x(t) - \int_{-\tau}^0 \dot{x}_t(r)\,dr,
\end{equation}
again with $r \in [-\tau,0]$ the integration variable. Substituting \eqref{eq:pie:delay:int} into \eqref{eq:pie:delay} yields
\begin{equation}\label{eq:pie:delay:rewritten}
  \dot{x}(t) = (A_0 + A_1)\,x(t) - A_1 \int_{-\tau}^0 \dot{x}_t(r)\,dr,
\end{equation}
an equation in which the delay no longer appears as an evaluation $x(t-\tau)$ but as a bounded integral of the derivative history $\dot{x}_t$.

The PIE adopts the \emph{fundamental state}
\begin{equation*}
  \mathbf{x}(t) := \mathrm{col}\!\big(x(t),\,\dot{x}_t\big) \in H,
\end{equation*}
i.e.\ the pair (current value, derivative history). In these coordinates the original Krasovskii state is recovered by the PI operator
\begin{equation}\label{eq:pie:T}
  \mathcal{T} = \mathcal{P}\!\left[\begin{smallmatrix}I & 0\\ I & \{0,0,-I\}\end{smallmatrix}\right],
\end{equation}
which acts as $\mathcal{T}\mathbf{x} = \mathrm{col}\!\big(v,\,\phi^{(2)}\big)$ with
\begin{equation*}
  v = I\cdot x(t) + 0 = x(t),\qquad \phi^{(2)}(s) = I\cdot x(t) + 0\cdot\dot{x}_t(s) + 0 + \int_s^0 (-I)\dot{x}_t(r)\,dr = x(t) - \int_s^0 \dot{x}_t(r)\,dr = x_t(s)
\end{equation*}
by \eqref{eq:pie:identity}. Hence $\mathcal{T}\mathbf{x} = \mathrm{col}(x(t),\, x_t)$: the operator $\mathcal{T}$ reconstructs the original Krasovskii state from the fundamental state. The dynamics in the fundamental coordinates are described by the operator
\begin{equation}\label{eq:pie:A}
  \mathcal{A} = \mathcal{P}\!\left[\begin{smallmatrix}A_0 + A_1 & -A_1\\ 0 & \{I,0,0\}\end{smallmatrix}\right],
\end{equation}
whose action on $\mathbf{x} = \mathrm{col}(x(t),\dot{x}_t)$ is, by definition \eqref{eq:pie:op},
\begin{equation*}
  \mathcal{A}\mathbf{x} = \mathrm{col}\!\Big(\underbrace{(A_0+A_1)x(t) - A_1\!\int_{-\tau}^0 \dot{x}_t(r)\,dr}_{=\,\dot{x}(t)\text{ by \eqref{eq:pie:delay:rewritten}}},\;\;\underbrace{0 + I\cdot \dot{x}_t(s) + 0 + 0}_{=\,\dot{x}_t(s)}\Big),
\end{equation*}
i.e.\ $\mathcal{A}\mathbf{x} = \mathrm{col}(\dot{x}(t),\,\dot{x}_t)$. The first block of $\{I,0,0\}$ here is the pointwise multiplier $R_0 = I$ that returns $\dot{x}_t(s)$ from itself; the zeros mean no integral over $[-\tau,s]$ and no integral over $[s,0]$. With these definitions the delay system \eqref{eq:pie:delay} is equivalent to the \emph{partial integral equation}
\begin{equation}\label{eq:pie:flow}
  \mathcal{T}\dot{\mathbf{x}}(t)=\mathcal{A}\mathbf{x}(t),
\end{equation}
whose left- and right-hand sides are now governed by the \emph{bounded} operators $\mathcal{T}$ and $\mathcal{A}$ on $H$, in contrast with the unbounded generator $A = \mathcal{T}^{-1}\mathcal{A}$ that would arise in the original Krasovskii formulation of Section~\ref{sec:hilbert}. The operator $\mathcal{T}$ is bounded and injective but not boundedly invertible, which is what allows \eqref{eq:pie:flow} to carry the dynamics through bounded operators.

\paragraph{Hybrid PIE and the timer-dependent PI Lyapunov functional.}
We endow the delay system with periodic jumps at $t_k=kT$: at each $t_k$ the fundamental state is reset by a bounded PI operator $\mathcal{J}$, so that the closed loop is the hybrid PIE
\begin{equation}\label{eq:pie:hybrid}
  \left\{
  \begin{aligned}
    \mathcal{T}\dot{\mathbf{x}}(t)&=\mathcal{A}\mathbf{x}(t), && t\ne t_k,\\
    \mathbf{x}(t_k)&=\mathcal{J}\mathbf{x}(t_k^-), && t_k=kT,
  \end{aligned}
  \right.
\end{equation}
with the jump operator
\begin{equation}\label{eq:pie:jump}
  \mathcal{J}=\mathcal{P}\!\left[\begin{smallmatrix}J_0 & 0\\ 0 & \{I,0,0\}\end{smallmatrix}\right],
\end{equation}
whose action on the fundamental state $\mathbf{x}=\mathrm{col}(x,\dot x_t)$ is $\mathcal{J}\mathbf{x}=\mathrm{col}(J_0 x,\dot x_t)$: the matrix $J_0\in\mathbb{R}^{n\times n}$ resets the instantaneous state (e.g.\ $J_0=I+BK$ for a sampled actuation update) while the distributed component $\dot x_t$ is left unchanged. The triple $\{I,0,0\}$ here means: identity multiplier on the distributed component, no $R_1$ or $R_2$ integrals, hence $\dot x_t\mapsto\dot x_t$. Because the reconstruction $\mathcal{T}$ rebuilds the physical history $x_t$ from the (unchanged) instantaneous state and derivative history, and $J_0$ acts trivially on the current value $x$ in the sampled-data setting, the physical Krasovskii state $\mathcal{T}\mathbf{x}=\mathrm{col}(x,x_t)$ jumps consistently as $\mathcal{T}\mathbf{x}(t_k)=\mathcal{T}\mathcal{J}\mathbf{x}(t_k^-)$, resetting only the finite actuation channel while the delay buffer varies continuously. Writing the jump on the fundamental state $\mathbf{x}$, rather than on the physical state $\mathcal{T}\mathbf{x}$, is what keeps the conditions expressed through the bounded operators $\mathcal{T},\mathcal{A},\mathcal{J}$ and matches the abstract impulsive model \eqref{eq:syst}. The system \eqref{eq:pie:hybrid} is an instance of \eqref{eq:syst} on $H$ with $\mathbb{T}_\sigma=\{kT\}$, hence $\sigma\in\mathcal{S}_{\mathrm{cst}}(T)$. We take the timer-dependent Lyapunov-Krasovskii functional
\begin{equation}\label{eq:pie:LKF}
  V(\tau,\mathbf{x}):=\big\langle \mathcal{T}\mathbf{x},\,\mathcal{P}(\tau)\,\mathcal{T}\mathbf{x}\big\rangle_H,\qquad \tau=t-t_k\in[0,T],
\end{equation}
where $\mathcal{P}(\tau)$ is, for each $\tau$, a positive PI operator. In contrast with the non-impulsive PIE stability theory, where a single constant $\mathcal{P}$ suffices, the timer dependence of $\mathcal{P}(\tau)$ is what enables the flow phase to be non-contractive (or even expansive, when $A_0$ is unstable) while the jumps restore stability, exactly the mechanism of Theorem~\ref{thm:hilbert:cst}.

\paragraph{Hybrid LPI conditions.}
Along \eqref{eq:pie:flow} the unbounded generator is eliminated through $\frac{d}{dt}(\mathcal{T}\mathbf{x})=\mathcal{T}\dot{\mathbf{x}}=\mathcal{A}\mathbf{x}$, so
\begin{equation}\label{eq:pie:Vdot}
  \tfrac{d}{dt}V
  =\big\langle\mathbf{x},\big(\mathcal{T}^{*}\dot{\mathcal{P}}(\tau)\mathcal{T}+\mathcal{A}^{*}\mathcal{P}(\tau)\mathcal{T}+\mathcal{T}^{*}\mathcal{P}(\tau)\mathcal{A}\big)\mathbf{x}\big\rangle_H .
\end{equation}
The following proposition turns this identity into verifiable hybrid LPIs. It is the mechanism, used throughout the PIE examples, by which the timer-dependent operator Lyapunov theory of Section~\ref{sec:hilbert} is applied to a system written in the implicit PIE form $\mathcal{T}\dot{\mathbf{x}}=\mathcal{A}\mathbf{x}$ without ever forming the unbounded generator $A$: all conditions are bounded operator inequalities on the PI operators $\mathcal{T},\mathcal{A},\mathcal{J}$ and the certificate $\mathcal{P}(\tau)$.

\begin{proposition}[PIE lift, constant dwell-time]\label{prop:pie:lift}
Let $H$ be a Hilbert space and let $\mathcal{T},\mathcal{A},\mathcal{J}\in L(H)$ be bounded operators such that $\mathcal{T}$ is injective with range $\operatorname{ran}\mathcal{T}=D(A)$, where $A$ generates a $C_0$-semigroup on $H$, and $\mathcal{A}=A\mathcal{T}$ on $H$. Consider the hybrid PIE \eqref{eq:pie:hybrid} with $\mathbb{T}_\sigma=\{kT\}$, $\sigma\in\mathcal{S}_{\mathrm{cst}}(T)$, and let $z(t):=\mathcal{T}\mathbf{x}(t)$ be the physical state. If there exist a timer-dependent self-adjoint PI operator $\mathcal{P}(\tau)$ with $\alpha_m I\preceq\mathcal{P}(\tau)\preceq c_P I$ for some $\alpha_m,c_P>0$ and all $\tau\in[0,T]$, and scalars $\gamma,\delta>0$, such that, for all $\tau\in(0,T)$,
\begin{subequations}\label{eq:pie:lift:lpi}
\begin{align}
  \mathcal{T}^{*}\dot{\mathcal{P}}(\tau)\mathcal{T}+\mathcal{A}^{*}\mathcal{P}(\tau)\mathcal{T}+\mathcal{T}^{*}\mathcal{P}(\tau)\mathcal{A}
  &\;\preceq\;-\gamma\,\mathcal{T}^{*}\mathcal{P}(\tau)\mathcal{T},\label{eq:pie:flow:lpi}\\
  \mathcal{J}^{*}\mathcal{T}^{*}\mathcal{P}(0)\mathcal{T}\mathcal{J}-\mathcal{T}^{*}\mathcal{P}(T^-)\mathcal{T}
  &\;\preceq\;-\delta\,\mathcal{T}^{*}\mathcal{T},\label{eq:pie:jump:lpi}
\end{align}
\end{subequations}
then the physical state $z$ is strongly UGES over $\mathcal{S}_{\mathrm{cst}}(T)$. The certificate is the timer-dependent quadratic functional $V(\tau,\mathbf{x})=\langle\mathcal{T}\mathbf{x},\mathcal{P}(\tau)\mathcal{T}\mathbf{x}\rangle_H=\langle z,\mathcal{P}(\tau)z\rangle_H$, equivalently $\bar P(\tau)=\mathcal{T}^{*}\mathcal{P}(\tau)\mathcal{T}$ in the operator Lyapunov conditions of Section~\ref{sec:hilbert}.
\end{proposition}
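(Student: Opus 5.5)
The plan is to transport everything to the physical state $z=\mathcal{T}\mathbf{x}$ and verify that it solves an instance of \eqref{eq:syst} on $H$, with generator $A$, which admits a coercive timer-dependent quadratic certificate. Then either Corollary~\ref{cor:hilbert:cst:coerc} or a direct Gr\"onwall argument as in Theorem~\ref{thm:coerc:cst} gives strong UGES over $\mathcal{S}_{\mathrm{cst}}(T)$.

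First, the flow. Since $\operatorname{ran}\mathcal{T}=D(A)$ and $\mathcal{A}=A\mathcal{T}$, on each flow interval the PIE $\mathcal{T}\dot{\mathbf{x}}=\mathcal{A}\mathbf{x}$ reads $\dot z=Az$, so $z(t)=S(t-t_k)z(t_k)$. At the jumps, $z(t_k)=\mathcal{T}\mathcal{J}\mathbf{x}(t_k^-)$. This is not yet of the form $J_{\rm ph}z(t_k^-)$, and I will not try to force it into that form. Instead I will work with the functional along trajectories. Set $W(t):=\langle z(t),\mathcal{P}(\tau(t))z(t)\rangle_H$. For $\mathbf{x}(t)$ in the relevant domain, \eqref{eq:pie:Vdot} combined with \eqref{eq:pie:flow:lpi} gives $\dot W\le-\gamma W$ on each $(t_k,t_{k+1})$. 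I will justify this by integrating first for smooth data, where $z\in D(A)$ and $\dot z=Az$, then extending to all of $H$ by density and the continuity of $z\mapsto\langle z,\mathcal{P}z\rangle$, exactly as in Corollary~\ref{cor:hilbert:cst:coerc}. The result is $W(t_{k+1}^-)\le e^{-\gamma T}W(t_k)$.

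For the jump, \eqref{eq:pie:jump:lpi} evaluated at $\mathbf{x}(t_k^-)$ gives $W(t_k)\le W(t_k^-)-\delta\|z(t_k^-)\|^2$. Using $W\le c_P\|z\|^2$, this becomes $W(t_k)\le(1-\delta/c_P)W(t_k^-)$, so $\mu:=\max(0,1-\delta/c_P)<1$ (and the inequality forces $\delta\le c_P$ anyway). Chaining the two estimates yields $W(t)\le \mu^{\kappa(t,s)}e^{-\gamma(t-s)}W(s)$ for all $t\ge s$. The timer dependence causes no trouble here because $\mathcal{P}(\tau)$ is evaluated along the actual timer of $\sigma_T$.

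Finally, coercivity gives $\alpha_m\|z(t)\|^2\le W(t)$ and $W(s)\le c_P\|z(s)\|^2$. Hence
\[
\|z(t)\|\le\sqrt{c_P/\alpha_m}\,\mu^{\kappa/2}e^{-\gamma(t-s)/2}\|z(s)\|,
\]
which is strong UGES with $M=\sqrt{c_P/\alpha_m}$, $\alpha=\gamma/2$ and $\rho=\mu^{1/2}$. If $\mu=0$, I will replace it by any value in $(0,1)$.

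The identification $\bar P=\mathcal{T}^*\mathcal{P}\mathcal{T}$ is then a restatement: on $\mathbf{x}$-coordinates, $V(\tau,\mathbf{x})=\langle\mathbf{x},\bar P(\tau)\mathbf{x}\rangle$.

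The main obstacle I expect is rigor in the flow step. The expression $\mathcal{T}\dot{\mathbf{x}}$ requires $\mathbf{x}$ to be differentiable, which means $z\in D(A^2)$ or at least a classical solution. I plan to prove the integrated inequality $W(t_k+h)\le e^{-\gamma h}W(t_k)$ for initial data $z(t_k)\in D(A)$, where the classical solution gives $\mathbf{x}=\mathcal{T}^{-1}z$ with $\mathcal{T}\dot{\mathbf{x}}=\dot z=Az=\mathcal{A}\mathbf{x}$. I will then pass to general data using the boundedness of $\mathcal{P}$ and strong continuity of $S$. This avoids ever needing $\mathcal{T}^{-1}$ to be bounded.
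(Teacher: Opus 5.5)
Your argument is correct. Its two core estimates are the paper's own: $\tfrac{d}{dt}V\le-\gamma V$ on each flow arc (from $\dot z=Az=\mathcal{A}\mathbf{x}$ and \eqref{eq:pie:flow:lpi}), and the jump decrement $-\delta\|z(t_k^-)\|^2$ from \eqref{eq:pie:jump:lpi}. The difference is in how you close.

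\begin{itemize}
\item \emph{The paper's route.} It declares $z$ to be the state of a system of the form \eqref{eq:syst} with generator $A$ and jump ``applied through $\mathcal{T}\mathcal{J}$''. It then invokes Corollary~\ref{cor:hilbert:cst:coerc}, and hence Theorem~\ref{thm:cst:nc} and Lemma~\ref{lem:datko}.
\item \emph{Your route.} You turn the additive jump decrement into the factor $1-\delta/c_P$ using $\mathcal{P}(T^-)\preceq c_PI$. You chain this with Gr\"onwall on each flow arc and read off the bound through $\alpha_m$, exactly as in the proof of $(ii)\Rightarrow(i)$ of Theorem~\ref{thm:coerc:banach}.
\end{itemize}

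What your route buys:
\begin{itemize}
\item It gives explicit constants: $M=\sqrt{c_P/\alpha_m}$, $\alpha=\gamma/2$, $\rho=\sqrt{\mu}$.
\item More importantly, it never uses the physical jump map. In $z$-coordinates that map is $\mathcal{T}\mathcal{J}\mathcal{T}^{-1}$, which is defined only on $D(A)$. Under the proposition's abstract hypotheses it need not be bounded on $H$: take $\mathcal{T}=(\lambda-A)^{-1}$ and $\mathcal{J}=\langle\cdot,e\rangle f$ with $e\notin D(A^*)$.
\item The corollary route tacitly needs a bounded $J\in L(H)$. It enters through $\|J\|$ in Lemma~\ref{lem:datko}, through Assumption~\ref{ass:growth}, and through the extension of the jump inequality to all of $H$.
\end{itemize}

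What the paper's route buys: it is shorter, and it exhibits the LPIs as literal instances of the Section~\ref{sec:hilbert} conditions. That is legitimate whenever $\mathcal{T}\mathcal{J}\mathcal{T}^{-1}$ extends boundedly, as it does in Examples~\ref{ex:pie} and~\ref{ex:boundary}.

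Two simplifications of your flow step:
\begin{itemize}
\item Every physical state lies in $\operatorname{ran}\mathcal{T}=D(A)$, so trajectories are always classical and the density extension is superfluous.
\item You never need $\dot{\mathbf{x}}$, which would indeed require $z\in D(A^2)$. You only need $\dot z=Az$ together with the pointwise identity $Az=\mathcal{A}\mathcal{T}^{-1}z$.
\end{itemize}
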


\begin{proof}
Along \eqref{eq:pie:hybrid}, $z=\mathcal{T}\mathbf{x}\in\operatorname{ran}\mathcal{T}=D(A)$ and, since $\mathcal{T}$ is bounded, $\dot z=\mathcal{T}\dot{\mathbf{x}}=\mathcal{A}\mathbf{x}=A\mathcal{T}\mathbf{x}=Az$ on each interval $(t_k,t_{k+1})$; at a jump, $z(t_k)=\mathcal{T}\mathbf{x}(t_k)=\mathcal{T}\mathcal{J}\mathbf{x}(t_k^-)$. Thus $z$ is the state of an impulsive system of the form \eqref{eq:syst} with generator $A$, jump applied through $\mathcal{T}\mathcal{J}$, and $\sigma\in\mathcal{S}_{\mathrm{cst}}(T)$. With $V(\tau,\mathbf{x})=\langle z,\mathcal{P}(\tau)z\rangle_H$, the bounds $\alpha_m I\preceq\mathcal{P}(\tau)\preceq c_P I$ give $\alpha_m\|z\|^2\le V(\tau,\mathbf{x})\le c_P\|z\|^2$, so $V$ is coercive in $z$. Using \eqref{eq:pie:Vdot}, the flow LPI \eqref{eq:pie:flow:lpi} yields $\frac{d}{dt}V\le-\gamma\langle\mathbf{x},\mathcal{T}^{*}\mathcal{P}(\tau)\mathcal{T}\mathbf{x}\rangle_H=-\gamma V$ on each inter-jump interval, the unbounded generator $A$ never appearing since $\mathcal{T}\dot{\mathbf{x}}=\mathcal{A}\mathbf{x}$ replaces it by the bounded $\mathcal{A}$. At a jump, using $\mathbf{x}(t_k)=\mathcal{J}\mathbf{x}(t_k^-)$,
\[
  V(0,\mathbf{x}(t_k))-V(T^-,\mathbf{x}(t_k^-))
  =\big\langle\mathbf{x}(t_k^-),\big(\mathcal{J}^{*}\mathcal{T}^{*}\mathcal{P}(0)\mathcal{T}\mathcal{J}-\mathcal{T}^{*}\mathcal{P}(T^-)\mathcal{T}\big)\mathbf{x}(t_k^-)\big\rangle_H
  \le-\delta\|\mathcal{T}\mathbf{x}(t_k^-)\|_H^2,
\]
by \eqref{eq:pie:jump:lpi}. With the coercive weight $\mathcal{P}(\tau)$ ($\alpha_m I\preceq\mathcal{P}(\tau)\preceq c_P I$) and the functional $V(\tau,z)=\langle z,\mathcal{P}(\tau)z\rangle_H$, these are exactly the coercive flow and jump inequalities \eqref{eq:hilbert:cst:flow:LMI} for the physical state $z$; Corollary~\ref{cor:hilbert:cst:coerc} then yields UGES of $z$ over $\mathcal{S}_{\mathrm{cst}}(T)$. (Since $\mathcal{P}$ is coercive on $z$, no boundedness of $-(\dot{\mathcal{P}}+A^*\mathcal{P}+\mathcal{P}A)$ is needed, and no use is made of $\mathcal{T}^{-1}$, which is in general unbounded.)
\end{proof}

\paragraph{Computation.}
Positivity of $\mathcal{P}(\tau)$ is enforced, following \cite{Shivakumar:21}, through the cone of PI operators of the form $\mathcal{P}(\tau)=\mathcal{Z}^{*}H(\tau)\mathcal{Z}$ with $H(\tau)\succeq0$ a matrix and $\mathcal{Z}$ a fixed vector of monomial PI operators; after a polynomial parametrization of $\tau\mapsto H(\tau)$, the conditions \eqref{eq:pie:flow:lpi}--\eqref{eq:pie:jump:lpi} become a finite semidefinite program solvable in \texttt{PIETOOLS} \cite{PIETOOLS:24}. An advantage of this method over the use of simpler Lyapunov-Krasovskii functionals used in the analysis and the control of time-delay systems \cite{Niculescu:01,GuKC:03,Briat:book1} and over the Lyapunov-Krasovskii and looped-functional approaches developed specifically for sampled-data and aperiodically sampled systems \cite{Fridman:04,Fridman:10,Liu:12,Seuret:12,Seuret:10,Davo:17} is that the flow inequality \eqref{eq:pie:flow:lpi} is \emph{exact}: it carries the full Lyapunov-Krasovskii functional through the PI-operator algebra with no Jensen, Wirtinger \cite{Seuret:13b}, or integral-inequality relaxation, so the only remaining source of conservatism is the degree of the polynomial parametrization of $\mathcal{P}(\tau)$, which can be increased to approach the necessary and sufficient conditions of Theorem~\ref{thm:cst:nc}.

\paragraph{Sampled-data control of a delay system: state delay versus input delay.}
We instantiate the construction on the sampled-data stabilization of a delay system, with the held control $u(t)=v(t)$, $v(t)=Kx(t_k)$ on $[t_k,t_{k+1})$, in the two configurations where the delay $h$ acts on the state or on the control input. This sampled-data stabilization problem is classically addressed through the input-delay approach \cite{Fridman:04,Fridman:10}, in which the zero-order hold is modelled as a sawtooth delay; the impulsive reformulation used here instead keeps the held input as an explicit state component reset at each sample. In both, the held input $v\in\mathbb{R}^m$ is a finite-dimensional component, frozen along the flow ($\dot{v}=0$) and reset at each sample by $v(t_k)=Kx(t_k)$. The finite-dimensional part of the augmented state is $\zeta=\mathrm{col}(x,v)\in\mathbb{R}^{n+m}$. The difference between the two cases is which signal carries the delay history: in Case 1 the history is the plant-state derivative $\dot{x}_t$ (an $n$-vector function), in Case 2 it is the held-input derivative $\dot{v}_t$ (an $m$-vector function).

The decisive observation is that the \emph{hybrid jump operator is identical in the two cases}: it is the finite-rank reset
\begin{equation}\label{eq:pie:Jsd}
  \mathcal{J}=\mathcal{P}\!\left[\begin{smallmatrix}\left[\begin{smallmatrix}I_n & 0\\ K & 0\end{smallmatrix}\right] & 0\\[2pt] 0 & \{I,0,0\}\end{smallmatrix}\right],
  \qquad \mathcal{J}:\ \mathrm{col}(x,v,\varphi)\mapsto\mathrm{col}(x,Kx,\varphi),
\end{equation}
whose finite-dimensional block, the matrix $\left[\begin{smallmatrix}I_n & 0\\ K & 0\end{smallmatrix}\right]\in\mathbb{R}^{(n+m)\times(n+m)}$ acting on $\zeta=\mathrm{col}(x,v)$, leaves the plant state $x$ unchanged and resets the held input as $v \leftarrow Kx$ (note that the second block-column is zero, so the previous held value of $v$ is discarded); the distributed block $\{I,0,0\}$ leaves the history $\varphi$ unchanged across the sample, only its boundary value being updated through the subsequent flow. The two configurations differ \emph{only} in the flow PIE $\mathcal{T}\dot{\mathbf{x}}=\mathcal{A}\mathbf{x}$.

\emph{Case 1 (delay on the state):} $\dot{x}=A_0x(t)+A_1x(t-h)+Bv$. The carried history is the plant-state history $x_t \in L^2([-h,0];\mathbb{R}^n)$. The fundamental state is $\mathbf{x}=\mathrm{col}(x,v,\dot{x}_t)$. Applying the identity \eqref{eq:pie:identity} with integration variable $r \in [-h,0]$ gives $x_t(s) = x(t) - \int_s^0 \dot{x}_t(r)\,dr$ and, with $s=-h$, $x(t-h) = x(t) - \int_{-h}^0 \dot{x}_t(r)\,dr$. Substituting into the dynamics yields
\begin{equation*}
  \dot{x}(t) = (A_0 + A_1)\,x(t) + B v - A_1\!\int_{-h}^0 \dot{x}_t(r)\,dr,
\end{equation*}
which in PIE form $\mathcal{T}\dot{\mathbf{x}} = \mathcal{A}\mathbf{x}$ corresponds to
\begin{equation}\label{eq:pie:case1}
  \mathcal{T}=\mathcal{P}\!\left[\begin{smallmatrix}I_{n+m} & 0\\[2pt] [\,I_n\ \ 0\,] & \{0,0,-I\}\end{smallmatrix}\right],\qquad
  \mathcal{A}=\mathcal{P}\!\left[\begin{smallmatrix}\left[\begin{smallmatrix}A_0+A_1 & B\\ 0 & 0\end{smallmatrix}\right] & \left[\begin{smallmatrix}-A_1\\ 0\end{smallmatrix}\right]\\[4pt] [\,0\ \ 0\,] & \{I,0,0\}\end{smallmatrix}\right].
\end{equation}
Reading the blocks: the finite-dimensional matrix in $\mathcal{A}$ acts on $\zeta=\mathrm{col}(x,v)$ producing $\mathrm{col}((A_0+A_1)x+Bv,\,0)$ (the second row is zero because $\dot{v}=0$ along the flow); the $Q_1$-block $\mathrm{col}(-A_1,0)$ acts on $\dot{x}_t$ through the integral $\int_{-h}^0 (-A_1)\dot{x}_t(r)\,dr$, feeding the delayed-state term into the $x$-equation; the distributed block $\{I,0,0\}$ in $\mathcal{A}$ returns $\dot{x}_t(s)$ from itself, encoding the tautological identity $\dot{x}_t(s) = \dot{x}(t+s)$ as in \eqref{eq:pie:A}. The $\mathcal{T}$ operator has finite-dimensional block $I_{n+m}$ (identity on $\zeta$) and $Q_2$-block $[I_n\ \ 0]$ acting on $\zeta=\mathrm{col}(x,v)$ to return $I_n x = x$, while $\{0,0,-I\}$ converts $\dot{x}_t$ into the history offset $-\int_s^0 \dot{x}_t(r)\,dr$; combined, these give $x_t(s) = x - \int_s^0 \dot{x}_t(r)\,dr$, so $\mathcal{T}\mathbf{x} = \mathrm{col}(x,v,x_t)$, the original Krasovskii state augmented with the held input.

\emph{Case 2 (delay on the control input):} $\dot{x}=A_0x(t)+Bu(t-h)=A_0x(t)+Bv(t-h)$. The carried history is now the held-input history $v_t \in L^2([-h,0];\mathbb{R}^m)$, with $v_t(0)=v$ and the delayed input given, via the same identity \eqref{eq:pie:identity} applied to $v_t$, by $v(t-h)=v - \int_{-h}^0 \dot{v}_t(r)\,dr$ (integration variable $r \in [-h,0]$). Substituting,
\begin{equation*}
  \dot{x}(t) = A_0\,x(t) + B v - B\!\int_{-h}^0 \dot{v}_t(r)\,dr.
\end{equation*}
The fundamental state is $\mathbf{x}=\mathrm{col}(x,v,\dot{v}_t)$ and the PIE operators become
\begin{equation}\label{eq:pie:case2}
  \mathcal{T}=\mathcal{P}\!\left[\begin{smallmatrix}I_{n+m} & 0\\[2pt] [\,0\ \ I_m\,] & \{0,0,-I\}\end{smallmatrix}\right],\qquad
  \mathcal{A}=\mathcal{P}\!\left[\begin{smallmatrix}\left[\begin{smallmatrix}A_0 & B\\ 0 & 0\end{smallmatrix}\right] & \left[\begin{smallmatrix}-B\\ 0\end{smallmatrix}\right]\\[4pt] [\,0\ \ 0\,] & \{I,0,0\}\end{smallmatrix}\right].
\end{equation}
The differences from \eqref{eq:pie:case1} are exactly three: (i) the $Q_2$-block of $\mathcal{T}$ is now $[0\ \ I_m]$ instead of $[I_n\ \ 0]$, so $\mathcal{T}$ reconstructs $v_t(s)= v - \int_s^0 \dot{v}_t(r)\,dr$ from the held-input component $v$ (and not from $x$); (ii) the open-loop matrix in the finite block of $\mathcal{A}$ is $A_0$ rather than $A_0+A_1$, since there is no instantaneous delayed-state term; (iii) the $Q_1$-block carrying the integral term is $\mathrm{col}(-B,0)$ rather than $\mathrm{col}(-A_1,0)$, since the integral now represents the delayed input $B v(t-h)$ rather than the delayed state $A_1 x(t-h)$.

In both cases $\mathcal{T},\mathcal{A},\mathcal{J}$ are bounded PI operators, $\sigma\in\mathcal{S}_{\mathrm{cst}}(T)$, and the hybrid LPIs \eqref{eq:pie:flow:lpi}--\eqref{eq:pie:jump:lpi} with the timer-dependent positive PI operator $\mathcal{P}(\tau)$ certify GES by Theorem~\ref{thm:hilbert:cst}. The sampled-data hybrid mechanism, namely the finite-rank jump \eqref{eq:pie:Jsd}, is therefore common to state-delayed and input-delayed plants; the location of the delay manifests solely as a different bounded $\mathcal{A}$ and a different attachment block in $\mathcal{T}$, with no change to the impulsive machinery. The input-delay case, classically the harder of the two because the delay sits inside the control loop, requires no special treatment in PIE coordinates.

\paragraph{Numerical comparison.}
For the scalar data $A_0=1$, $A_1=0.5$, $B=1$, $K=-1.6$, $h=0.3$, the exact stability limit under constant sampling is the largest $T$ for which $\rho\big(\mathcal{J}\,S(T)\big)<1$, the one-period monodromy condition of Theorem~\ref{thm:hilbert:cst}(iii). It equals $T^\star\approx2.1873$ for the state-delay configuration \eqref{eq:pie:case1} and $T^\star\approx0.8802$ for the input-delay configuration \eqref{eq:pie:case2} (the latter tolerates a smaller period because the control action sees the plant only through the delay). The hybrid LPIs \eqref{eq:pie:flow:lpi}--\eqref{eq:pie:jump:lpi}, solved with a polynomial parametrization of $\mathcal{P}(\tau)$ of degree $N$, return a guaranteed bound $T^\star_{\mathrm{LPI}}\le T^\star$; the conservatism is reduced by raising $N$, which enlarges the cone of admissible certificates toward the necessary and sufficient condition of Theorem~\ref{thm:cst:nc}. The results are summarized in Table~\ref{tab:results} where we can observe that we approach the theoretical values as the discretization order $N$ increases.

\begin{table}
\centering
\caption{Upper bound for the sampling period for the system $A_0=1$, $A_1=0.5$, $B=1$, $K=-1.6$, $h=0.3$ as a function of the discretization order $N$ of $\mathcal{P}$}\label{tab:results}
\begin{tabular}{c|cccccc}
 Case/$N$ & 1 & 2 & 4 & 8 & 16 & 32\\
 \hline
 State delay & 0.3611 & 0.6516 & 1.0433 & 1.4442 & 1.7514 & 1.9394\\
 Input delay & 0.2875 & 0.4270 & 0.5724 & 0.6910 & 0.7688 & 0.8058\\
 \hline
\end{tabular}
\end{table}

Figures~\ref{fig:ex3state} and~\ref{fig:ex3input} show the closed-loop response of the two configurations under the sampled-data feedback $u(t)=Kx(t_k)$, obtained by method-of-steps time-marching of the delay dynamics with a zero-order hold. For the state-delay plant $\dot x=x+\tfrac12 x(t-h)+u$ with $h=0.3$, the gain $K=-2.5$ at sampling period $T=0.10$ drives the state to zero, while a weak gain $K=-0.5$ at the same period leaves the closed loop unstable. For the input-delay plant $\dot x=x+u(t-h)$, the same gain $K=-2.5$ stabilizes at $T=0.10$ but fails at the slower period $T=0.60$, the held control then being too stale to counter the open-loop growth between samples.

\begin{figure}[t]
\centering
\includegraphics[width=0.48\linewidth]{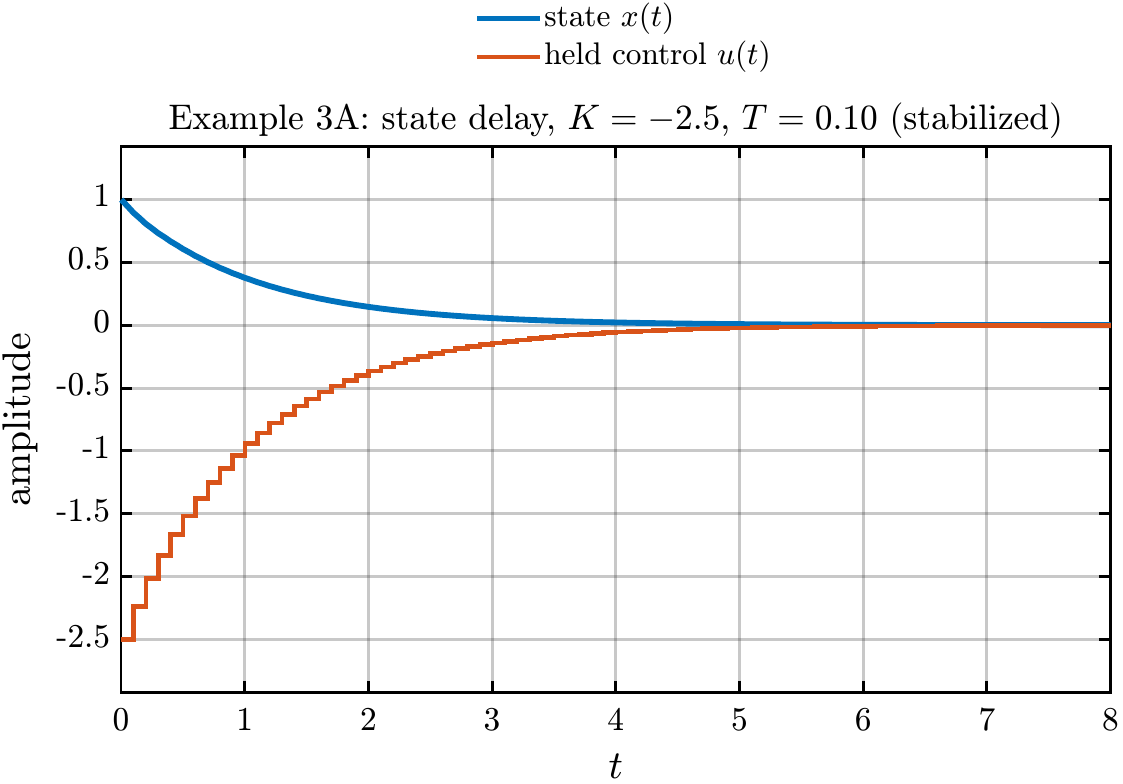}\hfill
\includegraphics[width=0.48\linewidth]{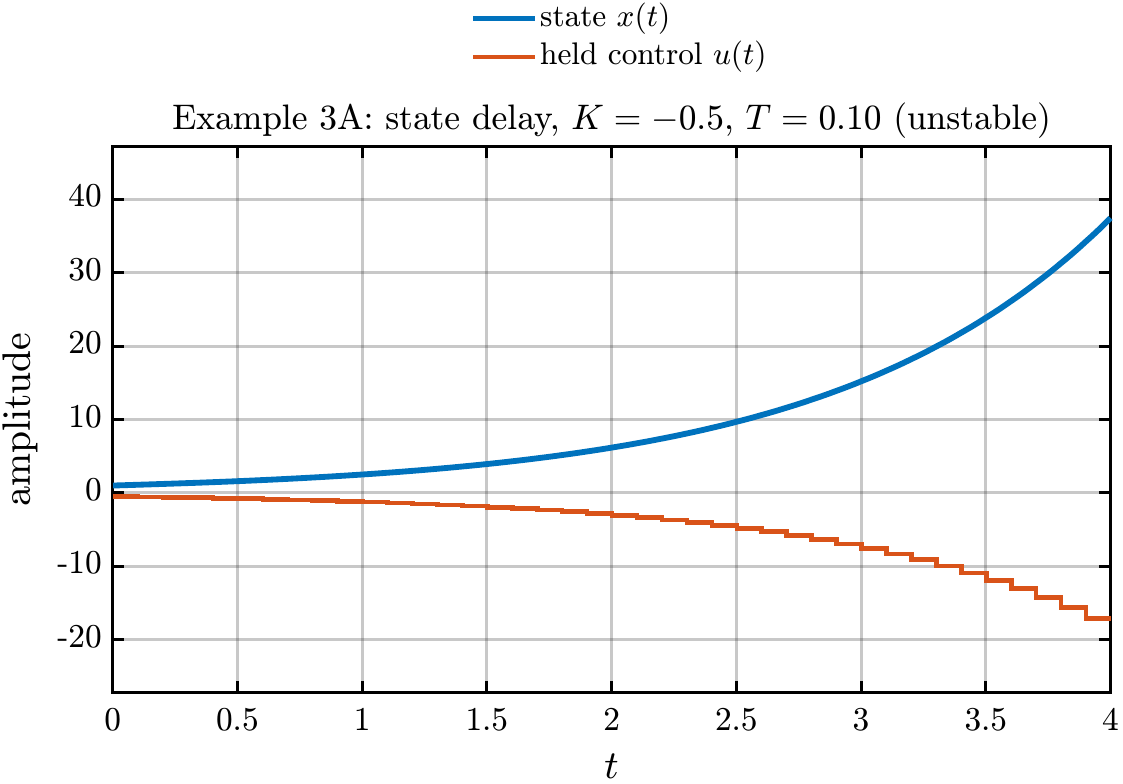}
\caption{Example~\ref{ex:pie}, state-delay configuration $\dot x=A_0x(t)+A_1x(t-h)+Bu(t)$ with $A_0=1$, $A_1=\tfrac12$, $B=1$, $h=0.3$. Each panel shows the state $x(t)$ (solid) and the sampled-data control $u(t)=Kx(t_k)$ held constant between samples (piecewise-constant staircase). Left: $K=-2.5$, $T=0.10$, the feedback drives the state to zero. Right: weak gain $K=-0.5$, $T=0.10$, the closed loop diverges.}\label{fig:ex3state}
\end{figure}

\begin{figure}[t]
\centering
\includegraphics[width=0.48\linewidth]{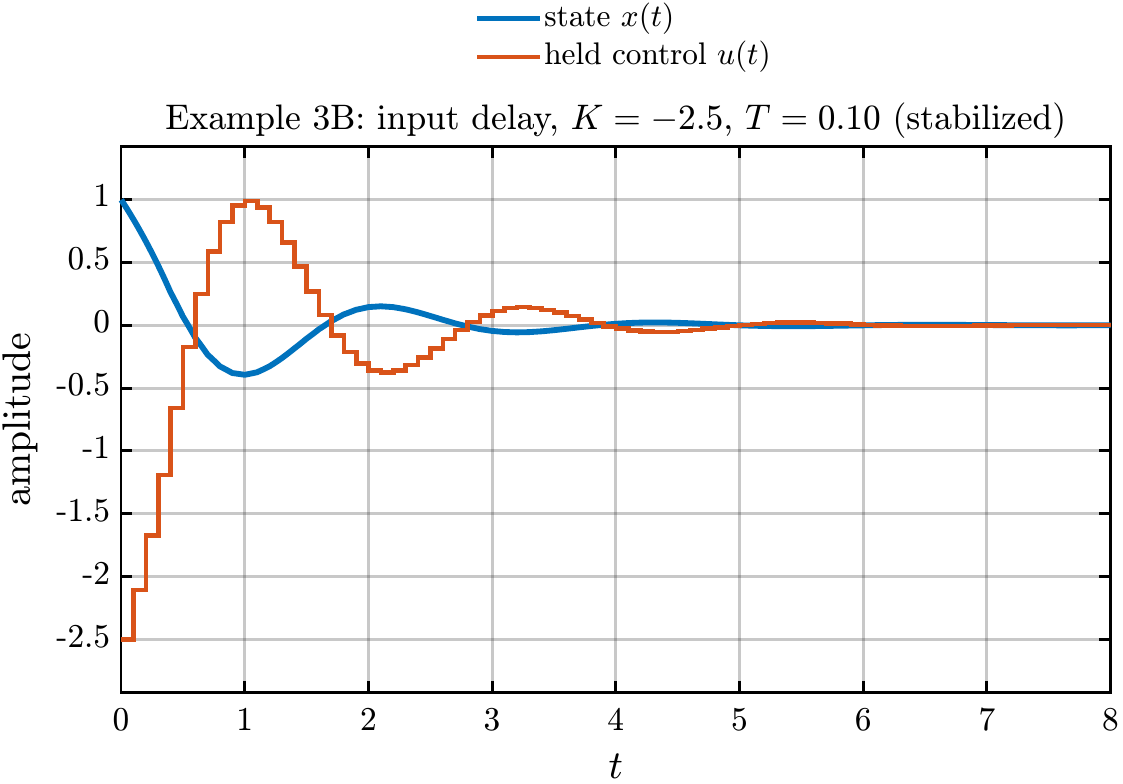}\hfill
\includegraphics[width=0.48\linewidth]{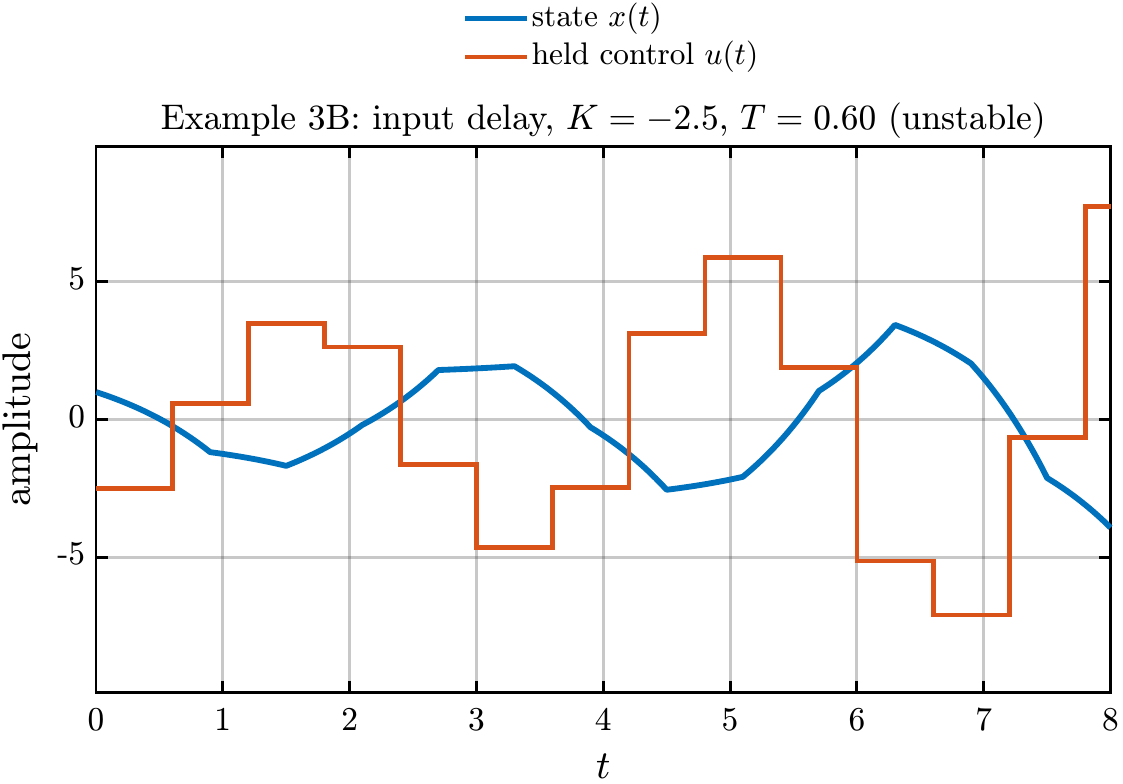}
\caption{Example~\ref{ex:pie}, input-delay configuration $\dot x=A_0x(t)+Bu(t-h)$ with $A_0=1$, $B=1$, $h=0.3$. Each panel shows the state $x(t)$ (solid) and the sampled-data control $u(t)=Kx(t_k)$ held constant between samples (piecewise-constant staircase). Left: $K=-2.5$, $T=0.10$, stabilized. Right: same gain at the slower period $T=0.60$, unstable, the held control acting on stale state information between widely spaced samples.}\label{fig:ex3input}
\end{figure}

\begin{remark}
The range dwell-time case is obtained verbatim by replacing the single jump LPI \eqref{eq:pie:jump:lpi} with the family $\mathcal{J}^{*}\mathcal{T}^{*}\mathcal{P}(0)\mathcal{T}\mathcal{J}-\mathcal{T}^{*}\mathcal{P}(\theta)\mathcal{T}\preceq-\delta\,\mathcal{T}^{*}\mathcal{T}$ for all $\theta\in[T_{\min},T_{\max}]$ and the flow LPI by $\mathcal{T}^{*}\dot{\mathcal{P}}(\tau)\mathcal{T}+\mathcal{A}^{*}\mathcal{P}(\tau)\mathcal{T}+\mathcal{T}^{*}\mathcal{P}(\tau)\mathcal{A}\preceq-\gamma\,\mathcal{T}^{*}\mathcal{P}(\tau)\mathcal{T}$, $\tau\in(0,T_{\max})$, which is the range analogue of Proposition~\ref{prop:pie:lift} routed through the coercive Corollary~\ref{cor:hilbert:rng:coerc}. More generally, every operator Lyapunov condition of Section~\ref{sec:hilbert} has a PIE counterpart obtained by the substitution $\bar P(\tau)\mapsto\mathcal{T}^{*}\mathcal{P}(\tau)\mathcal{T}$ and $A\mapsto\mathcal{T}^{-1}\mathcal{A}$, the latter never appearing explicitly because $A$ always occurs sandwiched as $A^*\bar P=\mathcal{A}^*\mathcal{P}\mathcal{T}$. This provides hybrid stability conditions for impulsive time-delay systems entirely within the PI-operator algebra.
\end{remark}

\subsection{Example 4: Sampled-data boundary control of a reaction-diffusion equation}\label{ex:boundary}

We now treat a genuinely boundary-actuated PDE, where the input operator is \emph{unbounded} in the conventional state space, and show that the PIE representation renders it bounded so that the impulsive theory applies through Proposition~\ref{prop:pie:lift}. Consider the reaction-diffusion equation on $s\in[0,1]$,
\begin{equation}\label{eq:rd:pde}
  x_t(t,s)=x_{ss}(t,s)+\lambda  x(t,s),\qquad x(t,0)=0,\quad x(t,1)=u(t),
\end{equation}
with reaction coefficient $\lambda>0$ (so that, for $\lambda>\pi^2$, the open-loop Dirichlet problem $u\equiv0$ is unstable), and a scalar boundary control $u(t)$ entering through the Dirichlet trace at $s=1$. The associated input operator is the Dirichlet map, which is unbounded on $L^2(0,1)$, the classical obstruction to a direct semigroup treatment of boundary control.

\paragraph{PIE representation.}
Following the construction of Section~\ref{ex:pie}, the PIE fundamental state is the highest spatial derivative $v:=x_{ss}\in L^2(0,1)$, which is free of boundary conditions. Integrating twice and imposing $x(t,0)=0$ gives, for $s\in[0,1]$,
\begin{equation}\label{eq:rd:reconstruct}
  x(t,s)=s  x_s(t,0)+\int_0^s(s-\theta)v(t,\theta)\,d\theta,
\end{equation}
and the second boundary condition $x(t,1)=u(t)$ fixes the unknown slope $x_s(t,0)=u(t)-\int_0^1(1-\theta)v(t,\theta)\,d\theta$. Substituting back,
\begin{equation}\label{eq:rd:Top}
  x(t,s)=(\mathcal{T}_0 v)(t,s)+(\mathcal{B}u)(t,s),\quad
  (\mathcal{T}_0 v)(s):=\int_0^s(s-\theta)v(\theta)\,d\theta-s\!\int_0^1(1-\theta)v(\theta)\,d\theta,\quad (\mathcal{B}u)(s):=s  u,
\end{equation}
where $\mathcal{T}_0\in L(L^2(0,1))$ is a bounded partial-integral operator and $\mathcal{B}\in L(\mathbb{R},L^2(0,1))$ is the bounded multiplication by $s$. The key gain is that the Dirichlet input, unbounded in the $x$-coordinate, is represented in the $v$-coordinate by the \emph{bounded} operator $\mathcal{B}$.

Differentiating \eqref{eq:rd:Top} in time, $x_t=\mathcal{T}_0\dot v+\mathcal{B}\dot u$, while the PDE \eqref{eq:rd:pde} gives $x_t=v+\lambda x=v+\lambda(\mathcal{T}_0 v+\mathcal{B}u)$. Equating the two expressions yields the boundary-input PIE
\begin{equation}\label{eq:rd:pie}
  \mathcal{T}_0\dot v=(I+\lambda\mathcal{T}_0)v+\lambda\mathcal{B}u-\mathcal{B}\dot u,
\end{equation}
in which, consistently with the general theory of boundary-actuated PIEs \cite{Shivakumar:24}, the input enters both through $u$ and through its time-derivative $\dot u$. All four operators $\mathcal{T}_0$, $I+\lambda\mathcal{T}_0$, $\lambda\mathcal{B}$, $\mathcal{B}$ are bounded.

\paragraph{Sampled-data feedback and impulsive reformulation.}
Let the boundary control be a sampled static feedback $u(t)=K  y(t_k)$ for $t\in[t_k,t_{k+1})$, held constant between samples, where $$y(t)=\langle c,x(t,\cdot)\rangle_{L^2}=\int_0^1 c(s)x(t,s)\,ds$$ is a bounded averaged measurement with weight $c\in L^2(0,1)$. The zero-order hold gives $\dot u(t)=0$ on each interval $(t_k,t_{k+1})$, so the troublesome $\mathcal{B}\dot u$ term in \eqref{eq:rd:pie} \emph{vanishes along the flow}, leaving the bounded evolution $\mathcal{T}_0\dot v=(I+\lambda\mathcal{T}_0)v+\lambda\mathcal{B}u$. The input updates only at the sampling instants, which is exactly an impulsive reset.

Augmenting the held control as a state component, set $\mathbf{x}:=\mathrm{col}(v,u)\in H:=L^2(0,1)\times\mathbb{R}$. The flow and jump operators are
\begin{equation}\label{eq:rd:TAJ}
  \mathcal{T}=\begin{bmatrix}\mathcal{T}_0&\mathcal{B}\\0&1\end{bmatrix},\quad
  \mathcal{A}=\begin{bmatrix}I+\lambda\mathcal{T}_0&\lambda\mathcal{B}\\0&0\end{bmatrix},\quad
  \mathcal{J}=\begin{bmatrix}I&0\\K\langle c,\mathcal{T}_0\cdot\rangle&K\langle c,(\cdot)s\rangle\end{bmatrix},
\end{equation}
all bounded on $H$. The closed loop is the hybrid PIE
\begin{equation}\label{eq:rd:hybrid}
  \left\{
  \begin{aligned}
    \mathcal{T}\dot{\mathbf{x}}(t)&=\mathcal{A}\mathbf{x}(t), && t\ne t_k,\\
    \mathbf{x}(t_k)&=\mathcal{J}\mathbf{x}(t_k^-), && t_k=kT,
  \end{aligned}
  \right.
\end{equation}
whose flow $\mathcal{T}\dot{\mathbf{x}}=\mathcal{A}\mathbf{x}$ encodes $\mathcal{T}_0\dot v=(I+\lambda\mathcal{T}_0)v+\lambda\mathcal{B}u$ together with $\dot u=0$, and whose jump $\mathbf{x}(t_k)=\mathcal{J}\mathbf{x}(t_k^-)$ leaves the fundamental component $v$ unchanged and resets the held control $u(t_k)=K\,y(t_k^-)=K(\langle c,\mathcal{T}_0 v\rangle+u\langle c,s\rangle)$, using $x=\mathcal{T}_0 v+\mathcal{B}u$ and $(\mathcal{B}u)(s)=su$. As in Example~\ref{ex:pie}, the jump is written on the fundamental state $\mathbf{x}$, matching the abstract impulsive model \eqref{eq:syst}. Since the sampling is periodic, $\sigma\in\mathcal{S}_{\mathrm{cst}}(T)$.

\paragraph{Stability certificate.}
The operator $\mathcal{T}_0$ is injective with range $\operatorname{ran}\mathcal{T}_0=\{w\in H^2(0,1):w(0)=w(1)=0\}$, on which $\mathcal{T}_0^{-1}=\partial_s^2$. With $\mathcal{B}u=su$, the augmented reconstruction $\mathcal{T}=\left[\begin{smallmatrix}\mathcal{T}_0&\mathcal{B}\\0&1\end{smallmatrix}\right]$ is injective with range the graph $D(A):=\operatorname{ran}\mathcal{T}=\{(x,u)\in H^2(0,1)\times\mathbb{R}:x(0)=0,\ x(1)=u\}$, the physical Dirichlet-at-$0$/held-input-at-$1$ domain on which $A=\operatorname{diag}(\partial_s^2+\lambda,0)$ generates a $C_0$-semigroup; then $\mathcal{A}=A\mathcal{T}$ (using $\partial_s^2\mathcal{B}u=\partial_s^2(su)=0$) and $\mathcal{T}^{-1}\mathcal{A}=\operatorname{diag}(\partial_s^2+\lambda,0)$, recovering exactly the physical generator. Proposition~\ref{prop:pie:lift} therefore applies: if there exist a timer-dependent self-adjoint $\mathcal{P}(\tau)$ with $\alpha_m I\preceq\mathcal{P}(\tau)\preceq c_P I$ and $\gamma,\delta>0$ solving the bounded LPIs \eqref{eq:pie:lift:lpi} with $\mathcal{T},\mathcal{A},\mathcal{J}$ of \eqref{eq:rd:TAJ}, then the closed-loop reaction-diffusion state is UGES over $\mathcal{S}_{\mathrm{cst}}(T)$. As in Section~\ref{ex:pie}, positivity of $\mathcal{P}(\tau)$ is enforced through the PI-operator cone and the LPIs are solved with \texttt{PIETOOLS} \cite{PIETOOLS:24}. The unbounded Dirichlet input has been removed entirely: the analysis takes place in the bounded PI-operator algebra, and the sampling annihilates the input-derivative term that would otherwise prevent a purely bounded formulation.

\paragraph{Numerical illustration.}
We take $\lambda=11>\pi^2$, so the open-loop Dirichlet problem is unstable (its first mode grows at rate $\lambda-\pi^2\approx1.13$), the averaged measurement weight $c\equiv1$, and the scalar gain $K=-1$. Solving the bounded LPIs \eqref{eq:pie:lift:lpi} with a piecewise-linear-in-timer parametrization of $\mathcal{P}(\tau)$ in \texttt{PIETOOLS} certifies UGES of the sampled-data closed loop up to a sampling period $T^\star\approx0.22$, in agreement with the monodromy limit $\rho(\mathcal{J}S(T))<1$. Figure~\ref{fig:ex4} contrasts the open loop ($u\equiv0$), whose state diverges, with the closed loop at $T=0.10$, whose state is driven to zero by the sampled boundary feedback, and Figure~\ref{fig:ex4ctrl} shows the corresponding piecewise-constant boundary control. This example exhibits a mechanism absent from Examples~\ref{ex:jump}--\ref{ex:flow}: the flow itself is unstable and the jumps supply the entire stabilizing action through the boundary input.

\begin{figure}[t]
\centering
\includegraphics[width=0.48\linewidth]{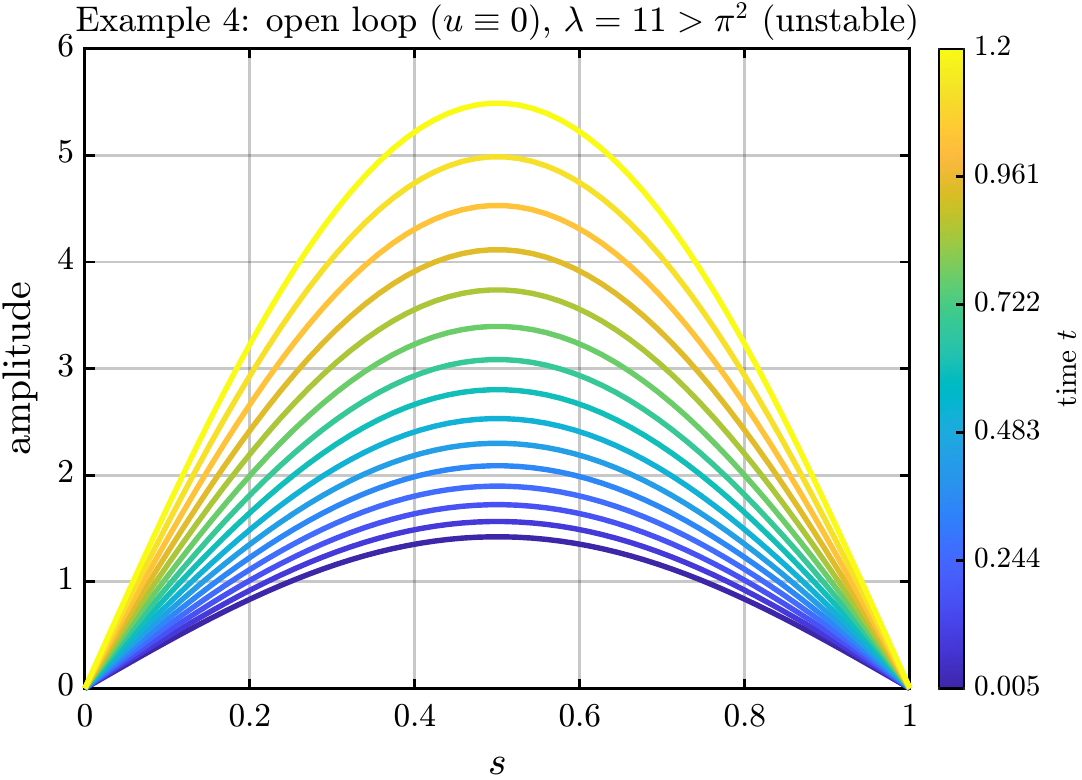}\hfill
\includegraphics[width=0.48\linewidth]{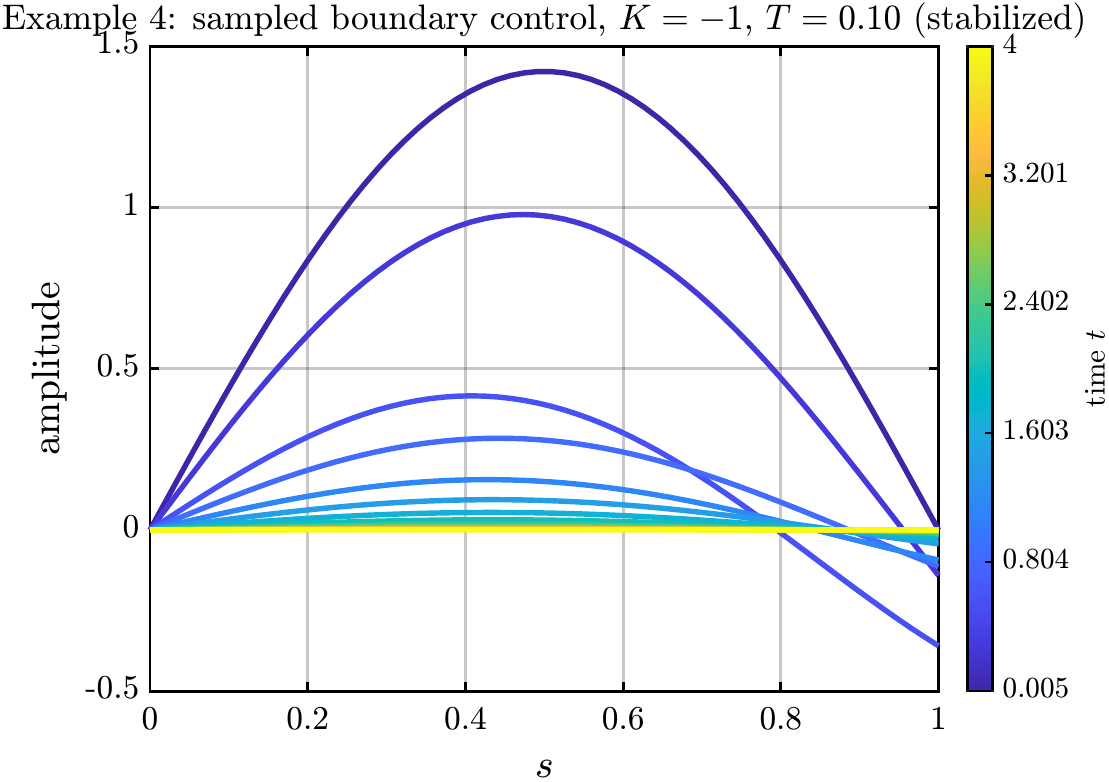}
\caption{Example~\ref{ex:boundary} (sampled-data boundary control of the reaction-diffusion equation, $\lambda=11>\pi^2$). Spatial profiles $x(\cdot,s)$ at successive times, colored by time from early (dark) to late (bright), the time scale being given by the colorbar. Left: open loop $u\equiv0$, the unstable diffusion mode grows. Right: sampled boundary feedback $u(t)=Ky(t_k)$ with $K=-1$, $T=0.10$, the profile converges to zero.}\label{fig:ex4}
\end{figure}

\begin{figure}[t]
\centering
\includegraphics[width=0.6\linewidth]{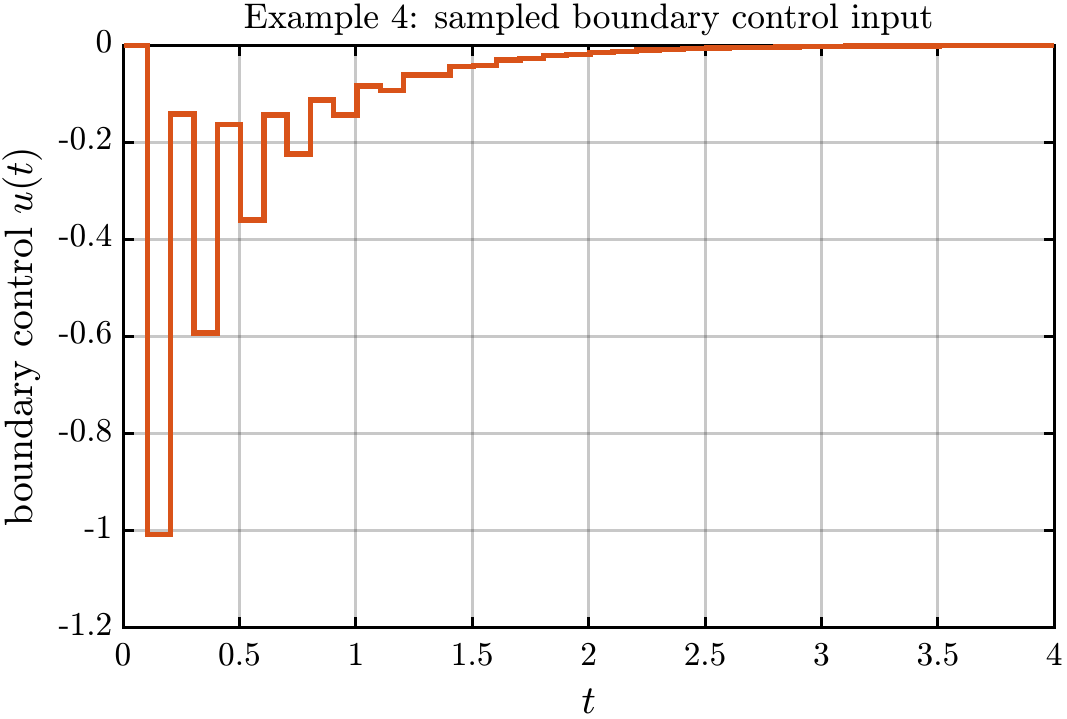}
\caption{Example~\ref{ex:boundary}, sampled-data boundary control input $u(t)=Ky(t_k)$ ($K=-1$, $T=0.10$), held constant between samples and updated at each sampling instant. The control amplitude decays as the closed-loop state converges.}\label{fig:ex4ctrl}
\end{figure}

\section{Conclusion}

We have established a complete Lyapunov characterization of uniform global exponential stability for impulsive linear time-invariant infinite-dimensional systems, organized by the structural information available on the impulse sequence. On Banach spaces, both non-coercive and Lipschitz-coercive characterizations are necessary and sufficient, with explicit Lyapunov functionals provided in each case. On Hilbert spaces, the non-coercive operator Lyapunov equations are necessary and sufficient in the fixed-sequence and constant dwell-time cases, but only sufficient in the arbitrary, minimum dwell-time, and range dwell-time cases; the more restrictive quadratic coercive conditions are sufficient in all cases. Both gaps reflect the absence of coercive quadratic certificates for exponentially stable $C_0$-semigroups \cite{Mironchenko:19}, which on infinite-dimensional Hilbert spaces makes the quadratic class strictly smaller than the non-coercive Lipschitz class whenever the family of admissible monodromies is not a singleton. Because coercive certificates are boundedly invertible, they further dualize into convex synthesis conditions through a congruence transformation and a linearizing change of variables, making the analysis machinery directly applicable to feedback design when the closed-loop system remains time-invariant.

A natural extension is the stabilization of \eqref{eq:syst} by a hybrid state feedback that controls both flow and jumps,
\begin{equation*}
  \dot{x}(t)=Ax(t)+Bu_c(t),\quad u_c(t)=K_c  x(t),\qquad x(t)=Jx(t^-)+B_d u_d(t^-),\quad u_d(t^-)=K_d  x(t^-),
\end{equation*}
with constant gains $K_c\in L(X,\mathcal{U}_c)$, $K_d\in L(X,\mathcal{U}_d)$ to be designed. Plugging the closed-loop generator $A_{\mathrm{cl}}:=A+BK_c$ and jump map $J_{\mathrm{cl}}:=J+B_d K_d$ into a coercive Hilbert condition (for instance, the constant dwell-time corollary in Section~\ref{subsec:hilbert:cst}) gives a flow LMI $\dot{\bar P}(\tau)+A_{\mathrm{cl}}^*\bar P(\tau)+\bar P(\tau)A_{\mathrm{cl}}\preceq -\epsilon I$ and a jump LMI $J_{\mathrm{cl}}^*\bar P(0)J_{\mathrm{cl}}\preceq \bar P(T)-\epsilon I$, both bilinear in $(\bar P,K_c,K_d)$. Assuming $\bar P$ coercive and setting $\bar Q(\tau):=\bar P(\tau)^{-1}$, $Y_c(\tau):=K_c\bar Q(\tau)$, $Y_d:=K_d\bar Q(T)$, the standard pre/post multiplication by $\bar Q$ and Schur complement on the jump inequality yield the LMI
\begin{equation*}
  -\dot{\bar Q}(\tau)+\bar Q(\tau)A^*+A\bar Q(\tau)+Y_c(\tau)^*B^*+BY_c(\tau)\preceq-\epsilon' I,\qquad\begin{pmatrix}-\bar Q(T)&\bar Q(T)J^*+Y_d^*B_d^*\\J\bar Q(T)+B_d Y_d&-\bar Q(0)+\epsilon'' I\end{pmatrix}\prec0,
\end{equation*}
which is linear in $(\bar Q,Y_c,Y_d)$ with $K_c=Y_c(\tau)\bar Q(\tau)^{-1}$ and $K_d=Y_d\bar Q(T)^{-1}$ recovered after solution.

However, the conditions developed in this paper \emph{do not directly apply} to this closed-loop system. The change of variable $Y_c(\tau)=K_c\bar Q(\tau)$ used to linearize the flow LMI introduces a timer-dependent gain reconstruction $K_c=Y_c(\tau)\bar Q(\tau)^{-1}$, and forcing $K_c$ constant requires $Y_c$ and $\bar Q$ to be compatible across the entire timer interval. More fundamentally, even when $K_c, K_d$ are constants, the closed-loop generator $A_{\mathrm{cl}}=A+BK_c$ remains time-invariant during flow, but the controlled system with sampled input or memoryful structure (as arises when $u_c$ is itself held between jumps, or when $u_d$ feeds back through delays) becomes \emph{genuinely time-varying}. The Lyapunov characterizations developed here are formulated for time-invariant generators and jump maps, and their extension to time-varying $A(t)$ and $J(t)$ is a substantive open problem. We leave for future work the development of time-varying versions of the present results, which would close the gap between analysis and synthesis for the general feedback configuration above.

\section*{Acknowledgments}

The authors are grateful to Mario Sigalotti for his useful comments while preparing this manuscript.

\bibliographystyle{unsrt}
\bibliography{./bib_francesco,./briat_IDS,./global_IDS}

@article{liu2015lyapunov,
  title={Lyapunov-based sufficient conditions for stability of hybrid systems with memory},
  author={Liu, J. and Teel, A. R.},
  journal={IEEE Transactions on Automatic Control},
  volume={61},
  number={4},
  pages={1057--1062},
  year={2015},
  publisher={IEEE}
}

@string{ieeetac = {IEEE Transactions on Automatic
Control}}

@string{siam = {SIAM Journal of Control and Optimization}}

@article{Briat:11l,
  author = "C. Briat and A. Seuret",
  title = "A looped-functional approach for robust stability analysis of linear impulsive systems",
  journal = "Systems \& Control Letters",
  year = "2012",
  volume = "61(10)",
  pages="980--988",
}

@article{Briat:12h,
  author = "C. Briat and A. Seuret",
  title = "Convex dwell-time characterizations for uncertain linear impulsive systems",
  journal = "{IEEE} Transactions on Automatic Control",
  year = "2012",
  volume = "57(12)",
  pages="3241--3246",
}

@article{Briat:13b,
  author = "C. Briat and A. Seuret",
  title = "Affine minimal and mode-dependent dwell-time characterization for uncertain switched linear systems",
  journal = "{IEEE} Transactions on Automatic Control",
year={2013},
volume={58},
number={5},
pages={1304--1310},
}

@article{Briat:13d,
  author = "C. Briat",
  title = "Convex conditions for robust stability analysis and stabilization of linear aperiodic impulsive and sampled-data systems under dwell-time constraints",
  journal = "Automatica",
  year = "2013",
  volume = "49(11)",
  pages="3449--3457",
}

@book{Briat:book1,
  author={C. Briat},
  title={Linear Parameter-Varying and Time-Delay Systems -- Analysis, Observation, Filtering \& Control},
  publisher={Springer-Verlag},
  series={Advances on Delays and Dynamics},
  volume ={3},
  year={2015},
  address={Heidelberg, Germany},
}

@article{Briat:15i,
  author = "C. Briat",
  title = "Stability analysis and stabilization of stochastic linear impulsive, switched and sampled-data systems under dwell-time constraints",
  year = "2016",
  volume = "74",
  pages="279--287",
  journal = "Automatica",
}

@article{Briat:16c,
  author = "C. Briat",
  title = "Dwell-time stability and stabilization conditions for linear positive impulsive and switched systems",
  year = "2017",
  volume = "24",
  pages="198--226",
  journal = "Nonlinear Analysis: Hybrid Systems",
}

@string{ieeetac = {IEEE Transactions on Automatic Control}}

@string{automatica = {Automatica}}

@article{PrieurAstolfi03,
	Author = {C. Prieur and A. Astolfi},
	Journal = ieeetac,
	Number = 10,
	Pages = {1768-1772},
	Title = {Robust stabilization of chained systems via hybrid control},
	Volume = 48,
	Year = 2003}

@BOOK{Boyd:94a,
    author={S.  Boyd and L.  {El-Ghaoui} and E.  Feron and V.  Balakrishnan},
    title={Linear Matrix Inequalities in Systems and Control Theory},
    publisher={PA, SIAM, Philadelphia},
    year={1994},
}

@article{Scherer:97a,
    author={C.  W.  Scherer and P.  Gahinet and M.  Chilali},
    title={Multiobjective Output-Feedback Control via {LMI} Optimization},
    journal={IEEE Transaction on Automatic Control},
    year={1997},
    volume={42},
    number={7},
    pages={896--911},
}

@BOOK{GuKC:03,
  AUTHOR =       {K.  Gu and V. L.  Kharitonov and J.  Chen},
  TITLE =        {Stability of Time-Delay Systems},
  PUBLISHER =    {Birkh{\"a}user, Boston},
  year =         {2003},
}

@BOOK{Niculescu:01,
  author    = {S. -I.  Niculescu},
  title     = {Delay effects on stability.  A robust control approach},
  volume    = {269},
  publisher = {Springer-Verlag: Heidelbeg},
  year      = {2001},
}

@unpublished{Scherer:05a,
    author={C.  W.  Scherer and S.  Weiland},
    title={Linear Matrix Inequalities in Control},
    year={2005},
    note={Lecture Notes},
}

@book{Bensoussan:06,
  author={A.  Bensoussan and G.  {Da~Prato} and M.  C.  Delfour and S.  K.  Mitter},
  title={Representation and Control of Infinite Dimensional Systems},
  publisher={Birkh{\"{a}}user},
  address = {Boston, USA},
  edition = {2nd},
  year={2007},
}

@article{Fridman:04,
  author ={E.  Fridman and A.  Seuret and J.  P.  Richard},
  title ={Robust Sampled-Data Stabilization of Linear Systems: An Input Delay Approach},
  journal ={Automatica},
  year ={2004},
  volume ={40},
  pages ={1441-1446},
}

@article{Barmish:85,
  author ={B.  R.  Barmish},
  title ={Necessary and sufficient conditions for quadratic stabilizability of an uncertain system},
  journal ={J.  Optim.  Theory Appl. },
  year ={1985},
  volume ={46},
  pages ={399-408},
}

@book{Lakshmikantham:89,
  author={V.  Lakshmikantham and D. D.  Bainov and P. S.  Simeonov},
  title={Theory of Impulsive Differential Equations},
  publisher={World Scientific},
  series={Series in Modern Applied Mathematics},
  year={1989},
}

@book{Liberzon:03,
  author={D.  Liberzon},
  title={Switching in Systems and Control},
  publisher={Birkh{\"{a}}user},
  address = {New York},
  year={2003},
}

@article{Geromel:06b,
  author={J. C.  Geromel and P.  Colaneri},
  title={Stability and Stabilization of Continuous-Time Switched Linear Systems},
    journal={{SIAM} Journal on Control and Optimization},
    year ={2006},
    volume ={45(5)},
    pages ={1915-1930},
}

@article{Naghshtabrizi:08,
  author={P.  Naghshtabrizi and J.  P.  Hespanha and A.  R.  Teel},
  title={Exponential stability of impulsive systems with application to uncertain sampled-data systems},
    journal={Systems \& Control Letters},
    year ={2008},
    volume ={57},
    pages ={378-385},
}

@inproceedings{Seuret:10,
   author ={A.  Seuret},
   title ={Exponential stability and stabilization of sampled-data systems with time-varying period},
   year ={2010},
   booktitle = {{IFAC} Conference on Time-Delay System},
   address ={Prague, Czech Republic},
   pages={301--306},
}

@article{Bamieh:91,
  author ={B.  Bamieh and J. B.  Pearson and B. A.  Francis and A.  Tannenbaum},
  title ={A lifting technique for linear periodic systems with applications to sampled-data control},
  journal ={Systems \& Control Letters},
  year ={1991},
  volume ={17},
  pages={ 79--88},
}

@ARTICLE{Seuret:12,
    author={A.  Seuret},
    title={A novel stability analysis of linear systems under asynchronous samplings},
    journal={Automatica},
    year ={2012},
    volume ={48(1)},
    pages={177--182},
}

@article{Fridman:10,
  author ={E.  Fridman},
  title ={A refined input delay approach to sampled-data control},
  journal ={Automatica},
  year ={2010},
  volume ={46(2)},
  pages={421--427}, }

@book{Yang:01b,
  author={T.  Yang},
  title={Impulsive control theory},
  publisher={Springer-Verlag},
  address = {Berlin Heidelberg},
  year={2001},
}

@book{Bainov:89,
  author={D. D.  Bainov and P. S.  Simeonov},
  title={Systems with impulse effects: Stability, theory and applications},
  publisher={Academy Press},
  address = {Chichester, UK},
  year={1989},
}

@article{Hespanha:08,
  author ={J.  P.  Hespanha and D.  Liberzon and A.  R.  Teel},
  title ={{L}yapunov conditions for input-to-state stability of impulsive systems},
  journal ={Automatica},
  year ={2008},
  volume ={44(11)},
  pages={2735--2744},
}

@book{Haddad:06,
  author={W.  M.  Haddad and V.  Chellaboina and S.  G.  Nersesov},
  title={Impulsive and Hybrid dynamical systems},
  publisher={Princeton University Press},
  year={2006},
}

@article{Liu:12,
 author ={K.  Liu and E.  Fridman},
 title ={Wirtinger's inequality and {L}yapunov-based sampled-data stabilization},
 journal ={Automatica},
 volume ={48(1)},
 year ={2012},
 pages ={102--108},
}

@article{Sivashankar:94,
author = {N.  Sivashankar and P.  P.  Khargonekar},
title = {Characterization of the {${\mathcal{L}}_2$}-Induced Norm for Linear Systems with Jumps with Applications to Sampled-Data Systems},
year = {1994},
journal = {SIAM Journal on Control and Optimization},
volume = {32(4)},
pages = {1128--1150},
}

@article{Khammash:93,
title ={Necessary and sufficient conditions for the robustness of time-varying systems with applications to sampled-data systems},
author ={M.  H. Khammash},
journal ={{IEEE} Transactions on Automatic Control},
volume ={38(1)},
pages ={49--57},
year ={1993},
}

@inproceedings{Seuret:12b,
 author ={A.  Seuret and F.  Gouaisbaut},
 title ={On the use of the {W}irtinger inequalities for time-delay systems},
 booktitle ={10th {IFAC} Worhshop on Time Delay Systems},
 year ={2012},
 location ={Boston, USA},
 pages ={260--265},
}

@article{Bernussou:89,
  author ={J. Bernussou and P. L. D. Peres and J. C. Geromel},
  title ={{A linear programming oriented procedure for quadratic stabilization of uncertain systems}},
  journal = {Systems \& Control Letters},
  year ={1989},
  volume ={13},
  pages={65--72},
}

@article{Dashkovskiy:12,
  author ={S.  Dashkovskiy and M. Kosmykov and A. Mironchenko and L. Naujok},
  title ={Stability of interconnected impulsive systems with and without time-delays, using {L}yapunov methods},
  journal = {Nonlinear Analysis: Hybrid systems},
  year ={2012},
  volume ={6},
  pages ={899--915},
}

@article{Oliveira:99,
  author ={M. C. {de Oliveira} and J. Bernussou and J. C. Geromel},
  title ={A new discrete-time robust stability criterion},
  journal = {Systems \& Control Letters},
  year ={1999},
  volume ={37(4)},
  pages ={261--265},
}

@incollection {Datko:72,
   author ={R. Datko},
   title ={An algorithm for computing {L}iapunov functionals for some differential difference equations},
   booktitle ={Ordinary differential equations},
   editor ={L. Weiss},
   publisher = {Academic Press, New York},
   pages = {387--398},
   year = {1972}
}

@article{Dashkovskiy:13,
   author = {S. Dashkovskiy and A. Mironchenko},
    title = "{Input-to-state stability of nonlinear impulsive systems}",
    journal = {SIAM Journal on Control and Optimization},
    volume={51(3)},
    pages={1962--1987},
     year = 2013,
}

@article{Allerhand:13,
   author = {L. I. Allerhand and U. Shaked},
    title = "{Robust state-dependent switching of linear systems with dwell-time}",
    journal = ieeetac,
    volume ={58(4)},
    year ={2013},
    pages ={994--1001},
}

@article{Seuret:13b,
 author ={A. Seuret and F. Gouaisbaut},
 title ={Wirtinger-based integral inequality: Application to time-delay systems},
  journal = {Automatica},
  year ={2013},
  volume ={49(9)},
  pages={2860--2866},
}

@article{Allerhand:11,
author = {L. I. Allerhand and U. Shaked},
title = {Robust stability and stabilization of linear switched systems with dwell time},
journal = {{IEEE} Transactions on Automatic Control},
volume = {56(2)},
pages = {381--386},
year = {2011},
}

@article{Teel:14,
  author ={A. R. Teel and A. Subbaraman and A. Sferlazza},
  title ={Stability analysis for stochastic hybrid systems: {A} survey},
  journal ="Automatica",
  year ={2014},
  volume ={50(10)},
  pages={2435--2456},
}

@book{Goebel:12,
  author={R. Goebel and R. G. Sanfelice and A. R. Teel},
  title={Hybrid Dynamical Systems. Modeling, Stability, and Robustness},
  publisher={Princeton University Press},
  year={2012},
}

@article{Xiang:15a,
  author ={W. Xiang},
  title ={On equivalence of two stability criteria for continuous-time switched systems with dwell time constraint},
  journal ={Automatica},
  year ={2015},
  volume ={54},
  pages ={36--40},
}

@book{Samoilenko:95,
  author={A. M. Samoilenko and N. A. Perestyuk},
  title={Impulsive differential equations (translated from Russian by Y. Chapovsky)},
  publisher={World Scientific},
  year={1995},
  address={Singapore},
}

@book{Curtain:95,
  author ={R. F. Curtain and H. J. Zwart},
  title ={An introduction to infinite-dimensional linear systems theory},
  publisher={Springer-Verlag},
  year={1995},
  address={New York, USA},
}

@book{Fridman:14,
  author ={E. Fridman},
  title ={Introduction to Time-Delay Systems},
  publisher={Birkh{\"a}user},
  year={2014},
  address={Springer International Publishing Basel Switzerland},
}

@article{Hetel:13,
author = {L. Hetel and E. Fridman},
journal     = ieeetac,
title        = {Robust sampled-data control of switched affine systems},
year        = {2013},
volume = {58(11)},
pages = {2922--2928},
}

@book{Lakshmikantham:91,
  author={V. Lakshmikantham and V. M. Matrosov and S. Sivasundaram},
  title={Vector {L}yapunov Functions and Stability Analysis of Nonlinear Systems},
   booktitle={Mathematics and Its Applications},
   year ={1991},
   publisher = {Springer Science+Business Media Dordrecht},
  }

@article{Davo:17,
  author ={M. A Davo and A. Ba\~nos and F. Gouaisbaut and S. Tarbouriech and A. Seuret},
  title = {Stability analysis of linear impulsive delay dynamical systems via looped-functionals},
  journal = "Automatica",
  year ={2017},
  volume={81},
  pages ={107--114},
}

@book{Pazy:83,
 author ={A. Pazy},
  title ={Semigroups of Linear Operators and Applications to Partial Differential Equations},
  year ={1983},
  publisher = {Springer-Verlag New York},
}

@article{Dashkhovskiy:23,
author ={S. Dashkovskiy and V. Slynko},
title = {Dwell-time stability conditions for infinite dimensional impulsive systems},
journal = {Automatica},
year = {2023},
volume = {147},
pages = {110695},
}

@article{Bivziuk:23,
author = {V. Bivziuk and S. Dashkovskiy and V. Slynko},
title = {Comparison theorem for infinite-dimensional linear impulsive systems},
journal = {arXiv preprint arXiv:2308.05615},
year = {2023},
}

@article{Mancilla:20,
author = {J. L. Mancilla-Aguilar and H. Haimovich},
title = {Uniform input-to-state stability for switched and time-varying impulsive systems},
journal = {IEEE Transactions on Automatic Control},
volume = {65},
number = {12},
pages = {5028--5042},
year = {2020},
}

@article{Peet:20,
author ={M. M. Peet},
title = {A Convex Solution of the $H_\infty$-Optimal Controller Synthetic Problem For Multidelay Systems},
journal = {SIAM Journal on Control and Optimization},
year = {2020},
volume = {58(3)},
pages = {1547--1578},
}

@article{Haidar:22,
 author ={I. Haidar and Y. Chitour and P. Mason and M. Sigalotti},
 title ={Lyapunov Characterization of Uniform Exponential Stability for Nonlinear Inﬁnite-Dimensional Systems},
journal = ieeetac,
year = {2022},
volume = {67(4)},
pages = {1685--1697},
}

@article{Mironchenko:18,
author ={A. Mironchenko and F. Wirth},
title = {Lyapunov characterization of input-to-state stability for semilinear control systems over Banach spaces},
journal = {Systems \& Control Letters},
year = {2018},
volume = {119},
pages = {64--70},
}

@article{Datko:70,
author ={R. Datko},
title = {Extending a theorem of A. M. Liapunov to Hilbert space},
journal = { Journal of Mathematical Analysis and Applications},
year = {1970},
volume = {32},
pages = {610--616},
}

@manual{PIETOOLS:24,
    author={S. Shivakumar and A. Das and D. Braghini and D. Jagt and Y. Peet and M. Peet},
    title={PIETOOLS 2024: User Manual},
    year={2004},
}

@article{Peet:21pie,
  author  = {M. M. Peet},
  title   = {Representation of Networks and Systems with Delay: {DDE}s, {DDF}s, {ODE}-{PDE}s and {PIE}s},
  journal = {Automatica},
  year    = {2021},
  volume  = {127},
  pages   = {109508},
  note    = {arXiv:1910.03881},
}

@article{Shivakumar:21,
  author  = {S. Shivakumar and A. Das and S. Weiland and M. M. Peet},
  title   = {A Partial Integral Equation ({PIE}) Representation of Coupled Linear {PDE}s and Scalable Stability Analysis using {LMI}s},
  journal = {Automatica},
  year    = {2021},
  volume  = {129},
  pages   = {109674},
}

@article{Chitour:25,
  author  = {Y. Chitour and J. Daafouz and I. Haidar and P. Mason and M. Sigalotti},
  title   = {A {Berger}-{Wang} Formula for Impulsive Switched Systems},
  journal = {arXiv preprint},
  year    = {2025},
  note    = {arXiv:2507.02434},
}

@article{Hante:11,
author = {Hante, Falk M. and Sigalotti, Mario},
title = {Converse Lyapunov Theorems for Switched Systems in Banach and Hilbert Spaces},
journal = {SIAM Journal on Control and Optimization},
volume = {49},
number = {2},
pages = {752-770},
year = {2011},
}

@article{Mironchenko:19,
  author  = {A. Mironchenko and F. Wirth},
  title   = {Non-coercive {L}yapunov functions for infinite-dimensional systems},
  journal = {Journal of Differential Equations},
  year    = {2019},
  volume  = {266},
  number  = {11},
  pages   = {7038--7072},
  doi     = {10.1016/j.jde.2018.11.026},
}

@article{Shivakumar:24,
  author  = {S. Shivakumar and A. Das and S. Weiland and M. M. Peet},
  title   = {Extension of the {P}artial {I}ntegral {E}quation Representation to {GPDE} Input-Output Systems},
  journal = {{IEEE} Transactions on Automatic Control},
  year    = {2024},
  volume={70(5)},
  pages={3240--3255},
}

\end{document}